\newcommand{\hidden}[1]{}
\newcommand{\R}{{\Bbb R}}
\newcommand{\Z}{{\Bbb Z}}
\newcommand{\N}{{\Bbb N}}
\newcommand{\D}{{\Bbb D}}
\newcommand{\bE}{\mathbb{E}}
\newcommand{\T}{{\Bbb T}}
\newcommand{\slT}{{\beta(T)}}
\newcommand{\dist}{{\rm dist}}
\newcommand{\s}{{\small{$\Sigma$}}}
\newcommand{\Bad}{{\bf Bad}}
\newcommand{\dd}{\mathrm{d}}
\newtheorem{theorem}{Theorem}
\newtheorem{proposition}{Proposition}
\newtheorem{corollary}{Corollary}
\newtheorem{lemma}{Lemma}
\newtheorem{thmF}{Theorem FMP$^+$ \!\!\!\!}
\newtheorem{definition}{Definition}
\theoremstyle{remark}
\newtheorem{remark}{{Remark}}
\def\ep{ \epsilon }
\newcommand{\Bni}{B_{i,n}}
\newcommand{\vx}{\mathbf{x}}
\newcommand{\va}{\mathbf{a}}
\newcommand{\vj}{\mathbf{j}}
\newcommand{\cM}{\mathcal{M}}
\newcommand{\cC}{\mathcal{C}}
\newcommand{\cR}{\mathcal{R}}
\newcommand{\cP}{\mathcal{P}}
\newcommand{\cH}{\mathcal{H}}
\newcommand{\UP}{\mathcal{U}(\Psi)}
\newcommand{\cB}{\mathcal{B}}
\newcommand{\slv}[1]{{\color{purple}  {#1}}}
\begin{document}

\title{The Shrinking Target Problem for Matrix Transformations \\ of Tori: revisiting the standard problem}

%\title{Appendix to the Shrinking Target Problem for Matrix Transformations of Tori: revisted}

 \author{ Bing Li \\ {\small\sc (SCUT) } \and Lingmin Liao \\ {\small\sc (Wuhan) }  \and
Sanju Velani \\ {\small\sc (York) }
 \and ~ Evgeniy Zorin \\ {\small\sc (York)}}

\bigskip

\date{\emph{``You've always had the power my dear, \\ you just had to learn it yourself''}}

 \maketitle

\begin{abstract}
	Let $T$ be a $d\times d$ matrix with real coefficients.
	Then $T$ determines a self-map of the $d$-dimensional torus
	$\T^d=\R^d/\Z^d$.   Let $ \{E_n \}_{n \in \mathbb{N}} $ be a sequence of
	subsets  of $\T^d$ and  let $W(T,\{E_n \})$ be the set of points $\vx
	\in \T^d$ such that
	$T^n(\vx)\in E_n $ for infinitely many $n\in\N$.  For a large class of
	subsets (namely, those satisfying the so called bounded property
	$ ({\boldsymbol{\rm B}}) $ which includes balls, rectangles, and
	hyperboloids) we show that the $d$-dimensional Lebesgue measure of the
	shrinking target set $W(T,\{E_n \})$  is  zero (resp. one) if a natural
	volume sum converges (resp. diverges).  In fact,  we prove a
	quantitative form of this zero-one criteria that describes the
	asymptotic behaviour of the counting function $R(x,N):= \# \big\{ 1\le n
	\le  N :    T^{n}(x) \in E_n \} $.  The counting result makes use of a
	general quantitative statement that holds for a large class
	measure-preserving dynamical systems (namely, those satisfying the so
	called summable-mixing property).   We next turn our attention to the
	Hausdorff dimension of $W(T,\{E_n \})$.  In the case the subsets $E_n$
	are balls, rectangles or hyperboloids we obtain precise formulae for the
	dimension. These shapes correspond, respectively,  to the  simultaneous,
	weighted and multiplicative theories of classical  Diophantine
	approximation.  The dimension results for balls generalises those
	obtained in \cite{hvMat} for integer matrices to real matrices. In the final section, we discuss various problems that stem from the results proved in the paper.
\end{abstract}

\bigskip

\section{Introduction}

Let $(X,\mathcal{B},\mu,T)$ be a measure-preserving dynamical system.  Recall, that by definition $\mu$ is a probability measure. Now let $\{E_n \}_{n \in \mathbb{N}} $ be a sequence of subsets  in $\mathcal{B}$ and let
\begin{eqnarray*}
W\big(T,\{E_n\}\big) & := & \limsup_{n\to\infty}T^{-n}(E_n)   \\[1ex] & = &  \{x\in   X: T^n(x)\in E_n\ \hbox{ for infinitely many }n\in \mathbb{N}\} \, .
\end{eqnarray*}
For obvious reasons the sets $E_n$ can be thought of as targets that the orbit under $T$ of points in $X$ have to hit. The interesting situation is usually, when working within a metric space,  the diameters of $E_n$ tend to zero as $n$ increases. It is thus natural to refer to $ W\big(T,\{E_n\}\big)$ as the corresponding shrinking target set associated with the given dynamical system and target sets. Since $T$ is measure-preserving $ \mu (T^{-n}(E_n))  =  \mu (E_n)$, and  a straightforward consequence of the (convergent) Borel-Cantelli Lemma is that
\begin{equation} \label{appconv}
\mu\big(W(T,\{E_n\})\big)=0   \qquad {\rm if \ } \qquad \sum_{n=1}^\infty \mu(E_n)  \, < \, \infty  \, .
\end{equation}

\noindent  Now two natural questions arise.   Both fall under the umbrella of the ``shrinking target problem''  formulated in \cite{hv1}.
\begin{itemize}
  \item[] \vspace*{-1ex}
  \begin{itemize} \item[~ \textbf{(P1)}] What is the $\mu$-measure of $W(T,\{E_n\})$ if the measure sum in \eqref{appconv} diverges? \\[-2ex]
  \item[~ \textbf{(P2)}] What is the Hausdorff dimension of $W(T,\{E_n\})$  if the measure sum converges and so $\mu\big(W(T,\{E_n\})\big)=0$?
      \end{itemize}
      \vspace*{-1ex}
\end{itemize}

\noindent  To be precise, the target sets $E_n$ in the original formulation in  \cite{hv1} are restricted to balls $B_n$.  The more general setup naturally incorporates a larger class of problems. For example, within the context of simultaneous  Diophantine approximation,  it enables us to address problems associated with the weighted (the target sets are rectangular) and multiplicative (the target sets are hyperbola) theories -- see Remark~\ref{rk-2} in \S\ref{secMTT} below.

In this paper  we revisit the  shrinking target problem investigated in \cite{hvMat} in which $T$ is a matrix transformation of the $d$-dimensional torus $X=\T^d:=\R^d/\Z^d$.  There are several reasons for doing this.  Firstly, for integer  matrix transformations a solution to (P1)  was announced in \cite{hvMat}; namely,  under some regularity condition on the rate at which the diameters of the balls $B_n$ tend to zero, we have that
 \begin{equation} \label{pastappdiv}
m_d(W(T,\{B_n\}))=1   \qquad {\rm if \ } \qquad \sum_{n=1}^\infty m_d(B_n)  \, = \, \infty  \,
\end{equation}
where $m_d$ is $d$-dimensional Lebesgue measure.  However, the intended paper establishing this divergent analogue  of  \eqref{appconv}  was never completed\footnote{The author SV would like to take this opportunity to apologise for making an announcement and then not delivering the goods!} and to the best of our knowledge such a result has not to date appeared in print elsewhere.    In this paper not only do we rectify the situation but  we consider the set up in which $T$ is a  real (rather than just integer) matrix transformation and the `target' sets are general sets rather than just balls.  Furthermore, our results are significantly stronger than statements such as \eqref{pastappdiv}.  In a nutshell, our solution to (P1) consists of full measure statements that are quantitative in nature. Next,   turning our attention to (P2), Theorem~2 in \cite{hvMat} provides a precise formula for the Hausdorff dimension of $W(T,\{B_n\})$  when  $T$ is an integer matrix transformation diagonalizable over the rationals. In this paper we investigate the more general situation in which $T$ is real and by making use of technology that was not available at the time of \cite{hvMat},  we show (for instance) that the aforementioned formula for the Hausdorff dimension holds for a large class of real diagonal matrix transformations.

At the heart of our solution to (P1) for matrix transformations of tori,  is a result that holds for a large class of  measure-preserving dynamical systems.   We start with describing this broader result and then move onto formally stating our theorems for matrix transformations.

\subsection{A quantitative full measure result for \s-mixing dynamical systems  \label{genmix}}

Given a measure-preserving dynamical system  $(X,\mathcal{B},\mu,T)$ and a sequence $\{E_n \}_{n \in \mathbb{N}} $ of subsets  in $\mathcal{B}$,  we will show that if $\mu$ is exponentially mixing and the measure sum  in \eqref{appconv} diverges then the associated $\limsup $ set $W\big(T,\{E_n\}\big)$  is of full measure.   However, it turns out that a lot more is true.  We can establish a quantitative full measure statement and at the same time work with the  potentially  weaker notion of \s-mixing.

 \begin{definition} \label{defmixing}
 Let $(X,\mathcal{B},\mu,T)$ be a measure-preserving dynamical system and  $\mathcal{C}$ be a collection of measurable subsets of $X$.  For $ n \in \N$, let
\begin{equation}\label{phimixing}
\phi(n):=\sup\left\{ \; \left|\frac{\mu(E\cap T^{-n}F)}{\mu(F)}-\mu(E)\right| \, : \,  E\in\mathcal{C}, F\in\mathcal{C}\right\}.
\end{equation}
We say that $\mu$ is  \s-mixing  (short for summable-mixing) with respect to $(T,\mathcal{C})$ if the series $\sum_{n=1}^\infty\phi(n)$ converges.
\end{definition}

Recall, that the above of mixing is stronger than that of {\em $\phi$-mixing} which simply requires that $ \phi(n) \to 0$ as $n \to \infty$.   Also recall,  that $\mu$ is \emph{exponentially mixing} with respect to $(T,\mathcal{C})$ if there exists a constant $0<\gamma<1$ such that
\begin{equation}\label{mixing}
\mu(E\cap T^{-n}(F))=\mu(E)\mu(F)+O(\gamma^n)\mu(F) ,
\end{equation}
for any $n \ge 1$ and $E, F\in\mathcal{C}$  -- the  implied  constant in the big O does not depend on the sets $E$ and $F$.  In other words, and not surprisingly,   exponentially mixing  and  \s-mixing coincide whenever $\phi(n)$ converges to zero exponentially fast.   It is worth mentioning that in the standard definition,  condition (\ref{mixing}) is required to hold for any  $F \in \mathcal{B}$ rather than  just in $\mathcal{C}$.   We refer the reader to the survey paper \cite{B2005} for further details including ``other'' variants of the notion of exponentially mixing.  Also, see \S\ref{missed} below.

	As above,  let $\{E_n \}_{n \in \mathbb{N}} $ be a sequence of measurable subsets  of $X$.  Then, given $ N \in \N$ and $x \in X$,   consider the counting function
\begin{equation} \label{countdef}
R(x,N)=R(x,N; T, \{E_n \}) := \# \big\{ 1\le n \le   N :    T^{n}(x) \in E_n \}.
\end{equation}
As alluded to in the definition, we will often simply write $R(x,N)$ for $R(x,N; T, \{E_n \})$ since the other dependencies will be clear from the context and are usually fixed.  It is easily seen that the convergent  statement \eqref{appconv} is equivalent to saying that if the  measure sum converges, then  $\lim_{N \to \infty} R(x,N)  $ is finite  for  $\mu$--almost all $x \in X$.  The following  result implies that for a large class of dynamical systems,  if  the measure sum  diverges  then  $\mu$--almost all $x \in X$ `hit' the target sets $E_n$  the `expected' number of times.

\begin{theorem}\label{gencountthm}
Let $(X,\mathcal{B},\mu,T)$ be a measure-preserving dynamical system  and $\mathcal{C}$ be a collection of subsets of $X$. Suppose that $\mu$ is \s-mixing with respect to $(T,\mathcal{C})$ and let  $\{E_n \}_{n \in \mathbb{N}} $ be a sequence of subsets  in $\mathcal{C}$.   Then, for any given $\varepsilon>0$, we have that
\begin{equation} \label{def_Psi}R(x,N)=\Phi(N)+O\left(\Phi^{1/2}(N) \ (\log\Phi(N))^{3/2+\varepsilon}\right)
\end{equation}
\noindent for $\mu$-almost all $x\in X$, where
$$\Phi(N):=\sum_{n=1}^N \mu(E_n) \,  .
$$
\end{theorem}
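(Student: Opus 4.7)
The plan is to write $R(x,N) = \sum_{n=1}^N f_n(x)$ with $f_n(x) := \mathbf{1}_{E_n}(T^n x)$, establish a block variance estimate of the form
$$\int \Bigl(\sum_{n=M+1}^N (f_n - \mu(E_n))\Bigr)^2 \, d\mu \;\le\; C\bigl(\Phi(N) - \Phi(M)\bigr) \qquad (0 \le M < N)$$
for a constant $C$ depending only on the mixing sum $\sum_k \phi(k)$, and then invoke the quantitative Borel--Cantelli lemma of Gál--Koksma type (see, e.g., Harman's \emph{Metric Number Theory}, Lemma 1.5), which converts precisely such a block variance bound into the a.e.\ asymptotic
$$\sum_{n=1}^N f_n(x) \;=\; \Phi(N) + O\bigl(\Phi(N)^{1/2}(\log \Phi(N))^{3/2+\varepsilon}\bigr)$$
for any $\varepsilon > 0$, under the harmless assumption $\Phi(N) \to \infty$ (the case of convergent $\Phi$ being already covered by the standard Borel--Cantelli lemma and giving $R(\cdot,N) = O(1)$).

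For the variance bound, I would expand the square and apply the $T$-invariance of $\mu$ (via the identity $T^{-m}E_m \cap T^{-n}E_n = T^{-m}(E_m \cap T^{-(n-m)}E_n)$ for $m \le n$) to reduce to diagonal terms $\sum_{n=M+1}^N(\mu(E_n) - \mu(E_n)^2) \le \Phi(N) - \Phi(M)$ and off-diagonal terms $2\sum_{M<m<n\le N}\bigl(\mu(E_m \cap T^{-(n-m)}E_n) - \mu(E_m)\mu(E_n)\bigr)$. The $\Sigma$-mixing hypothesis, applied with $E = E_m$ and $F = E_n$ in Definition~\ref{defmixing}, yields the pointwise correlation bound
$$\bigl|\mu(E_m \cap T^{-(n-m)}E_n) - \mu(E_m)\mu(E_n)\bigr| \;\le\; \phi(n-m)\,\mu(E_n).$$
Reindexing $k = n-m$ and exchanging the order of summation gives
$$\sum_{M<m<n\le N}\phi(n-m)\,\mu(E_n) \;\le\; \Bigl(\sum_{k\ge 1}\phi(k)\Bigr)\bigl(\Phi(N) - \Phi(M)\bigr),$$
and the summability of $\phi$ makes the bracketed sum finite, delivering the claimed variance bound with $C = 1 + 2\sum_k \phi(k)$.

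The proof is essentially mechanical once the two ingredients (the variance estimate and the Gál--Koksma lemma) are in place; the only genuine obstacle is conceptual, namely observing that $\Sigma$-mixing as formulated in Definition~\ref{defmixing} couples precisely with the off-diagonal correlations in the second-moment calculation to produce a bound that is \emph{linear} in $\Phi(N) - \Phi(M)$, which is exactly the input the Gál--Koksma machinery demands. No higher moments, smoothing, or additional truncation is required, and the exponent $3/2 + \varepsilon$ on the logarithm is precisely what the sharp form of the Gál--Koksma lemma delivers: one factor of $\log^{1/2+\varepsilon}\Phi(N)$ would already arise from a Chebyshev plus convergent Borel--Cantelli argument applied along a geometric subsequence, and the extra factor of $\log$ from controlling the maximal oscillation across consecutive subsequence points via the block variance bound.
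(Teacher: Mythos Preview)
Your proposal is correct and follows essentially the same route as the paper: set $f_n(x)=\chi_{E_n}(T^n x)$, $f_n=\phi_n=\mu(E_n)$, verify the block variance bound via the \s-mixing definition (the paper obtains the identical constant $C=1+2\sum_k\phi(k)$), and then invoke Harman's Lemma~1.5. The paper's write-up is slightly terser in the second-moment expansion but the argument is the same in every structural detail.
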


A simple consequence of Theorem~\ref{gencountthm} is that  $\lim_{N \to \infty} R(x,N) = \infty  $ for  $\mu$--almost all $x \in X$ if  the measure sum diverges and so together with \eqref{appconv} we obtain the following zero-full measure criterion.

\begin{corollary} \label{gencountcor}
	Let $(X,\mathcal{B},\mu,T)$ be a measure-preserving dynamical system  and $\mathcal{C}$ be a collection of subsets of $X$. Suppose that $\mu$ is \s-mixing with respect to $(T,\mathcal{C})$ and let  $\{E_n \}_{n \in \mathbb{N}} $ be a sequence of subsets  in $\mathcal{C}$.    Then
\begin{eqnarray}\label{appdiv}
		\mu\big(W(T,\{E_n\})\big)=
		\begin{cases}
			0 &\text{if}\ \  \sum_{n=1}^\infty \mu\big(E_n\big)<\infty\\[2ex]
			1 &\text{if}\ \ \sum_{n=1}^\infty \mu\big(E_n\big)=\infty.
		\end{cases}
	\end{eqnarray}
\end{corollary}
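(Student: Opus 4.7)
The plan is to derive both cases directly from material already in place: the convergence case is recorded in the introduction as \eqref{appconv}, while the divergence case is an essentially immediate consequence of Theorem~\ref{gencountthm}. So the work is just bookkeeping.

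For the convergent direction, suppose $\sum_{n=1}^\infty \mu(E_n) < \infty$. Since $T$ is measure-preserving, $\mu(T^{-n}(E_n)) = \mu(E_n)$, so $\sum_{n=1}^\infty \mu(T^{-n}(E_n)) < \infty$ and the convergent Borel--Cantelli Lemma, as already used to derive \eqref{appconv}, gives $\mu(W(T,\{E_n\})) = 0$. No mixing hypothesis is needed here, and in fact no additional argument beyond what is already stated.

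For the divergent direction, fix any $\varepsilon > 0$ and apply Theorem~\ref{gencountthm}. When $\sum_{n=1}^\infty \mu(E_n) = \infty$ the partial sum $\Phi(N) = \sum_{n=1}^N \mu(E_n)$ tends to infinity with $N$. The theorem then yields, for $\mu$-almost every $x \in X$,
\[
R(x,N) = \Phi(N) + O\!\left(\Phi^{1/2}(N)\, (\log \Phi(N))^{3/2+\varepsilon}\right).
\]
Since the error term is $o(\Phi(N))$ as $\Phi(N) \to \infty$ (any positive power of $\Phi$ dominates the polylogarithmic factor), it follows that $R(x,N) \to \infty$ for $\mu$-almost every $x$. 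Unwinding the definition \eqref{countdef}, this says that for $\mu$-almost every $x$ the orbit satisfies $T^n(x) \in E_n$ for infinitely many $n$, i.e.\ $x \in W(T,\{E_n\})$. Hence $\mu(W(T,\{E_n\})) = 1$.

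There is no genuine obstacle in this corollary: all substance is contained in Theorem~\ref{gencountthm}, and the corollary is just the observation that the error term in the quantitative counting asymptotic is negligible compared with $\Phi(N)$ whenever the latter diverges, together with the elementary implication that $R(x,N)\to\infty$ means membership in the $\limsup$ set.
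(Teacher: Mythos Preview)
Your argument is correct: the convergence case is just \eqref{appconv}, and for the divergence case Theorem~\ref{gencountthm} immediately gives $R(x,N)\to\infty$ for $\mu$-almost every $x$, hence full measure for the $\limsup$ set. This is exactly the deduction the paper itself flags just after stating the theorem.

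However, the paper's written proof of the corollary deliberately takes a different, self-contained route that avoids invoking Theorem~\ref{gencountthm} (and hence avoids the counting lemma, Lemma~\ref{ebc}). Instead, it reuses only the second-moment estimate \eqref{countcor_proof_double_count} established during the proof of Theorem~\ref{gencountthm} and then applies the Paley--Zygmund inequality (Lemma~\ref{PZ}) to the random variable $Z_{1,b}=\sum_{n=1}^b\chi_{T^{-n}E_n}$: this gives $\mu(Z_{1,b}>\lambda\,\mathbb{E}(Z_{1,b}))\to 1$ as $b\to\infty$ and $\lambda\to 0$, and the observation $W(T,\{E_n\})\supseteq\limsup_{b}\{Z_{1,b}>\lambda\,\mathbb{E}(Z_{1,b})\}$ finishes it. The advantage of the paper's approach is that a reader interested only in the zero--one law gets a short direct proof from first principles; the advantage of yours is that it is shorter still once Theorem~\ref{gencountthm} is in hand, and it makes transparent that the corollary carries no new content beyond the quantitative theorem.
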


\bigskip

Before moving onto considering  the specific situation in which $T$ is a matrix transformation of the torus we discuss previous related works.

\subsubsection{Connection to other works \label{missed}}

We will focus on two previous works that are related to the framework presented above; i.e. the notion of  \s-mixing and its consequences.
In an interesting paper \cite{FMP},  Fern\'{a}ndez, Meli\'{a}n $\&$ Pestana introduced the notion of a transformation $T$ being \emph{uniformly mixing at a point $x_0 \in X $}. Their notion coincides with our  Definition~\ref{defmixing} if we restrict the collection $\mathcal{C}$ to balls $B$ centered at $x_0$.  The upshot  \cite[Theorem~1]{FMP} is that given a decreasing sequence of balls $B_n := B(x_0, r_n)$, if $T$ is  uniformly mixing at $x_0 $ and $ \sum_{n=1}^{\infty} \mu(B_n) = \infty $ then
\begin{equation} \label{jkl}
\lim_{N \to \infty} \frac{R(x,N)}{\Phi(N)}   = \lim_{N \to \infty} \frac{  \# \big\{ 1\le n \le   N :    T^{n}(x) \in B_n \big\} }{  \sum_{n=1}^{N} \mu(B_n)  }   = 1 \, .
\end{equation}
Clearly our Theorem~\ref{gencountthm} not only implies this asymptotic statement but it also provides a reasonably sharp estimate for the error term.  As a consequence, the various applications of \eqref{jkl} considered in  \cite{FMP} can be strengthened accordingly.   Indeed,  their main motivating application to inner functions  \cite[Theorem~2]{FMP} can be improved to the following statement.

\begin{thmF} Let $f: \D \to \D$ be an inner function with $f(0)=0$, but not a rotation. Let $\xi_0$ be a point in $\partial\D$ and let $\{r_n\}$ be a decreasing sequence of positive numbers. If $\sum_{n =1}^{\infty} r_n = \infty $, then for any given $\varepsilon>0$, we have that
$$ \# \big\{ 1\le n \le   N :  d\big((f^*)^n(\xi), \xi_0    \big)  < r_n   \big\}  =\Phi(N)+O\left(\Phi^{1/2}(N) \ (\log\Phi(N))^{3/2+\varepsilon}\right)
$$
\noindent for $\mu$-almost all $x\in X$, where
$\Phi(N):=\sum_{n=1}^N r_n
$,  $ f^*(\xi) = \lim_{r \to 1_-} f(r\xi) $ and $d$ is the angular distance in $\partial\D$.
\end{thmF}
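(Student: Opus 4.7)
The plan is to apply Theorem~\ref{gencountthm} to the boundary dynamical system of the inner function $f$. Take $X := \partial\D$, let $\mu$ denote normalised Lebesgue measure on $\partial\D$, let $T := f^*$, and choose as targets the arcs $E_n := \{\xi \in \partial\D : d(\xi, \xi_0) < r_n\}$. Since $\mu(E_n)$ is a fixed positive multiple of $r_n$, the partial sum $\sum_{n=1}^N \mu(E_n)$ coincides with the $\Phi(N)$ of the statement up to a harmless constant, and the counting function $R(\xi, N)$ defined in \eqref{countdef} becomes precisely the quantity on the left-hand side of the desired asymptotic.

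Two dynamical hypotheses have to be checked. That $f^*$ preserves $\mu$ is classical and follows from $f$ being inner with $f(0) = 0$ via the mean value property applied to the Poisson extension of an indicator. The substantial point is that $\mu$ is \s-mixing with respect to $(f^*, \mathcal{C})$ for some collection $\mathcal{C}$ of subsets of $\partial\D$ containing every $E_n$; a natural choice is to take $\mathcal{C}$ to be the family of all arcs of $\partial\D$. The qualitative assertion that $f^*$ is uniformly $\phi$-mixing at every point of $\partial\D$ (when $f$ is not a rotation) is precisely Theorem~1 of \cite{FMP}. To upgrade this to \s-mixing one has to extract a summable rate from their proof. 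The underlying mechanism is that the two conditions $f(0) = 0$ and $f$ non-rotational force, via the Schwarz lemma and the Denjoy--Wolff theorem, a spectral gap for the Koopman operator on the orthogonal complement of constants, which in turn yields an exponential correlation decay $\phi(n) = O(\gamma^n)$ with some $0 < \gamma < 1$ uniformly on arcs.

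With these inputs at hand, Theorem~\ref{gencountthm} applies directly and produces the claimed asymptotic with error term $O(\Phi^{1/2}(N)(\log\Phi(N))^{3/2+\varepsilon})$ for $\mu$-almost all $\xi \in \partial\D$. The principal obstacle in the plan is the quantitative mixing step above: while \cite{FMP} establish qualitative $\phi$-mixing, one must verify a summable rate valid uniformly over the entire class of arcs $\mathcal{C}$. This amounts to a Poisson-kernel analysis tracking the convergence rate of iterated composition operators, which is implicit in the arguments of \cite{FMP} but does not appear to be stated there in precisely this form.
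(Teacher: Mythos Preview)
Your approach is exactly the one the paper takes: apply Theorem~\ref{gencountthm} to the boundary system $(\partial\D,\mu,f^*)$ with $\mathcal{C}$ the collection of arcs centred at $\xi_0$. The one thing to correct is your reading of \cite{FMP}. Their notion of ``uniformly mixing at a point $x_0$'' is \emph{not} merely qualitative $\phi$-mixing; as noted just before the statement of Theorem~FMP$^+$, it coincides with Definition~\ref{defmixing} when $\mathcal{C}$ is restricted to balls at $x_0$, so summability of $\sum\phi(n)$ is already built into their hypothesis. Their Theorem~2 verifies precisely this summable (in fact exponential) rate for the boundary map of an inner function fixing the origin. Hence the ``principal obstacle'' you identify --- upgrading qualitative mixing to a summable rate --- is not an obstacle at all: the required \s-mixing input is exactly what \cite{FMP} already supply, and Theorem~\ref{gencountthm} applies with no further work.
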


  In the later stages of preparing  this manuscript, we discovered that our Theorem~\ref{gencountthm}  overlaps  with a result of  Philipp \cite[Theorem~3]{P1967} dating back to 1967. Indeed, in his theorem the condition imposed on the  sequence $\{E_n \}_{n \in \mathbb{N}} $ of measurable  sets is in effect equivalent to our notion of  \s-mixing with $\mathcal{C}= \mathcal{B} $.   It appears  that \cite[Theorem~3]{P1967} has been either entirely overlooked, or at least not fully exploited in previous works.  For the sake of completeness we have decided to include the proof of Theorem~\ref{gencountthm} in \S\ref{bcsv}.  Moreover, our proof is pretty short and unlike Philipp's approach it exploits a rather general tool (Lemma~\ref{ebc} in \S\ref{bcsv}) for establishing  sharp counting statements.  To the best of our knowledge, a slightly weaker version of the tool, which suffices to establish Theorem~\ref{gencountthm}, first appears in Sprind{\v z}uk's book \cite[Lemma~10]{Sprindzuk}  which was some ten years after Philipp's paper. Furthermore, we  have decided to include a self contained proof of the Corollary~\ref{gencountcor} since it is rather nifty and some readers may only be interested in the zero-full measure criterion rather than its stronger quantitative form.

%We now consider the specific situation in which $T$ is a matrix transformation of the torus.

\subsection{Quantitative full measure results for matrix transformations \label{secMTT}}

Let $T$ be a $d\times d$ non-singular matrix with real coefficients.
Then, $T$ determines a self-map of  the $d$-dimensional torus
 $X=\T^d:=\R^d/\Z^d$; namely, it sends $\vx\in\T^d$ to $T\vx$ modulo one. In what follows, $T$ will denote both the matrix and the transformation.  It should be obvious from the context what is meant.  Furthermore,  for $n \in \N$,  by  $T^n$  we will always mean the $n$-th iteration of the transformation $T$ rather than the matrix multiplied $n$ times.   With reference to the general setup of \S\ref{genmix},  we now describe a broad collection $\cC$  of  `target' sets contained in $\T^d $ so that for any sequence $\{E_n \}_{n \in \mathbb{N}} $ of subsets in $\mathcal{C}$  we are able to address the  shrinking target `measure'  problem (P1) for the associated $\limsup$ set $ W\big(T,\{E_n\}\big) $. In order to do this, we require  the notion of the  Minkowski content of a set in $\R^d$.  We  start by recalling this basic  notion  from geometric measure theory.

 \medskip

Let $0\le s \le d$ be two positive integers and let  $A$ be a subset of $\R^d$.  Let $m_d$ denote the $d$-dimensional Lebesgue measure and $\alpha(d)$  denote the volume of the $d$-dimensional unit Euclidean ball $\{ \vx \in \R^d : |\vx| < 1 \}$.   By convention, we define  $\alpha(0) :=1$.  For $0< \epsilon < \infty$,  we  let $A(\ep)$ denote the $\ep$-neighbourhood of $A$; that is
$$
A(\ep) := \{\vx  \in \R^d: \dist(\vx, A)<\epsilon \}   \, .
$$
Then,  following the classical text of Federer \cite[Section 3.2.37]{Fed},   the $s$-dimensional upper and lower  Minkowski content of $A$ are defined, respectively as
\[
M^{*s}(A):=\limsup_{\epsilon\to 0^+} \frac{ m_d (A(\ep)) }{\alpha(d-s)  \, \epsilon^{d-s}}   \qquad
{\rm and  }  \qquad
M^{s}_{*}(A):=\liminf_{\epsilon\to 0^+} \frac{ m_d (A(\ep)) }{\alpha(d-s)\epsilon^{d-s}}  \, .
\]
If these upper and lower  Minkowski contents are equal, then their common value is called the $s$-dimensional  Minkowski content of $A$ and is denoted by $M^{s}(A)$.  In general, the set functions $M^{*s} $  and $M_{*}^{s}  $ are not measures.  However, for nice sets it turns out that both equal a constant multiple of  the Lebesgue measure $m_s$.   In particular,    a result of Federer \cite[Theorem 3.2.39]{Fed} states that
if $A$ is a closed $s$-rectifiable subset of $\mathbb{R}^d$ (i.e. the image of a bounded set from $\mathbb{R}^s$ under a Lipschitz function), then the $s$-dimensional Minkowski content of $A$ exists, and is equal to the $s$-dimensional Hausdorff measure of $A$.  Recall that for integer $s$ the latter is a constant multiple of  $s$-dimensional Lebesgue measure.  Also, for the sake of completeness it is worth mentioning that the  Minkowski content is intimately related to the Minkowski dimension which, nowadays is more commonly referred to as the box dimension. When considering this fractal dimension, $s$ need not be an integer and we put $\alpha(d-s)=1 $ in the above definitions of  upper and lower  Minkowski contents.    For further details see \cite[Section 3.1]{F}, \cite[Sections 3.2.37-44]{Fed}, \cite[Chapter 5]{MAT} and references within.

\medskip

The following proposition  identifies  the  collection $\cC$  of  `target' sets alluded to above as subsets $E$ of $\T^d$ for which the boundary $\partial E$ has  bounded $(d-1)$-dimensional upper Minkowski content.  It makes use of the work initiated by Keller \cite{K1979, KPhD}  on the existence and properties of absolutely continuous invariant measures for piecewise expanding maps, and subsequently developed by  the likes of G\'ora $\&$ Boyarsky \cite{GB1989}, Buzzi \cite{B1997, B2000, B2001},   Buzzi $\&$ Maume-Deschamps \cite{BM2002}, Saussol \cite{S2000} and  Tsujii \cite{T2000, T2001}.

\begin{proposition} \label{mainohyes}
Let $T$ be a real,  non-singular  matrix transformation of the torus
 $\T^d$.
Suppose that all eigenvalues of $T$ are of modulus  strictly larger than $1$. Then
\begin{itemize}
  \item[(i)]  there exists an absolutely continuous (with respect to Lebesgue measure $m_d$) invariant probability measure (acim)  $\mu$,
  \item[(ii)] {the support $A \subseteq \T^d$ of any acim $\mu$ can be decomposed into finitely many disjoint measurable sets $A_1, \dots, A_s$} such that for each $1 \le i \le s$ the restriction $\mu|_{A_i}$ of $\mu$ to $A_i$ is ergodic and is equivalent to the restriction $m_d|_{A_i}$ of  Lebesgue measure $m_d$ to $A_i$,
  \item[(iii)] each ergodic component $A_i$ in (ii) can in turn be decomposed into
  finitely many disjoint measurable sets $A_{i1}, \dots, A_{ip_i}$ such that  for each $1 \le j \le p_i$ the restriction   $\mu|_{A_{ij}}$ is mixing with respect to $T^{p_i}$,
    \item[(iv)] on each mixing component $A_{ij}$ in (iii), the restriction $\mu|_{A_{ij}}$ is exponentially mixing with respect to $(T^{p_i}, \mathcal{C})  $ for any collection  $\mathcal{C}$  of subsets $E$ of $A_{ij}$ satisfying the bounded property
$$ ({\boldsymbol{\rm B}})   : \quad \sup_{E\in \mathcal{C}}   \; M^{*(d-1)}(\partial E)  \, <  \, \infty  \, .$$
\end{itemize}
\end{proposition}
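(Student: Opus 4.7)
The plan is to reduce the proposition to the standard spectral theory of multidimensional piecewise expanding maps. Since all eigenvalues of $T$ have modulus strictly greater than $1$, the lifted map on $\R^d$ is uniformly expanding in a suitable norm, and reduction modulo $\Z^d$ yields a piecewise affine uniformly expanding map of $\T^d$ with finitely many polyhedral smoothness domains. Part (i) then follows directly from the existence theorems for acims for such maps, established by Tsujii \cite{T2000,T2001} and Saussol \cite{S2000}. For parts (ii)--(iii) I would invoke the spectral decomposition of the Perron--Frobenius operator $\cL_T$ on Saussol's Banach space $V_\alpha$ of quasi-H\"older functions. The Lasota--Yorke type inequality of \cite{S2000} yields quasi-compactness of $\cL_T$ on $V_\alpha$, so the peripheral spectrum consists of finitely many eigenvalues on the unit circle, separated from the remaining spectrum by a spectral gap. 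The generalised eigenspace at the eigenvalue $1$ is finite-dimensional, and its non-negative basis of eigendensities produces the ergodic components $A_1,\dots,A_s$ of (ii). Within each $A_i$, the remaining peripheral eigenvalues for $\cL_{T|A_i}$ form a cyclic group of $p_i$-th roots of unity, which forces the further partition $A_i = A_{i,1}\sqcup\dots\sqcup A_{i,p_i}$ cyclically permuted by $T$, with $\mu|_{A_{ij}}$ mixing for $T^{p_i}$.

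For (iv) the spectral gap of $\cL_{T^{p_i}}$ translates, via the usual $L^1$--$V_\alpha$ duality, into decay of correlations of the form
\[
\left|\int \phi\,(\psi\circ T^{np_i})\,d\mu|_{A_{ij}} \; - \; \int\phi\,d\mu|_{A_{ij}}\int\psi\,d\mu|_{A_{ij}}\right| \; \le \; C\,\gamma^n\,\|\phi\|_{V_\alpha}\,\|\psi\|_{L^1(\mu|_{A_{ij}})}
\]
for some $0<\gamma<1$ independent of $\phi,\psi$ (compare \cite{S2000,BM2002}). Substituting $\phi=\mathbf{1}_E$ and $\psi=\mathbf{1}_F$ with $E,F\in\mathcal{C}$ yields precisely \eqref{mixing}, provided $\|\mathbf{1}_E\|_{V_\alpha}$ is uniformly bounded over $\mathcal{C}$. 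This is where property $({\boldsymbol{\rm B}})$ enters: the Saussol seminorm of an indicator satisfies
\[
|\mathbf{1}_E|_\alpha \; = \; \sup_{0<\epsilon\le\epsilon_0}\epsilon^{-\alpha}\int_{\T^d}\operatorname{osc}\!\bigl(\mathbf{1}_E, B(x,\epsilon)\bigr)\,dm_d(x) \; \le \; \sup_{\epsilon}\epsilon^{-\alpha}\,m_d\!\bigl((\partial E)(\epsilon)\bigr),
\]
since the oscillation of $\mathbf{1}_E$ on $B(x,\epsilon)$ is at most $1$ and vanishes unless $x$ lies within $\epsilon$ of $\partial E$. For the optimal choice $\alpha=1$ this is bounded, up to a universal constant, by $M^{*(d-1)}(\partial E)$, which by hypothesis is uniformly finite over $\mathcal{C}$.

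The main obstacle is less any single step than the bookkeeping. One must carry the spectral decomposition faithfully over the (possibly proper) support $A\subset\T^d$, verify that the densities $h_{ij}$ of the components $\mu|_{A_{ij}}$ themselves lie in $V_\alpha$, and --- crucially --- track that the implied constant in the correlation estimate depends only on $\sup_{E\in\mathcal{C}} M^{*(d-1)}(\partial E)$. This final point is what converts the abstract spectral gap into the uniform statement \eqref{mixing} over the entire collection $\mathcal{C}$, and it is ultimately what the introduction of property $({\boldsymbol{\rm B}})$ is designed to deliver.
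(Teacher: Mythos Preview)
Your proposal is correct and mirrors the paper's proof: parts (i)--(iii) are read off from Saussol's spectral decomposition \cite[Theorem~5.1 and Proposition~5.1]{S2000}, and part (iv) follows from the exponential decay of correlations \cite[Theorem~6.1]{S2000} together with the oscillation bound ${\rm osc}(\chi_E, B(x,\epsilon)) \le \chi_{(\partial E)(\epsilon)}(x)$, which property $({\boldsymbol{\rm B}})$ converts into a uniform $V_\alpha$-bound on the indicators $\chi_E$. The only quibble is that $\alpha$ is not freely chosen (so ``optimal choice $\alpha=1$'' is misleading) but is dictated by Saussol's Lasota--Yorke inequality; the paper simply works with whatever $\alpha\in(0,1]$ the framework supplies and obtains $|\chi_E|_\alpha \le C_0\,\epsilon_0^{1-\alpha}$, which is equally sufficient.
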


\bigskip

\begin{remark}  By definition, the restriction $\mu|_{A}$ of  a probability measure $\mu$ to  a set $A$ with $\mu(A) > 0$  is  normalized so that it too is a probability measure. In other words, for an arbitrary measurable set $E$
$$\mu|_{A} (E) :=  {\tiny \frac{1}{\mu(A)}} \, \mu(E) \, . $$
 For each $1 \le i \le s$,  the  sets  $A_{ij}$  ($1 \le j \le p_i$) appearing in part (iii) are referred to as the mixing components of $A_i$ $(=\bigcup_{j=1}^{p_i}A_{ij})$  and the positive integers $p_i$  are the period of the mixing components.  These mixing components satisfy the property that
$$
T(A_{ij}) = A_{ij+1} \quad (1 \le j \le p_i-1) \qquad {\rm and } \qquad    T(A_{ip_i}) = A_{i1}  \, .
$$
Also, for the sake of clarity, completeness and convenience,  recall that if $\mu$ and $\nu$ are two measures on the same measurable space,  then $\mu$ is \emph{absolutely continuous} with respect to $\nu$  (written $\mu \ll \nu$)  if $\mu(E) = 0 $ for every measurable set $ E $ for which $ \nu(E) = 0 $.  Moreover, the  measures $\mu$ and $\nu$ are  \emph{equivalent} if $\mu \ll \nu$ and $\nu \ll \mu$ and  are said to be \emph{strongly equivalent} { or \emph{comparable}}  if there exists a constant $C\ge 1$ such that
for every measurable set $ E $
\[
{ C^{-1}} \nu(E) \le   \mu(E) \le   C \nu(E).
\]
Given a  measure-preserving dynamical system $(X, \mathcal{B}, \mu, T)$, the  invariant measure $\mu$ is  ergodic if for every set $E\in\mathcal{B}$  with $T^{-1}E=E$  we have either $\mu(E)=0$ or $\mu(E)=1$.  Moreover, $\mu$ is said to be  mixing with respect to $T$  (often referred to strong-mixing) if for every  $E, F\in\mathcal{B}$
\[
\lim_{n\to\infty} \mu(E\cap T^{-n}F)=\mu(E)\mu(F).
\]
Clearly, exponentially mixing tells us that the implied error term in the above limit  decays exponentially.  Also, if we put $F=E$ we immediately see that mixing implies ergodic.
\end{remark}

\medskip

%We will postpone the proof of proposition  to the next  section.

The following constitutes our most general measure theoretic result for the shrinking target problem for matrix transformations of tori. As we shall see the ``divergent'' part, which is the hard part,  is essentially an immediate consequence of combining Theorem~\ref{gencountthm} and Proposition~\ref{mainohyes}.

\begin{theorem}\label{measure-matrix}
Let $T$ be a real, non-singular matrix transformation of the torus $\mathbb{T}^d$. Suppose that all eigenvalues of $T$ are of modulus  strictly larger than $1$ and let $\mathcal{C}$ be any  collection of subsets $E$ of $\mathbb{T}^d$ satisfying the bounded property
$ ({\boldsymbol{\rm B}}) $.  Furthermore,  let $\mu$ be an acim and suppose it  has  support $\T^d$ and is mixing with respect to $T$.  Then
for  any sequence $ \{E_n \}_{n \in \mathbb{N}} $ of subsets  in $\mathcal{C}$ and $\varepsilon>0$, we have that
\begin{equation}  \label{formula-MT-count}R(\vx,N)=\Phi(N)+O\left(\Phi^{1/2}(N) \ (\log\Phi(N))^{3/2+\varepsilon}\right) %\, ,   \qquad  where  \quad
%%\Psi(N):=\textstyle{\sum_{n=1}^N\psi(q_n)}\varsigma
%\Phi(N):=\sum_{n=1}^N \mu(E_n) \, .
 \end{equation}
  for $\mu$-almost all (equivalently  $m_d$-almost all) $\vx\in \mathbb{T}^d$, where $\Phi(N):=\sum_{n=1}^N \mu(E_n)$.
In particular,
	\begin{eqnarray}\label{formula-measure-matrix}
		m_d\big(W(T,\{E_n\})\big)=\mu\big(W(T,\{E_n\})\big)=
		\begin{cases}
			0 &\text{if}\ \  \sum_{n=1}^\infty \mu\big(E_n\big)<\infty\\[2ex]
			1 &\text{if}\ \ \sum_{n=1}^\infty \mu\big(E_n\big)=\infty.
		\end{cases}
	\end{eqnarray}
%Furthermore,
%$$
%m_d\big(W(T,\{E_n\})\big)=\mu\big(W(T,\{E_n\})\big)
%$$
\end{theorem}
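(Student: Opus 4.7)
The plan is to derive Theorem~\ref{measure-matrix} by stitching together Proposition~\ref{mainohyes} and Theorem~\ref{gencountthm}: the former will upgrade the assumed mixing of $\mu$ to \emph{exponential} mixing on any collection satisfying bounded property $(\boldsymbol{\rm B})$; exponential mixing immediately implies \s-mixing; and Theorem~\ref{gencountthm} then delivers the quantitative counting formula \eqref{formula-MT-count} essentially for free.

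First I would apply Proposition~\ref{mainohyes} to $T$: the eigenvalue hypothesis is precisely what the proposition requires, so conclusions (i)--(iv) are at hand. Since $\mu$ is assumed mixing with respect to $T$, it is in particular ergodic; combined with the full-support assumption $\mathrm{supp}(\mu)=\T^d$, the decomposition in (ii) collapses to a single ergodic component $A_1=\T^d$ (up to $\mu$-null sets), with $\mu$ and $m_d$ equivalent on $\T^d$. The decomposition in (iii) then partitions $A_1$ into $p_1$ mixing components that $T$ cyclically permutes. The crucial sanity check here is that mixing of $\mu$ with respect to \emph{$T$ itself} (rather than some power) forces $p_1=1$: otherwise, taking $E=F=A_{11}$, the iterates $T^{-n}A_{11}$ would cycle through the $A_{1j}$ and $\mu(E\cap T^{-n}F)$ would oscillate between $0$ and $\mu(A_{11})>0$, never converging to $\mu(E)\mu(F)=\mu(A_{11})^2$. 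Hence the only mixing component is $\T^d$ itself, and Proposition~\ref{mainohyes}(iv) yields that $\mu$ is exponentially mixing with respect to $(T,\mathcal{C})$ for any $\mathcal{C}$ satisfying $(\boldsymbol{\rm B})$.

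Second, exponential mixing \eqref{mixing} immediately gives $\phi(n)=O(\gamma^n)$ in Definition~\ref{defmixing}, whence $\sum_{n}\phi(n)<\infty$, i.e.\ $\mu$ is \s-mixing with respect to $(T,\mathcal{C})$. All hypotheses of Theorem~\ref{gencountthm} are now satisfied, so \eqref{formula-MT-count} holds for $\mu$-almost every $\vx\in\T^d$. By the equivalence of $\mu$ and $m_d$ on $\T^d$ established in the previous paragraph, ``$\mu$-almost every'' is the same as ``$m_d$-almost every'', so \eqref{formula-MT-count} also holds $m_d$-a.e. The dichotomy \eqref{formula-measure-matrix} then drops out: divergence of $\sum_n \mu(E_n)$ means $\Phi(N)\to\infty$ and the error term in \eqref{formula-MT-count} is $o(\Phi(N))$, forcing $R(\vx,N)\to\infty$ and hence $\vx\in W(T,\{E_n\})$ for $\mu$- (equivalently $m_d$-) almost every $\vx$; the convergence case is the standard convergent Borel--Cantelli statement \eqref{appconv}, transferred to $m_d$ via equivalence.

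The only genuinely non-automatic step is the period-one observation in the second paragraph; without it Proposition~\ref{mainohyes}(iv) would only give exponential mixing for $T^{p_1}$ rather than $T$, and one would have to rewrite $R(\vx,N)$ in terms of $p_1$ residue classes and apply Theorem~\ref{gencountthm} to each. Everything else is essentially bookkeeping on top of the two cited results.
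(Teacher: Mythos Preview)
Your proposal is correct and follows essentially the same route as the paper: use Proposition~\ref{mainohyes} to see that the full-support, mixing assumption forces a single mixing component of period one (so part~(iv) gives exponential mixing with respect to $(T,\mathcal{C})$), then feed this into Theorem~\ref{gencountthm}, and finally transfer everything to $m_d$ via the equivalence $\mu\sim m_d$ from part~(ii). Your treatment is in fact a bit more explicit than the paper's---you spell out why $p_1=1$ and why exponential mixing implies \s-mixing---while the paper additionally cites Corollary~\ref{gencountcor} for the zero-one dichotomy rather than reading it off the counting formula as you do, but these are cosmetic differences.
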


\medskip

We note that the existence of the acim measure $\mu$ in Theorem~\ref{measure-matrix}   is guaranteed by part (i) of Proposition~\ref{mainohyes} and that the assumptions imposed on it, namely that the support of $\mu$ is the whole space $\T^d$ and that $\mu$ is mixing with respect to $T$, are often satisfied.  Indeed, this is the situation when the eigenvalues of $T$ are large in modulus or the coefficients of $T$ are integers.   Regarding the former we have the following precise statement. We will come to the integer situation shortly (see Theorem~\ref{corInteger} below).

%
% The assumptions imposed on the measure $\mu$ in Theorem~\ref{measure-matrix}, namely that the support of $\mu$ is the whole space $\T^d$ and that $\mu$ is mixing with respect to $T$, are  often satisfied.  Indeed, this is the situation when the eigenvalues of $T$ are large in modulus or the coefficients of $T$ are integers.   Regarding the former we have the following precise statement. We will return to the integer situation shortly.

\begin{theorem}   \label{cor-bigger-eigen}
	Let $T$ be a real, non-singular matrix transformation of the torus $\mathbb{T}^d$. Suppose that all eigenvalues of {$T$} are of modulus strictly larger than $1+\sqrt{d}$. Let $\mathcal{C}$ be any  collection of subsets of $\mathbb{T}^d$ satisfying the bounded property
$ ({\boldsymbol{\rm B}}) $.  Then there is a unique acim $\mu$, such that for  any sequence $ \{E_n \}_{n \in \mathbb{N}} $ in $\mathcal{C}$ and $\varepsilon>0$, the counting formula \eqref{formula-MT-count} holds for $\mu$-almost all (equivalently  $m_d$-almost all) $\vx\in \mathbb{T}^d$, where $\Phi(N):=\sum_{n=1}^N \mu(E_n)$.
In particular, the zero-full measure  criteria  \eqref{formula-measure-matrix}  holds.
\end{theorem}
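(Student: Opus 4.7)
The plan is to deduce Theorem~\ref{cor-bigger-eigen} from the already-established Theorem~\ref{measure-matrix} by showing that the stronger hypothesis $|\lambda|>1+\sqrt{d}$ allows us to upgrade the conclusions of Proposition~\ref{mainohyes}. Proposition~\ref{mainohyes}(i) already supplies at least one acim $\mu$. It therefore remains to establish three further facts: (a) the acim is unique; (b) its support is all of $\T^d$; and (c) it is mixing with respect to $T$ itself (not merely with respect to $T^{p_i}$ restricted to some sub-component). Once (a)--(c) are in place, Theorem~\ref{measure-matrix} applies directly to yield both the counting formula \eqref{formula-MT-count} and the zero-full measure dichotomy \eqref{formula-measure-matrix}. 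The equivalence ``$\mu$-a.a.\ $\Leftrightarrow$ $m_d$-a.a.'' is inherited from Proposition~\ref{mainohyes}(ii), which in this setting collapses to $\mu\sim m_d$ on all of $\T^d$.

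The geometric heart of the argument is to show that $T$ is \emph{topologically exact} on $\T^d$: for every non-empty open set $U\subseteq\T^d$ there exists an $n\in\N$ with $T^n(U)=\T^d$. The threshold $1+\sqrt{d}$ is natural because the Euclidean diameter of the fundamental cube $[0,1]^d$ is $\sqrt{d}$, and an effective expansion factor per iteration strictly greater than $1+\sqrt{d}$ in every direction eventually forces the image of any sufficiently small ball to wrap around the torus and cover it entirely. I would carry this out by lifting a small Euclidean ball $B\subseteq U$ to $\R^d$, tracking its forward orbit under the matrix iterates $T^n$, and showing that once the lifted image contains a ball of radius exceeding $\sqrt{d}/2$ its projection to $\T^d$ is surjective. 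Some care is required because the eigenvalue condition does not directly translate into a pointwise inequality $\|T\vx\|\ge (1+\sqrt{d})\|\vx\|$ (eigenvalues versus singular values), so one passes to sufficiently large iterates where $\min_{\vx\neq 0}\|T^n\vx\|/\|\vx\|$ is controlled up to sub-exponential error by $(\min_i |\lambda_i|)^n$.

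With topological exactness in hand, the decomposition in Proposition~\ref{mainohyes}(ii)--(iii) must collapse to $s=1$ and $p_1=1$. Indeed, each ergodic component $A_i$ satisfies $\mu|_{A_i}\sim m_d|_{A_i}$, hence has positive Lebesgue measure; a density point of $A_i$ yields an open ball whose forward $T$-image fills $\T^d$, which combined with forward invariance of $A_i$ (mod null sets) forces $A_i=\T^d$ and hence $s=1$. The same argument applied to the cyclically permuted mixing components $A_{1j}$ of the Remark forces $p_1=1$, giving $T$-mixing of $\mu$ and full support. Uniqueness then follows because two distinct ergodic acims would both have support of full Lebesgue measure (by topological exactness) yet be mutually singular, a contradiction; an arbitrary acim is a convex combination of ergodic ones, so there is in fact only one. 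The hypotheses of Theorem~\ref{measure-matrix} are thereby verified.

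The main obstacle, and essentially the only genuinely new input beyond results already proved in the paper, is the topological exactness step and the calibration that the constant $1+\sqrt{d}$ is the correct threshold. This is a finite-iteration geometric statement about the action of powers of the linear map $T$ on the universal cover $\R^d$, combined with a wrap-around covering argument on $\T^d$ balancing the asymptotic expansion factor $\min_i|\lambda_i|$ against the diameter $\sqrt{d}$ of the fundamental domain.
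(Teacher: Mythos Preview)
Your overall strategy coincides with the paper's: establish that $T$ is locally eventually onto (your ``topologically exact''), deduce that the acim from Proposition~\ref{mainohyes} has full support and is mixing with respect to $T$, and then invoke Theorem~\ref{measure-matrix}. The paper does not re-derive the geometric step; it quotes Buzzi \cite[Proposition~1 and Lemma~5]{B1997} for topological transitivity and the locally-eventually-onto property, then uses \cite[Theorem~3 and its Corollary]{B1997} to obtain uniqueness, full support and maximal entropy of the acim (this is packaged as Lemma~\ref{lem:uni-acim}), and finally passes from locally eventually onto to exactness and hence mixing via \cite[Theorem~5.2.12]{PU2010} and \cite[Proposition~12.2]{PY1998}. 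Your density-point argument for collapsing the ergodic and mixing decompositions is a reasonable substitute for the citation to Buzzi's structural results, though the paper's route via exactness is cleaner.

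There is, however, a genuine gap in your sketch of topological exactness. For a \emph{real} (non-integer) matrix $T$, the $n$-th iterate of the torus map is \emph{not} the matrix power $T^n$ followed by reduction mod~$1$: the relation $(T\vx \bmod 1)\bmod 1 = T\vx \bmod 1$ iterates correctly only when $T(\Z^d)\subseteq \Z^d$. In general the torus map is piecewise affine, and each iteration cuts the image of a ball into pieces along the discontinuity set $T^{-1}(\partial[0,1)^d)$. So one cannot simply ``lift $B$ to $\R^d$, track $T^n\tilde B$, and project''; one has to show that the expansion per step beats the fragmentation caused by the cutting, and it is precisely in this competition that the threshold $1+\sqrt{d}$ arises. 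Your remark about passing to high iterates to convert eigenvalue bounds into singular-value bounds is correct but addresses a different issue. The missing ingredient is Buzzi's combinatorial-geometric argument controlling how the boundary of the partition propagates under iteration; without it, the sketch does not yet establish the locally-eventually-onto property for non-integer $T$.
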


\medskip

\begin{remark}  \label{unique}
 The fact that the acim $\mu$  appearing in Theorem~\ref{cor-bigger-eigen} is unique is a trivial consequence of the fact that any acim satisfying the hypotheses of Theorem~\ref{measure-matrix} has to be unique. Indeed, to see that this is the case, suppose there exist two such measures.  Then by part (ii) of  Proposition~\ref{mainohyes},  both are equivalent to $m_d$.
By assumption, both are mixing with respect to $T$ and hence ergodic. It thus follows (see \cite[Theorem 6.10]{W-GTM79}) that the two measures are equal.
\end{remark}

\begin{remark}  \label{force}
By using the full force of Proposition~\ref{mainohyes}, the assumptions on  $\mu$ in Theorem~\ref{measure-matrix} can be completely dropped if we restrict our attention to the shrinking target set $W(T^p,\{E_n\}) \cap A $.  Here $A \subseteq \T^d$ is the support of the acim $\mu$ (guaranteed by part (i) of  Proposition~\ref{mainohyes}) and $p:=   p_1 p_2  \ldots p_s$ where the integers $p_i$ are the periods of the mixing components associated with  part (iii) of Proposition~\ref{mainohyes}.  Establishing   Theorem~\ref{metricresult} below  is an illustration of precisely this remark in action.    In short, the point of making the assumptions on $\mu$ in Theorem~\ref{measure-matrix} is to obtain a simple statement for the size of $W(T,\{E_n\})$  in terms of the probability measure $m_d$ supported on $\T^d$.
%The assumption is to concentrate ourselves to a simple case of Proposition \ref{mainohyes}, i.e., the case with only one mixing component of period $1$. In general, by Proposition \ref{mainohyes}, we can obtain the same result if we restrict ourselves to the study of the shrinking target sets with respect to an iteration of $T$ on the support of $\mu$.
\end{remark}

\medskip

\begin{remark}\label{rk-2} We consider two special families  of target sets  that correspond to  ``natural'' setups within the classical theory of Diophantine approximation.
Let $\psi: \R^+ \to \R^+$ be a real positive function and fix some point $\va:= (a_1, \ldots,  a_d)  \in \T^d$. For $n \in \N$, let
$$
B_n = B(\va, \psi(n)):= \Big\{ \vx  \in \mathbb{T}^d :   \max_{1\le i \le d} \|x_i-a_i\| \le \psi(n)   \Big\}
$$
and
\begin{equation} \label{iona}
H_n = H(\va, \psi(n))  :=   \Big\{ \vx \in \mathbb{T}^d :  \prod_{1\le i\le d} \|x_i-a_i\|  \le \psi(n)   \Big\}  \, ,
\end{equation}
where $\| \, . \, \|$ denotes the distance to the nearest integer.
Clearly, $B_n$ is  a  ball with respect to the maximum norm and $H_n$ is a hyperboloid  --   both are centred at the fixed point $\va$.
In turn, let
$$W(T,\psi,\va):=\{\vx\in\mathbb{T}^d: T^n(\vx)\in B(\va,\psi(n))\ \ \text{for infinitely many}\ n\in\mathbb{N}\},$$
and
$$W^{\times}(T,\psi,\va):=\{\vx\in\mathbb{T}^d: T^n(\vx)\in H(\va,\psi(n))  \ \ \text{for infinitely many}\ n\in\mathbb{N}\},$$
denote the corresponding shrinking target sets.  The former is intimately related to sets studied within the classical simultaneous theory of Diophantine approximation and the latter to the multiplicative theory. To see this explicitly, suppose that $T$ is an integer, diagonal  matrix. In fact, suppose that
$$
T = {\rm diag} \, (t_1, \ldots, t_d  )    \quad  {\rm with} \quad  t_i   \ge 2 \,
$$
and for convenience suppose $\va$ is the origin $\textbf{0}$. Then, on using the fact that $T$ is integer,  it is easily seen that for any given $\vx:= (x_1, \ldots,  x_d)  \in [0,1)^d$ we have
$$
T^n(\vx)\in B(\textbf{0},\psi(n)) \quad \Longleftrightarrow \quad \max_{1\le i \le d} \|t_i^nx_i\| \le \psi(n)
$$
and
$$
T^n(\vx)\in H(\textbf{0},\psi(n)) \quad \Longleftrightarrow \quad \prod_{1\le i \le d} \|t_i^nx_i\| \le \psi(n) \, .
$$
  It is evident that  both  the families of target sets $\{B_n\}_{n\ge   1}$ and $\{H_n\}_{n\ge   1}$ satisfy the bounded property
$ ({\boldsymbol{\rm B}}) $.   Thus, at the very least, our theorems incorporate both the simultaneous and multiplicative aspects of the classical theory of Diophantine approximation in which the denominators of the rational approximates are restricted to lacunary sequences. For the explicit statements see Corollaries~\ref{Cor1}~$\&$~\ref{Cor2} below.   In fact, our bounded property
$ ({\boldsymbol{\rm B}}) $ condition is far more general than the so called property $ ({\boldsymbol{\rm P}}) $ condition (see \S\ref{galSV}) imposed  by Gallagher in his elegant and influential paper \cite{GallP}.   We reiterate that our results hold for any family of target sets $\{E_n\}_{n\ge   1}$ whose  boundaries are  rectifiable and are of uniformly bounded $(d-1)$-dimensional Lebesgue measure.
\end{remark}

\medskip
%
%
%We now investigate natural  situations in which the measure $\mu$ associated with Theorem~\ref{measure-matrix} is strongly equivalent to Lebesgue measure $m_d$ on $\T^d$ so that we can replace $\sum_{n=1}^\infty\mu(E_n)$ by $\sum_{n=1}^\infty m_d(E_n)$ in the righthand side of (\ref{formula-measure-matrix}).

We now investigate natural  situations in which the  measure $\mu$ associated with Theorem~\ref{measure-matrix} is strongly equivalent to Lebesgue measure $m_d$ on $\T^d$.  For such situations  we can replace $\mu$ by $ m_d$ in the finite sum $\Phi(N)$ and the righthand side of (\ref{formula-measure-matrix}) and thus obtain  statements entirely in terms of Lebesgue measure.
%Although the statements can  be viewed as corollaries to Theorem~\ref{measure-matrix}, they are really direct consequences of Propositions~\ref{gencountthm}~$\&$~\ref{mainohyes}.   The work lies in showing  that the measure under consideration satisfies the hypotheses of the propositions. Indeed, this is precisely in line with establishing the general theorem above.
To start with, let us stick with  real, non-singular matrices and suppose that $T$ is diagonal with all eigenvalues (or equivalently diagonal entries)  $\beta_1,\beta_2,\dots,\beta_d$ of modulus strictly larger than $1$.   Now with this in mind, let $\beta\in \mathbb{R}$ such that $|\beta|>1$ and  let $\mu_\beta$ be corresponding Parry measure for positive $\beta$ or the Yrrap measure for negative $\beta$ -- see $\S\ref{secmetricresult}$ for background and further details.   Also, let  $K(\beta)$ denote  the support of $\mu_\beta$.  Then (see Proposition~\ref{Prop:support} below),
\begin{equation}\label{supp}
K(\beta)= [0,1]  \qquad {\rm if  }  \qquad  \beta \in (-\infty, -g] \, \cup  \,  (1, +\infty) \, , \end{equation}
and  $K(\beta)$ is a finite union of closed intervals contained in  $[0,1]$ if $\beta \in (-g, -1)$.   Here and throughout, $$g:=(\sqrt5 +1)/2$$ is the golden ratio.    Now returning to the transformation $T$ of the torus $\T^d$,  we consider the product measure $ \nu$ of the corresponding  one-dimensional Parry-Yrrap measures $ \mu_{\beta_i}$; that is
\begin{equation}\label{prodmeas}
\nu:=\mu_{\beta_1}\times\mu_{\beta_2}\times\cdots\times\mu_{\beta_d}   \, .
\end{equation}
Then by definition,  the support of $\nu$ is
\[
K:=\prod_{i=1}^d K(\beta_i),
\]
and in view of \eqref{supp} we have that $K=\T^d$ if all $\beta_i$ are in $(-\infty, -g] \, \cup  \,  (1, +\infty)$.   On exploiting the properties of the Parry-Yrrap  measures $\mu_{\beta_i}$  and using the full force of Proposition~\ref{mainohyes} (see Remark~\ref{force}) we are able to show  that  $\nu$ is  exponentially mixing with respect to $(T, \mathcal{C})  $ for any collection  $\mathcal{C}$  of subsets $E$ of $K$ satisfying the bounded property $ ({\boldsymbol{\rm B}}) $.  The details of this are given  in  \S\ref{secmetricresult}   and is at the heart of establishing the following statement for real, diagonal matrix transformations.

%
%\begin{theorem} \label{metricresult}
%	Let $T$ be a real, non-singular matrix transformation of the torus $\mathbb{T}^d$. Suppose that $T$ is diagonal with  all eigenvalues in $(-\infty, -g]\cup (1, +\infty)$.  Let $\mathcal{C}$ be any  collection of subsets $E$ of $\mathbb{T}^d$ satisfying the bounded property
%$ ({\boldsymbol{\rm B}}) $.  Then for  any sequence $ \{E_n \}_{n \in \mathbb{N}} $ in $\mathcal{C}$ we have that
% 	\begin{eqnarray*}
%		m_d\big(W(T,\{E_n\})\big)=
%		\begin{cases}
%			0 &\text{if}\ \  \sum_{n=1}^\infty m_d\big(E_n\big)<\infty\\[2ex]
%			1 &\text{if}\ \ \sum_{n=1}^\infty m_d\big(E_n\big)=\infty.
%		\end{cases}
%	\end{eqnarray*}
%\end{theorem}

%
%\begin{theorem} \label{metricresult}
%	Let $T$ be a real, non-singular matrix transformation of the torus $\mathbb{T}^d$. Suppose that $T$ is diagonal and all  eigenvalues are of modulus strictly larger than $1$, and let $\mathcal{C}$ be any  collection of subsets $E$ of $K$ satisfying the bounded property
%$ ({\boldsymbol{\rm B}}) $.  Then for  any sequence $ \{E_n \}_{n \in \mathbb{N}} $ of subsets  in $\mathcal{C}$,  we have that
%	\begin{eqnarray*}
%		m_d|_K\big(W(T,\{E_n\})\big)=\mu\big(W(T,\{E_n\})\big)=
%		\begin{cases}
%			0 &\text{if}\ \  \sum_{n=1}^\infty \mu\big(E_n\big)<\infty\\[2ex]
%			1 &\text{if}\ \ \sum_{n=1}^\infty \mu\big(E_n\big)=\infty.
%		\end{cases}
%	\end{eqnarray*}
%	\end{theorem}

\begin{theorem} \label{metricresult}
	Let $T$ be a real, non-singular matrix transformation of the torus $\mathbb{T}^d$. Suppose that $T$ is diagonal and all  eigenvalues $\beta_1,\beta_2,\dots,\beta_d$ are of modulus strictly larger than $1$.  Let $\nu$ be the product measure given by \eqref{prodmeas}  with support $K \subseteq \mathbb{T}^d $.  Let $\mathcal{C}$ be any  collection of subsets $E$ of $K$ satisfying the bounded property
$ ({\boldsymbol{\rm B}}) $.  Then for  any sequence $ \{E_n \}_{n \in \mathbb{N}} $ in $\mathcal{C}$ and $\varepsilon>0$,  the counting formula \eqref{formula-MT-count} holds for $\nu$-almost all (equivalently  $m_d|_K$-almost all) $\vx\in \mathbb{T}^d$, where $\Phi(N):=\sum_{n=1}^N \nu(E_n)$.
In particular,
	\begin{eqnarray*}		m_d|_K\big(W(T,\{E_n\})\big)=\nu\big(W(T,\{E_n\})\big)=
		\begin{cases}
			0 &\text{if}\ \  \sum_{n=1}^\infty \nu\big(E_n\big)<\infty\\[2ex]
			1 &\text{if}\ \ \sum_{n=1}^\infty \nu\big(E_n\big)=\infty.
		\end{cases}
	\end{eqnarray*}

\noindent Furthermore,  if all the eigenvalues of $T$ are in $(-\infty, -g]\cup (1, +\infty)$ then $K= \mathbb{T}^d$ and we can replace $\nu $ by $m_d$ in the above; i.e. the counting formula \eqref{formula-MT-count} holds for $m_d$-almost all $\vx\in \mathbb{T}^d$, where $\Phi(N):=\sum_{n=1}^N m_d(E_n)$ and in particular
\begin{eqnarray*}
		m_d\big(W(T,\{E_n\})\big)=
		\begin{cases}
			0 &\text{if}\ \  \sum_{n=1}^\infty m_d\big(E_n\big)<\infty\\[2ex]
			1 &\text{if}\ \ \sum_{n=1}^\infty m_d\big(E_n\big)=\infty.
		\end{cases}
	\end{eqnarray*}

\end{theorem}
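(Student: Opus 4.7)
\textbf{Proof plan for Theorem~\ref{metricresult}.}

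The plan is to derive both assertions from Theorem~\ref{gencountthm} applied to the dynamical system $(K,\nu,T)$, together with the componentwise structure guaranteed by Proposition~\ref{mainohyes} and the mechanism described in Remark~\ref{force}. Since each Parry--Yrrap measure $\mu_{\beta_i}$ is an acim for the corresponding one-dimensional $\beta_i$-transformation on its support $K(\beta_i)\subseteq [0,1]$, the product measure $\nu$ is an acim for the product map $T$ on $K\subseteq \T^d$. Viewing $T$ as acting on $K$, Proposition~\ref{mainohyes} is available, but neither ergodicity nor mixing of $\nu$ under $T$ is assumed; hence Theorem~\ref{measure-matrix} cannot be invoked directly, and we must pass to a power of $T$ as in Remark~\ref{force}.

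The next step is to verify that $\nu$ is $\Sigma$-mixing on $(K,T^p,\mathcal{C})$ for a suitable $p\ge 1$ and any collection $\mathcal{C}$ of subsets of $K$ satisfying property $(\mathbf{B})$. Here I would apply Proposition~\ref{mainohyes} componentwise: each one-dimensional factor $(K(\beta_i),\mu_{\beta_i},T_{\beta_i})$ admits a decomposition into ergodic and then mixing components of some period $p_i$; the results to be developed in \S\ref{secmetricresult} will give $p_i=1$ precisely when $\beta_i\in(-\infty,-g]\cup(1,+\infty)$, while $p_i>1$ can occur when $\beta_i\in(-g,-1)$. The product system then splits into mixing components that are products of the one-dimensional mixing components, and with $p:=p_1p_2\cdots p_d$ the restriction of $\nu$ to each such component is exponentially mixing with respect to $T^p$ on sets satisfying $(\mathbf{B})$. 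A small but necessary observation is that the intersection $E\cap A_{ij}$ of any $E\in\mathcal{C}$ with a product mixing component $A_{ij}$ still has boundary of uniformly bounded $(d-1)$-dimensional upper Minkowski content, because the $A_{ij}$ are products of intervals.

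Having established the mixing hypothesis, I apply Theorem~\ref{gencountthm} to $(K,\nu,T^p)$ on each mixing component to obtain the counting formula for the iterates $T^{pn}$. To recover the statement for $R(\vx,N)$ associated to $T$ itself, I split the count by residue class modulo $p$: writing
\[
R(\vx,N)=\sum_{r=0}^{p-1}\#\bigl\{n\ge 1:\;pn+r\le N,\;T^{pn}(T^r\vx)\in E_{pn+r}\bigr\},
\]
each term is a counting function for the system $(T^p,\{E_{pn+r}\}_n)$ evaluated at $T^r\vx$, to which Theorem~\ref{gencountthm} applies. Summing the resulting asymptotics over $r=0,\dots,p-1$ and using $T$-invariance of $\nu$, the leading terms add up to $\Phi(N)=\sum_{n=1}^N\nu(E_n)$, and the error terms aggregate to the stated $O(\Phi^{1/2}(N)(\log\Phi(N))^{3/2+\varepsilon})$ bound because there are only finitely many residue classes. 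The zero/full-measure dichotomy in terms of $\nu$ then follows by specialising to the convergent case via \eqref{appconv} and combining with the divergent case via the counting formula.

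For the final Lebesgue assertion, when every $\beta_i\in(-\infty,-g]\cup(1,+\infty)$, equation \eqref{supp} gives $K=\T^d$. The standard density bounds for Parry and Yrrap measures, to be recalled in \S\ref{secmetricresult}, yield constants $c_i,C_i>0$ with $c_i\le d\mu_{\beta_i}/dm_1\le C_i$ on $[0,1]$; taking products shows that $\nu$ is strongly equivalent to $m_d$ on $\T^d$. Strong equivalence preserves both the convergence/divergence of $\sum\nu(E_n)$ versus $\sum m_d(E_n)$ and the Lebesgue-almost-sure equality of the two counting asymptotics, which transfers the result from $\nu$ to $m_d$. The main obstacle I anticipate is the bookkeeping for the periodic case $\beta_i\in(-g,-1)$: showing that the residue-class decomposition of $R(\vx,N)$ indeed produces the clean total $\Phi(N)$ (rather than a sum of shifted partial totals that only match up to lower-order terms) requires a careful use of $T$-invariance of $\nu$ together with the cyclic permutation $T(A_{ij})=A_{i,j+1}$ between mixing components; this is where the technical heart of the argument lies.
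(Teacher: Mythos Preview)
Your overall strategy is sound, but you are making the argument harder than necessary because of a misconception about the one-dimensional factors. You anticipate that $p_i>1$ can occur when $\beta_i\in(-g,-1)$; in fact the paper establishes (Proposition~\ref{Prop:support}, citing Rokhlin for $\beta>1$ and Liao--Steiner \cite[Corollary~2.3]{LS12} for $\beta<-1$) that $\mu_{\beta}$ is exact, hence mixing, with respect to $T_\beta$ for \emph{every} real $\beta$ with $|\beta|>1$. Consequently the product measure $\nu$ is already mixing with respect to $T$ itself (Lemma~\ref{productmeasure}), the period is $p=1$ throughout, and Proposition~\ref{mainohyes}(iv) gives exponential mixing of $\nu$ with respect to $(T,\mathcal{C})$ directly on the single mixing component $K$. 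Theorem~\ref{gencountthm} then applies with no residue-class decomposition at all.

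So the paper's route is: show $\nu$ is $T$-mixing via the one-dimensional exactness results, deduce exponential mixing from Proposition~\ref{mainohyes}(iv), and invoke Theorem~\ref{gencountthm} once. Your route---pass to $T^p$, work on each product mixing component, and reassemble $R(\vx,N)$ by residue classes using $T$-invariance and the cyclic permutation of components---would work in principle, and the bookkeeping you flag as ``the technical heart'' can indeed be carried out. What your approach buys is robustness to situations where the factor measures are genuinely only periodic-mixing; what it costs is the entire residue-class argument, which here is vacuous since $p=1$. Your treatment of the ``furthermore'' part via strong equivalence of $\nu$ and $m_d$ is exactly what the paper does.
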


In the case the collection  $\mathcal{C}$  of subsets of $K$ is restricted to rectangles   with sides parallel to the axes (they clearly satisfy the bounded property
$ ({\boldsymbol{\rm B}}) $) we can avoid using Proposition~\ref{mainohyes} and give a self-contained and reasonably elementary proof of the above theorem (see \S\ref{specialthm4}). In particular, it is more than enough to establish the following corollary for balls (cubes); i.e., when we take $E_n=B_n$ (see Remark \ref{rk-2}) in the above theorem.

%\hi{REMOVE
%It can then be shown (see Lemma~\ref{productmeasure} in \S\ref{secmetricresult} below) that the product measure $ \mu$
%%is an acim which is equivalent to Lebesgue measure $m_d$ and
% is exponentially mixing with respect to
%any  collection $\mathcal{C}$ of rectangles contained in $K$ \hi{(RECTANGLES DON'T SHOW UP IN STATEMENTS BELOW)}. The following statement is a consequence of Theorem \ref{measure-matrix}.  A direct proof for the case that $\mathcal{C}$ is a collection of rectangles contained in $K$ can also be provided by using only Theorem~\ref{gencountthm}.}

\begin{corollary}\label{Cor1}
Let $T$ be a real, non-singular matrix transformation of the torus $\mathbb{T}^d$. Suppose that $T$ is diagonal with  all eigenvalues in $(-\infty, -g]\cup (1, +\infty)$. Let $\psi: \mathbb{R}^+\to \mathbb{R}^+$ be real positive function and $\va\in \T^d$. Then
for  any  $\varepsilon>0$, we have that
$$ \# \big\{ 1\le n \le   N :    T^n(\vx)\in B(\va,\psi(n)) \big\}
=
\Phi(N)+O\left(\Phi^{1/2}(N) \ (\log\Phi(N))^{3/2+\varepsilon}\right) %\, ,   \qquad  where  \quad
%%\Psi(N):=\textstyle{\sum_{n=1}^N\psi(q_n)}\varsigma
%\Phi(N):=\sum_{n=1}^N \mu(E_n) \, .
 $$
  for $m_d$-almost all  $\vx\in \mathbb{T}^d$, where $\Phi(N):=\sum_{n=1}^N (2 \psi(n))^d$.
In particular,
		\begin{eqnarray*}
		m_d\big(W(T,\psi,\va)\big)=
		\begin{cases}
			0 &\text{if}\ \sum_{n=1}^\infty \psi(n)^d<\infty\\[2ex]
			1 &\text{if}\ \sum_{n=1}^\infty \psi(n)^d=\infty.
		\end{cases}
	\end{eqnarray*}

\end{corollary}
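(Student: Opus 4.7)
The plan is to obtain Corollary~\ref{Cor1} as a direct specialisation of the Lebesgue half of Theorem~\ref{metricresult} to the collection
\[
\mathcal{C}:=\{B(\va,\psi(n)) : n\in\N\}
\]
of max-norm balls centred at $\va$. The hypothesis that every diagonal entry of $T$ lies in $(-\infty,-g]\cup(1,+\infty)$ is precisely the condition under which Theorem~\ref{metricresult} guarantees that the support of the product Parry--Yrrap measure equals $\T^d$ and that Lebesgue measure may be used throughout. Hence the only tasks are (i) to verify the bounded property $({\boldsymbol{\rm B}})$ for $\mathcal{C}$ and (ii) to identify $m_d(B(\va,\psi(n)))$ with $(2\psi(n))^d$.

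For (i), I would first dispose of indices with $\psi(n)>1/2$: for such $n$ the ball $B(\va,\psi(n))$ is all of $\T^d$, so $W(T,\psi,\va)=\T^d$ whenever there are infinitely many of them (in which case $\sum_n\psi(n)^d$ also diverges) and both conclusions of the corollary become trivial. We may therefore assume $\psi(n)\le 1/2$ for all $n$, in which case $B_n$ is a closed cube of side $2\psi(n)$ with $m_d(B_n)=(2\psi(n))^d$, settling (ii). Its boundary $\partial B_n$ is a union of $2d$ flat $(d-1)$-rectifiable faces, and by the theorem of Federer quoted in the text, the $(d-1)$-dimensional upper Minkowski content of each face equals its Hausdorff measure $(2\psi(n))^{d-1}\le 1$. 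Thus
\[
M^{*(d-1)}(\partial B_n)\le 2d\,(2\psi(n))^{d-1}\le 2d
\]
uniformly in $n$, which verifies $({\boldsymbol{\rm B}})$.

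Substituting these data into Theorem~\ref{metricresult} yields at once both the asserted quantitative asymptotic for $R(\vx,N)$ with $\Phi(N)=\sum_{n=1}^N(2\psi(n))^d$, and the zero-full dichotomy for $m_d(W(T,\psi,\va))$, since $\sum_n\psi(n)^d$ and $\sum_n(2\psi(n))^d$ converge or diverge together. No genuine obstacle arises: the heavy lifting---exponential mixing of the product measure $\nu$ with respect to $(T,\mathcal{C})$, identification of its support with $\T^d$, and the quantitative Borel--Cantelli of Theorem~\ref{gencountthm}---is already packaged inside Theorem~\ref{metricresult}. The only points requiring mild care are the boundary-content estimate above and the bookkeeping for wraparound balls, and both are routine.
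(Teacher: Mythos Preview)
Your proposal is correct and is exactly the approach the paper takes: the corollary is obtained immediately from the ``furthermore'' part of Theorem~\ref{metricresult} by specialising to the sequence $E_n=B(\va,\psi(n))$, noting that balls (cubes) clearly satisfy the bounded property~$({\boldsymbol{\rm B}})$ and that $m_d(E_n)=(2\psi(n))^d$. Your write-up in fact supplies more detail than the paper does---the explicit Minkowski-content bound for the cube boundary and the disposal of indices with $\psi(n)>1/2$---but the underlying argument is identical.
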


\noindent In fact, if we assume that $\psi(n) \to 0 $ as $n \to \infty$,  we are able to appropriately extend Corollary~\ref{Cor1} to the situation in which the   eigenvalues are in $(-\infty, -g] \, \cup  \,   [-1, +\infty)$.  In other words,  we can incorporate the interval [-1,1] into the allowed range of the eigenvalues.    This is the subject of  \S\ref{metricresultext}  below.
%It is also observed (see Remark~\ref{LS} at the end of  \S\ref{secmetricresult}) that it is natural to exclude the interval  $(-g,-1)$ from the allowable range.

In another direction, if $T$ is an integer matrix transformation we are able to use  a nifty ``reduction'' argument to relax the condition that $T$ is diagonal in Theorem \ref{metricresult} to  $T$ is diagonalizable over $\Z$.   This reduction argument is the subject of \S\ref{redarg} below.   In fact, for integer matrices far more is true.
\begin{theorem}   \label{corInteger}
	Let $T$ be an integer,  non-singular  matrix transformation of the torus $\mathbb{T}^d$.  Suppose that all eigenvalues of $T$ are of modulus strictly larger than $1$ and  and let $\mathcal{C}$ be any  collection of subsets $E$ of $\mathbb{T}^d$ satisfying the bounded property
$ ({\boldsymbol{\rm B}}) $.  Then, for  any sequence $ \{E_n \}_{n \in \mathbb{N}} $ of subsets  in $\mathcal{C}$ and $\varepsilon>0$,
the counting formula \eqref{formula-MT-count} holds for $m_d$-almost all $\vx\in \mathbb{T}^d$, where $\Phi(N):=\sum_{n=1}^N m_d(E_n)$. In particular,
\begin{eqnarray*}
		m_d\big(W(T,\{E_n\})\big)=
		\begin{cases}
			0 &\text{if}\ \  \sum_{n=1}^\infty m_d\big(E_n\big)<\infty\\[2ex]
			1 &\text{if}\ \ \sum_{n=1}^\infty m_d\big(E_n\big)=\infty.
		\end{cases}
	\end{eqnarray*}
\end{theorem}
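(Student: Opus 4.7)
The plan is to deduce Theorem~\ref{corInteger} directly from Theorem~\ref{measure-matrix} by showing that when $T$ is an integer, non-singular matrix with all eigenvalues of modulus strictly larger than $1$, the $d$-dimensional Lebesgue measure $m_d$ itself serves as the acim $\mu$ appearing in the hypotheses of Theorem~\ref{measure-matrix}. Concretely, I would check that $m_d$ is (a) $T$-invariant, (b) of full support $\T^d$, and (c) mixing with respect to $T$. Properties (a) and (b) are routine: invariance follows because $T$ is integer, so on the fundamental domain $[0,1)^d$ the induced map is $|\det T|$-to-one onto $\T^d$ and a standard change of variables yields $m_d(T^{-1}E) = m_d(E)$ for every measurable $E$; full support is immediate since every nonempty open subset of $\T^d$ has positive Lebesgue measure.

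For the remaining property, namely that $m_d$ is mixing, I would use Fourier analysis. By density of trigonometric polynomials in $L^2(\T^d,m_d)$, it is enough to verify mixing on characters $e_\mathbf{k}(\vx) := \exp(2\pi i \langle \mathbf{k}, \vx\rangle)$ with $\mathbf{k}\in\Z^d$. The identity $e_\mathbf{k} \circ T = e_{T^{\!\top} \mathbf{k}}$, where $T^{\!\top}$ denotes the transpose, reduces the problem to showing that for any $\mathbf{k}_1, \mathbf{k}_2\in \Z^d\setminus\{\0\}$ one has
\[
\int_{\T^d} e_{\mathbf{k}_1 + (T^{\!\top})^n\mathbf{k}_2}(\vx)\,dm_d(\vx) \;\longrightarrow\; 0 \qquad (n\to\infty);
\]
equivalently, that $(T^{\!\top})^n \mathbf{k}_2 \neq -\mathbf{k}_1$ for all sufficiently large $n$. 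The key point is that $T^{\!\top}$ shares the characteristic polynomial of $T$, so all eigenvalues of $T^{\!\top}$ have modulus strictly larger than $1$. Hence, by a standard Jordan-form argument, for every nonzero vector in $\R^d$ its $(T^{\!\top})^n$-orbit escapes to infinity; in particular $(T^{\!\top})^n \mathbf{k}_2 = -\mathbf{k}_1$ can occur at most finitely often.

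With $m_d$ thus verified to meet the hypotheses imposed on $\mu$ in Theorem~\ref{measure-matrix}, the counting asymptotic \eqref{formula-MT-count} and the zero-full dichotomy \eqref{formula-measure-matrix} become exactly the statement of Theorem~\ref{corInteger} upon substituting $\mu = m_d$. I do not anticipate a serious obstacle: once the dynamical content of Theorem~\ref{measure-matrix} is available, the only real content is the classical Fourier-analytic verification of mixing for integer expanding endomorphisms. The mildly delicate point is the escape estimate $|(T^{\!\top})^n \mathbf{k}|\to\infty$, but this reduces cleanly to the fact that an invertible real matrix whose complex spectrum lies outside the closed unit disc has no nonzero bounded forward orbits.
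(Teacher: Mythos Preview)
Your proposal is correct and follows essentially the same strategy as the paper: verify that $m_d$ is an acim with full support that is mixing for $T$, then invoke Theorem~\ref{measure-matrix} with $\mu = m_d$. The only difference is cosmetic --- the paper cites Walters \cite{W-GTM79} for invariance and mixing and detours through Lemma~\ref{lem:uni-acim} (the maximal-entropy characterisation) to identify $m_d$ with the canonical acim, whereas your direct Fourier argument and explicit invariance check make that detour unnecessary.
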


\medskip

To end with, we illustrate natural  ``number theoretic'' consequences of our results.  Let $ t_1, \ldots, t_d~\ge  ~2 $ be integers and let $T = {\rm diag} \, (t_1, \ldots, t_d  )$.  Then with reference to Remark~\ref{rk-2}, it follows that
$$W(T,\psi,\va)=\{\vx\in [0,1)^d: \max_{1\le i \le d} \|t_i^nx_i-a_i\| \le \psi(n)  \ \ \text{for infinitely many}\ n\in\mathbb{N}\}$$
and
$$W^{\times}(T,\psi,\va)=\{\vx\in [0,1)^d: \prod_{1\le i \le d} \|t_i^nx_i - a_i\| \le \psi(n)  \ \ \text{for infinitely many}\ n\in\mathbb{N}\} \, . $$
Thus,  Theorem~\ref{corInteger} implies the following statement for multiplicative Diophantine approximation.    In fact, since $T$ is diagonal, it is also covered by the ``furthermore part'' of Theorem~\ref{metricresult}.
 %\red{To end with, we describe a ``number theoretic'' consequence of Theorem~\ref{corInteger}.  Let $ t_1, \ldots, t_d~\ge  ~2 $ be integers and let $T = {\rm diag} \, (t_1, \ldots, t_d  )$.  Then with reference to Remark~\ref{rk-2}, it follows that
%$$W^{\times}(T,\psi,\va)=\{\vx\in [0,1]^d: \prod_{1\le i \le d} \|t_i^nx_i - a_i\| \le \psi(n)  \ \ \text{for infinitely many}\ n\in\mathbb{N}\}$$
%and so Theorem~\ref{corInteger} implies the following statement for multiplicative Diophantine approximation.  In fact, since $T$ is diagonal, it is also covered by the ``furthermore part'' of Theorem~\ref{metricresult}.  }

\begin{corollary}\label{Cor2}
Let $ t_1, \ldots, t_d \ge   2 $ be integers and let $T = {\rm diag} \, (t_1, \ldots, t_d  )$.  Let $\psi: \mathbb{R}^+\to \mathbb{R}^+$ be real positive function such that $\psi (x) < 2^{-d}$ and $\va =(a_1, \ldots, a_d  ) \in \T^d$. Then
$$
 \# \Big\{ 1\le n \le   N :  \prod_{1\le i \le d} \|t_i^nx_i - a_i\| \le \psi(n)  \Big\} =
\Phi(N)+O\left(\Phi^{1/2}(N) \ (\log\Phi(N))^{3/2+\varepsilon}\right) %\, ,   \qquad  where  \quad
%%\Psi(N):=\textstyle{\sum_{n=1}^N\psi(q_n)}\varsigma
%\Phi(N):=\sum_{n=1}^N \mu(E_n) \, .
 $$
  for $m_d$-almost all  $\vx= (x_1, \ldots, x_d  )\in \mathbb{T}^d$, where
  $$
 \Phi(N)=\sum_{n=1}^N 2^d \psi(n) \left(\sum_{s=0}^{d-1}\frac{1}{s!}\left(\log\frac{1}{2^{d}
 \psi(n)}\right)^{s}\right)     $$
  In particular,
\begin{eqnarray*}
		m_d\big(W^{\times}(T,\psi,\va)\big)=
		\begin{cases}
			0 &\text{if}\ \sum_{n=1}^\infty \psi(n) \,  \big(\log  \frac{1}{\psi(n)}\big)^{d-1} <\infty\\[3ex]
			1 &\text{if}\ \sum_{n=1}^\infty \psi(n) \,  \big(\log  \frac{1}{\psi(n)}\big)^{d-1}=\infty.
		\end{cases}
	\end{eqnarray*}
\end{corollary}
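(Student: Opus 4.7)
The plan is to apply Theorem~\ref{corInteger} to the family of hyperboloidal targets $E_n := H_n = H(\va, \psi(n))$ defined in~\eqref{iona}. Remark~\ref{rk-2} already shows that, for the integer diagonal transformation $T=\mathrm{diag}(t_1,\ldots,t_d)$, the event $T^n(\vx)\in H_n$ is exactly $\prod_{i=1}^d\|t_i^nx_i-a_i\|\le \psi(n)$, so the counting function in the corollary coincides with $R(\vx,N;T,\{H_n\})$ from~\eqref{countdef} and $W^{\times}(T,\psi,\va)=W(T,\{H_n\})$. Once this translation is in hand, there are essentially only two ingredients to supply: (i) verify that $\{H_n\}$ satisfies the bounded property $({\boldsymbol{\rm B}})$, and (ii) compute $m_d(H_n)$ in closed form so that $\Phi(N)=\sum_{n\le N} m_d(H_n)$ has the shape stated in the corollary; the counting asymptotic and the zero--full measure dichotomy then follow directly from Theorem~\ref{corInteger}.

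For the volume, translation invariance of $m_d$ lets us take $\va=\mathbf{0}$, and using the symmetries $x_i\mapsto -x_i$ together with $\|x_i\|=|x_i|$ on $[-\tfrac12,\tfrac12]$ gives
\[
m_d(H_n)=2^d\cdot\mathrm{vol}\bigl\{u\in[0,\tfrac12]^d:u_1\cdots u_d\le\psi(n)\bigr\}.
\]
Substituting $u_i=y_i/2$ and using the hypothesis $\psi(n)<2^{-d}$ (so that $c:=2^d\psi(n)<1$), together with the classical distribution of a product of i.i.d.\ $\mathrm{Unif}[0,1]$ variables, proved by a short induction on $d$,
\[
\mathrm{vol}\{y\in[0,1]^d:y_1\cdots y_d\le c\}=c\sum_{s=0}^{d-1}\frac{(\log(1/c))^s}{s!},
\]
yields exactly the $\Phi(N)$ announced in the corollary.

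To check $({\boldsymbol{\rm B}})$, I would write the boundary $\partial H_n$ as the union of $2^d$ congruent pieces, each isometric to $\{y\in(0,\tfrac12]^d:y_1\cdots y_d=\psi(n)\}$. Partition this piece by which coordinate $y_{i_0}$ is smallest; on each subregion parametrize by the remaining $d-1$ coordinates, so that $y_{i_0}=\psi(n)/\prod_{j\ne i_0}y_j$ is a graph with $\partial y_{i_0}/\partial y_j=-y_{i_0}/y_j$. The surface element is $\sqrt{1+\sum_{j\ne i_0}(y_{i_0}/y_j)^2}\prod_{j\ne i_0}dy_j$, and since $y_{i_0}\le y_j$ there, it is bounded by $\sqrt{d}$. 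Integrating over the projection $(0,\tfrac12]^{d-1}$ and summing over the $d$ subregions and the $2^d$ symmetric copies produces an upper bound for $\mathcal H^{d-1}(\partial H_n)$ independent of $n$ (a constant of order $d^{3/2}$). Because each piece is a smooth hypersurface, $\partial H_n$ is closed and $(d-1)$-rectifiable, so Federer's theorem cited in the paper identifies its upper Minkowski content with this Hausdorff measure, giving $({\boldsymbol{\rm B}})$.

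With these two inputs, Theorem~\ref{corInteger} delivers the counting formula. For the zero--one law I would note that, uniformly in $n$ with $\psi(n)\le 4^{-d}$, one has $\log(1/(2^d\psi(n)))\asymp\log(1/\psi(n))$, so the top term in the finite sum dominates and $m_d(H_n)\asymp_d \psi(n)(\log(1/\psi(n)))^{d-1}$; for the remaining $n$ both quantities are bounded between positive constants. Hence the series $\sum m_d(H_n)$ and $\sum\psi(n)(\log(1/\psi(n)))^{d-1}$ converge or diverge together, and the dichotomy in Theorem~\ref{corInteger} translates into the statement claimed. The main technical obstacle is Step~2: controlling $\mathcal H^{d-1}(\partial H_n)$ uniformly in $n$, because as $\psi(n)\to 0$ the hypersurface becomes progressively more curved near the coordinate hyperplanes, and one must exploit precisely the ordering $y_{i_0}\le y_j$ in the right partition to keep the gradient bounded.
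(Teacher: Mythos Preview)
Your proposal is correct and follows essentially the same route as the paper: apply Theorem~\ref{corInteger} with $E_n=H_n$, compute $m_d(H_n)$ via the same induction on $d$ (the paper's Lemma in \S\ref{volcal} is precisely your ``product of uniforms'' formula after the substitution $u_i=y_i/2$), and read off the zero--one law from the resulting asymptotic. The only substantive difference is that the paper simply asserts in Remark~\ref{rk-2} that the family $\{H_n\}$ satisfies property $({\boldsymbol{\rm B}})$ as ``evident'', whereas you supply an explicit verification via the smallest-coordinate decomposition and Federer's theorem; your argument for this is sound and is a welcome addition rather than a departure.
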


\noindent

\noindent The analogous statement for the simultaneous set $W(T,\psi,\va)$  is clearly covered by Corollary~\ref{Cor1} above.
The condition that $\psi (x) < 2^{-d}$  is only required for the counting statement.

Corollary~\ref{Cor2} is probably most familiar to number theorists  within the context of when  $t_1= \ldots =t_d$.  This corresponds to approximating arbitrary points $\vx\in [0,1]^d$ by  ``shifted'' rational points $((p_1+a_1)/q, \ldots, (p_d+a_d)/q)$ with  denominators $q$ restricted to an integer lacunary sequence.   In this setup, the zero-full measure criterion within the corollary  can just as easily be deduced from the elegant work of Gallagher \cite{GallP}  mentioned in Remark~\ref{rk-2}.  Also, under the same setup and the assumption that $\psi$ is non-increasing, the corresponding  quantitative version (with a slightly worse error term) can be deduced from \cite[Theorem~4.6]{Harman}.

\medskip

\begin{remark}  \label{Baowei}  For $n \in \N$, let $H_n= H(\va, \psi(n))$ be the hyperboloid region given by \eqref{iona}.   Clearly, Corollary~\ref{Cor2}   follows directly from   Theorem~\ref{corInteger} on letting  $E_n = H_n$ and on showing that \begin{equation}  \label{iona2}
m_d(H_n) =   2^d \psi(n) \left(\sum_{s=0}^{d-1}\frac{1}{s!}
\left(\log\frac{1}{2^{d}\psi(n)}\right)^{s}\right)   \, . \end{equation}
For the sake of completeness we will provide  the details of this measure calculation in \S\ref{volcal}.
\end{remark}

%\begin{remark}  \label{Baowei}  \sv{REMOVE}
%\red{ The  zero-full measure criterion within Corollary~\ref{Cor2} (i.e., the `in particular' part) can be directly obtained from the main ``Gallagher type'' result in \cite[Theorem 1.3]{BWpre}.
%}
%\end{remark}

\subsection{Dimension results for matrix transformations}

We address the shrinking target `dimension'  problem (P2) in the case $T$ is a self-map of  the $d$-dimensional torus  $\T^d$ and the target sets  are a sequence $\{B_n \}_{n \in \mathbb{N}} $ of balls as in the original formulation of the problem.   The following two theorems constitute our main dimension results. It turns out that these statements for balls can be exploited   to determine the dimension of shrinking targets sets in the case the targets are  a sequence $\{H_n \}_{n \in \mathbb{N}} $  of hyperboloids.  Throughout, given a real positive function $\psi: \R^+ \to \R^+$  we let $\lambda =\lambda(\psi)$ denote its lower order  at infinity; that is $$\lambda=\lambda(\psi):=\liminf_{n\to\infty}\frac{-\log\psi(n)}{n}.$$

\begin{theorem}\label{dimresult}
	Let $T$ be a real, non-singular matrix transformation of the torus $\mathbb{T}^d$. Suppose that $T$ is diagonal with all  eigenvalues $\beta_1,\beta_2,\dots,\beta_d$  strictly larger than $1$.  Assume that $1<\beta_1\le \beta_2\le\cdots\le \beta_d$. Let $\psi: \mathbb{R}^+\to \mathbb{R}^+$ be a real positive  function and $\va \in \mathbb{T}^d$. Then
	$$\dim_{\rm H} W(T,\psi, \va)=\min_{1\le   i\le   d}\theta_i(\lambda),$$
	where

	$$
	\theta_i(\lambda):=\frac{i\log \beta_i-\sum\limits_{k:\beta_k>\beta_ie^{\lambda}}(\log \beta_j-\log \beta_i-\lambda) +\sum\limits_{k>i}\log \beta_j}{\lambda+\log \beta_i}  \, .
	$$
	
\end{theorem}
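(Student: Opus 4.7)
\medskip

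\noindent \textbf{Proof proposal for Theorem~\ref{dimresult}.}  I would establish matching upper and lower bounds for $\dim_{\rm H}W(T,\psi,\va)$.  Throughout, write $\lambda(n):=-\log\psi(n)/n$ so that $\liminf_n\lambda(n)=\lambda$, and set $E_n:=T^{-n}(B(\va,\psi(n)))$.  Because $T$ is diagonal, $E_n$ is a product, coordinate-by-coordinate, of unions of $\beta_j^n$ intervals of length $2\psi(n)/\beta_j^n$ spaced $1/\beta_j^n$ apart.  Hence $W(T,\psi,\va)=\limsup_n E_n$ is a limsup of unions of axis-parallel rectangles whose side lengths depend on the coordinate, and the analysis naturally organises around the $d$ cube-scales $\rho_n^{(i)}:=2\psi(n)/\beta_i^n$ together with the transition thresholds $\beta_ie^{\lambda(n)}$.

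\smallskip

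\noindent \emph{Upper bound.}  For each fixed $i$ I would cover $E_n$ by axis-parallel cubes of side $\rho_n^{(i)}$.  A coordinate-wise case analysis shows that in direction $j$ the number of cubes needed is, up to constants, $\beta_i^n$ when $\beta_j\le\beta_i$, equal to $\beta_j^n$ when $\beta_i\le\beta_j\le\beta_ie^{\lambda(n)}$, and equal to $1/\rho_n^{(i)}$ when $\beta_j>\beta_ie^{\lambda(n)}$.  Taking the product over $j$ to get the total count $N_n^{(i)}$, multiplying by $(\rho_n^{(i)})^s$, and then taking logarithms expresses the exponent (divided by $n$) as a function of $s$ and $\lambda(n)$ whose negativity is equivalent to $s>\theta_i(\lambda(n))$.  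One checks by direct differentiation that $\mu\mapsto\theta_i(\mu)$ is decreasing; combined with $\liminf_n\lambda(n)=\lambda$ this yields $\sum_n N_n^{(i)}(\rho_n^{(i)})^s<\infty$ whenever $s>\theta_i(\lambda)$.  Since the resulting union of cubes covers $\bigcup_{n\ge N}E_n\supseteq W$, it follows that $\dim_{\rm H}W\le\theta_i(\lambda)$ for every $i$, hence $\dim_{\rm H}W\le\min_i\theta_i(\lambda)$.

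\smallskip

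\noindent \emph{Lower bound.}  Here I would invoke a mass transference principle (MTP) for rectangles, in the spirit of the works of Wang--Wu and Kleinbock--Wang.  Applied with $\psi\equiv R$ for a small constant $R$, Corollary~\ref{Cor1} implies that the ``fat'' limsup set, whose level-$n$ set is a union of rectangles with side lengths $(2R/\beta_j^n)_{j=1,\dots,d}$ centred at the grid points $((k_j+a_j)/\beta_j^n)_j$, has full $m_d$-measure.  The target set $W(T,\psi,\va)$ is obtained from this full-measure set by isotropically shrinking each level-$n$ rectangle by the factor $\psi(n)/R$, so that its $j$-th side becomes $2\psi(n)/\beta_j^n$.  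An appropriate rectangles-MTP then transfers this full-measure statement into the dimension bound $\dim_{\rm H}W\ge\min_i\theta_i(\lambda)$, with $\min_i\theta_i(\lambda)$ emerging as the critical exponent of a Frostman-type measure supported on a Cantor subset built along a subsequence $n_k$ realising $\lambda(n_k)\to\lambda$.  Equivalently, one can construct this Cantor subset directly: at stage $k$, subdivide every surviving cell into the level-$n_k$ rectangles it contains, distribute mass uniformly, and verify the local dimension bound by revisiting, in each coordinate, the same three-case count used in the upper bound.

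\smallskip

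\noindent \emph{Main obstacle.}  The upper bound is essentially a combinatorial exercise; the heart of the matter is the lower bound.  Two points demand care.  First, the rectangle centres $((k_j+a_j)/\beta_j^n)_j$ are not self-similar across $n$ when the $\beta_j$ are generic real numbers, so any rectangles-MTP used here must be flexible enough to accommodate affine shifts and noncommensurable scales; this is precisely the kind of ``technology not available at the time of \cite{hvMat}'' alluded to in the introduction.  Second, because $\lambda$ is only a liminf, the Cantor construction (whether built implicitly via the MTP or explicitly by hand) has to be carried out along a subsequence $n_k$ realising the liminf, which forces one to control the long gaps $n_{k-1}<n<n_k$, typically by inserting trivial generations that preserve the surviving set in the relevant weighted sense.
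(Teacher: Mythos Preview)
Your overall strategy---covering for the upper bound, rectangles Mass Transference Principle for the lower bound---matches the paper's, and your upper-bound organisation (in each coordinate take the cheaper of ``one cube per preimage interval'' or ``tile the whole axis'') is arguably cleaner than the paper's route through an overlap count.  However, your description of the preimage structure is only correct for \emph{integer} $\beta_j$: you assert that $T_{\beta_j}^{-n}(B(a_j,\psi(n)))$ consists of ``$\beta_j^n$ intervals of length $2\psi(n)/\beta_j^n$ spaced $1/\beta_j^n$ apart'', centred at grid points $(k_j+a_j)/\beta_j^n$.  For real $\beta_j>1$ the cylinders of order $n$ for $T_{\beta_j}$ are \emph{not} of uniform length $\beta_j^{-n}$; the non-full ones are strictly shorter, the preimage intervals inherit this irregularity, and the centres lie on no lattice.

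For the upper bound this is harmless: the cylinder count is $\le\beta_j^{n+1}/(\beta_j-1)$ by R\'enyi and your three-case tally survives.  For the lower bound it is a genuine gap.  To feed the Wang--Wu MTP (Theorem~\ref{MTPRG}) the target rectangles must have the \emph{prescribed} side lengths $2\psi(n)\beta_j^{-n}$, which forces a restriction to preimages lying in \emph{full} cylinders.  Your full-measure input via Corollary~\ref{Cor1} concerns the $\limsup$ over \emph{all} preimage intervals, and there is no automatic reason the full-cylinder sub-$\limsup$ inherits full measure; nor can you keep the short intervals, since they violate the hypothesis $r(B_{i,n})=r_n^{v_i}$ of the MTP.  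The paper closes this gap not through the measure theorem at all but through a distribution result of Bugeaud--Wang (Fact~BW in \S\ref{pfrecdim}): every $(n{+}1)$ consecutive cylinders of order $n$ contain a full one, so consecutive full-cylinder preimages are at distance $\le(n{+}3)\beta_j^{-n}$, and hence the scaled-up full-cylinder balls of radius $(\psi(n)\beta_j^{-n})^{s_j}\ge(n{+}3)\beta_j^{-n}$ already \emph{cover} $\mathbb{T}$ at each level along a subsequence realising the $\liminf$.  The full-measure hypothesis of the MTP then holds trivially.  Your ``main obstacle'' paragraph correctly senses a difficulty but misplaces it: the issue is neither flexibility of the MTP nor noncommensurability of the scales, but the irregular cylinder structure of non-integer $\beta$-transformations, and the missing ingredient---whether you go via MTP or a direct Cantor construction---is precisely Fact~BW.
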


\bigskip

\begin{remark}  \label{rectlove}  We will in fact deduce the above theorem from a more general statement concerning  rectangular target sets -- see  Theorem~\ref{rectangledimresult} in \S\ref{pfthm6}.  \end{remark}

In the case $d=1$, the above result corresponds to the main result in  \cite{SW2013}.   It turns out that while we are currently unable to prove in full generality the analogue of Theorem \ref{dimresult}  that incorporates negative eigenvalues, we can do so in the one dimensional case.   Thus, the following statement  for $\beta<-1$ is  new and extends the work of Shen $\&$ Wang \cite{SW2013} from positive to arbitrary  $\beta$-transformations  $T_{\beta}$.

\begin{theorem}\label{dimensionalone}
  Let $\beta$ be a real number with $|\beta|>1$ and $K(\beta)$ be the support of the associated Parry-Yrrap measure. Let $\psi: \mathbb{R}^+\to \mathbb{R}^+$ be a real positive decreasing function and  $a \in K(\beta)$.
  Then
	$$\dim_{\rm H} W(T_\beta,\psi, a)=\frac{\log|\beta|}{\lambda+\log|\beta|}  \, . $$
\end{theorem}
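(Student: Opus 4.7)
The plan is to prove matching upper and lower bounds on $\dim_{\rm H} W(T_\beta,\psi,a)$, with the formula $s(\lambda):=\log|\beta|/(\lambda+\log|\beta|)$ as the common value. The case $\beta>1$ is essentially the main result of Shen--Wang, so the substantive novelty is to treat $\beta\in(-\infty,-1)$, where the relevant dynamical system is the Yrrap transformation. Throughout I would work inside the support $K(\beta)$ of the Parry--Yrrap measure $\mu_\beta$, noting that for $\beta\le-g$ this support is all of $[0,1]$ while for $\beta\in(-g,-1)$ it is a finite union of closed intervals cyclically permuted by $T_\beta$.

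\textbf{Upper bound.} Write $s:=s(\lambda)$. By definition of $\lambda$, for any $\epsilon>0$ one has $\psi(n)\le e^{-(\lambda-\epsilon)n}$ for all sufficiently large $n$. The set $W(T_\beta,\psi,a)$ is covered by $\bigcup_{n\ge N}T_\beta^{-n}(B(a,\psi(n)))$ for every $N$. The generation-$n$ cylinders associated to $T_\beta$ partition $K(\beta)$ into $O(|\beta|^n)$ intervals, each of length $\lesssim|\beta|^{-n}$, so $T_\beta^{-n}(B(a,\psi(n)))\cap K(\beta)$ is a union of $O(|\beta|^n)$ intervals of diameter $\lesssim|\beta|^{-n}\psi(n)$. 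For any $s'>s$ the $s'$-Hausdorff sum is bounded by a geometric series with ratio $\exp\bigl((1-s')\log|\beta|-(\lambda-\epsilon)s'\bigr)$, which is $<1$ as soon as $s'>\log|\beta|/(\log|\beta|+\lambda-\epsilon)$. Letting $\epsilon\to0$ gives $\dim_{\rm H} W(T_\beta,\psi,a)\le s(\lambda)$.

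\textbf{Lower bound.} This is the harder direction and requires a Cantor-type construction $\mathcal{F}\subseteq W(T_\beta,\psi,a)$. By the other half of the $\liminf$ definition, for any $\epsilon>0$ there is a sparse subsequence $n_k\to\infty$ with $\psi(n_k)\ge e^{-(\lambda+\epsilon)n_k}$. Inductively I would select, at stage $k$, a family of \emph{full} generation-$n_k$ cylinders contained in $T_\beta^{-n_k}(B(a,\psi(n_k)))$, where \emph{full} means that $T_\beta^{n_k}$ maps the cylinder diffeomorphically onto a full Markov piece of $K(\beta)$. Each such cylinder has length comparable to $|\beta|^{-n_k}$, and the count of full level-$n_{k+1}$ descendants inside a given level-$n_k$ cylinder that lie in the next preimage is at least of order $|\beta|^{n_{k+1}-n_k}\psi(n_k)$. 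A Bernoulli-type mass distribution on the limit set $\mathcal{F}$ combined with the standard mass-distribution principle then yields $\dim_{\rm H}\mathcal{F}\ge\log|\beta|/(\log|\beta|+\lambda+\epsilon)-o(1)$, and letting $\epsilon\to0$ gives $\dim_{\rm H} W(T_\beta,\psi,a)\ge s(\lambda)$.

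\textbf{Main obstacle.} Compared with the positive case, the subtlety for $\beta<-1$ is that the cylinder structure of the Yrrap transformation is less transparent: not every generation-$n$ cylinder is full, and when $\beta\in(-g,-1)$ the support $K(\beta)$ is not even connected. The crucial ingredient I would need to establish (or extract from the literature on negative $\beta$-expansions) is that the proportion of full cylinders of generation $n$ inside $K(\beta)$ has a positive lower density, uniform in $n$. This should follow from the Markov structure of the Yrrap expansion together with the exponential mixing of $\mu_\beta$ supplied by Proposition~\ref{mainohyes}, which in particular forces full cylinders to be spatially equidistributed. Once this abundance of full cylinders is in hand, the rest of the Cantor construction and the mass-distribution estimate proceed in direct analogy with the positive case, and the monotonicity assumption on $\psi$ is used only to ensure that $B(a,\psi(n))$ shrinks monotonically along the selected subsequence.
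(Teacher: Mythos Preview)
Your upper bound is essentially the paper's argument, modulo one small imprecision: for $\beta<-1$ one only has $N_n\le|\beta|^{n(1+\epsilon)}$ for every $\epsilon>0$ (from the topological entropy of $T_\beta$, cf.\ Remark~\ref{-need}), not $O(|\beta|^n)$; this costs nothing after letting $\epsilon\to0$.

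The lower bound, however, has a genuine gap. You correctly identify that the obstacle for $\beta<-1$ is controlling full cylinders, and you propose to extract a uniform positive density of full level-$n$ cylinders from exponential mixing. This is precisely what is \emph{not} available: the paper notes explicitly (see \S\ref{negohya}) that no analogue of the Bugeaud--Wang full-cylinder distribution result (Fact~BW) is known for negative $\beta$-transformations, and exponential mixing of $\mu_\beta$ is a statement about measure-theoretic correlations of sets, not about the combinatorial question of which level-$n$ cylinders surject onto $K(\beta)$ under $T_\beta^n$. Your Cantor construction therefore lacks its key input, and the suggestion that Proposition~\ref{mainohyes} supplies it is unjustified.

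The paper circumvents this obstacle by a different mechanism. Rather than locating full cylinders of $T_\beta$ itself, it passes to a high iterate $T_\beta^k$ (so that the slope exceeds $8$) and constructs an explicit Markov subsystem $(\Lambda,T_\beta^k|_\Lambda,\mathcal{P}_\Lambda)$ on which \emph{every} cylinder is full by construction (Proposition~\ref{Markov}); one has $\dim_{\rm H}\Lambda\ge 1-\log 8/(k\log|\beta|)$. On this subsystem the Mass Transference Principle applies directly (Proposition~\ref{Markovlowerbound}), the hypothesis that each Markov piece eventually covers $K(\beta)\ni a$ coming from the locally-eventually-onto property of $T_\beta$ (Liao--Steiner for $\beta<-1$). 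This yields
\[
\dim_{\rm H} W(T_\beta,\psi,a)\ \ge\ \frac{1-\log8/(k\log|\beta|)}{1+\lambda/\log|\beta|},
\]
and letting $k\to\infty$ gives the sharp bound. Note also that the monotonicity of $\psi$ is used not where you place it but in Lemma~\ref{lemma_liminf_thesame}, to ensure that passing from $\psi(n)$ to $\psi(kn)$ does not alter $\lambda$.
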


\medskip \noindent  The proof of Theorem~\ref{dimensionalone} makes use of a general approximation technique for any  piecewise linear map of the unit interval  with constant slope.  The associated result (Proposition \ref{Markov} in \S\ref{dimensionalone})  may prove to be useful for other problems.

\bigskip

 We now mention two consequences of our main dimension theorems. The first is that if $T$ is  an integer matrix transformation, then  in Theorem~\ref{dimresult} we can replace the condition that $T$ is diagonal by $T$ is diagonalizable over $\Z$.

  \begin{theorem}\label{dimresultinteger}
	Let $T$ be an integer, non-singular matrix transformation of the torus $\mathbb{T}^d$. Suppose that $T$ is  diagonalizable over $\Z$ with all  eigenvalues $\beta_1,\beta_2,\dots,\beta_d$  strictly larger than $1$.  Assume that $1<\beta_1\le \beta_2\le\cdots\le \beta_d$. Let $\psi: \mathbb{R}^+\to \mathbb{R}^+$ be a real positive function and $\va \in \mathbb{T}^d$. Then
	$$\dim_{\rm H} W(T,\psi, \va)=\min_{1\le   i\le   d}\theta_i(\lambda)  \, . $$
\end{theorem}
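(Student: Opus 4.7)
The plan is to reduce to the diagonal case (Theorem~\ref{dimresult}) via an integer change of basis. Since $T$ is diagonalizable over $\Z$, there is $P\in\mathrm{GL}_d(\Z)$ with $D := P^{-1}TP = \mathrm{diag}(\beta_1,\ldots,\beta_d)$. As $\det P = \pm 1$, both $P$ and $P^{-1}$ descend to group automorphisms of $\T^d$ and are bi-Lipschitz homeomorphisms (with Lipschitz constants controlled by the operator norms of $P$ and $P^{-1}$ on $\R^d$); in particular they preserve Hausdorff dimension.

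Using $T^n = PD^nP^{-1}$ and substituting $\vy = P^{-1}\vx$ yields
$$W(T,\psi,\va) \;=\; P\cdot W^{*}, \qquad W^{*} \;:=\; \bigl\{\vy\in\T^d : D^n\vy\in E_n \text{ for infinitely many } n\bigr\},$$
where $E_n := P^{-1}B(\va,\psi(n))$. Each $E_n$ is a parallelepiped centred at $P^{-1}\va$, obtained as the image of a cube of side $2\psi(n)$ under the fixed linear map $P^{-1}$. For all $n$ large enough that $\psi(n)$ avoids wrap-around on the torus, standard operator-norm bounds deliver
$$B(P^{-1}\va,\, c_1\psi(n)) \;\subseteq\; E_n \;\subseteq\; B(P^{-1}\va,\, c_2\psi(n))$$
for constants $0<c_1\le c_2$ independent of $n$. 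Hence, since shrinking target membership is a tail condition,
$$W(D,\, c_1\psi,\, P^{-1}\va) \;\subseteq\; W^{*} \;\subseteq\; W(D,\, c_2\psi,\, P^{-1}\va).$$

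Now any positive constant multiple of $\psi$ has the same lower order $\lambda$ as $\psi$ itself. Applying Theorem~\ref{dimresult} to the diagonal transformation $D$ (whose eigenvalues $\beta_1\le\cdots\le\beta_d$ are real and strictly exceed $1$) with approximating functions $c_1\psi$ and $c_2\psi$ shows that both ends of the sandwich have Hausdorff dimension $\min_{1\le i\le d}\theta_i(\lambda)$. Monotonicity of Hausdorff dimension yields $\dim_{\rm H}W^{*} = \min_i\theta_i(\lambda)$, and the bi-Lipschitz invariance under $P$ transports this equality to $W(T,\psi,\va)$, as required.

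The main obstacle, modest but worth flagging, is justifying that the hypothesis \emph{diagonalizable over $\Z$} indeed supplies a conjugating matrix $P\in\mathrm{GL}_d(\Z)$ (rather than merely in $\mathrm{GL}_d(\Q)$), for it is precisely this integrality that makes $P$ descend to a bi-Lipschitz bijection of the torus; a weaker rational conjugation would only produce a finite-to-one covering map and one would have to keep track of preimage fibres. With the $\Z$-interpretation in place, the remainder of the argument is a clean linear-algebraic reduction that replaces a cube-target problem for $T$ by two cube-target problems for $D$ whose Hausdorff dimensions are already delivered by Theorem~\ref{dimresult}.
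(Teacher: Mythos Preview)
Your proposal is correct and follows essentially the same route as the paper's argument in \S\ref{iwill}: conjugate $T$ to the diagonal matrix $D$ via the integer change of basis, use that the induced torus map is bi-Lipschitz to sandwich the transported ball between $B(\phi(\va),c_1\psi(n))$ and $B(\phi(\va),c_2\psi(n))$, and then invoke Theorem~\ref{dimresult} together with the fact that constant multiples of $\psi$ share the same lower order $\lambda$. Your caveat about needing $P\in\mathrm{GL}_d(\Z)$ is well spotted; the paper sidesteps it by working with the \emph{forward} map $\phi$ induced by a nonsingular integer $P$ (satisfying $\phi\circ T=D\circ\phi$), which is finite-to-one and locally bi-Lipschitz even when $\det P\neq\pm1$, and that already suffices for the Hausdorff dimension comparison via Lemma~\ref{lip}.
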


The theorem follows from Theorem~\ref{dimresult} by using a ``reduction'' argument -- see \S\ref{iwill}.  The second is that the above theorems for balls enables us to establish  the dimension of the  multiplicative set  $W^\times (T,\psi, \va)$. In fact, we only require the $d=1$ statement and so we are able to utilize the more general Theorem~\ref{dimensionalone}.

\begin{theorem}\label{multiplicative}
Let $T$ be a real, non-singular matrix transformation of the torus $\mathbb{T}^d$. Suppose that $T$ is diagonal and  all  eigenvalues $\beta_1,\beta_2,\dots,\beta_d$  are of modulus strictly larger than $1$.  Assume that $1<|\beta_1|\le   |\beta_2|\le   \cdots \le   |\beta_d|$. Let $\psi: \mathbb{R}^+\to \mathbb{R}^+$ be a real positive decreasing function and $\va \in \mathbb{T}^d$ with $a_d\in K(\beta_d)$. Then
$$\dim_{\rm H} W^\times (T,\psi, \va)=d-1+\frac{\log|\beta_d|}{\lambda+\log|\beta_d|}  \, . $$
\end{theorem}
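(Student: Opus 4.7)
The dimension formula equals $(d-1)+\dim_{\rm H} W(T_{\beta_d},\psi,a_d)$ by Theorem~\ref{dimensionalone}. The plan is to sandwich $W^\times:=W^\times(T,\psi,\va)$ between a product set of the right dimension (for the lower bound) and an efficient cover tailored to the hyperbolic shape of the targets $H(\va,\psi(n))$ (for the upper bound).

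\emph{Lower bound.} Using $\|\beta_i^n x_i-a_i\|\le 1/2$ for each $i$, one immediately obtains
\[
[0,1)^{d-1}\times W(T_{\beta_d},\psi,a_d)\ \subseteq\ W^\times,
\]
because whenever $\|\beta_d^n x_d-a_d\|\le\psi(n)$ one has $\prod_{i=1}^d\|\beta_i^n x_i-a_i\|\le 2^{-(d-1)}\psi(n)\le\psi(n)$. Since $[0,1)^{d-1}$ has equal Hausdorff and lower box dimensions, Marstrand's product inequality yields $\dim_{\rm H}([0,1)^{d-1}\times B)\ge (d-1)+\dim_{\rm H} B$, and applying Theorem~\ref{dimensionalone} to $B=W(T_{\beta_d},\psi,a_d)$ completes this direction.

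\emph{Upper bound.} Fix $s>(d-1)+\tfrac{\log|\beta_d|}{\lambda+\log|\beta_d|}$; I aim to show $\mathcal{H}^s(W^\times)=0$ by a Hausdorff $s$-cover. Decompose each preimage $H_n^*:=T^{-n}(H(\va,\psi(n)))$ into dyadic ``shells'' indexed by $\vec j=(j_1,\dots,j_{d-1})\in\Z_{\ge 0}^{d-1}$: on the shell $\widetilde R_{n,\vec j}$ one imposes $\|\beta_i^n x_i-a_i\|\asymp e^{-j_i}$ for $i<d$, which by the hyperboloid constraint forces $\|\beta_d^n x_d-a_d\|\lesssim e^{\sum_i j_i}\psi(n)$. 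Each shell is a disjoint union of $\asymp\prod_{i=1}^{d}|\beta_i|^n$ axis-aligned rectangles with side-lengths
\[
\ell_i=e^{-j_i}|\beta_i|^{-n}\ \ (i<d),\qquad \ell_d\asymp e^{\sum_i j_i}\psi(n)\,|\beta_d|^{-n}.
\]
Covering each rectangle by cubes at its minimal side-length and summing the resulting $s$-contributions over all $(n,\vec j)$ then delivers the desired vanishing.

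\emph{Main obstacle.} The delicate point is the case analysis over the shapes $\vec j$: which of $\ell_1,\dots,\ell_d$ is minimal depends on $\vec j$, and the covering scale therefore changes between regimes. The assumption $|\beta_d|=\max_i|\beta_i|$ enters here decisively, since it guarantees that the binding regime is the one where $\ell_d$ is minimal (the regime of ``small'' $\sum_i j_i$), and in that regime a direct geometric-series calculation shows the double sum $\sum_n\sum_{\vec j}$ converges precisely when $s>(d-1)+\tfrac{\log|\beta_d|}{\lambda+\log|\beta_d|}$; in the other regimes (where some $\ell_i$ with $i<d$ is minimal, or where the $x_d$-constraint becomes trivial) the geometric bookkeeping produces strictly weaker thresholds on $s$ and therefore does not obstruct the bound. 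A last minor point, namely passing from the exponential model $\psi(n)=e^{-\lambda n}$ to a general decreasing $\psi$ with lower order $\lambda$, is handled by a standard subsequence/pigeonhole argument using the monotonicity of $\psi$.
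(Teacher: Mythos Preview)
Your lower bound is the same as the paper's (Appendix, Proposition~\ref{lowerboundmult}): the inclusion $[0,1)^{d-1}\times W(T_{\beta_d},\psi,a_d)\subseteq W^\times$ followed by a product/slicing inequality (the paper quotes Falconer's Corollary~7.12 for this, equivalent to your Marstrand argument) and then Theorem~\ref{dimensionalone}. One notational slip: for real non-integer $\beta_i$ the $n$-th iterate $T_{\beta_i}^n(x_i)$ is not $\beta_i^n x_i$ reduced modulo $1$, so you should write $\|T_{\beta_i}^n(x_i)-a_i\|$ throughout.

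For the upper bound your architecture---cover $H(\va,\psi(n))$, pull back through the order-$n$ cylinder products, cover each resulting rectangle by cubes at its shortest side---is exactly that of the paper's Proposition~\ref{upperboundmult}. The real difference is the hyperboloid cover. The paper invokes the Bovey--Dodson lemma (Lemma~\ref{BD}): for $s\in(d-1,d)$ the set $H(\va,\delta)$ admits a cover $\mathcal{B}$ by \emph{balls} with $\sum_{B\in\mathcal{B}} d(B)^s\ll\delta^{s-d+1}$. Each ball pulls back through a cylinder product to a rectangle with sides $|\beta_i|^{-n}d(B)$, and since $|\beta_d|=\max_i|\beta_i|$ the shortest side is \emph{always} $|\beta_d|^{-n}d(B)$. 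Hence there is no case analysis at all, and the $s$-sum collapses directly to $\sum_n|\beta_d|^{n(d(1+\epsilon)-s)}\psi(n)^{s-d+1}$.

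Your anisotropic dyadic-shell cover (singling out $x_d$) produces rectangles whose shortest side depends on $\vec{j}$, which is precisely what creates the case analysis you then assert away. That assertion is correct---for instance in $d=2$ the ``$\ell_1$ minimal'' regime contributes $\asymp\psi(n)^{s-1}|\beta_1|^{n(2-s)}$, converging already for $s>1+\log|\beta_1|/(\lambda+\log|\beta_1|)$, which is indeed weaker since $|\beta_1|\le|\beta_2|$---but in higher dimensions the partition of $\Z_{\ge0}^{d-1}$ into regimes and the associated multi-geometric sums are not a one-liner, and leaving them unverified is a genuine gap in the write-up. Two further points you gloss over: the cylinder count for real (especially negative) $\beta_i$ is not $\asymp|\beta_i|^n$ but only $\le|\beta_i|^{n(1+\epsilon)}$ via the entropy bound (Remark~\ref{-need}), and this $\epsilon$ must be carried through and sent to $0$ at the end; and your final remark about passing from $\psi(n)=e^{-\lambda n}$ to general $\psi$ is unnecessary for the upper bound, since the definition of $\lambda$ as a $\liminf$ already makes the sum $\sum_n|\beta_d|^{n(d-s)}\psi(n)^{s-d+1}$ converge for $s$ above the threshold without any subsequence argument.
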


\noindent  This consequence of Theorem~\ref{dimensionalone} was pointed out to us by  Baowei Wang.  We thank him for sharing his insight and indeed for providing the details of the proof which forms the appendix. We stress that  making use of Theorem~\ref{dimensionalone},  rather than the previously known $d=1$ case of Theorem~\ref{dimresult} due to Shen $\&$ Wang \cite{SW2013} is crucial. The latter  requires  that all the eigenvalues are positive and strictly larger than one and would thus yield a weaker version of Theorem~\ref{multiplicative}.
%\noindent \red{ In the case that all the eigenvalues are positive and strictly larger than one, the above theorem follows from the main ``Jarn\'{\i}k type'' result in \cite[Theorem 1.9]{BWpre}.   The proof of  Theorem~\ref{multiplicative} follows essentially the same ``standard''  argument as that appearing in  \cite{BWpre} and will not be reproduced in this paper.   The difference is that we have Theorem~\ref{dimensionalone} at our disposal whereas in \cite{BWpre} the authors  did not and so they appeal instead to the known $d=1$ case of Theorem~\ref{dimresult} due to Shen $\&$ Wang \cite{SW2013}.}

\section{Establishing Theorem~\ref{gencountthm} and Corollary~\ref{gencountcor} \label{bcsv}}

 The following  statement~\cite[Lemma~1.5]{Harman} represents an important  tool in the theory of metric Diophantine approximation for establishing counting statements.  It has its bases in the familiar variance method of probability theory and can be viewed as the quantitative form of the (diveregnce)  Borel-Cantelli Lemma \cite[Lemma~2.2]{durham}.

\begin{lemma} \label{ebc}
Let $(X,\cal{B},\mu)$ be a probability space, let $(f_n(x))_{n \in \N}$ be a sequence of non-negative $\mu$-measurable functions defined on $X$, and $(f_n)_{n \in \N },\ (\phi_n)_{n  \in \N}$ be sequences of real numbers  such that
$$ 0\le   f_n \le   \phi_n \hspace{7mm} (n=1,2,\ldots).  $$

\noindent %Write
%$$
%\Phi(N)= \sum\limits_{n=1}^{N}\phi_n
%$$
%and suppose that $\Phi(N) \to \infty$  as   $ N \to \infty$.
Suppose that for arbitrary  $a,b \in \N$ with $a <  b$, we have
\begin{equation} \label{ebc_condition1}
\int_{X} \left(\sum_{n=a}^{b} \big( f_n(x) -  f_n \big) \right)^2\mathrm{d}\mu(x)\, \le  \,  C\!\sum_{n=a}^{b}\phi_n
\end{equation}

\noindent for an absolute constant $C>0$. Then, for any given $\varepsilon>0$,  we have
\begin{equation} \label{ebc_conclusion}
\sum_{n=1}^N f_n(x)\, =\, \sum_{n=1}^{N}f_n\, +\, O\left(\Phi(N)^{1/2}\log^{\frac{3}{2}+\varepsilon}\Phi(N)+\max_{1\le   k\le   N}f_k\right)
\end{equation}
\noindent for $\mu$-almost all $x\in X$, where $
\Phi(N):= \sum\limits_{n=1}^{N}\phi_n
$. %(\textbf{Harman has $\Phi(N) \to \infty$ as   $ N \to \infty$}).
\end{lemma}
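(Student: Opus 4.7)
The plan is a variance/Borel--Cantelli argument along a geometric subsequence, combined with a Rademacher--Menshov-style maximal inequality to handle the interpolation inside each block. Write $S_N(x):=\sum_{n\le N}f_n(x)$, $E_N:=\sum_{n\le N}f_n$, and the centred partial sum $D_N(x):=S_N(x)-E_N$. Hypothesis~\eqref{ebc_condition1} can then be read as
$$\int_X \bigl(D_b(x)-D_a(x)\bigr)^2\,\mathrm{d}\mu(x)\le C\bigl(\Phi(b)-\Phi(a)\bigr)\qquad\text{for all }a<b.$$
Without loss of generality I assume $\Phi(N)\to\infty$; otherwise the $\max_{k\le N}f_k$ term in~\eqref{ebc_conclusion} trivially absorbs the statement.

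\emph{Step 1 (Subsequence).} Pick $\{N_k\}_{k\ge1}$ by $N_k:=\max\{N:\Phi(N)\le 2^k\}$, so that $\Phi(N_k)\asymp 2^k$ and hence $\log\Phi(N_k)\asymp k$.

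\emph{Step 2 (Pointwise control at the $N_k$).} Chebyshev's inequality applied to the variance bound with threshold $\lambda_k=\Phi(N_k)^{1/2}\,k^{1/2+\delta}$ yields
$$\mu\{|D_{N_k}|>\lambda_k\}\ll k^{-(1+2\delta)},$$
which is summable. The convergence Borel--Cantelli lemma then gives $|D_{N_k}(x)|\ll \Phi(N_k)^{1/2}(\log\Phi(N_k))^{1/2+\delta}$ for $\mu$-almost every $x$ and all sufficiently large~$k$.

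\emph{Step 3 (Maximal inequality within a block).} To promote this to arbitrary $N$, I would establish a Rademacher--Menshov-type bound inside each block $(N_k,N_{k+1}]$. Decomposing $(N_k,N]$ dyadically, applying the hypothesis~\eqref{ebc_condition1} to each dyadic sub-interval, and combining by Cauchy--Schwarz, one obtains
$$\Bigl\|\max_{N_k<N\le N_{k+1}}\bigl|D_N-D_{N_k}\bigr|\Bigr\|_2^2\ll \bigl(\log(N_{k+1}-N_k)\bigr)^2\bigl(\Phi(N_{k+1})-\Phi(N_k)\bigr).$$
A second application of Chebyshev plus Borel--Cantelli, using $\log N_{k+1}\asymp\log\Phi(N_{k+1})\asymp k$ and the threshold $\Phi(N_{k+1})^{1/2}(\log\Phi(N_{k+1}))^{3/2+\varepsilon}$, then yields
$$\max_{N_k<N\le N_{k+1}}|D_N(x)-D_{N_k}(x)|\ll \Phi(N_k)^{1/2}(\log\Phi(N_k))^{3/2+\varepsilon}$$
for $\mu$-a.e.\ $x$ and all sufficiently large $k$.

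\emph{Step 4 (Interpolation and finish).} Combining Steps~2 and~3 via the triangle inequality and noting $\Phi(N)\asymp\Phi(N_k)$ for $N\in(N_k,N_{k+1}]$ produces $|D_N(x)|\ll\Phi(N)^{1/2}(\log\Phi(N))^{3/2+\varepsilon}$, which is~\eqref{ebc_conclusion}. The auxiliary $\max_{k\le N}f_k$ term in the stated error swallows small~$N$ (before $\Phi$ grows) and the harmless boundary rounding at the $N_k$.

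The main technical obstacle is Step~3: the hypothesis gives an \emph{integrated} variance bound rather than the orthogonality (individually summable variances) assumed in the classical Rademacher--Menshov inequality. The necessary adaptation proceeds through a dyadic pyramid whose tree structure contributes one factor of $\log$ from the number of levels and another from the Cauchy--Schwarz across levels; this $(\log)^{2}$ in the maximal bound, together with the $(\log)^{1+2\delta}$ needed to force the Borel--Cantelli series to converge (with $\delta$ arbitrarily small), is precisely what produces the $(\log\Phi(N))^{3/2+\varepsilon}$ exponent and prevents us from obtaining anything smaller by this method.
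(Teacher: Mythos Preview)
The paper does not prove this lemma but quotes it from Harman \cite[Lemma~1.5]{Harman}. Your outline nevertheless has a genuine gap. In Step~3 you assert $\log N_{k+1}\asymp\log\Phi(N_{k+1})\asymp k$ in order to bound the number of dyadic levels in the Rademacher--Menshov pyramid; the second comparison holds by construction, but the first is false in general. If $\phi_n=1/n$ then $\Phi(N)\sim\log N$, so $N_k\sim e^{2^k}$ and $\log(N_{k+1}-N_k)\sim 2^{k+1}$, not $k$. Feeding this honest level count into your maximal bound gives $\bigl\|\max_{N_k<N\le N_{k+1}}|D_N-D_{N_k}|\bigr\|_2^2\ll 2^{2k}\cdot 2^k$, and the Chebyshev tail at threshold $2^{k/2}k^{3/2+\varepsilon}$ is of order $2^{2k}k^{-3-2\varepsilon}$, which diverges. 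The root cause is that your dyadic decomposition counts levels by \emph{index}, whereas the variance hypothesis~\eqref{ebc_condition1} is denominated in \emph{$\Phi$-increments}, and nothing in the assumptions ties the two scales together.

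The ingredient you never invoke is the nonnegativity hypothesis $f_n(x)\ge0$. In the standard argument one passes to the finer subsequence $M_j:=\min\{N:\Phi(N)\ge j\}$, so that consecutive $M_j$ carry unit $\Phi$-increment; for $M_j\le N<M_{j+1}$ the monotonicity of $S_N(x)$ sandwiches $D_N$ between $D_{M_j}-O(1)$ and $D_{M_{j+1}}+O(1)+f_{M_{j+1}}$ with no maximal inequality needed, and this is exactly where the $\max_{k\le N}f_k$ term in~\eqref{ebc_conclusion} originates. The Rademacher--Menshov machinery is then applied to the $\Phi$-parametrised sequence $(D_{M_j})_j$ rather than inside index blocks: for $j\in[2^r,2^{r+1})$ the number of dyadic levels is genuinely $r\asymp\log j\asymp\log\Phi$, and one recovers the stated $(\log\Phi)^{3/2+\varepsilon}$ exponent after Chebyshev and Borel--Cantelli.
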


\medskip

Note that in statistical terms, if the sequence $f_n$ is the mean of $f_n(x)$; i.e.
$$
f_n = \int_{X} f_n(x) \mathrm{d}\mu(x) \, ,
$$
then the l.h.s.~of~\eqref{ebc_condition1} is simply the variance ${\rm Var}(Z_{a,b}) $ of the random variable  $$Z_{a,b}=Z_{a,b}(x):=\sum_{n=a}^{b}f_n(x) \, . $$ In particular,
$$
 {\rm Var}(Z_{a,b})  = \bE(Z^2_{a,b})  -  \bE(Z_{a,b})^2
$$
where
$$
 \bE(Z_{a,b}) = \int_X Z_{a,b}(x) \mathrm{d}\mu(x)  \, .
$$

\noindent The following extremely useful classical inequality that bounds the probability that a  random variable is small, in terms of its expectation and second moment,   is a well know  consequence of the Cauchy-Schwarz inequality.

\begin{lemma}[Paley-Zygmund Inequality] \label{PZ}
Let $(X,\cal{B},\mu)$ be a probability space and $Z$ be  a non-negative random variable. Then for any  $0 < \lambda < 1 $, we have that
$$ \mu\Big(\{ x \in X :Z(x)>\lambda\mathbb{E}(Z)\} \Big)  \, \ge   \,  (1-\lambda)^2   \ \frac{\mathbb{E}(Z)^2}{\mathbb{E}(Z^2)} \, .   $$
\end{lemma}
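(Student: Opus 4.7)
The plan is to establish Paley--Zygmund by the standard two-line argument: split the expectation of $Z$ according to whether $Z$ exceeds the threshold $\lambda\,\mathbb{E}(Z)$, and then bound the `large' part via Cauchy--Schwarz. Let me write $A := \{x \in X : Z(x) > \lambda\,\mathbb{E}(Z)\}$, which is the event whose measure we want to bound from below.

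First, I would decompose
$$\mathbb{E}(Z) \;=\; \int_{X\setminus A} Z\,\dd\mu \;+\; \int_{A} Z\,\dd\mu.$$
On $X \setminus A$ we have $Z(x) \le \lambda\,\mathbb{E}(Z)$ pointwise, so the first integral is at most $\lambda\,\mathbb{E}(Z)\cdot\mu(X\setminus A) \le \lambda\,\mathbb{E}(Z)$. Rearranging gives
$$(1-\lambda)\,\mathbb{E}(Z) \;\le\; \int_{A} Z\,\dd\mu \;=\; \mathbb{E}(Z\,\mathbf{1}_{A}).$$

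Next, I would apply the Cauchy--Schwarz inequality to the right-hand side to get
$$\mathbb{E}(Z\,\mathbf{1}_{A}) \;\le\; \bigl(\mathbb{E}(Z^2)\bigr)^{1/2}\bigl(\mathbb{E}(\mathbf{1}_{A}^2)\bigr)^{1/2} \;=\; \bigl(\mathbb{E}(Z^2)\bigr)^{1/2}\bigl(\mu(A)\bigr)^{1/2}.$$
Combining this with the previous display and squaring yields
$$(1-\lambda)^2\,\mathbb{E}(Z)^2 \;\le\; \mathbb{E}(Z^2)\,\mu(A),$$
which is precisely the desired inequality after dividing by $\mathbb{E}(Z^2)$ (one may assume $\mathbb{E}(Z^2)>0$ since otherwise $Z=0$ almost surely and the statement is vacuous).

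There is no genuinely hard step here: the only thing to be careful about is the degenerate case $\mathbb{E}(Z^2)=0$, and checking that the non-negativity of $Z$ is what makes the pointwise bound $Z\,\mathbf{1}_{X\setminus A} \le \lambda\,\mathbb{E}(Z)$ work. Everything else is a one-line application of Cauchy--Schwarz, which is why the lemma is stated as a well-known consequence of that inequality in the excerpt.
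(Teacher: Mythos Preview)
Your proof is correct and is exactly the standard argument the paper alludes to: the paper does not actually prove Lemma~\ref{PZ} but simply states it as ``a well know[n] consequence of the Cauchy-Schwarz inequality,'' and your splitting of $\mathbb{E}(Z)$ followed by Cauchy--Schwarz on $\mathbb{E}(Z\,\mathbf{1}_A)$ is precisely that consequence.
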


We shall see that a straightforward application of Lemma~\ref{PZ} leads to a direct proof of Corollary~\ref{gencountcor}.  Deducing  Theorem~\ref{gencountthm} from Lemma~\ref{ebc} is also pretty straightforward.

\bigskip

%\subsection{Proof of Theorem~\ref{gencountthm}}

\begin{proof}[\textbf{Proof of Theorem~\ref{gencountthm}}] Given a sequence $\{E_n \}_{n \in \mathbb{N}} $ of subsets in $\mathcal{C}$, we consider Lemma~\ref{ebc} with

\begin{equation} \label{Harman_choice_parameters}
 f_n(x):= {\chi}_{{T^{-n}(E_{n})}} (x)  =  {\Large\chi}_{_{E_{n}}} (T^n(x)) \quad {\rm and } \quad
 f_n:= \phi_n :=  \mu (E_n) \, ,
\end{equation}
where  $ \chi_{_{E}} $   is the characteristic function  of the set  $E \subset X $.   Then, clearly for any $ x \in X$ and $N \in \N$,  we have that the
$$
{\rm l.h.s. \ of \ } \eqref{ebc_conclusion}  \ = \ \# \big\{ 1\le n \le   N :    T^{n}(x) \in E_n \} :=  \ R(x, N) \,
$$
and so  \eqref{ebc_conclusion}  and  \eqref{def_Psi} coincide.  Thus, to complete the proof of Theorem~\ref{gencountthm} we need to verify that  \eqref{ebc_condition1} is satisfied.  Note that by definition, $f_n$ is the mean of $f_n(x) $ and so

\begin{equation}  \label{ohyest}
{\rm l.h.s. \ of \ } \eqref{ebc_condition1}  \ = \ {\rm Var}(Z_{a,b}) \ = \ \bE(Z_{a,b}^2)-\bE(Z_{a,b})^2  \,
\end{equation}
where
\begin{itemize}
  \item[$\bullet$] ${\rm Var}(Z_{a,b}) $ is variance of the random variable
\begin{equation} \label{proof_combined_first_moment_ub}
Z_{a,b}=Z_{a,b}(x)=\sum\limits_{n=a}^b\chi_{E_n}(T^n(x))=\sum\limits_{n=a}^b\chi_{T^{-n}E_n}(x),
\end{equation}
  \item[$\bullet$]   the expectation
\begin{equation} \label{exp_ub}
\mathbb{E}(Z_{a,b})=\sum\limits_{n=a}^b\mu(E_n) \, ,
\end{equation}
  \item[$\bullet$] and  the second moment
\begin{equation} \label{proof_combined_second_moment_ub}
\begin{aligned}
\mathbb{E}(Z_{a,b}^2)&= \sum_{a\le m, n\le b}\mu\left(T^{-m}(E_m)\cap T^{-n}(E_n)\right) \\[2ex] & =   \sum_{a\le n\le b}\mu(E_n)+2\sum_{a\le m< n\le b}\mu\left(E_m\cap T^{-(n-m)}(E_n)\right).
\end{aligned}
\end{equation}
\end{itemize}

\noindent By making use of the  \s-mixing property \eqref{phimixing}, it follows that
\begin{eqnarray*}
\sum_{a\le m< n\le b}\mu\left(E_m\cap T^{-(n-m)}(E_n)\right) & \le   & \sum_{a\le m< n\le b} \!\! \mu(E_m)\mu(E_n) \ +\sum_{a\le m< n\le b} \!\!\! \phi(n-m) \ \mu(E_n) \nonumber \\[2ex]
& \le   & \sum_{a\le m< n\le b} \!\! \mu(E_m)\mu(E_n) \ + \sum_{a\le   n \le   b }\left(\sum_{a\le   m<n}\!\!\! \phi(n-m) \right)\mu(E_n)  \nonumber \\[2ex]
& \le   & \sum_{a\le m< n\le b} \!\! \mu(E_m)\mu(E_n) \ +  \  \kappa \sum_{a\le   n \le   b }\mu(E_n)  \nonumber
\end{eqnarray*}
where  $ \kappa := \sum_{n=1}^\infty\phi(n) < \infty$.  This together with \eqref{proof_combined_second_moment_ub} implies  that
\begin{eqnarray}\label{countcor_proof_double_count}
\mathbb{E}(Z_{a,b}^2)
& \le   &  (2\kappa +1) \sum_{a\le   n \le   b }\mu(E_n) \ +  \ 2\, \sum_{a\le m< n\le b} \!\! \mu(E_m)\mu(E_n) \nonumber  \\[2ex]
& \le   &  (2\kappa +1) \sum_{a\le   n \le   b }\mu(E_n) \ +  \  \left(\sum_{a\le n\le b} \!\! \mu(E_n) \right)^2   \, .
\end{eqnarray}
The upshot of \eqref{ohyest}, \eqref{exp_ub} and \eqref{countcor_proof_double_count} is that
$$
{\rm Var}(Z_{a,b}) \ \le   \ (2\kappa +1) \sum_{a\le   n \le   b }\mu(E_n)  \, .
$$
This verifies \eqref{ebc_condition1}  with $C= 2\kappa +1$ and thereby completes the proof of Theorem~\ref{gencountthm}.
 \end{proof}

\bigskip

\begin{proof}[\textbf{Proof of Corollary~\ref{gencountcor}}] In view of \eqref{appconv},  we assume that the sum in \eqref{appdiv} diverges.  With the same notation as in the proof of Theorem~\ref{gencountthm}, we start with the  observation that for any $\lambda>0$
\begin{equation}\label{really}
  \mu\Big(W(T,\{E_n\})\Big)  \ \ge \  \mu\Big(\limsup_{b\to\infty}(Z_{1,b}>\lambda\mathbb{E}(Z_{1,b}))\Big)\ \ge \  \limsup_{b\to\infty}\mu\Big(Z_{1,b}>\lambda\mathbb{E}(Z_{1,b})\Big) \, . \end{equation}
To estimate the measure on the r.h.s. we use the Paley-Zygmund inequality (Lemma~\ref{PZ}) and the estimates \eqref{exp_ub} and \eqref{countcor_proof_double_count}.  With this in mind,  for any  $0 < \lambda < 1 $, it follows that
\begin{eqnarray*} \mu\Big(Z_{1,b}>\lambda\mathbb{E}(Z_{1,b})\Big)  & \ge   & (1-\lambda)^2   \ \  \frac{\mathbb{E}(Z_{1,b})^2}{\mathbb{E}(Z_{1,b}^2)} \\[2ex] & \ge & (1-\lambda)^2 \  \frac{\Big(\sum\limits_{1\le n\le b}\mu(E_n)\Big)^2}{\Big(\sum\limits_{1\le n\le b}\mu(E_n)\Big)^2+(2\kappa +1)\sum\limits_{1\le n\le b}\mu(E_n)}  \, .
\end{eqnarray*}

\noindent By the divergent sum hypothesis, on letting $b\to\infty$ and $\lambda\to 0$,
	we obtain that $$ \limsup_{b\to\infty}\mu\Big(Z_{1,b}>\lambda\mathbb{E}(Z_{1,b})\Big)=1 \, . $$
which together with  \eqref{really} completes the proof of the corollary.
\end{proof}

\medskip

\section{Establishing  measure results for matrix transformations }
\subsection{Proof of Proposition \ref{mainohyes}} \label{Sec:1.3}

%Saussol (\cite[Theorem 5.1]{S2000}) proved that for a piecewise expanding map $T:X \longrightarrow X$, where $X\subset \mathbb{R}^d$ is a compact set, there exists an absolutely continuous invariant probability measure $\mu$. Further, {\color{blue} the support of $\mu$ can be decomposed into finitely many disjoint measurable sets, on each of such set $\mu$ is mixing with respect to $T^p$}.

Let $X\subset \mathbb{R}^d$ be a compact set.   The proof of Proposition~\ref{mainohyes} makes essential use of the work of Saussol \cite{S2000} for general piecewise expanding maps $T:X \longrightarrow X$ on $X$.  Clearly, our particular case  in which $X=[0,1]^d$ and the real, non-singular matrix $T$ (with the modulus of all eigenvalues  strictly larger than one) that sends $\vx$ to $T\vx \ {\rm mod} \ 1$  defines a piecewise expanding map on $X$.  In  what follows will state and apply Saussol's results to our setup.  So with this in mind,
the first three parts, apart  for the equivalence of the restricted measures $\mu|_{A_i}$ and $m_d|_{A_i}$ in part (ii), follow from \cite[Theorem 5.1]{S2000}.  To prove the equivalence of the restricted measures we note that by \cite[Proposition 5.1]{S2000}, the Randon-Nykodym derivative $f$ of $\mu|_{A_i}$ with respect to $m_d|_{A_i}$ is $m_d$-almost surely strictly positive. Hence,  for any measurable subset $E$, if $m_d|_{A_i}(E)>0$, then
\[
\mu|_{A_i}(E)=\int_E f(\vx) m_d|_{A_i}(d\vx)>0
\]
and so $m_d|_{A_i} \ll \mu|_{A_i}$. The other direction follows directly from part (i).

It remains to prove part (iv) of the proposition.  Without loss of generality, we will assume that $A_{ij}$ is $A$ and thus $\mu|_{A_{ij}} = \mu$  and also  $\mu$ is mixing with respect to $T$.    The key to proving part (iv) is to use the fact  that the acim $\mu$  satisfies the property of exponential decay of correlations.
%It is worth pointing out that the  existence of the acim $\mu$ (i.e., the first part of the proposition)  can be obtained as a direct consequence of combining Theorems 1 $\&$ 3 of Buzzi \cite{B1997}.  The work of Buzzi is specifically for expanding affine maps of the torus unlike Saussol \cite{S2000} which is for general  piecewise expanding maps.
%Let $N\ge 1$ be an integer. We will work in the Euclidean space $\mathbb{R}^N$. We denote by $B_\epsilon(x)$ the ball with center $x$ and radius $\epsilon$.
With this mind, for a set $E\subset\mathbb{R}^d$,
%we write
%$$B_\epsilon(E):=\{y\in\mathbb{R}^d: \sup_{x\in E} |x-y|\le \epsilon\},$$ and
define the {\it oscillation} of $\varphi \in L^1(m_d)$ over $E$ as
$${\rm osc}(\varphi,E):= \underset{ E}{ \textrm{ess-sup} } (\varphi)- \underset{ \ E}{ \textrm{ess-inf} } (\varphi).$$
For given real numbers $0<\alpha \le 1$ and $0<\epsilon_0<1$, define the following {\it $\alpha$-seminorm}
$$|\varphi|_\alpha:=\underset{ 0<\epsilon\le\epsilon_0}{ \sup \  } \epsilon^{ -\alpha} \int_{ \mathbb{R}^d}{\rm osc}(\varphi,B(x, \epsilon))dx.$$
%We observe (see \cite{Saussol1}) that $X\ni x \mapsto {\rm osc}(\varphi,B_\epsilon(x))$ is a measurable function and
%$$\text{supp} (\text{osc}(\varphi,B_\epsilon(x)))\subset B_\epsilon(\text{supp } \varphi).$$
Let $V_{\alpha}$ be the space of $L^1(m_d)-$functions such that $|\varphi|_\alpha<\infty$ endowed with the norm
$$\|\varphi\|_{\alpha}:=\|\varphi\|_{L^1(m_d)}+|\varphi|_\alpha.$$
Then
$(V_{\alpha},\|\cdot\|_{\alpha})$ is a Banach space which does not depend on the choice of $\epsilon_0 $  and
 $V_{\alpha} \subset L^\infty (m_d)$  (see \cite[Section 3]{S2000}).
Since the acim $\mu$ is mixing with respect to $T$, it follows from \cite[Theorem 6.1]{S2000} that there exist constants $C>0$ and $0\le   \gamma<1$ such that for all $\psi\in V_{\alpha}$, for all $\phi\in L^1(\mu)$ and for all $n\in\mathbb{N}$,  we have
\begin{align}\label{decay}
\left|\int_{[0,1]^d}  \!\!\!\!\! \psi.\phi\circ T^n\, d\mu \ - \  \int_{[0,1]^d} \!\!\!\!\!\!\! \psi \ d\mu\int_{[0,1]^d}\!\!\!\!\!\!\!\phi \ d \mu\right| \ \le \ C \ \|\psi\|_{\alpha}  \ \|\phi\|_1  \ \gamma^{n}.
\end{align}

\noindent Now, let $\mathcal{C}$ be any  collection of subsets of $A$ satisfying the bounded property
$ ({\boldsymbol{\rm B}}) $.  Take  $\psi=\chi_E$ and $\phi=\chi_F$ with $E , F \in\mathcal{C}$   and assume for the moment that
\begin{align}\label{exp-mixing-E}
\sup_{E\in \mathcal{C}}\|\chi_E\|_{\alpha}  <\infty.
\end{align}
Note that \eqref{exp-mixing-E} implies that  $\psi=\chi_E\in V_\alpha$  and thus together with (\ref{decay}), we obtain  that % Then the exponentially mixing is satisfied, if we can show
\begin{align*}
\left|\mu(E\cap T^{-n}F)-\mu(E)\mu(F)\right|\le C \cdot \|\chi_E\|_{\alpha} \cdot \mu(F) \cdot \gamma^{n} \le    C \cdot \Big(\sup_{E\in \mathcal{C}} \|\chi_E\|_{\alpha}\Big) \cdot \gamma^n \cdot \mu(F).
\end{align*}
Hence the exponentially mixing property (\ref{mixing}) is satisfied for the collection $\mathcal{C}$.  This proves part (iv) modulo \eqref{exp-mixing-E}.

%In fact, let $E=B(y,r)$ be a ball centered at $y$ with radius $r$. Then for $\epsilon\ge r$, we have
%\[
%{\rm osc}(\chi_{B(y,r)}, B(x, \epsilon)) \le \chi_{B(y, r+\epsilon)}(x),
%\]
%and for $\epsilon<r$, we have
%\[
%{\rm osc}(\chi_{B(y,r)}, B(x, \epsilon)) \le \chi_{B(y, r+\epsilon) \setminus B(y, r-\epsilon)}(x).
%\]
%Thus for $\epsilon\ge r$,
%\[
%\epsilon^{-\alpha} \int_{\mathbb{R}^d} {\rm osc}(\chi_{B(y,r)},B(x, \epsilon))dx \le C_0 \epsilon^{-\alpha}\cdot (r+\epsilon)^d \le C_0 2^d \epsilon^{d-\alpha} \le C_0 2^d \epsilon_0^{d-\alpha},
%\]
%and for $\epsilon<r$,
%\begin{align*}
%\epsilon^{-\alpha} \int_{\mathbb{R}^d} {\rm osc}(\chi_{B(y,r)}, B(x, \epsilon))dx &\le C_0 \epsilon^{-\alpha} \big((r+\epsilon)^d-(r-\epsilon)^d \big)\\
%&\le 2 C_0 \epsilon^{-\alpha} \sum_{k=1}^{d} r^{d-k}\epsilon^k {d \choose k} \\
%& \le 2 C_0 \epsilon^{-\alpha} \cdot \epsilon 2^d= 2^{d+1} C_0 \epsilon^{1-\alpha} \le 2^{d+1} C_0 \epsilon_0^{1-\alpha}.
%\end{align*}
%Hence $|\chi_E|_\alpha < \infty$ and $\chi_E\in V_\alpha$.

We now prove  \eqref{exp-mixing-E}. To start with, observe  that  for the characteristic  function $\chi_E$, the oscillation can only be non-zero on the boundary $\partial E$ of $E$.  It can be verified that for any $\epsilon>0$,
\[
{\rm osc}(\chi_E, B(x, \epsilon)) \, \le  \,  \chi_{(\partial E)(\epsilon)}(x),
\]
where $(\partial E)(\epsilon)$    is the $\epsilon $-neighborhood of $\partial E$.
Thus,
\[
\epsilon^{-\alpha} \int_{\mathbb{R}^d} {\rm osc}(\chi_{E},B(x, \epsilon))dx \le \epsilon^{-\alpha}\cdot m_d((\partial E)(\epsilon)).
\]
By the bounded property $ ({\boldsymbol{\rm B}}) $ imposed on $\mathcal{C}$, there exists a constant $C_1$ such that the $(d-1)$-dimensional upper Minkowski content of $ \partial E$
\[
M^{*(d-1)}(\partial E) \le   C_1,
\]
for all $E\in \mathcal{C}$.
Hence, by the definition of $M^{*(d-1)}$, there exists a constant $C_0$ such that for all $E\in \mathcal{C}$
\[
m_d((\partial E)(\epsilon)) \, < \, C_0 \epsilon.
\]
Consequently, for all $E\in \mathcal{C}$,
\[
|\chi_E|_\alpha=\underset{ 0<\epsilon\le\epsilon_0}{ \sup \  } \epsilon^{ -\alpha} \int_{ \mathbb{R}^d}{\rm osc}(\chi_E,B(x, \epsilon))dx \, \le \, C_0\epsilon_0^{1-\alpha}.
\]
On the other hand, for $E\subset [0,1]^d$, we have that
\[
\|\chi_E\|_{L^1(m_d)}= m_d(E)\le   1.
\]
Thus, for all $E\in \mathcal{C}$, it follows that
\[
\| \chi_E\|_\alpha \le   1+ C_0\epsilon_0^{1-\alpha}.
\]
%$\chi_E \in V_\alpha$ and have uniformly bounded $\| \cdot \|_\alpha$ norm.
 Therefore, (\ref{exp-mixing-E}) holds and this  completes the proof of the proposition.
\hfill  $\square$

\bigskip

\begin{remark}
For the sake of completeness, we mention that in the case that $T$ is an integer,  non-singular,  matrix transformation of the torus $\mathbb{T}^d$  with all eigenvalues in  modulus strictly larger than one,   Fan \cite{F1999} proved the exponential decay of correlation formula (\ref{decay}). Also in the case  $d=1$, the first three parts of Proposition~\ref{mainohyes} coincide with the Main {Result} of Wagner~\cite{W1979}
\end{remark}

\subsection{Proof of Theorems \ref{measure-matrix},  \ref{cor-bigger-eigen} and \ref{corInteger}  \label{section3}  }

\begin{proof}[Proof of Theorem~\ref{measure-matrix}]
%The convergent part of the theorem simply  makes use of  the fact that the measure $\mu$ under consideration is invariant under the action of $T$  and so \eqref{appconv} is applicable.  For the ``divergent'' part,
In view of parts (i) to (iii) of  Proposition~\ref{mainohyes}, if the acim $\mu$ has $\T^d$ as its support and is mixing with respect to $T$, then $\mu$ has one mixing component (namely the whole space $\mathbb{T}^d$) of period one. %By the uniqueness of ergodic decomposition, $\mathbb{T}^d$ is the unique mixing (ergodic) component of $\mu$ \red{MAKE CLEARER -- HOW IS Prop~\ref{mainohyes} USED??}.
Furthermore, by part (iv) of Proposition \ref{mainohyes}, on this unique mixing component, $\mu$ is exponentially mixing with respect to $(T,\mathcal{C})$ for any  collection $\mathcal{C}$ of subsets of $\mathbb{T}^d$ satisfying the bounded property $ ({\boldsymbol{\rm B}}) $. The desired  counting part \eqref{formula-MT-count} of the theorem and the  zero-full measure criteria \eqref{formula-measure-matrix}
with respect to the measure $\mu$ now  immediately follow on  applying Theorem~\ref{gencountthm} and Corollary~\ref{gencountcor} respectively.
 To complete the proof of  Theorem~\ref{measure-matrix}, it remains to prove that the measures  $\mu$  and  $m_d$ share the same zero and full  measure sets (note that we have already shown above that $\mu\big(W(T,\{E_n\})\big)$ is either 0 or 1).  This follows directly from part (ii) of  Proposition \ref{mainohyes} since it implies that $\mu$ is equivalent to $m_d$ and it is easily seen that
equivalent measures share the same zero and full  measure sets.
\end{proof}

\medskip

The following lemma will be used in establishing Theorems~\ref{cor-bigger-eigen} and \ref{corInteger}.

\medskip

\begin{lemma}\label{lem:uni-acim}
Let $T$ be a real, non-singular matrix transformation of the torus $\mathbb{T}^d$. Suppose that (i) all eigenvalues of {$T$} are of modulus strictly larger than $1+\sqrt{d}$ or  that (ii) $T$ is integer and all eigenvalues of $T$ are of modulus strictly larger than $1$. %Let $T$ be a matrix in Theorem~\ref{cor-bigger-eigen}, or in Theorem ~\ref{corInteger}.
Then there is a unique acim $\mu$. Furthermore, such an acim $\mu$ has $\T^d$ as its support and is of maximal entropy.
\end{lemma}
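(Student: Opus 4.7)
The overall strategy is to reduce the uniqueness assertion to an application of Remark~\ref{unique}, for which I must verify that any acim $\mu$ has support $\mathbb{T}^d$ and is mixing with respect to $T$ itself (not merely some iterate $T^{p}$). Once these are in hand, uniqueness is automatic, and the maximal entropy claim will follow from Rokhlin's formula.

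Applying Proposition~\ref{mainohyes}(i) provides an acim $\mu$ with support $A\subseteq\mathbb{T}^d$. By parts (ii)-(iii), $A$ decomposes as $\bigsqcup_{i,j} A_{ij}$ into cyclically permuted mixing components, with each restriction $\mu|_{A_{ij}}$ equivalent to $m_d|_{A_{ij}}$. The crux is to show that, under either hypothesis, this decomposition is trivial, namely $s=1$, $p_1 = 1$, and $A_{11} = \mathbb{T}^d$. In case~(ii), Lebesgue measure $m_d$ itself is already an acim, because an integer matrix $T$ with $\det T\neq 0$ acts on $\mathbb{T}^d$ as a $|\det T|$-to-$1$ covering group endomorphism, so Haar measure (equal to $m_d$) is invariant. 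With all eigenvalues of modulus strictly greater than $1$, $T$ is an expanding endomorphism, and the exponential decay of correlations for $m_d$ is classical (cf.\ Fan~\cite{F1999}); hence $m_d$ is mixing with respect to $T$ and has full support, and applying Remark~\ref{unique} to this specific $m_d$ delivers uniqueness.

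In case~(i), the hypothesis $|\beta_i| > 1+\sqrt{d}$ is stronger than mere expansion, and the plan is to exploit it via a direct covering argument: any nonempty open set $U\subseteq\mathbb{T}^d$ should eventually satisfy $T^n U = \mathbb{T}^d$ for all large $n$, because the minimum linear expansion factor of at least $1+\sqrt{d}$ per step (with $\sqrt{d}$ being the Euclidean diameter of the fundamental domain $[0,1]^d$) ensures that after finitely many iterations the image, once reduced mod~$\mathbb{Z}^d$, wraps around and overlaps to cover the whole torus. Since each $A_{ij}$ has positive Lebesgue measure by Proposition~\ref{mainohyes}(ii), this forces $A_{ij}=\mathbb{T}^d$ modulo Lebesgue null sets, whence $s=1$ and $p_1=1$. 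Full support and mixing with respect to $T$ thus hold in either case, and Remark~\ref{unique} then gives the uniqueness of $\mu$.

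For the maximal entropy claim, note that $T$ is linear so $DT\equiv T$ everywhere, and Rokhlin's formula for (piecewise) expanding endomorphisms yields
\begin{equation*}
h_\mu(T) \;=\; \int_{\mathbb{T}^d} \log\bigl|\det DT(\vx)\bigr|\, d\mu(\vx) \;=\; \log|\det T|,
\end{equation*}
which coincides with the topological entropy of $T$ (classical for integer endomorphisms, and a standard consequence of the variational principle for the piecewise linear expanding maps in case~(i)); hence $\mu$ is of maximal entropy. The main obstacle I anticipate lies in making the covering argument of case~(i) fully rigorous with the specific threshold $1+\sqrt{d}$; a cleaner route may be to invoke Saussol-type results on uniqueness of acims for multidimensional piecewise expanding maps, extracting full support as a byproduct of the spectral analysis of the transfer operator on the space $V_\alpha$ used in the proof of Proposition~\ref{mainohyes}.
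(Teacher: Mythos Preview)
Your approach differs substantially from the paper's. The paper's proof is a two-line citation of Buzzi~\cite{B1997}: Proposition~1 there gives that $(\T^d,T)$ is topologically transitive under either hypothesis, and then Theorem~3 and its Corollary yield directly that the acim is unique, has full support, and is of maximal entropy. Mixing is \emph{not} asserted in the lemma; it is established separately in the proofs of Theorems~\ref{cor-bigger-eigen} and~\ref{corInteger}.

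Your hands-on route is largely sound in case~(ii): $m_d$ is indeed a mixing acim for an integer expanding endomorphism, and uniqueness follows because any acim $\nu$ satisfies $\nu\ll m_d$ and an ergodic invariant measure absorbs every absolutely continuous invariant measure. (Your appeal to Remark~\ref{unique} is slightly miscalibrated: that remark only shows two acims \emph{both} with full support and mixing must coincide; the missing step---that no other acim can exist---is the standard ``$\nu\ll m_d$ with $m_d$ ergodic $\Rightarrow \nu=m_d$'' argument.)

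In case~(i), however, there is a genuine gap, which you yourself flag. The assertion that the ``minimum linear expansion factor'' is at least $1+\sqrt{d}$ per step is false: the hypothesis controls eigenvalue moduli, not singular values, so a single application of $T$ need not expand every vector by that factor. Moreover, each iterate involves a mod-$1$ reduction that cuts sets along the partition boundary, so the naive ``images grow until they cover'' picture needs careful bookkeeping. The paper does not attempt this directly either: in the proof of Theorem~\ref{cor-bigger-eigen} it simply cites \cite[Lemma~5]{B1997} for the locally-eventually-onto property under exactly this eigenvalue threshold. Your fallback suggestion---invoking the spectral machinery behind Proposition~\ref{mainohyes}---would essentially reconstruct Buzzi's argument, so citing \cite{B1997} is the efficient fix. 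Your Rokhlin-formula argument for maximal entropy is valid in spirit but also requires a reference for the multidimensional piecewise-expanding setting; again the paper sidesteps this by citing Buzzi.
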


\begin{proof} We are given that all eigenvalues of $T$ are of modulus strictly larger than $1$.   Thus, by part (i) of Proposition \ref{mainohyes}, there exists an acim $\mu$.   By \cite[Proposition 1]{B1997}, if all eigenvalues of $T$ have modulus strictly larger than $1+\sqrt{d}$ or if $T$ is integers, then the dynamical system $(\mathbb{T}^d, T)$ is topological transitive. % {\color{blue} topological mixing and hence topological transitive}.
Hence, by  \cite[Theorem 3 and its Corollary]{B1997} the acim $\mu$ has the whole space $\T^d$ as its support and is the unique maximal entropy measure.  %By Theorem  \ref{measure-matrix}, we need only to show that
\end{proof}

We now prove Theorems~\ref{cor-bigger-eigen} and \ref{corInteger}.
\begin{proof}[Proof of Theorem~\ref{cor-bigger-eigen}]

By \cite[Lemma 5]{B1997}, if the eigenvalues of $T$ are all of modulus strictly larger than $1+\sqrt{d}$, then $T$ is locally eventually onto. Thus the unique maximal entropy acim $\mu$ coming from  Lemma~\ref{lem:uni-acim} which has $\T^d$ as its support, is exact (see for example \cite[Theorem 5.2.12]{PU2010}) and hence mixing with respect to $T$ (see \cite[Proposition 12.2]{PY1998}).   In other words, this measure $\mu$ satisfies the hypotheses of Theorem~\ref{measure-matrix}.  On  applying Theorem~\ref{measure-matrix}, we obtain Theorem~\ref{cor-bigger-eigen}.
\end{proof}

%Since for the integer coefficients case, the Lebesgue measure is invariant, mixing and of maximal entropy (see for example \cite[Section 1.1 and Theorems 1.11, 1.28, $\&$ 8.15]{W-GTM79}), we can conclude that the Lebesgue measure $m_d$ is the unique maximal entropy mixing acim. By Theorem \ref{measure-matrix}, $m_d$ is of exponentially mixing with respect to
%any  collection $\mathcal{C}$ of subsets $E$ of $\mathbb{T}^d$ satisfying the bounded property
%$ ({\boldsymbol{\rm B}}) $ and Theorem~\ref{corInteger} follows.

\begin{proof}[Proof of Theorem~\ref{corInteger}]

Observe that if $T$ is integer, then $T$ is an endomorphism of the torus $\T^d$ (see for example, \cite[Theorem 0.15]{W-GTM79}). By  \cite[Corollary 1.10.1 and Theorem 1.28]{W-GTM79}, the Lebesgue measure $m_d$  is mixing with respect to $T$. %$T$ has no roots of unity as eigenvalues if and only if $T$ is mixing with respect to the Lebesgue measure $m_d$.
Furthermore, by  \cite[Theorem 8.15]{W-GTM79}, $m_d$ is of maximal entropy.
Thus $m_d$ is nothing but the unique maximal entropy acim $\mu$ in Lemma~\ref{lem:uni-acim}. %$m_d$ is of exponentially mixing with respect to any  collection $\mathcal{C}$ of subsets $E$ of $\mathbb{T}^d$ satisfying the bounded property $ ({\boldsymbol{\rm B}}) $
Hence, Theorem~\ref{corInteger} follows on applying  Theorem~\ref{measure-matrix} with $\mu = m_d$.
\end{proof}

\subsubsection{Proof of Corollary~\ref{Cor2} \label{volcal}}

 As noted in Remark~\ref{Baowei}, the corollary follows directly from  Theorem~\ref{corInteger} on showing that the $d$-dimensional Lebesgue measure $m_d$ of the hyperboloid region $ H(\va, \psi(n))$ satisfies \eqref{iona2}.  It is easily versified that $ m_d (H(\va, \psi(n))) $ is independent of the `shift' $\va \in \T^d$. So with this in mind, it suffices to prove the following statement.

\begin{lemma}
Given  $d\in\N$ and $\delta>0$, let
\begin{equation} \label{ie_hyperbola}
H_d(\delta):=\big\{(x_1,\dots,x_d)\in [0, 1)^d: \|x_1\|\dots\|x_d\| <\delta \big\}  \, .
\end{equation}
Then
\begin{equation} \label{result_Hk}
m_d(H_d(\delta))=\begin{cases}
1 &\text{ if }   \  \delta \ge   2^{-d}\\[2ex]
2^{d}\delta\left(\sum_{t=0}^{d-1}\frac{1}{t!}\left(\log\frac{1}{2^{d}\delta}\right)^{t}\right) &\text{ if }   \  \delta < 2^{-d}  \, .
\end{cases}
\end{equation}
\end{lemma}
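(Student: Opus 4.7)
The plan is to first dispose of the trivial range $\delta \ge 2^{-d}$, then exploit the symmetry of $\|\cdot\|$ to reduce the remaining case to a standard volume on the unit cube, and finally evaluate that volume by induction on $d$.

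First I observe that $\|x_i\| \le 1/2$ for every $x_i \in [0,1)$, so pointwise on $[0,1)^d$ we have $\|x_1\|\cdots\|x_d\| \le 2^{-d}$. Hence if $\delta \ge 2^{-d}$ then $H_d(\delta) = [0,1)^d$ up to a null set, and $m_d(H_d(\delta)) = 1$. For $\delta < 2^{-d}$, I use the fact that the map $x \mapsto \|x\|$ is essentially $2$-to-$1$ from $[0,1)$ onto $[0,1/2]$, together with the rescaling $z_i = 2y_i$, to get
$$m_d(H_d(\delta)) \; = \; 2^d \cdot m_d\bigl(\{y \in [0,1/2]^d : y_1\cdots y_d < \delta\}\bigr) \; = \; V_d(2^d\delta),$$
where
$$V_d(\eta) \; := \; m_d\bigl(\{z \in [0,1]^d : z_1\cdots z_d < \eta\}\bigr), \qquad \eta := 2^d\delta \in (0,1).$$
The problem thus reduces to showing the classical identity
$$V_d(\eta) \; = \; \eta \sum_{t=0}^{d-1} \frac{1}{t!}\Bigl(\log\frac{1}{\eta}\Bigr)^{t}.$$

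I would prove this identity by induction on $d$. The base case $d=1$ gives $V_1(\eta) = \eta$, matching the formula. For the induction step, I integrate over the last coordinate using Fubini: since the slice $\{(z_1,\dots,z_{d-1}) : z_1\cdots z_{d-1} < \eta/z_d\}$ equals all of $[0,1]^{d-1}$ when $z_d \le \eta$, I obtain
$$V_d(\eta) \; = \; \eta \; + \; \int_\eta^1 V_{d-1}(\eta/z_d)\, dz_d.$$
Inserting the inductive formula for $V_{d-1}$, the substitution $u = \log(z_d/\eta)$ turns each summand into $\int_0^{\log(1/\eta)} u^t\, du = (\log(1/\eta))^{t+1}/(t+1)$. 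After dividing by $t!$ and reindexing $s = t+1$, the resulting sum assembles precisely with the leading $\eta$ to give $\eta\sum_{s=0}^{d-1} \tfrac{1}{s!}(\log 1/\eta)^s$, completing the induction. Substituting $\eta = 2^d\delta$ yields the required formula.

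The proof is essentially a bookkeeping exercise; the only places requiring a little care are the symmetry/rescaling step (making sure the Jacobian $2^{-d}$ exactly cancels the combinatorial factor $2^d$ coming from the $2$-to-$1$ map) and the re-indexing at the end of the induction. As a consistency check, both branches of the formula agree at the threshold $\delta = 2^{-d}$, since there $\eta = 1$, every $(\log 1)^t = 0$ for $t\ge 1$, and the $t=0$ term contributes $2^d\cdot 2^{-d} = 1$.
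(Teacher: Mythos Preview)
Your proof is correct and follows essentially the same route as the paper: reduce by the $\|\cdot\|$-symmetry to the half-cube, then compute the hyperbolic volume by induction on $d$ via Fubini over the last coordinate. The only cosmetic difference is that you rescale to the unit cube with parameter $\eta=2^d\delta$, whereas the paper works directly on $[0,1/2]^d$ with parameter $\delta$; your normalization makes the induction a touch tidier but the argument is the same.
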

\begin{proof}
To simplify computations, first note  that the measure  of $H_d(\delta)$ is equal to $2^d$ times the measure of $ [0,1/2]^d  \cap H_d(\delta) $.  Furthermore,  it is technically simpler to work with points restricted to  $[0,1/2]^d$ since the inequality under consideration  is equivalent to
\begin{equation} \label{ie_hyperbola_half}
%\frac{1}
x_1\dots x_d\le  \delta.
\end{equation}
So, from now on we will focus on computing the measure of the set
\begin{equation} \label{def_Vk}
V_d(\delta):=\big\{ (x_1,\dots,x_d)\in [0,1/2]^d  \, : \,  {x_1\dots x_d}\le  \delta\big\}    \,
\end{equation}
and recall that
\begin{equation} \label{hypersv}
2^d   \, m_d \big( V_d(\delta)\big)  \ =   \ m_d \big( H_d(\delta)  \big) \, .
\end{equation}

\bigskip

%Suppose, that  $\delta  \ge  2^{-d} $. Then it is a easily versified that $V_d(\delta) = [0,1/2]^d$  and this together with \eqref{hypersv}  implies  \eqref{result_Hk}.   So without loss of generality assume that $\delta  < 2^{-d} $.

\noindent \emph{Case \!(a):  if  $\delta  \ge  2^{-d} $.}  Then it is a easily versified that $V_d(\delta) =
[0,1/2]^d$  and this together with \eqref{hypersv}  implies  \eqref{result_Hk}.   So, without loss of generality  we can assume that $\delta  < 2^{-d} $.

\medskip

\noindent \emph{Case \!(b): if   $\delta  < 2^{-d} $.}   In view of \eqref{hypersv},  to  establish \eqref{result_Hk} we need to show that for any $d\in\N$ and $ 0< \delta   <  2^{-d} $
\begin{equation} \label{computation_Vk}
m_d\big( V_d(\delta) \big)  \, = \, \delta \ \sum_{t=0}^{d-1}\frac{1}{t!}
\left(\log\frac{1}{2^{d}\delta}\right)^{t}  \, .
\end{equation}
This we now do  by induction on $d$.  For $d=1$, we have that
\begin{equation} \label{computation_V1}
m_1\big(V_1(\delta) \big) =  m_1\big(    \left\{x\in[0,1/2]  \, : \, x\le\delta\right\}   \big)  \, = \, \delta \,
\end{equation}
and this coincides with \eqref{computation_Vk}.   Now let $d \ge 2$ and observe that we can rewrite~\eqref{ie_hyperbola_half} as
\begin{equation} \label{ie_hyperbola_changed}
x_1\dots x_{d-1}\le  \delta/x_d.
\end{equation}
Note that since  $ (x_1,\dots,x_d)\in [0,1/2]^d$ ,  the left hand side of~\eqref{ie_hyperbola_changed} is not bigger than $(1/2)^{d-1}$. Hence, it follows that  for any $ 0< x_d \le 2^{d-1}\delta$,  inequality \eqref{ie_hyperbola_changed} is satisfied   for all $0\le   x_1,\dots,x_{d-1}\le   1/2$.  The $m_d$-measure of the set of such points $ (x_1,\dots,x_d)$   is thus equal to $2^{-d+1}  \times  2^{d-1}\delta = \delta $.  On the other hand, for
 any  fixed value of $x_d \in (2^{d-1}\delta, 1/2]$, the  $m_{d-1}$-measure of the set of points $(x_1\dots x_{d-1})\in [0,1/2]^{d-1}$ satisfying~\eqref{ie_hyperbola_changed} is by definition equal to $m_{d-1}\big(V_{d-1}(\delta/x_d)\big)$. The upshot is that for any $d \ge 2$ and $ 0< \delta   <  2^{-d} $
\begin{equation} \label{Vk_value_one}
m_d\big(V_d(\delta) \big) \, = \, \delta \, + \, \int_{2^{d-1}\delta}^{1/2}  m_{d-1}\big(   V_{k-1}(\delta/x_d)  \big) \; \dd x_d.
\end{equation}
Now assume that \eqref{computation_Vk} holds with $d-1$ in place of $d$. Then, it follows via  \eqref{Vk_value_one} that

\begin{eqnarray*}
m_d\big(V_d(\delta) \big) &= &
\delta \ + \ \int_{2^{d-1}\delta}^{1/2}   \ \frac{\delta}{x_d}
\left(\sum_{t=0}^{d-2}\frac{1}{t!}
\left(\log\frac{x_d}{2^{d-1}\delta}\right)^{t}\right) \, \dd x_d
\\[2ex]
&= & \delta \ + \  \delta \, \int_{1}^{\frac{1}{2^{d}\delta} }   \
\left( \sum_{t=0}^{d-2} \frac{1}{t!}
 \frac{(\log y)^t}{y} \right)  \, \dd y   \\[2ex]
&= & \delta \ + \  \delta \ \sum_{t=0}^{d-2} \frac{1}{t!}   \ \int_{1}^{\frac{1}{2^{d}\delta} }   \
 \frac{(\log y)^t}{y} \ \dd y \\[2ex]
& = & \delta \ + \ \delta
\ \sum_{t=0}^{d-2}\frac{1}{(t+1)!}
\left(\log\frac{1}{2^{d}\delta}\right)^{t+1}  \\[2ex]
& = & \delta \ + \ \delta
\ \sum_{t=1}^{d-1}\frac{1}{t!}
\left(\log\frac{1}{2^{d}\delta}\right)^{t}  \, = \,  \delta
\ \sum_{t=0}^{d-1}\frac{1}{t!}
\left(\log\frac{1}{2^{d}\delta}\right)^{t}  \, .
\end{eqnarray*}
This completes the induction step and so establishes  \eqref{computation_Vk} for any $d  \ge 1 $.

\end{proof}

\medskip

\subsection{Proof of Theorem~\ref{metricresult}   \label{secmetricresult}}

We start by summarising  various basic facts concerning  $\beta$-transformations that will be  utilized in proving   Theorem~\ref{metricresult}. So, with this in mind,
let $\beta$ be a real number such that  $|\beta|>1$ and let  $T_{\beta}: [0,1)\to [0,1)$ be the associated $\beta$-transformation  given by $$T_{\beta}(x)=\beta x \ (\text{mod}\ 1).$$
 For obvious reasons, when $\beta<-1$ the corresponding transformation is refereed to as the negative $\beta$-transformation.
 %\sv{We remark that in the literature, the (negative) $\beta$-transformations are defined on the interval $[0,1)$, and here we identify $[0,1)$ as $\T$.}

For $ \beta > 1$, R\'{e}nyi \cite[Theorem 1]{R57} proved that there exists a unique  $T_\beta$-invariant measure $\mu_\beta$ (the so called Parry measure) that is strongly equivalent to  (one-dimensional) Lebesgue measure $m_1$ on the unit interval. Clearly, this implies that $\mu_\beta$ is  absolutely continuous with respect to Lebesgue measure. For the negative $\beta$-transformation, Ito and Sadahiro \cite{IS2009} proved that there is a unique $T_{\beta}$-invariant measure $\mu_{\beta}$ (the so called Yrrap measure) which is absolutely continuous with respect to Lebesgue measure $m_1$. The following proposition implies  that $\mu_{\beta}$ is in fact strongly equivalent to $m_1$ when $\beta \le   -g$.  Note that in view of  \cite[Theorem 16]{IS2009},
% Let $\beta>1$ be a real number. The $\beta$-transformation $T_{\beta}: [0,1]\to [0,1]$ is given by $$T_{\beta}(x)=\beta x - \lfloor \beta x\rfloor.$$ %$$T_{\beta}(x)=\beta x \ (\text{mod}\ 1).$$
% Similarly, the negative $\beta$-transformation
%  $T_{-\beta}: [0,1]\to [0,1]$ is defined as $$T_{-\beta}(x)=-\beta x+  \lfloor \beta x\rfloor+1.$$ %$$T_{-\beta}(x)=-\beta x \ (\text{mod}\ 1).$$
% R\'{e}nyi \cite{R57} proved that there exists a unique $T_\beta$-invariant measure $\mu_\beta$ (the so called Parry measure) strongly equivalent to the Lebesgue measure. For negative $\beta$-transformation, Ito and Sadahiro \cite{IS2009} proved that there is a unique $T_{-\beta}$-invariant measure $\mu_{-\beta}$ (called Yrrap measure) which is absolutely continuous with respect to the Lebesgue measure.
for the negative   $\beta$-transformation the corresponding density function of $\mu_{\beta}$  with respect to $m_1$ is given by
 $$h_{\beta}(x):=\frac{1}{F(\beta)}\sum_{n\ge 0\atop T_{\beta}^n1\ge x}\frac{1}{\beta^n},$$
 where
 $$F(\beta):=\int_0^1\sum_{n\ge 0\atop T_{\beta}^n1\ge x}\frac{1}{\beta^n}dx$$
 is the normalising function.
\begin{proposition}\label{negativebetameasure}
	Let $\beta \le -g$. Then the Yrrap measure  $\mu_{\beta}$ is strongly equivalent to the  Lebesgue measure $m_1$ on the unit interval. More precisely, there exists a constant $C(\beta)>0$ such that
	$$C(\beta)^{-1}\le h_{\beta}(x)\le C(\beta)    \qquad \forall 	\quad  x\in (0,1) \, . $$
\end{proposition}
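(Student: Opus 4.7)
The plan is to establish the upper and lower bounds on $h_\beta$ separately. The upper bound is elementary via absolute convergence, while the substantive lower bound follows by carefully accounting for positive and negative contributions to the alternating series defining $h_\beta$, and is sharp precisely at $|\beta|=g$. Throughout we take $F(\beta)>0$ as given, since it is the normalization constant of the Yrrap probability measure \cite{IS2009}.

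For the upper bound, the triangle inequality applied to the series defining $h_\beta$ yields
$$h_\beta(x) \,\le\, \frac{1}{F(\beta)}\sum_{n=0}^\infty \frac{1}{|\beta|^n} \,=\, \frac{|\beta|}{F(\beta)\,(|\beta|-1)}.$$

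For the lower bound, set $S(x) := F(\beta)\,h_\beta(x) = \sum_{n\ge 0,\, T_\beta^n 1 \ge x}\beta^{-n}$. Since $T_\beta^0 1 = 1 > x$ for every $x\in[0,1)$, the $n=0$ term always contributes $+1$. The remaining terms split by parity of $n$: those with even $n\ge 2$ contribute $+|\beta|^{-n}$ (as $\beta^n>0$), whereas those with odd $n\ge 1$ contribute $-|\beta|^{-n}$ (as $\beta^n<0$). Discarding the non-negative even contributions and using the geometric series gives
$$S(x) \,\ge\, 1 \,-\, \sum_{\substack{n\ge 1\\ n\text{ odd}}}\frac{1}{|\beta|^n} \,=\, 1 - \frac{|\beta|}{|\beta|^2-1} \,=\, \frac{|\beta|^2 - |\beta| - 1}{|\beta|^2-1}.$$
The numerator is strictly positive precisely when $|\beta|>g$, so this yields a uniform positive lower bound for every $\beta<-g$, and combined with the upper bound completes the strong-equivalence argument in that range.

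At the boundary $\beta=-g$ the elementary bound degenerates to $S(x)\ge 0$, and this is the main (and only) obstacle. The plan here is to handle the boundary case by direct computation, exploiting that the orbit of $1$ under $T_{-g}$ is eventually periodic: a short check using $g^2=g+1$ shows $T_{-g}^n 1 = 2-g$ for every $n\ge 1$, whence $S(x)=1$ on $(2-g,1)$ and $S(x) = \sum_{n\ge 0}(-g)^{-n} = g/(g+1) = 1/g$ on $(0,2-g]$, both strictly positive. The threshold $|\beta|\ge g$ is sharp in the sense discussed in the paragraph preceding the proposition: for $\beta\in(-g,-1)$ the support $K(\beta)$ is only a proper subset of $[0,1]$, and so strong equivalence with Lebesgue on $[0,1]$ fails in that range.
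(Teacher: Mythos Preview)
Your proof is correct and follows essentially the same approach as the paper's: both treat $\beta=-g$ separately by explicit computation of the eventually periodic orbit of $1$, and for $\beta<-g$ both obtain the lower bound $S(x)\ge 1-\sum_{n\text{ odd}}|\beta|^{-n}=\frac{|\beta|^2-|\beta|-1}{|\beta|^2-1}$ by keeping the $n=0$ term and discarding the even contributions. The only minor differences are that the paper also bounds $F(\beta)$ explicitly (yielding a concrete $C(\beta)$) and uses the slightly sharper upper bound $S(x)\le\sum_{n\text{ even}}|\beta|^{-n}$ rather than the full triangle inequality, but these do not affect the substance of the argument.
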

\begin{proof}   For  $\beta=-g$,    it is easily verified  that
	$$h_{\beta}(x)=\begin{cases}
		\frac{1}{3+\beta}\ \ &\text{if}\quad  0<x\le 2+\beta\\[2ex]
		\frac{-\beta}{3+\beta}\ \ &\text{if}\quad   2+\beta<x< 1.
	\end{cases}
$$	
Hence,  we can choose $C(\beta)=\frac{1}{3+\beta}$.
Without loss of generality, assume $\beta<-g$ and note that
$$ 1+\sum_{n=0}^\infty\frac{1}{\beta^{2n+1}} \ = \ \frac{\beta^2+\beta-1}{\beta^2-1}   \  \le \   F(\beta)  \  \le  \   \sum_{n=0}^\infty \frac{1}{\beta^{2n}}  \ = \ \frac{\beta^2}{\beta^2-1} \, . $$
It then immediately  follows that
$$h_{\beta}(x)  \ \le \  \frac{1}{F(\beta)}\sum_{n=0}^\infty \frac{1}{\beta^{2n}} \  =\ \frac{\beta^2}{\beta^2+\beta-1}$$
and
$$h_{\beta}(x) \ \ge \  \frac{1}{F(\beta)}\left(1+\sum_{n=0}^\infty\frac{1}{\beta^{2n+1}}\right) \ = \ \frac{\beta^2   + \beta-1}{\beta^2}.$$
Hence, we can choose  $
C(\beta)= \frac{\beta^2}{\beta^2+\beta-1}  \,$  .
\end{proof}

The following result identifies  the nature of the support of the $T_\beta$-invariant measure $\mu_\beta$.
%In what follows, we let $\beta$ be a real number with $|\beta|>1$ and denote by $T_\beta$ the corresponding $\beta$- (or negative $\beta$-) transformation.  Let $\mu_\beta$ be the associated Parry (or Yrrap) measure and let $K(\beta)$ denote its support.

\begin{proposition}\label{Prop:support}
	Let $\beta$ be a real number with $|\beta|>1$, $\mu_\beta$ be the associated Parry-Yrrap measure and let $K(\beta)$ denote the support of $\mu_\beta$. Then
\begin{equation*}
K(\beta)=[0,1] \qquad {\rm if  }  \quad  \beta \in (-\infty, -g] \, \cup  \,  (1, +\infty) \, , \end{equation*}
and  $K(\beta)$ is a finite union of closed intervals contained in $[0,1]$ if $\beta \in (-g, -1)$.  Furthermore,  $\mu_\beta$ is mixing with respect to $T_{\beta}$  and is equivalent to  the measure $m|_{K(\beta)}$; i.e. the one-dimensional Lebesgue measure restricted to $K(\beta)$.
\end{proposition}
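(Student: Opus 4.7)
The plan is to treat the three assertions---identification of the support $K(\beta)$, equivalence of $\mu_\beta$ with $m|_{K(\beta)}$, and mixing---separately in three ranges of $\beta$, in ascending order of difficulty.

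For $\beta > 1$, I would invoke Parry's explicit formula for the density of $\mu_\beta$, namely
\[
h_\beta(x) \;=\; \frac{1}{F(\beta)} \sum_{n \ge 0 \,:\, T_\beta^n(1) > x} \beta^{-n},
\]
together with the classical uniform bound $1 \le h_\beta(x) \le \beta/(\beta-1)$ on $[0,1)$. This immediately gives $K(\beta) = [0,1]$ and the strong equivalence of $\mu_\beta$ with Lebesgue measure on $[0,1]$. Mixing (indeed, exactness) of $T_\beta$ with respect to $\mu_\beta$ is then Parry's original theorem; alternatively, it follows from Proposition~\ref{mainohyes}, since $T_\beta$ is piecewise expanding with a single mixing component $[0,1]$ of period one.

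For $\beta \le -g$, the groundwork is already laid by Proposition~\ref{negativebetameasure}, which supplies two-sided bounds on the Yrrap density $h_\beta$ on $(0,1)$. This at once gives $K(\beta) = [0,1]$ and the strong equivalence $\mu_\beta \asymp m_1$. For mixing, I would appeal to Proposition~\ref{mainohyes} combined with the observation that for $|\beta|\ge g$ the negative $\beta$-transformation is topologically transitive on $[0,1]$, so the decomposition of parts (ii)--(iii) of that proposition collapses to a single mixing component of period one; alternatively, one can cite the analysis of the negative $\beta$-shift in \cite{IS2009}.

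The main obstacle is the remaining regime $-g < \beta < -1$. Here $h_\beta$ vanishes on a nontrivial part of $[0,1]$, so $K(\beta)$ is a proper subset whose geometry must be pinned down directly. The plan is to exploit the finite Markov structure of the negative $\beta$-transformation in this range: following \cite{IS2009}, the forward $T_\beta$-orbits of the unique discontinuity point and of the boundary points of the natural invariant domain visit only finitely many subintervals, so they generate a finite partition $\{I_1,\ldots,I_\ell\}$ each of whose elements is mapped onto a union of elements. Taking the union of these intervals gives the required representation of $K(\beta)$ as a finite union of closed intervals. On this set, the density $h_\beta$ is bounded above and away from zero by a termwise estimate analogous to the one in Proposition~\ref{negativebetameasure}, yielding the strong equivalence $\mu_\beta \asymp m|_{K(\beta)}$. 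For the mixing claim one verifies that the induced transition matrix on the partition is primitive---equivalently, that $(T_\beta,\mu_\beta)$ admits no nontrivial cyclic decomposition---and then invokes part (iv) of Proposition~\ref{mainohyes} applied to $T_\beta|_{K(\beta)}$. Establishing the finiteness and primitivity of this Markov structure uniformly over $\beta \in (-g,-1)$ is the step I expect to require the most bookkeeping and to constitute the main technical burden of the proof.
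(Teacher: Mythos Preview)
Your treatment of the ranges $\beta>1$ and $\beta\le -g$ is essentially the paper's: both use the explicit density bounds (R\'enyi for positive $\beta$, Proposition~\ref{negativebetameasure} for $\beta\le -g$) to obtain $K(\beta)=[0,1]$ and strong equivalence with $m_1$.

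The divergence, and a genuine gap, is in the range $-g<\beta<-1$. You propose to build a \emph{finite} Markov partition by following the forward $T_\beta$-orbits of the discontinuity and the boundary points. For generic $\beta$ in this range those orbits are infinite, so no finite Markov partition exists; the ``finiteness'' you need is simply not available from \cite{IS2009}. The fact that the support of the acim is a finite union of closed intervals is a general property of piecewise monotone interval maps and does \emph{not} come from a finite Markov structure. The paper accordingly bypasses this entirely by citing Keller~\cite{K1978} for the finite-union-of-intervals statement and Liao--Steiner~\cite[Theorem~2.1]{LS12} for the explicit description. Likewise, your plan to establish mixing by checking primitivity of a transition matrix presupposes the same nonexistent finite partition; the paper instead cites exactness directly (Rokhlin~\cite{R1961} for $\beta>1$, Liao--Steiner~\cite[Corollary~2.3]{LS12} for $\beta<-1$) and then Wagner~\cite{W1979} for the equivalence $\mu_\beta\sim m|_{K(\beta)}$.

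A smaller point: for $\beta\le -g$ you argue that topological transitivity forces the decomposition in Proposition~\ref{mainohyes}(ii)--(iii) to collapse to a single mixing component of period one. Transitivity gives a single ergodic component, but does not by itself exclude a nontrivial period $p_1>1$; you still need exactness or topological mixing, which is exactly what \cite{LS12} provides and what the paper invokes.
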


From this point onwards, given $\beta$ with $|\beta|>1$, $\mu_\beta$ will always denote the associated Parry-Yrrap measure and  $K(\beta)$ will denote the support of $\mu_\beta$.

%
%
%\begin{remark}
%From this point onwards, given $\beta$ with $|\beta|>1$, $\mu_\beta$ will always denote the associated Parry-Yrrap measure and  $K(\beta)$ will denote the support of $\mu_\beta$.
%\end{remark}

\begin{proof}
If $\beta\in (-\infty, -g] \, \cup  \,  (1, +\infty)$, the result immediately follows from the fact that  $\mu_{\beta}$ is strongly equivalent to the  Lebesgue measure $m_1$ on $[0,1]$  --  this is
Proposition \ref{negativebetameasure} for $\beta \le -g$ and as already mentioned established by   R\'{e}nyi \cite[Theorem 1]{R57} for $\beta > 1$.   In general,  Keller \cite{K1978} proved for any $|\beta|>1$ the support of $\mu_\beta$ is a finite union of closed intervals.   The precise description of the closed intervals for the non-trivial case when   $\beta \in (-g, -1)$ was given by Liao $\&$ Steiner \cite[Theorem 2.1]{LS12}.

 For the furthermore part, it follows via  Rokhlin \cite[Section 4.5]{R1961} for $\beta>1$ and  Liao $\&$ Steiner \cite[Corollary 2.3]{LS12} for  $\beta<-1$, that the  $T_\beta$-invariant measure $\mu_\beta$ is exact and hence mixing  with respect to $T_\beta$ (\cite[Proposition 12.2]{PY1998}).  In turn, by the
 %By Liao and Steiner \cite[Corollary 2.4.]{LS12} (see also Faller \cite[Theorem 6.23]{Fa08})
the Main Result in  \cite{W1979} it follows that  $\mu_\beta$ is equivalent to $m_1$ restricted to  $K(\beta)$.
\end{proof}

Of course, as indicated in above proof of the proposition, for $\beta \in (-\infty, -g] \, \cup  \,  (1, +\infty)$ we have that $\mu_{\beta}$ is strongly equivalent to the  Lebesgue measure $m_1$ on $[0,1]$ rather than simply equivalent.
The next statement is a  straight forward consequence of Proposition~\ref{Prop:support} and basic  properties of product measures.
\begin{lemma}\label{productmeasure}
Let $T$ be a real, non-singular matrix transformation of the torus $\mathbb{T}^d$. Suppose that $T$ is diagonal and all  eigenvalues $\beta_1,\beta_2,\dots,\beta_d$ are of modulus strictly larger than $1$.  Then the product measure $$\nu:=\mu_{\beta_1}\times\mu_{\beta_2}\times\cdots\times\mu_{\beta_d}$$ has support $
	K:=\prod_{i=1}^d K(\beta_i)
$ and is a $T$-invariant mixing measure that is equivalent to  $m_d|_K$; i.e. the $d$-dimensional  Lebesgue measure restricted to $K$.
\end{lemma}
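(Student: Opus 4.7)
The plan is to verify the four assertions (support, $T$-invariance, mixing, equivalence to $m_d|_K$) in turn, relying on Proposition~\ref{Prop:support} for the one-dimensional building blocks and on standard product-measure arguments for the passage to dimension $d$.

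First, because $T$ is diagonal with eigenvalues $\beta_i$, its action on $\T^d$ factors coordinatewise as $T(x_1,\dots,x_d)=(T_{\beta_1}(x_1),\dots,T_{\beta_d}(x_d))$. By Proposition~\ref{Prop:support}, each $\mu_{\beta_i}$ has support $K(\beta_i)$, is $T_{\beta_i}$-invariant, is mixing with respect to $T_{\beta_i}$, and is equivalent to $m_1|_{K(\beta_i)}$. The support of a product of Borel probability measures on Polish spaces equals the product of the supports, so $\mathrm{supp}(\nu)=\prod_{i=1}^d K(\beta_i)=K$. The $T$-invariance of $\nu$ is an immediate consequence of Fubini/Tonelli applied to characteristic functions of rectangles: for any measurable rectangle $A_1\times\cdots\times A_d$,
\[
\nu\bigl(T^{-1}(A_1\times\cdots\times A_d)\bigr)=\prod_{i=1}^d \mu_{\beta_i}\bigl(T_{\beta_i}^{-1}A_i\bigr)=\prod_{i=1}^d \mu_{\beta_i}(A_i)=\nu(A_1\times\cdots\times A_d),
\]
and since rectangles generate the product $\sigma$-algebra, the identity $\nu\circ T^{-1}=\nu$ extends to all measurable sets via a standard monotone class / $\pi$--$\lambda$ argument.

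Next, for the equivalence $\nu\sim m_d|_K$: by Proposition~\ref{Prop:support} the density $d\mu_{\beta_i}/dm_1$ is supported on $K(\beta_i)$ and is strictly positive there (up to a Lebesgue-null set). Hence the density
\[
\frac{d\nu}{dm_d}(x_1,\dots,x_d)=\prod_{i=1}^d \frac{d\mu_{\beta_i}}{dm_1}(x_i)
\]
is strictly positive $m_d$-a.e.\ on $K$ and vanishes off $K$, giving $\nu\sim m_d|_K$.

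The main step is proving that $\nu$ is mixing with respect to $T$. The standard route is to first verify the mixing identity on measurable rectangles and then extend. Given rectangles $E=E_1\times\cdots\times E_d$ and $F=F_1\times\cdots\times F_d$,
\[
\nu\bigl(E\cap T^{-n}F\bigr)=\prod_{i=1}^d \mu_{\beta_i}\bigl(E_i\cap T_{\beta_i}^{-n}F_i\bigr)\xrightarrow[n\to\infty]{}\prod_{i=1}^d \mu_{\beta_i}(E_i)\,\mu_{\beta_i}(F_i)=\nu(E)\,\nu(F),
\]
using mixing of each coordinate factor. Mixing then extends to finite disjoint unions of rectangles by linearity, and finally to arbitrary $E,F$ in the product Borel $\sigma$-algebra by an approximation argument: for any $\varepsilon>0$, choose finite disjoint unions of rectangles $E',F'$ with $\nu(E\triangle E'),\nu(F\triangle F')<\varepsilon$, and use the elementary bound
\[
\bigl|\nu(E\cap T^{-n}F)-\nu(E'\cap T^{-n}F')\bigr|\le \nu(E\triangle E')+\nu(F\triangle F')<2\varepsilon,
\]
valid because $\nu$ is $T$-invariant, together with the analogous bound for the product of measures. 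Letting $n\to\infty$ and then $\varepsilon\to 0$ yields the mixing property for $E,F$.

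The only point requiring mild care is the approximation step, but this is entirely standard once $\nu$-measurability and $T$-invariance are in hand, so no serious obstacle is expected. The outcome is that $\nu$ satisfies all the properties claimed in the lemma.
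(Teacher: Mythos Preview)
Your proof is correct and follows essentially the same approach as the paper: both rely on Proposition~\ref{Prop:support} for the coordinate-wise properties of the $\mu_{\beta_i}$ and then pass to the product by standard arguments. The only difference is cosmetic: the paper handles mixing by citing the proof of \cite[Theorem~1.24]{W-GTM79}, whereas you write out the rectangle-plus-approximation argument explicitly, which is exactly what that reference does.
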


\begin{proof} By definition, the product measure $\nu$ has support $K=\prod_{i=1}^d K(\beta_i)$ and in view of  Proposition~\ref{Prop:support} it is $T$-invariant and
equivalent to the measure $m_d|_K$.  Furthermore, since each $\mu_{\beta_i}$ is mixing with respect to $T_{\beta_i}$,  on following the proof of \cite[Theorem 1.24]{W-GTM79} it is easily verified that $\nu$ is mixing with respect to $T$.
\end{proof}

 We now show that Theorem~\ref{metricresult} is an easy consequence of  Lemma~\ref{productmeasure} together with  Proposition~\ref{mainohyes} and Theorem~\ref{gencountthm}. In the next section we will provide a  self-contained and essentially elementary proof of Theorem~\ref{metricresult}  in the case the collection  $\mathcal{C}$  of subsets of $K$ is restricted to rectangles  with sides parallel to the axes. This is an important class of ``target sets'' that clearly satisfy the bounded  property
$ ({\boldsymbol{\rm B}}) $ and the proof will avoid appealing to  Proposition \ref{mainohyes}.

%For general collection $\mathcal{C}$ of subsets $E$ of $K$ satisfying the bounded property $ ({\boldsymbol{\rm B}}) $, we apply  Propositions \ref{gencountthm} and \ref{mainohyes} and Theorem \ref{measure-matrix}.   %  to give an alternative proof of Theorem \ref{metricresult}.

\begin{proof}[Proof of Theorem \ref{metricresult} using Proposition \ref{mainohyes}]
Lemma \ref{productmeasure} implies  that  product measure $\nu$ is a $T$-invariant mixing measure equivalent to $m_d|_K$. Hence by Proposition~\ref{mainohyes}, $\nu$ is exponentially mixing with respect to $(T, \mathcal{C})$ where $\mathcal{C}$ is any collection  of subsets $E$ of $K$ satisfying the bounded property $ ({\boldsymbol{\rm B}}) $. Then the main counting part of Theorem \ref{metricresult} immediately follows from Theorem~\ref{gencountthm}.  For the ``furthermore'' part we first recall (as we have done so several times)  that  by R\'{e}nyi \cite[Theorem 2]{R57} and Proposition~\ref{negativebetameasure}, for any $\beta\in (-\infty, -g]\cup (1, +\infty)$  the Parry-Yrrap measure $\mu_\beta$ is strongly equivalent to the  Lebesgue measure $m_1$ on  $[0,1]$.
 It thus  follows that the product measure $\nu$ is strongly equivalent to $m_d$  restricted  to $K=\T^d$. The upshot of this is that we  can replace $\nu$ by $m_d$ in the first part of the theorem and thereby completes the proof of Theorem~\ref{metricresult}.
%On the other hand, remark that the balls $R_n= R(\va, \psi(n))$ with respect to the maximum norm are cubes: $B(a_1,\psi(n))\times \cdots \times B(a_d, \psi(n))$. Thus by the strong equivalence  between $\mu$ and $m_d$, we have
%\[
%\sum_{n=1}^\infty \mu(R_n) \asymp \sum_{n=1}^\infty m_d(R_n)=2\sum_{n=1}^\infty \psi(n).
%\]
%Therefore, Theorem \ref{metricresult} (ii) follows immediately from Theorem \ref{metricresult} (i).
%The seconde part of the theorem follows from the fact that when all $\beta_i$ are in $(-\infty, -g]\cup (1, +\infty)$, $\mu$ is strongly equivalent to $m_d$. Further, $\mu$ has support $\T^d$ and is mixing. Then we can finish the proof by applying Theorem \ref{measure-matrix}.
% thus an acim with the whole space as its support. On the other hand, since for each $1\le   i \le   d$, $\mu_i$ is mixing with respect to $T_{\beta_i}$, we have $\mu$ is mixing with respect to $T$. Then we can apply Theorem \ref{measure-matrix} to conclude the result for the measure $\mu$.
\end{proof}

\subsubsection{Theorem~\ref{metricresult} for rectangles: a self contained and direct  proof \label{specialthm4} }

In the proof of Theorem~\ref{metricresult} given above, we make use  of  Proposition~\ref{mainohyes} to deduce that the product  measure $\nu$ is exponentially mixing with respect to $(T, \mathcal{C})$ where $\mathcal{C}$ is any collection  of subsets $E$ of $K$ satisfying the bounded property $ ({\boldsymbol{\rm B}}) $.   The following result enables us to bypass the proposition in the case $\mathcal{C}$   is restricted to rectangles  with sides parallel to the axes.

\begin{lemma}\label{productmeasure-exp-m}
Let $T$ be a real, non-singular matrix transformation of the torus $\mathbb{T}^d$. Suppose that $T$ is diagonal and all  eigenvalues $\beta_1,\beta_2,\dots,\beta_d$ are of modulus strictly larger than $1$.  Let $\nu:=\mu_{\beta_1}\times\mu_{\beta_2}\times\cdots\times\mu_{\beta_d}$ be the product measure and   $
	K:=\prod_{i=1}^d K(\beta_i)$ be its support.     Then $\nu$ is exponentially mixing with respect to $(T, \mathcal{R})$ for any collection  $\mathcal{R}$ of rectangles of $K$ with sides parallel to the axes.
\end{lemma}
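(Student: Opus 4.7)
The plan is to exploit the fact that $T$ is diagonal and rectangles with sides parallel to the axes have a product structure matching that of $\nu$, so that the whole statement factors coordinatewise into $d$ one-dimensional exponential mixing statements. More precisely, if $E=\prod_{i=1}^d E_i$ and $F=\prod_{i=1}^d F_i$ are two rectangles in $\cR$ with $E_i,F_i\subseteq K(\beta_i)$, then since $T=\mathrm{diag}(\beta_1,\dots,\beta_d)$ we have $T^{-n}(F)=\prod_i T_{\beta_i}^{-n}(F_i)$, so
\[
\nu\big(E\cap T^{-n}(F)\big) \;=\; \prod_{i=1}^d \mu_{\beta_i}\!\big(E_i\cap T_{\beta_i}^{-n}(F_i)\big).
\]
Thus everything reduces to the one-dimensional analogue: for each $|\beta|>1$, the Parry--Yrrap measure $\mu_\beta$ is exponentially mixing with respect to $(T_\beta,\mathcal{I})$ where $\mathcal{I}$ is the family of (traces on $K(\beta)$ of) intervals.

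The first step will be to record this 1D exponential mixing in a self-contained way, avoiding an appeal to Proposition~\ref{mainohyes}. For $\beta>1$ this is classical work of Hofbauer--Keller on the exponential decay of correlations for the Parry measure, and for $\beta<-1$ it follows from Liao--Steiner \cite[Cor.~2.3]{LS12} combined with the Keller transfer-operator machinery (once one knows $\mu_\beta$ is exact). What one needs concretely is the existence of $\gamma_i\in(0,1)$ and $C_i>0$ such that for all intervals $E_i,F_i\subseteq K(\beta_i)$ and all $n\ge 1$,
\begin{equation}\label{prop-plan-1D}
\mu_{\beta_i}\!\big(E_i\cap T_{\beta_i}^{-n}(F_i)\big) \;=\; \mu_{\beta_i}(E_i)\mu_{\beta_i}(F_i)\;+\;\theta_{i,n}\,\mu_{\beta_i}(F_i),
\end{equation}
with $|\theta_{i,n}|\le C_i\gamma_i^n$; the bounded variation of the characteristic function of an interval is what feeds into the functional-analytic decay estimate, so the constants can be made uniform over the collection of all intervals.

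The second step is purely algebraic. Multiplying the $d$ identities \eqref{prop-plan-1D} together, one obtains
\[
\nu\big(E\cap T^{-n}(F)\big) \;=\; \nu(F)\prod_{i=1}^d\big(\mu_{\beta_i}(E_i)+\theta_{i,n}\big).
\]
Expanding the product and subtracting off the leading term $\nu(E)\nu(F)=\nu(F)\prod_i\mu_{\beta_i}(E_i)$ gives a sum of $2^d-1$ terms, each of which contains at least one factor $\theta_{i,n}$ and is otherwise bounded by $1$ in absolute value (since each $\mu_{\beta_j}(E_j)\le 1$). Setting $\gamma:=\max_i\gamma_i<1$ and $C:=(2^d-1)\max_i C_i$, one concludes
\[
\big|\nu\big(E\cap T^{-n}(F)\big)-\nu(E)\nu(F)\big| \;\le\; C\gamma^n\,\nu(F),
\]
which is exactly the exponential mixing property \eqref{mixing} for $(T,\mathcal{R})$, with implied constant independent of the choice of rectangles in $\mathcal{R}$.

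The main obstacle is really concentrated in the first step: the paper is trying to avoid Proposition~\ref{mainohyes}, so one has to give (or cite) a stand-alone derivation of \eqref{prop-plan-1D} for both positive and negative $\beta$. For $\beta>1$ this is essentially standard and can be sourced from Philipp \cite{P1967} or Hofbauer--Keller; for $\beta<-1$ the cleanest route seems to go through the exactness of $\mu_\beta$ proved in \cite{LS12} together with a direct transfer-operator argument on functions of bounded variation restricted to $K(\beta)$. Once \eqref{prop-plan-1D} is in hand, the coordinatewise factoring and the finite combinatorial expansion above are routine, and the lemma follows with $\gamma=\max_i\gamma_i$.
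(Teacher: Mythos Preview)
Your proposal is correct and follows essentially the same approach as the paper: factor $\nu(E\cap T^{-n}F)$ coordinatewise using the product structure of $\nu$ and the diagonality of $T$, invoke one-dimensional exponential mixing of each $\mu_{\beta_i}$ on intervals, then multiply the factors and expand to isolate the error term with $\gamma=\max_i\gamma_i$. The only difference worth noting is in the sourcing of the one-dimensional input for $\beta<-1$: the paper cites Baladi's general spectral-gap result \cite[Theorem~3.4]{Baladi} for piecewise monotone expanding interval maps directly, whereas you propose assembling it from exactness (via \cite{LS12}) plus a transfer-operator argument on BV---both routes are valid, but the Baladi reference is cleaner and more self-contained.
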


It is easily  seen that by appealing to Lemma~\ref{productmeasure-exp-m} instead of Proposition~\ref{mainohyes} in the ``Proof of Theorem \ref{metricresult} using  Proposition \ref{mainohyes}'' given in the previous section,   we obtain the special case of  Theorem \ref{metricresult} in which $\mathcal{C}$ is any collection  $\mathcal{R}$ of rectangles of $K$ with sides parallel to the axes.  In other words, it enables us to provide a self-contained and direct proof of Theorem~\ref{metricresult} for rectangular target sets.

\begin{proof}[Proof of Lemma~\ref{productmeasure-exp-m}]
First, we assert that for any $\beta \in \mathbb{R}$ with $|\beta|>1$,  $\mu_\beta$ is exponentially mixing with respect to $(T_\beta, \mathcal{C})$ where $\mathcal{C}$ is any collection of intervals of $K(\beta)$.  When $\beta>1$, this is the classical result of Gel'fond \cite[Formula (12)]{G1959} and Philipp \cite[Lemma 7]{P1967}. When $\beta<-1$, the assertion  follows from a general result of Baladi \cite[Theorem 3.4]{Baladi} for piecewise monotone expanding interval maps.

%To be precise, this is proved only for positive
 %$\beta$ but  the argument in all likelihood can be adapted for negative  $\beta$.
  % In any case  Proposition \ref{mainohyes} in \S\ref{secMTT} implies $\mu_\beta$ is exponentially mixing.

%	Denote by $\mu_{\beta_i}$ such measures for $\beta_i$ ($i=1,\cdots,d$) in the lemma. Let $0<\gamma_i<1$ be the corresponding exponentially mixing constant.
%%	({\color{red} do you still have exponentially mixing property for negative beta case for $\beta\in (-\infty, -g]$}? YES!)
%	Then the product measure $\mu:=\mu_{\beta_1}\times\mu_{\beta_2}\times\cdots\times\mu_{\beta_d}$ is $T$-invariant.
	
We  now verify that the product  $\nu$ satisfies the desired exponentially mixing property. With this in mind,  let  $E=B(z_1,r_1)\times \cdots \times B(z_d, r_d)$ and $F=B(z'_1,r'_1)\times \cdots \times B(z'_d, r'_d)$  be any two rectangles in $\mathcal{R}$.
	Then
	\begin{eqnarray*}
		\nu(E\cap T^{-n}F)
=\int\chi_{E}(x_1,\dots,x_d) \, \chi_{F}(T_{\beta_1}^nx_1,\dots,T_{\beta_d}^nx_d) \ d\mu_{\beta_1}(x_1)\cdots \, d\mu_{\beta_d}(x_d)  \, ,
			\end{eqnarray*}
		and by the property of the product measure the right hand side   equals %noting that we choose the norm of maximal value,
\begin{eqnarray*}		
\int\chi_{B(z_1,r_1)}(x_1)\chi_{B(z_1',r_1')}(T_{\beta_1}^nx_1)d\mu_{\beta_1}(x_1)\cdots\int\chi_{B(z_d,r_d)}(x_d)\chi_{B(z_d',r_d')}(T_{\beta_d}^nx_d)d\mu_{\beta_d}(x_d).
\end{eqnarray*}
It then follows by the exponentially mixing property of $T_{\beta_i}$,  that
\begin{eqnarray*}		
		\nu(E\cap T^{-n}F)=\prod_{i=1}^d  \Big( \, \mu_{\beta_i}(B(z_i,r_i)) \, \mu_{\beta_i}(B(z_i',r_i'))+O(\gamma_i^{n}) \, \mu_{\beta_i}(B(z_i',r_i')) \, \Big)
\end{eqnarray*}	
where $0<\gamma_i<1$.    This together with the fact that $$\nu(E)=\mu_{\beta_1}(B(z_1,r))\cdots\mu_{\beta_d}(B(z_d,r)) \, ,  \quad \nu(F)=\mu_{\beta_1}(B(z_1',r'))\cdots\mu_{\beta_d}(B(z_d',r'))$$ and $\mu_{\beta_i}(B(z_i,r))\le 1$  $(1\le i\le d)$ implies that
	\begin{eqnarray*}	
		\nu(E\cap T^{-n}F)=\nu(E)\nu(F)+O(\gamma^n)\mu(F)    \quad {\rm where}  \quad \gamma=\max\{\gamma_1,\dots,\gamma_d\}  \, .
	\end{eqnarray*}
In other words, \eqref{mixing} holds for rectangles in $\mathcal{R}$ and we are done.
 % and then also is true for any measurable set $F$ since each term in \eqref{mixing} includes $F$ and $F$ can be generated by balls.
\end{proof}

 %We first give a direct proof for the case where $\mathcal{C}$ is a collection of rectangles in $K$ by using Theorem~\ref{gencountthm}.

%
%\medskip
%
%\begin{remark} \hi{I DON'T THINK WE NEED THIS REMARK}
%We have the same result of Theorem \ref{metricresult} (ii) for the shrinking target problem of Euclidean balls:
%$$
%B_n := B(\va, \psi(n))= \Big\{ \vx  \in \mathbb{T}^d :   \Big({\sum_{i=1}^d |a_i-x_i|^2}\Big)^{1/2} \le \psi(n)   \Big\}.
%$$
%To prove this, one needs only notice that
%$$
%R(\va, \psi(n)/\sqrt{d}) \subset B(\va, \psi(n))\subset R(\va, \psi(n)).
%$$
%\end{remark}

%\begin{remark}  \label{LS}
%A consequence of Proposition \ref{negativebetameasure} is that the support of the Yrrap measure  $\mu_\beta$ is the full  interval $[0,1)$ when $\beta \in(-\infty, -g]$. However, by Rokhlin \cite{R1961}, and  Liao and Steiner \cite[Theorem 2.1]{LS12}, when $\beta \in(-g, -1)$, the support of $\mu_\beta$  is a finite union of disjoint closed intervals which is not full.  Thus it is natural to preclude this interval when it comes to establishing  measure theoretic statements with respect to Lebesgue measure; such as, Theorem~\ref{metricresult}.
%\end{remark}

\subsubsection{Extending Corollary \ref{Cor1} to incorporate eigenvalues in $[-1,1]$ } \label{metricresultext}
 We show that on assuming $\psi(n)\to 0$ as $n\to\infty$, we can naturally extend Corollary \ref{Cor1} to the situation that all the  eigenvalues of $T$  are in $(-\infty, -g] \, \cup  \,   [-1, +\infty)$; that is to say,  we can incorporate the interval  $[-1,1]$.  %In the same way, Theorem~\ref{metricresult} (i) can be evidently extended to all diagonal matrices, we omit the details.
 %Note that in view of Remark~\ref{LS}, this is best possible.
In short, assuming that $T$  has eigenvalues in $[-1,1]$, we do this in most cases  by reformulating  the shrinking target set  $W(T,\psi,\va)$ in terms of related ``lower dimensional'   sets $W(T_*,\psi,\va_*)$  for which   Corollary~\ref{Cor1} in its current form  is applicable.  In other words, all the eigenvalues of the related transformation $T_*$  are  in $(-\infty, -g] \, \cup  \,   (1, +\infty)$.
%Let $T$ be a real, non-singular, diagonal  matrix transformation of the torus $\mathbb{T}^d$ with  eigenvalues $\beta_1,\beta_2,\dots,\beta_d$  in $(-\infty, -g]\cup [-1, +\infty)$.  Without loss of generality, assume that there is at least one eigenvalue, say $\beta_1$,  in  $[-1,1]$.
Let $T$ be a real, non-singular, diagonal  matrix transformation of the torus $\mathbb{T}^d$ with  eigenvalues $\beta_1,\beta_2,\dots,\beta_d$  in $(-\infty, -g]\cup [-1, +\infty)$.  Without loss of generality, assume that there is at least one eigenvalue in  $[-1,1]$.  In fact, let us assume that there is only one such eigenvalue, say $\beta_1$.  It should be self-evident how to deal with the situation in which that are  multiple eigenvalues in  $[-1,1]$.
   We consider the  three separate  situations  depending on whether $ |\beta_1| < 1$, $\beta_1 =1 $ or  $\beta_1 =-1 $.  Note that $T=\text{diag}(\beta_1,\cdots,\beta_d)$ and so for any $\vx=(x_1,\dots,x_d)\in \T^d$
$$T^n(\vx)=(T_{\beta_1}^n(x_1),T_{\beta_2}^n(x_2),\cdots,T_{\beta_d}^n(x_d)).$$

%\noindent \textbf{(i)} One of eigenvalues is strictly less than $1$ in modulus. Without loss of generality, we assume $|\beta_1|<1$.
\noindent \textbf{(i)} \emph{We assume $|\beta_1|<1$.}  We distinguish between three subcases:
\begin{itemize}
  \item Case 1: $\beta_1=0$. Then it is easily verified that
  \begin{eqnarray*}
	W(T,\psi,\va)=
	\begin{cases}
		\emptyset &\text{if}\ a_1\neq 0\\[1ex]
		\mathbb{T}\times W(T_*,\psi, \va_*) &\text{if}\ a_1=0,
	\end{cases}
\end{eqnarray*}
where
\begin{equation} \label{upp} T_*:=\text{diag}(\beta_{2},\cdots,\beta_d)  \quad {\rm   \ and   \   } \quad  \va_*:=(a_{2},\cdots,a_d)\in\mathbb{T}^{d-1}    \, .  \end{equation}
\item Case 2: $0<\beta_1<1$. Then $T_{\beta_1}^n(x_1)=\beta_1^nx_1\to 0$ as $n\to\infty$ for any $x_1\in\mathbb{T}$. Note that zero is the unique fixed point of $T_{\beta_1}$. Thus,
\[
\big\{x_1\in [0,1) : T_{\beta_1}^n(x_1)< \psi(n)\  \hbox{ for infinitely many }n\in \mathbb{N} \big\}  \, =  \, I_* \ \text{ or } \ \overline{I_*},
\]
where $I_*:=[0, \min\{1,\tau\})$, $\overline{I_*}$ is the closure of the set $I_*$ and \begin{equation} \label{up}
\tau:=
\limsup\limits_{n\to\infty}  \, \psi(n)  \, |\beta_1|^{-n}
.\end{equation}
Indeed, the set under consideration  is $I_*$ if $\psi(n) \, |\beta_1|^{-n}\le    \tau   $ for infinitely many $  n\in \mathbb{N}$ and $\overline{I_*}$ otherwise.  Hence, with $ T_*$  and $ \va_*$ as in \eqref{upp}, it follows that

\begin{eqnarray*}
	W(T,\psi,\va)=
	\begin{cases}
		\emptyset &\text{if}\ \ a_1\neq 0\\[1ex]
		\{0\}\times W(T_*,\psi, \va_*) &\text{if}\ \  a_1=0\ \text{and}\ \tau=0,\\[1ex]
I_*\times W(T_*,\psi, \va_*)  \text{ \  \ or \  \ } \overline{I_*}\times W(T_*,\psi, \va_*) &\text{if}\ \  a_1=0\ \text{and}\ \tau>0.
	\end{cases}
\end{eqnarray*}
\item Case 3: $-1<\beta_1<0$. Let $F$ be the countable set of all preimages of zero; that is,
    $$
    F:=\big\{x_1\in\mathbb{T}: T_{\beta_1}^n(x_1)=0\ \text{for some}\ n\ge   0\big\}.
    $$
      If $x_1\notin F$, then $T_{\beta_1}x_1=\beta_1x_1+1$ and $$T_{\beta_1}^nx_1=\beta_1^nx_1+\beta_1^{n-1}+\beta_1^{n-2}+\cdots+\beta_1+1\to \frac{1}{1-\beta_1}$$
    as $n\to\infty$.  Note that zero and $\frac{1}{1-\beta_1}$ are the fixed points of $T_{\beta_1}$. Thus,
  \[
 \big\{x_1\in [0,1)\setminus F : T_{\beta_1}^n(x_1)< \psi(n)\ \ \text{for infinitely many}\ n\in\mathbb{N}\big\} = J_*\setminus F  \ \text{ or } \  \overline{J_*}\setminus F ,
\]
where $J_*:=\left( (\frac{1}{1-\beta_1}-\tau, \frac{1}{1-\beta_1}+\tau)\cap [0, 1)\right)$, $\overline{J_*}$ is the closure of the set $J_*$ and $\tau$ is given by \eqref{up}.  Indeed, the set under consideration  is $J_*\setminus F$   if
$\psi(n) \, |\beta_1|^{-n} \le    \tau $ for infinitely many $ n\in \mathbb{N}$ and $\overline{J_*}\setminus F$ otherwise.
Hence, with $ T_*$  and $ \va_*$ as in \eqref{upp}, it follows that
\begin{eqnarray*}
	W(T,\psi,\va)=
	\begin{cases}
F\times W(T_*,\psi, \va_*) &\text{if}\ \ a_1=0\\[1ex]
		\emptyset &\text{if}\  \ a_1\neq \frac{1}{1-\beta_1}, a_1\neq 0\\[1ex]
		\{\frac{1}{1-\beta_1}\}\times W(T_*,\psi, \va_*) &\text{if} \ \ a_1=\frac{1}{1-\beta_1}\ \text{and}\ \tau=0,\\[1ex]
J_*\times W(T_*,\psi, \va_*)  \text{ \ \  or  \  \  } \overline{J_*} \times W(T_*,\psi, \va_*) &\text{if} \  \ a_1=\frac{1}{1-\beta_1}\ \text{and}\ \tau>0.
	\end{cases}
\end{eqnarray*}
\end{itemize}
The upshot is that in order to determine the size of $W(T,\psi, \va)$ or the behaviour of the associated counting function we need to investigate the shrinking target set $W(T_*,\psi, \va_*) \subseteq \T^{d-1}$.   Recall, that for ease of discussion we are assuming that $\beta_1$ is the only eigenvalue of $T$ in  $[-1,1]$.  Thus, all the eigenvalues of the related transformation $T_*$  are  in $(-\infty, -g] \, \cup  \,   (1, +\infty)$ and so Corollary~\ref{Cor1} is applicable with $T$ replaced by $ T_*$, $\va$ replaced by $\va_* $ and $d$ replaced by $d-1$.

\bigskip

\noindent \textbf{(ii)} \emph{We assume $\beta_1=1$.}
For any $\vx=(x_1,\dots,x_d)\in \mathbb{T}^d$ and $ \va=(a_1,\cdots,a_d)\in \mathbb{T}^d$, the condition $T^n(\vx)\in B(\va,\psi(n))$
%$$|T^n\vx-\va|=\max\Big\{|x_1-y_1|,|T_{\beta_2}^n(x_2)-y_2|,\cdots,|T_{\beta_d}^n(x_d)-y_d|\Big\}<\psi(n)$$
implies that $$\|x_1-a_1\|<\psi(n).$$  Now,  since we are assuming that $\psi(n)\to 0$ and $n \to \infty$, it follows that for any $\vx\in W(T,\psi, \va)$ we must  have that  $x_1=a_1$. Hence, with $ T_*$  and $ \va_*$ as in \eqref{upp}, it follows that
$$W(T,\psi,\va)=\{a_1\}\times W(T_*,\psi, \va_*)  \, . $$
As  in (i), the upshot is that we need to investigate the shrinking target set $W(T_*,\psi, \va_*) \subseteq \T^{d-1}$ and that  for this setup  Corollary~\ref{Cor1} is applicable with $T$ replaced by $ T_*$, $\va$ replaced by $\va_* $ and $d$ replaced by $d-1$.

%\noindent \textbf{(ii)} \emph{We assume $\beta_1=1$.}
%Then, for any $\va=(a_1,\cdots,a_d)\in \mathbb{T}^d$, the condition $T^n(\vx)\in B(\va,\psi(n))$
%%$$|T^n\vx-\va|=\max\Big\{|x_1-y_1|,|T_{\beta_2}^n(x_2)-y_2|,\cdots,|T_{\beta_d}^n(x_d)-y_d|\Big\}<\psi(n)$$
%implies $$|x_1-a_1|<\psi(n).$$ Since $\psi(n)\to 0$, for any $x\in W(T,\psi, \va)$, we have $x_1=a_1$. Let  $k$ be the multiple number of eigenvalue $1$. Without loss of generality, assume $\beta_1=\beta_2=\cdots=\beta_k=1$, and $\beta_{k+1},\cdots,\beta_d\in (-\infty, -g]\cup (1, +\infty)$. Then for any $\va=(a_1,\cdots,a_d)\in\mathbb{T}^d$,
%$$W(T,\psi,\va)=\{a_1\}\times\cdots\times \{a_k\}\times W(T_*,\psi, \va_*) $$
%where $T_*=\text{diag}(\beta_{k+1},\cdots,\beta_d)$ and $\va_*=(a_{k+1},\cdots,a_d)\in\mathbb{T}^{d-k}$. By Corollary \ref{Cor1}, we have
%\begin{eqnarray*}
%	m_{d-k}(W(T_*,\psi,\va_*))=
%	\begin{cases}
%		0 &\text{if}\ \sum_{n=1}^\infty \psi(n)^{d-k}<\infty\\
%		1 &\text{if}\ \sum_{n=1}^\infty \psi(n)^{d-k}=\infty.
%	\end{cases}
%\end{eqnarray*}
%Thus, we have reduced the study of $W(T,\psi,\va)$ to the study of a shrinking target set $W(T_*,\psi,\va_*)$ in a lower dimensional space whose Lebesgue measure is determined by Corollary~\ref{Cor1}.

\bigskip

\noindent \textbf{(iii)} \emph{We assume $\beta_1=-1$.}
 Then, for $x_1\neq 0$
\begin{eqnarray*}
T_{\beta_1}^n(x_1)=
	\begin{cases}
		x_1 &\text{if}\ \  n \ \text{is even}\\[1ex]
		-x_1+1 &\text{if}\   \  n\ \text{is odd}.
	\end{cases}
\end{eqnarray*}
If $a_1=0$, then the same reasoning as in (ii) shows that $ W(T,\psi, \va)=\{0\}\times W(T_*,\psi, \va_*)$  and so we can apply  Corollary~\ref{Cor1} with $T$ replaced by $ T_*$, $\va$ replaced by $\va_* $ and $d$ replaced by $d-1$. If $a_1\neq 0$, it follows that for any  $\vx\in W(T,\psi, \va)$  we must have that  $x_1=a_1$ or $x_1=-a_1+1$. Thus,  with $ T_*$  and $ \va_*$ as in \eqref{upp},  we have that
 $$W(T,\psi, \va)=\{a_1\}\times W'(T_*,\psi, \va_*) \ \bigcup \  \{1-a_1\}\times W''(T_*,\psi,\va_*),$$
where
 $$ W'(T_*,\psi, \va_*):=\{\vx\in\mathbb{T}^{d-1}: T_*^n(\vx)\in B(\va_*,\psi(n))\ \ \text{for infinitely many even}\ n\in\mathbb{N}\},$$
 $$ W''(T_*,\psi, \va_*):=\{\vx\in\mathbb{T}^{d-1}: T_*^n(\vx)\in B(\va_*,\psi(n))\ \ \text{for infinitely many odd}\ n\in\mathbb{N}\}.$$

 Now observe that $W'(T_*,\psi, \va_*)$ and $W''(T_*,\psi, \va_*)$ are shrinking target sets with respect to the   transformation $T_*\circ T_*$ of the torus $\mathbb{T}^{d-1}$.  Indeed,
$$
 W'(T_*,\psi, \va_*)=\{\vx\in\mathbb{T}^{d-1}: (T_*\circ T_*)^{n}(x)\in B(\va_*,\psi(2n))\ \ \text{for infinitely many}\ n\in\mathbb{N}\},
$$
and
$$
 W''(T_*,\psi, \va_*)=\{\vx\in\mathbb{T}^{d-1}: (T_*\circ T_*)^{n}(x)\in T_*^{-1}B(\va_*,\psi(2n+1))\ \ \text{for infinitely many}\ n\in\mathbb{N}\}.
$$
Now in essence,   the above procedure removes the presence of the problematic  eigenvalue  $\beta_1=-1$ but still none of the results we have established for matrix transformations are applicable to the above shrinking targets sets. The reason for this is simple.  The composition map  $T_*\circ T_*$  is not a matrix transformation of the torus $\mathbb{T}^{d-1}$.  The upshot is that we need to appeal to Theorem~\ref{gencountthm} and its corollary directly.  In view of the argument set out in \S\ref{specialthm4} that provides a self-contained proof of Theorem~\ref{metricresult} for rectangular target sets, it is easily seen that the desired counting and  measure statements for $W'(T_*,\psi, \va_*)$ and   $W''(T_*,\psi, \va_*)$  would follow on showing that  the product measure $\nu$ is exponentially mixing with respect to $(T_*\circ T_*, \mathcal{R})$.    To establish the latter,   we first recall  (see the start of the proof of Lemma~\ref{productmeasure-exp-m})  that for any $\beta \in \mathbb{R}$ with $|\beta|>1$,  $\mu_\beta$ is exponentially mixing with respect to $(T_\beta, \mathcal{C})$ where $\mathcal{C}$ is any collection of intervals of $K(\beta)$.  Hence by definition,  there exists a constant $0<\gamma<1$ such that
$$
\mu_\beta\big(E \cap T^{-2n}F\big)= \mu_\beta(E) \mu_\beta(F) +O(\gamma^{2n})  \, \mu_\beta(F)
$$
for any $E, F\in \mathcal{C}$. In other words,
$$
\mu_\beta\big(E \cap (T\circ T)^{-n}F\big) = \mu_\beta(E) \mu_\beta(F) +O\big((\gamma^2)^n\big) \,  \mu_\beta(F).
$$
and so  $\mu_\beta$ is exponentially mixing with respect to $(T_\beta \circ T_\beta, \mathcal{C})$.  Then, on mimicking the proof of Lemma~\ref{productmeasure-exp-m}  with $T$ replaced by $T_*\circ T_*$ and $d$ replaced by $d-1$, we conclude that $\nu$ is exponentially mixing with respect to $(T_*\circ T_*, \mathcal{R})$  for any collection  $\mathcal{R}$ of rectangles of $K$ with sides parallel to the axes.

\medskip

\subsubsection{A nifty ``reduction'' argument} \label{redarg}

 In this section we show that when $T$ is an integer matrix transformation, the diagonal assumption in Theorem~\ref{metricresult}  can be relaxed to $T$ is diagonalizable over $\Z$.  So,
 suppose $T$ is diagonalizable over $\mathbb{Z}$.  Then by definition, there exist a nonsingular integer  matrix $P$ and a diagonal integer matrix $D$   such that product matrix relationship $P\cdot T=D \cdot P$ holds.  In turn, there exists an invertible mapping
 \begin{equation} \label{nifty} \phi: \mathbb{T}^d\to\mathbb{T}^d  \quad \hbox{\ such  that \ } \quad \phi\circ T=D\circ \phi  \, . \end{equation}  Obviously, the diagonal entries of $D$ are the eigenvalues of $T$ and we assume  these integers are of modulus strictly larger than $1$.

 Now recall that for diagonal transformations such as $D$, the ``Proof of Theorem~\ref{metricresult} using Proposition~\ref{mainohyes}'' makes key use of the fact that the product measure $\nu$  is $D$-invariant and is exponentially mixing with respect to $(D, \mathcal{C})$ where $\mathcal{C}$ is any collection  of subsets $E$ of $K$ satisfying the bounded property $ ({\boldsymbol{\rm B}}) $.   We claim that the image measure $\nu\circ\phi$ is $T$-invariant and exponentially mixing with respect to $(D, \mathcal{C})$.  The proof of Theorem~\ref{metricresult} can then be modified in the obvious manner to deal with the more general (integer) situation  in which $T$ is diagonalizable over $\mathbb{Z}$.   To establish the claim, first note that  for any measurable set $A\subset \mathbb{T}^d$, on using the fact that $\nu$  is $D$-invariant it follows that
 $$
 \nu\circ\phi(T^{-1}A)=\nu(\phi(T^{-1}A))=\nu(D^{-1}(\phi(A)))
 =\nu(\phi(A))=\nu\circ\phi(A).
 $$
Thus,  $\nu\circ\phi$ is $T$-invariant.
	Next, since $\phi$ is linear the (inverse) image of any collection $\mathcal{C}$ of subsets $E$ of $K$ satisfying the bounded property $ ({\boldsymbol{\rm B}}) $ also satisfies the bounded property $ ({\boldsymbol{\rm B}})$.
	Hence, for any $E, F\in \mathcal{C}$, noting that $\phi(E\cap T^{-n}F)=\phi(E)\cap \phi(T^{-n}F)$, it follows that there exists a constant $0<\gamma<1$ such that %and measurable set $F\subset \mathbb{T}^d$,
	\begin{eqnarray*}
	\nu\circ\phi(E\cap T^{-n}F)& = & \nu\big(\phi(E\cap T^{-n}F)\big)=\nu\big(\phi(E)\cap\phi(T^{-n}F) \big)\\[1ex]
		&=&\nu\big(\phi(E)\cap D^{-n}\circ\phi(F)\big)\\[1ex]
	&=&\nu\big(\phi(E)\big) \, \nu\big(\phi(F)\big)\, + \, O\big(\gamma^n\big) \ \nu\big(\phi(F)\big)\\[1ex]
		&=&\nu\circ\phi(E)  \ \nu\circ\phi(F)  \, +  \, O(\gamma^n) \ \nu\circ\phi(F)  \,
	\end{eqnarray*}
as desired.    Note that the third displayed line uses the fact that $\nu$  is exponentially mixing with respect to $(D, \mathcal{C})$.

\medskip

\begin{remark}	
We remark that for real non-integer matrices,  we cannot in general use the above argument to extend Theorem~\ref{metricresult} to the situation that $T$ is diagonalizable over $\Z$. In short,  the commutative property  $\phi\circ T=D\circ \phi$ may not be true since the product matrix relationship $P\cdot T=D\cdot P$ does not guarantee that %$P(T(x))=D(P(x))$ for which the twice module one for $T$ and $P$ can not exchange.
\[
P(T\vx \ {\rm mod} \ 1) \ {\rm mod} \ 1 = T(P\vx \ {\rm mod} \ 1) \ {\rm mod} \ 1.
\]
\end{remark}

\section{Establishing  dimension results for matrix transformations \label{estdim}}

We begin with a brief account  in which we  bring together various statements concerning Hausdorff measure and dimension that we will utilise in the course of establishing Theorems~\ref{dimresult} and  \ref{dimensionalone}.

\subsection{Preliminaries}
We start by defining Hausdorff measure and dimension
for completeness and for establishing some notation. Let $X$ be a subset of $\R^d$.   For $\rho
> 0$, a countable collection $ \left\{B_{i} \right\} $ of
Euclidean balls in  $\R^{d}$ of diameter $d_i \le   \rho $ for each
$i$ such that $X \subset \bigcup_{i} B_{i} $ is called a $ \rho
$-cover for $X$.  Let $s$ be a non-negative number and define $$
 {\cal H}^{s}_{ \rho } (X)
  \; = \; \inf \left\{ \sum_{i} d_i^s
\ :   \{ B_{i} \}  {\rm \  is\ a\  } \rho {\rm -cover\  of\ } X
\right\} \; , $$ where the infimum is taken over all possible $
\rho $-covers of $X$. The {\it s-dimensional Hausdorff measure}
${\cal H}^{s} (X)$ of $X$ is defined by $$ {\cal H}^{s} (X) =
\lim_{ \rho \rightarrow 0} {\cal H}^{s}_{ \rho } (X) = \sup_{ \rho
> 0} {\cal H}^{s}_{ \rho } (X)
$$ \noindent and the {\it Hausdorff dimension} dim $X$ of $X$ by
$$ \dim_{\rm H} \, X = \inf \left\{ s : {\cal H}^{s} (X) =0 \right\} =
\sup \left\{ s : {\cal H}^{s} (X) = \infty \right\} \, . $$

\vskip 9pt

%Strictly speaking, in the standard definition of Hausdorff measure
%the  $\rho$--cover by cubes is replaced by non--empty subsets in
%$\R^k$ with diameter at most $\rho \,$. It is easy to check that
%the resulting measure is comparable to ${\cal H}^{s} $ defined
%above and thus the Hausdorff dimension is the same in both cases.

\noindent Further details and alternative definitions of Hausdorff measure  and dimension can be found in \cite{F,MAT}.   It is easily verified (see \cite[Corollary~2.4]{F}) that the Hausdorff dimension of a set is invariant under bi-Lipschitz maps.

\begin{lemma} \label{lip}
Let $X$ be a subset of $\R^d$ and $f: X \to \R^d$  be a bi-Lipschitz map; i.e.
$$
c_1 \, |x-y|  \le  | f(x) - f(y) |  \le c_2 \, |x-y|    \qquad (x,y \in X )
$$
where $ 0 < c_1 \le c_2  < \infty $, then   $\dim_{\rm H}  f(X) = \dim_{\rm H}  X \, $.
\end{lemma}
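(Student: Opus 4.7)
The plan is to establish the two inequalities $\dim_{\rm H} f(X) \le \dim_{\rm H} X$ and $\dim_{\rm H} X \le \dim_{\rm H} f(X)$ by transporting covers back and forth through $f$ and using the scaling behaviour of the $s$-dimensional Hausdorff pre-measure $\mathcal{H}^s_\rho$. The upper Lipschitz bound gives one direction and the lower Lipschitz bound gives the other.

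For the first inequality, I would start with an arbitrary $\rho$-cover $\{B_i\}$ of $X$ by Euclidean balls of diameters $d_i \le \rho$. Since $f$ is Lipschitz with constant $c_2$, each set $f(B_i \cap X)$ has diameter at most $c_2 d_i$, and hence is contained in a Euclidean ball $B_i'$ of diameter at most $2 c_2 d_i \le 2 c_2 \rho$ (centered at any point of the image). The collection $\{B_i'\}$ is therefore a $2 c_2 \rho$-cover of $f(X)$, and summing over $i$ yields
\[
\mathcal{H}^s_{2 c_2 \rho}\bigl(f(X)\bigr) \ \le \ (2 c_2)^s \sum_i d_i^s.
\]
Taking the infimum over all $\rho$-covers of $X$ and then letting $\rho \to 0$ gives $\mathcal{H}^s\bigl(f(X)\bigr) \le (2 c_2)^s \mathcal{H}^s(X)$. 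Thus whenever $\mathcal{H}^s(X) = 0$ we also have $\mathcal{H}^s\bigl(f(X)\bigr) = 0$, which by the definition of Hausdorff dimension forces $\dim_{\rm H} f(X) \le \dim_{\rm H} X$.

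For the reverse inequality I would exploit the fact that the lower bound $c_1 > 0$ forces $f$ to be injective on $X$, so the inverse map $f^{-1} \colon f(X) \to X$ is well-defined and satisfies $|f^{-1}(u) - f^{-1}(v)| \le c_1^{-1}|u-v|$ for all $u,v \in f(X)$. Applying the argument of the previous paragraph verbatim to $f^{-1}$ in place of $f$, with $c_1^{-1}$ playing the role of $c_2$, gives $\mathcal{H}^s(X) \le (2 c_1^{-1})^s \mathcal{H}^s\bigl(f(X)\bigr)$ and hence $\dim_{\rm H} X \le \dim_{\rm H} f(X)$, completing the proof. There is no real obstacle here; the only tiny subtlety is that the excerpt's definition of $\mathcal{H}^s_\rho$ uses covers by balls rather than arbitrary sets, which is handled by the routine observation that a set of diameter $d$ is contained in a ball of diameter $2d$, contributing only a harmless multiplicative constant that is irrelevant for the dimension.
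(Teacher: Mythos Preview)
Your argument is correct and is precisely the standard proof (as in Falconer, Corollary~2.4). The paper does not actually prove this lemma; it simply states it as a well-known fact with a reference to \cite[Corollary~2.4]{F}, so there is nothing further to compare.
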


We now  describe a deep and powerful mechanism for obtaining  lower bounds for the Hausdorff dimension of a large class of ``rectangular' $\limsup$ sets.

\subsubsection{Mass Transference Principle for Rectangles}  \label{RR-baby}

The discussion below is tailored to the application we have in mind. It is far from the most
general and powerful setup of the Mass Transference Principle. We begin by describing  the original  `balls to balls' principle which is all that is required for directly proving Theorem~\ref{dimresult} and  Theorem~\ref{dimensionalone}.  However, we will deduce Theorem~\ref{dimresult} from a more general statement concerning  rectangular target sets  and for this we will require the more versatile `rectangle to rectangle' principle.

To set the scene, let $X$ be a locally compact subset of $\R^d$  equipped with a non-atomic probability measure $\mu$. Suppose there
exist constants $ \delta > 0$, $0<a\le 1\le b<\infty$ and $r_0 > 0$ such
that
\begin{equation} \label{MTPmeasure}
 a \, r ^{\delta}  \ \le    \  \mu(B)  \ \le    \   b \, r
^{\delta}
\end{equation}
for any ball $B=B(x,r)$ with $x\in X$ and radius $r\le r_0$. Such a measure is said to be \emph{$\delta$-Ahlfors regular}. It is well known that if $X$ supports a $\delta$-Ahlfors regular measure $\mu$, then   $\dim_{\rm H} X = \delta$ and moreover
that $ \mu$ is strongly equivalent to $\delta$-dimensional Hausdorff measure ${\cal H}^\delta$ -- see \cite{F,MAT} for
the details. The latter implies that \eqref{MTPmeasure} is valid with $\mu$ replaced by  $\cH^s $.  Next, given $s > 0$  and a ball $B=B(x,r)$ we define
the scaled ball
$$
B^s:=B\big(x,r^{\frac{s}{\delta}}\big)\,.
$$
and so by definition $B^{\delta}=B$.  The following  Mass Transference Principle \cite{BV2006} allows us to transfer
$\cH^\delta$-measure theoretic statements for $\limsup$ subsets of
$X$ to general $\cH^s$-measure theoretic statements.

	\begin{theorem}[MTP: balls to balls]\label{MTPB}
		Let $X$ be a locally compact subset of $\R^d$ equipped with a  $\delta$-Ahlfors regular measure $\mu$.    Let $\{B_n\}_{n \in \N}$ be a sequence of balls in $X$ with radius $r(B_n)\to 0$ as $n\to\infty$.
		Let $s \ge  0 $ and suppose that
		$$\mathcal{H}^\delta \big( \limsup_{n\to\infty}B_n^s\big)=\mathcal{H}^\delta(X).$$
		Then,		$$\mathcal{H}^s\big(\limsup_{n\to\infty}B_n\big)=\mathcal{H}^s(X).$$
	\end{theorem}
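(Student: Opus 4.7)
The plan is to split according to the relationship between $s$ and $\delta$. The case $s=\delta$ is trivial since $B_n^\delta = B_n$, and the case $s>\delta$ is also immediate because the $\delta$-Ahlfors regularity of $\mu$ forces $\dim_{\rm H} X = \delta$, so $\cH^s(X) = 0$. The substantive range is $0 \le s < \delta$, on which we focus.

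Next I would localize the problem. By $\sigma$-finiteness and $\delta$-Ahlfors regularity, it suffices to show that for every ball $B_0 \subset X$ centred on $X$,
\[
\cH^s\!\left(\limsup_{n\to\infty} B_n \cap B_0\right) \ \gg \ r(B_0)^s,
\]
with an implicit constant independent of $B_0$. The standard tool for such lower bounds is the Mass Distribution Principle: if a probability measure $\mu^*$ supported on a set $E$ satisfies $\mu^*(A) \le C\,({\rm diam}\,A)^s$ for every set $A$ of sufficiently small diameter, then $\cH^s(E) \ge 1/C$. So the task reduces to producing a probability measure $\mu^*$ on $\limsup B_n \cap B_0$ with exactly this scaling property for the exponent $s$ (not $\delta$).

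The construction of $\mu^*$ is carried out through a nested Cantor-like subfamily. Starting inside $B_0$, I would use the hypothesis $\cH^\delta(\limsup B_n^s) = \cH^\delta(X)$ together with a Vitali-type covering argument to select, at level $1$, finitely many pairwise disjoint scaled balls $B_{n_i}^s \subset B_0$ whose $\cH^\delta$-measures sum to a fixed positive fraction of $\cH^\delta(B_0)$, with radii smaller than some threshold $\rho_1 \ll r(B_0)$. Inside each chosen $B_{n_i}^s$ I then pass to the actual (smaller) ball $B_{n_i}$, whose radius is $r(B_{n_i}) = r(B_{n_i}^s)^{\delta/s}$. Repeating this selection inside each $B_{n_i}$ (using that $\limsup B_n^s$ has full $\cH^\delta$-measure inside every ball), and letting the scale thresholds $\rho_k \downarrow 0$ rapidly, I obtain a decreasing sequence of compact sets whose intersection $K_\infty$ lies in $\limsup B_n \cap B_0$. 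Distributing mass uniformly (proportional to the $\cH^\delta$-measure of each chosen scaled ball) yields the desired probability measure $\mu^*$ on $K_\infty$.

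The main obstacle, and the heart of the proof, is verifying the uniform bound $\mu^*(B(x,r)) \le C r^s$ for \emph{every} small radius $r$, not only for the radii that appear explicitly in the Cantor construction. For an $r$ lying between the scales of two consecutive levels $k$ and $k+1$, one must count how many level-$k$ balls $B(x,r)$ can meet and estimate the mass carried by each. The key accounting is the identity $r(B_{n_i}) = r(B_{n_i}^s)^{\delta/s}$: the $\delta$-dimensional mass assigned via the scaled parent $B_{n_i}^s$ translates into an $s$-dimensional scaling when read against the actual ball $B_{n_i}$, which is precisely why the exponent drops from $\delta$ to $s$. Making this bookkeeping work uniformly in $r$ requires choosing the threshold sequence $\rho_k$ to decay fast enough that balls at different levels are well-separated on each relevant scale, so that the worst case reduces to intersecting $O(1)$ balls at a single level. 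Once the estimate $\mu^*(B(x,r)) \ll r^s$ is secured, the Mass Distribution Principle delivers the required lower bound $\cH^s(\limsup B_n \cap B_0) \gg r(B_0)^s \asymp \cH^s(B_0)$, and localization upgrades this to $\cH^s(\limsup B_n) = \cH^s(X)$.
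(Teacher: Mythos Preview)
The paper does not prove this theorem; it is quoted from the literature (Beresnevich--Velani, \cite{BV2006}) and used as a black box. Your sketch is a faithful outline of the original Beresnevich--Velani argument: the case split on $s$ versus $\delta$, the localization to balls $B_0$, the Cantor-type construction driven by a $K_{G,B}$-covering lemma (your Vitali-type selection), and the Mass Distribution Principle applied to the resulting probability measure, with the exponent change $\delta \to s$ arising from the relation $r(B_n) = r(B_n^s)^{\delta/s}$. So your approach matches the known proof, and there is nothing in the present paper to compare it against beyond the citation.
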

	
\bigskip

 Note that by the definition of Hausdorff dimension,  Theorem~\ref{MTPB} implies that
\begin{equation}  \label{oj} \dim_{\rm H} \big(\limsup_{n\to\infty}B_n   \big)  \ge   s \, , \end{equation}
and moreover that
$\mathcal{H}^s(\limsup_{n\to\infty}B_n) = \infty   $ if $s < \delta
$.   We now describe a recent result due to Wang $\&$  Wu \cite{WW2021} that  gives a lower bound for the  Hausdorff dimension of $\limsup$ sets defined via rectangles rather than just  balls.  So with this in mind, fix an integer $p \ge 1$ and for $1 \le i \le p$, let
 $X_i$ be a subset of $\R^{d_i}$.  Obviously, if  $B_{i}$ is a ball in $X_i$ then  $\prod_{i=1}^pB_{i}$ is in general a rectangle in the product space $ \prod_{i=1}^p   X_i$.  The following statement  for $\limsup $ sets arising from sequences of such rectangles is a much simplified version of \cite[Theorem 3.4] {WW2021}. As we shall see, it is more than adequate for our purpose.

	\begin{theorem}[MTP: rectangles to rectangles]\label{MTPRG}
	For each $1 \le i \le p$, let
 $X_i$ be a locally compact subset of $\R^{d_i}$ equipped with a  $\delta_i$-Ahlfors regular measure $\mu_i$.  Let $\{B_{i,n}\}_{n \in \N}$ be a sequence of balls in $X_i$ with radius $r(B_{i,n}) \to 0$ as $n\to\infty$ for each $1\le   i\le   p$ and   assume that there exist ${\mathbf{v}}=(v_1,\dots, v_p)\in(\mathbb{R}^+)^p$ and a sequence $\{r_n\}_{n \in \N}$ of positive real numbers such that
  \begin{align}\label{assump}
 r(B_{i,n}) = r_n^{v_i} \quad \text{for all \  }  1\leq i \leq p.
\end{align}
  Suppose that there exists $(s_1, \dots, s_p) \in \prod_{i=1}^p(0,\delta_i)$ such that
		\begin{equation}\label{fullmeasure}		\mu_1\times\cdots\times\mu_p\Big(\limsup_{n\to\infty}{\textstyle \prod_{i=1}^p}B_{i,n}^{s_i}\Big)= \mu_1\times\cdots\times\mu_p \Big( {\textstyle \prod_{i=1}^p}X_i\Big).
		\end{equation}
		%where for each $1\leq i\leq p$, $s_i={u_i \delta_i\over v_i}$ for some $0<u_i<v_i$.
		 %Let  $s_1,\dots, s_p{\color{red} >} 0$ and
%		Then for any choice of ${\mathbf{u}}=(u_1, \dots, u_p)$ with ${\color{red} 0\leq u_i <   v_i}$ and $\frac{\delta_i \, u_i}{{\color{red} v_i}}=s_i$ ($ 1 \le i \le p$), we have that
%		
Then, we have that
		\begin{equation*}
			\dim_{\rm H}\big(\limsup_{n\to\infty} {\textstyle \prod_{i=1}^p}B_{i,n} \big)\ge   \min_{ 1\leq i\leq p} s({\mathbf{u}},{\mathbf{v}},i), \,
		\end{equation*}
%where
%$$\mathcal{A}:=\{{\color{red} v_i}: \ 1\le   i\le   p\} $$
where ${\mathbf{u}}=(u_1,\dots, u_p)$ with $u_i=s_iv_i/\delta_i$ for $1\leq i\leq p$, and
$$ s({\mathbf{u}},{\mathbf{v}},i):=\sum_{k\in \mathcal{K}_1(i)} \delta_k+\sum_{k\in\mathcal{K}_2(i)} \delta_k \big(1-\frac{  v_k-u_k}{v_i}\big)
+\sum_{k\in\mathcal{K}_3(i)}\frac{u_k\delta_k}{v_i} \, ,
$$
\noindent with the sets
$$
\mathcal{K}_1(i):=\{1 \le k \le p : u_k \ge   v_i\}, \qquad \mathcal{K}_2(i):=\{ 1 \le k  \le p :  v_k \le   v_i\},  $$
and
$$ \mathcal{K}_3(i):=\{1, \dots, p\}\setminus \left(\mathcal{K}_1(i) \cup \mathcal{K}_2(i)\right)
		$$
forming a partition of $\{1, \dots, p\}$.
	\end{theorem}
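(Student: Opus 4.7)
The plan is to establish the lower bound separately for each index $i \in \{1,\dots,p\}$, i.e.\ to construct, for every $i$, a Cantor-like subset of $\limsup_{n\to\infty}\prod_{k=1}^p B_{k,n}$ whose Hausdorff dimension is at least $s(\mathbf{u},\mathbf{v},i)$, and then take the minimum. The index $i$ dictates the \emph{base scale} $r_n^{v_i}$: we will work with a grid whose $i$-th coordinate side has length comparable to $r_n^{v_i}$, which is precisely the width of the full rectangle $B_{i,n}$ in direction $i$. The shape $s(\mathbf{u},\mathbf{v},i)$ then emerges naturally by counting how much ``room'' each factor $k$ contributes at this scale.

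First, I would reduce to the situation in which the base product space is the unit cube $\prod_{i=1}^{p}[0,1]^{d_i}$ equipped with the product of the Ahlfors regular measures $\mu_i$ (Lemma~\ref{lip} together with a standard localisation lets us assume this). Next, for the chosen $i$, I would partition the shrunken rectangle $\prod_{k=1}^p B_{k,n}^{s_k}$, coordinate-by-coordinate, by rescaling each direction to a common scale $r_n^{v_i}$. Concretely: in directions $k \in \mathcal{K}_1(i)$ (where $u_k\ge v_i$) the shrunken side $r_n^{u_k}$ is already $\le r_n^{v_i}$, so we keep the whole factor and can even enlarge it back to a full ball of radius $r_n^{v_k}$; in directions $k \in \mathcal{K}_2(i)$ (where $v_k\le v_i$) the \emph{full} side $r_n^{v_k}$ is $\ge r_n^{v_i}$ and we subdivide it into $\approx r_n^{v_k-v_i}$ many sub-intervals of length $r_n^{v_i}$, then identify those sub-intervals with the $\mu_k$-mass concentrated on the shrunken factor; in directions $k \in \mathcal{K}_3(i)$ we do a similar subdivision but only inside the shrunken factor $B_{k,n}^{s_k}$. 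This gives, inside each $\prod_k B_{k,n}^{s_k}$, a family of ``aligned'' sub-rectangles of $i$-th side $r_n^{v_i}$ whose count and individual $\mu$-size are easily tracked.

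Using the full-measure hypothesis \eqref{fullmeasure}, I would then build a nested Cantor-type set $\mathcal{K}_\infty \subseteq \limsup_n\prod_k B_{k,n}$ level by level: at each level, pick a subsequence of indices $n$ so large that the covers by the shrunken rectangles cover all but a tiny fraction of the previously selected sub-rectangles; inside each selected sub-rectangle, replace the shrunken factor in every direction by the corresponding full ball $B_{k,n}$ (this is the step that moves from $R_n^{\mathbf{s}}$ to $R_n$ and where the exponents $v_k$ enter). The resulting Cantor set lies inside $\limsup_n \prod_k B_{k,n}$ by construction. Equidistribute a mass distribution $\nu$ uniformly among the chosen children at every stage, and estimate, for any ball $B(x,r)$ of small radius, the quantity $\nu(B(x,r))$ by locating $r$ between two consecutive base scales $r_n^{v_i}$: the combinatorial bookkeeping above yields $\nu(B(x,r))\lesssim r^{s(\mathbf{u},\mathbf{v},i)-\varepsilon}$ for arbitrary $\varepsilon>0$. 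The mass distribution principle then gives $\dim_{\mathrm H}\mathcal{K}_\infty \ge s(\mathbf{u},\mathbf{v},i)-\varepsilon$, and letting $\varepsilon\to 0$ and minimising over $i$ finishes the proof.

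The main obstacle, by a wide margin, is the bookkeeping in the mass estimate for $\nu(B(x,r))$. One has to correctly split a generic ball $B(x,r)$ into contributions from three very different regimes of directions (those in $\mathcal{K}_1(i)$, $\mathcal{K}_2(i)$, $\mathcal{K}_3(i)$) and show that, for \emph{any} radius $r$ lying between two consecutive base scales, the exponent of $r$ one obtains is exactly $s(\mathbf{u},\mathbf{v},i)$. The partition into $\mathcal{K}_1,\mathcal{K}_2,\mathcal{K}_3$ in the statement is dictated precisely by which of $r$, $r_n^{v_k}$, $r_n^{u_k}$ is smallest, and getting this trichotomy to collapse into the single expression for $s(\mathbf{u},\mathbf{v},i)$ is the technical core of the argument. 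Once this is done, the remaining ingredients (subsequence extraction, the Cantor construction, Ahlfors regularity of $\mu_k$, and the mass distribution principle) fit together in the standard way.
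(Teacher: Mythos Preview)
The paper does not give a self-contained proof of this theorem. Theorem~\ref{MTPRG} is stated as ``a much simplified version of \cite[Theorem 3.4]{WW2021}'', and the only justification offered is the remark that follows: one checks that the $\kappa$-scaling property in Wang \& Wu's general statement holds with $\kappa=0$ when the resonant sets are points, and then \cite[Proposition~3.1]{WW2021} converts the full-measure hypothesis \eqref{fullmeasure} into the ubiquity hypothesis required by \cite[Theorem~3.4]{WW2021}. In other words, the paper's ``proof'' is a citation, not an argument.

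Your proposal, by contrast, sketches the direct Cantor-set/mass-distribution construction that underlies the Wang--Wu result itself. The overall strategy you describe --- fix an index $i$ to set the base scale $r_n^{v_i}$, subdivide the blown-up rectangles $\prod_k B_{k,n}^{s_k}$ according to the trichotomy $\mathcal{K}_1(i),\mathcal{K}_2(i),\mathcal{K}_3(i)$, extract a nested Cantor set inside $\limsup_n\prod_k B_{k,n}$ using the full-measure hypothesis, and bound $\nu(B(x,r))$ via the mass distribution principle --- is indeed the correct skeleton, and it is essentially what \cite{WW2021} carries out (in greater generality). What your sketch buys is self-containment; what the paper's approach buys is brevity, since the heavy lifting has already been done elsewhere. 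One point to flag: your reduction ``to the unit cube $\prod_i[0,1]^{d_i}$'' is not quite right, since the $X_i$ are only assumed to support $\delta_i$-Ahlfors regular measures and need not be bi-Lipschitz to cubes; the construction must be carried out directly in the $X_i$ using Ahlfors regularity to control counts and measures, as Wang \& Wu do. Apart from that, your outline is sound but, as you acknowledge, the mass estimate bookkeeping is where all the genuine work lies, and a full proof would require substantially more detail there.
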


\medskip

Note that  since  the radius $r(B_{i,n})\to  0 $ as $n\to\infty$ for each $1\le   i\le   p$ we automatically have  that $ \lim_{n\to\infty} r_n=0 $.
Also note that if  {\eqref{assump}} holds for some
${\mathbf{v}}=(v_1,\dots, v_p)\in(\mathbb{R}^+)^p$, then it  holds for $c{\mathbf{v}}=(cv_1,\dots, cv_p)$ where $c>0$ is a constant. Thus, the choice of $v$ and therefore $u$ is not unique. However, it is easily seen that the ``dimension number'' $s({\mathbf{u}},{\mathbf{v}},i)$ is not effected; i.e.
\begin{equation*}
s({\mathbf{u}},{\mathbf{v}},i) = s(c{\mathbf{u}},c{\mathbf{v}},i) \, .
\end{equation*}
For the sake of convenience,  we  refer to a collection of rectangles $\{R_n := {\textstyle \prod_{i=1}^p}B_{i,n}  \}_{n\in \mathbb{N}}  $  with sidelengths satisfying {\eqref{assump}} as  a collection of \emph{rectangles with exponent $\mathbf{v}$.}  Thus,    Theorem~{\ref{MTPRG}} can be regarded as the  dimension analogue of the original  Mass Transference Principle for $\limsup$ sets arising from  rectangles with exponent $\mathbf{v}$.

\medskip

%r9a #&#
\begin{remark}
\label{rem9a}
It is worth mentioning that assumption {\eqref{assump}} can be easily  weakened to the following statement:  for any $1\leq i,j \leq p$,
\begin{equation*}
 \lim_{n\to\infty} {\log r(B_{i,n}) \over \log r(B_{j,n})} \ \ \text{ \ \ exists and is finite.}
\end{equation*}
To see this, let  $r_n=r(B_{1,n})$.  Then,   there exists ${\mathbf{v}}=(v_1,\dots, v_p)\in(\mathbb{R}^+)^p$ and  sequences $\{v_{i,n} \}_{n \in  \mathbb{N}  } $  for each  $1\leq i \leq p$,    such  that
\begin{equation*}
r(B_{i,n})=r_n^{v_{i,n}} \quad \text{with} \ \lim_{n\to\infty}v_{i,n}=v_i \ .
\end{equation*}
Now, given $\epsilon > 0$   consider the associated  $\limsup$ set
$
R^*_\epsilon $ obtained by replacing the balls $B_{i,n}$ by balls     $B^*_{i,n}$  with the same centre  but radius $r(B^*_{i,n})=r_n^{v_i + \epsilon}$ $(1\leq i \leq p)$. Then,
\begin{equation*}
R^*_\epsilon:=\limsup_{n\to\infty} {\textstyle \prod_{i=1}^p}B^*_{i,n}    \subset \limsup_{n\to\infty} {\textstyle \prod_{i=1}^p}B_{i,n}
\end{equation*}
and on  applying  Theorem~{\ref{MTPRG}} we obtain a lower bound for $\dim_{\mathrm{H}} R^*_\epsilon$  which converges to the desired  dimensional number as $\epsilon \to 0$.
\end{remark}

\medskip

\begin{remark}
		 In the above setup the rectangles arise as products of balls $B_i$ in $X_i$.  Balls of radius $\rho$ are of course $\rho$-neighbourhoods of special points; namely their centres. The general form of the Mass Transference Principle  of  Wang $\&$ Wu is based on the framework of ubiquitous systems.  This allows them to naturally consider the situation in which the balls $B_i$ are replaced by $\rho$-neighbourhoods of special sets called  resonant sets.  It is easily verified that the $\kappa$-scaling property for resonant sets within the general dimension statement   \cite[Theorem 3.4]{WW2021} is satisfied  with $\kappa=0$ when the resonant sets are points.  In turn,  this together with \cite[Proposition 3.1]{WW2021}  directly yields Theorem~\ref{MTPRG}.
 It is also worth mentioning that if we replace the full measure condition \eqref{fullmeasure} by the stronger `local ubiquity system for rectangles' condition  \cite[Definition 3.2]{WW2021} and also assume that the radii of the balls in the given sequence
are non-increasing,  then we are able to conclude \cite[Theorem 3.2]{WW2021} that
   $$ \cH^{s} \big(\limsup_{n\to\infty} {\textstyle \prod_{i=1}^p}B_{i,n} \big) = \cH^{s} \big( {\textstyle \prod_{i=1}^p} X_i \big)  \quad  {\rm with}  \quad s=\min_{ 1\leq i\leq p} s({\mathbf{u}},{\mathbf{v}},i).
   $$
   In other words, we obtain a complete `rectangle to rectangle' analogue of the original Mass Transference Principle.
   \end{remark}

\medskip

It is easily seen  that in the case of balls,  Theorem~\ref{MTPRG} coincides with the dimension statement \eqref{oj}. Indeed, when $p=1$ we have that  that $\mathcal{K}_1(1)=\mathcal{K}_3(1)=\emptyset$ and $\mathcal{K}_2(1)=\{1\}$.    Hence, it follows that
$$s({\mathbf{u}},{\mathbf{v}}, 1)=\delta_1(1-\frac{ v_1-u_1}{ v_1})=\frac{\delta_1 u_1}{ v_1}= s_1  \, , $$
	and so	Theorem~\ref{MTPRG} implies that
		$$\dim_{\rm H}\limsup_{n\to\infty}B_{1,n}\ge    s_1$$
as claimed.

\subsection{Proof of Theorem~\ref{dimresult} via a dimension theorem for rectangular  targets  \label{pfthm6} }

As mentioned in Remark~\ref{rectlove}, straight after the statement of Theorem \ref{dimresult},  we deduce the theorem from a more  general statement concerning rectangular target sets.  This we now describe and prove.
For $ 1 \le i \le d$, let $\psi_i: \R^+ \to \R^+$ be a real positive function. For convenience, let $\Psi := (\psi_1,\dots,\psi_d)$  and for $n \in \N$ let $\Psi(n) := (\psi_1(n),\dots,\psi_d(n))$.  Fix some point $\va:= (a_1, \ldots,  a_d)  \in \T^d$ and for $n \in \N$, let
$$
 R\big(\va, \Psi(n) \big):= \Big\{ \vx  \in \mathbb{T}^d :  \|x_i-a_i\| \le \psi_i(n)   \ (1\le   i \le   d) \Big\}.
$$
%{\color{blue} Bing: Should it be
%	$$
%	R_n = R\big(\va, \Psi(n) \big):= \Big\{ \vx  \in \mathbb{T}^d :    |x_i-a_i| \le \psi_i(n)\ \text{for}\ 1\le   i\le   d   \Big\}?
%	$$}
Clearly, $R\big(\va, \Psi(n) \big)$ is  a rectangle centred at the fixed point $\va$.
In turn, let
$$
W(T,\Psi,\va):=\big\{\vx\in\mathbb{T}^d: T^n(\vx)\in R\big(\va, \Psi(n) \big) \ \ \text{for infinitely many}\ n\in\mathbb{N}\big\}  \ . $$
It is evident that    the family of rectangular target sets $\{R\big(\va, \Psi(n) \big)\}_{n\ge   1}$ satisfy the bounded property
$ ({\boldsymbol{\rm B}}) $ and indeed the stronger property $ ({\boldsymbol{\rm P}_{\!\va}}) $ based on Gallagher's property $({\boldsymbol{\rm P}}) $  as described in \S\ref{galSV}.   Note that when $\psi:=\psi_1= \dots = \psi_d  $,  the rectangles are  squares and  so
$$
W(T,\Psi,\va)  = W(T,\psi,\va)  \, .
$$
 Also,  it is  easily verified  that if
$$
T = {\rm diag} \, (\beta_1, \ldots, \beta_d  )     \qquad   (\beta_i \in \R)  \,
$$
then
\begin{equation}  \label{yesyes}
W(T,\Psi,\va)=\big\{\vx\in\mathbb{T}^d: |T_{\beta_i}^nx_i-a_i| \le \psi_i(n)  \ (1\le   i \le   d) \ \ \text{for infinitely many}\ n\in\mathbb{N}\big\}  \, ,
\end{equation}
%{\color{blue} Bing: should it be
%	\begin{equation*}  \label{yesyes}
%		W(T,\Psi,\va)=\big\{\vx\in\mathbb{T}^d:  |T_{\beta_i}^nx_i-a_i| \le \psi_i(n) \ \ \text{for}\ 1\le   i\le   d\  \text{and infinitely many}\ n\in\mathbb{N}\big\}  \, ?
%	\end{equation*}
%}
where $T_{\beta_i}$ is the  standard $\beta$-transformation   with $\beta=\beta_i$.

\medskip

It turns out that the Hausdorff  dimension of the shrinking target set $W(T,\Psi,\va)$ is dependent on the set $\UP$ of accumulation points $ \mathbf{t}=(t_1,t_2, \ldots, t_d ) $ of the sequence $\big\{(-\frac{\log\psi_1(n)}{n},\cdots,
-\frac{\log\psi_d(n)}{n})\big\}_{n\ge   1}$.

\medskip

	\begin{theorem}\label{rectangledimresult}
Let $T$ be a real, non-singular matrix transformation of the torus $\mathbb{T}^d$. Suppose that $T$ is diagonal with all  eigenvalues $\beta_1,\beta_2,\dots,\beta_d$  strictly larger than $1$.  Assume that $1<\beta_1\le \beta_2\le\cdots\le \beta_d$.  For $ 1 \le i \le d$, let $\psi_i: \mathbb{R}^+\to \mathbb{R}^+$ be a real positive %{\color{blue} decreasing}
		function and $\va \in \mathbb{T}^d$. Assume that $\UP$ is bounded. Then
		$$\dim_{\rm H} W(T,\Psi, \va)=\sup_{\mathbf{t}\in\UP} \min_{1\le   i\le   d}\big\{\theta_i(\mathbf{t})\big\},$$
		where
		%\begin{equation*}
		%		\delta_i(\mathbf{t}):= \sum_{\beta_k\ge  \beta_i}1+\sum_{\beta_ke^{t_k}<\beta_i}\left(1-\frac{t_k}{\log\beta_i}\right)+\sum_{ \beta_ke^{t_k}\ge   \beta_i>\beta_k }\frac{\log\beta_k}{\log\beta_i}
		%	\end{equation*}
		%	and
		\begin{equation*}
			\theta_i(\mathbf{t}):=
			\sum_{k\in \mathcal{K}_1(i)}1+\sum_{k\in \mathcal{K}_2(i)}\left(1-\frac{t_k}{\log\beta_i+t_i}\right) +\sum_{k\in \mathcal{K}_3(i)}\frac{\log\beta_k}{\log\beta_i+t_i}
		\end{equation*}
and, in turn
			\begin{equation*}
			\mathcal{K}_1(i):= \{1\le   k\le   d: \log \beta_k>\log \beta_i+t_i\}, \ \mathcal{K}_2(i):= \{1\le   k\le   d: \log \beta_k+t_k\le   \log \beta_i+t_i\}, \
			\end{equation*}
and
\begin{equation*}
  \mathcal{K}_3(i):=\{1, \dots, d\}\setminus (\mathcal{K}_1(i)\cup \mathcal{K}_2(i)).
\end{equation*}
	\end{theorem}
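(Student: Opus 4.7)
The proof establishes matching lower and upper bounds for $\dim_{\rm H} W(T,\Psi,\va)$. Geometrically, $W(T,\Psi,\va)$ is a $\limsup$ set of rectangles: at each level $n$, the preimage set $A_n := T^{-n}(R(\va,\Psi(n)))$ consists of approximately $\prod_k \beta_k^n$ axis-parallel rectangles of sides $\psi_k(n)/\beta_k^n$, centred at the preimages of $\va$ under $T^n$. Along a subsequence realising an accumulation point $\mathbf{t}\in\UP$, the natural scaling exponents are $v_i:=\log\beta_i+t_i$; these, together with the partition $\cK_1(i)\cup\cK_2(i)\cup\cK_3(i)$, drive both bounds.

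For the lower bound, fix $\mathbf{t}\in\UP$ and a subsequence $\{n_k\}$ with $-\log\psi_i(n_k)/n_k\to t_i$ for every $i$; restricting the $\limsup$ to $\{n_k\}$ gives $W'\subseteq W(T,\Psi,\va)$. We apply Theorem~\ref{MTPRG} with $X_i=\T$, $\mu_i=m_1$ (so $\delta_i=1$), enumerating the preimage balls $B_{i,n_k}$ (centred at the preimages of $a_i$ under $T_{\beta_i}^{n_k}$) of radius $\psi_i(n_k)/\beta_i^{n_k}\sim r_{n_k}^{v_i}$, with $r_{n_k}=e^{-n_k}$; the along-subsequence approximation in the exponent is absorbed via Remark~\ref{rem9a}. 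Choose $s_i=\log\beta_i/v_i-\varepsilon$ for small $\varepsilon>0$, so $u_i:=s_iv_i\to\log\beta_i$ as $\varepsilon\to 0$. The scaled product rectangles $\prod_iB_{i,n_k}^{s_i}$ correspond under $T^{n_k}$ to target rectangles $R(\va,\tilde\Psi(n_k))$ in $\T^d$ with $\tilde\psi_i(n_k)=\beta_i^{n_k(1-s_i)}\psi_i(n_k)^{s_i}$, and the choice of $s_i$ ensures $\tilde\psi_i(n_k)\gtrsim e^{n_k\varepsilon v_i}$ along the subsequence, so the divergence hypothesis of Theorem~\ref{metricresult} holds along $\{n_k\}$ and the full-measure condition \eqref{fullmeasure} is satisfied. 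Theorem~\ref{MTPRG} then yields $\dim_{\rm H} W'\ge\min_i s(\mathbf{u},\mathbf{v},i)$; a direct computation gives $s(\mathbf{u},\mathbf{v},i)\to\theta_i(\mathbf{t})$ as $\varepsilon\to 0$. Supping over $\mathbf{t}\in\UP$ completes the lower bound.

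For the upper bound, at each level $n$ we cover $A_n$ by $d$-dimensional cubes of side $\delta_n:=e^{-nv_i}$, with $v_i=\log\beta_i+t_i^{(n)}$ and $\mathbf{t}^{(n)}:=(-\log\psi_k(n)/n)_k$. The number of \emph{occupied} cubes is bounded direction by direction via the tripartition: for $k\in\cK_1(i)$ the $\sim e^{n\log\beta_k}$ preimages of $a_k$ are denser than $\delta_n$, so all $\sim e^{nv_i}$ cubes are occupied; for $k\in\cK_2(i)$ the total length covered by preimage rectangles in direction $k$ is $\beta_k^n\cdot\psi_k(n)/\beta_k^n\sim e^{-nt_k}$, giving $\lesssim e^{n(v_i-t_k)}$ occupied cubes; for $k\in\cK_3(i)$ each of the $\sim e^{n\log\beta_k}$ preimages sits in a distinct cube. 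Multiplying these directional counts and invoking $\cH^s(A_n)\le(\text{\#occupied cubes})\cdot\delta_n^s$ gives
$$
\cH^s(A_n)\;\lesssim\;\exp\!\bigl(n v_i\bigl(\theta_i(\mathbf{t}^{(n)})-s\bigr)\bigr).
$$
Choosing for each $n$ the index $i(n)$ minimising $\theta_i(\mathbf{t}^{(n)})$, and using that $\UP$ is bounded with $\mathbf{t}\mapsto\min_i\theta_i(\mathbf{t})$ continuous (the contributions from the three sets $\cK_j(i)$ agree across their mutual boundaries), the series $\sum_n\cH^s(A_n)$ converges geometrically whenever $s>\sup_{\mathbf{t}\in\UP}\min_i\theta_i(\mathbf{t})$. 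Hence $\cH^s(W(T,\Psi,\va))=0$ for every such $s$, giving the matching upper bound.

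The principal difficulty is the directional occupancy count underpinning the upper bound: the correct exponent in each direction is governed by the comparisons of $\log\beta_k$ and $v_k$ with $v_i$, and is exactly the case analysis codified by $\cK_1(i),\cK_2(i),\cK_3(i)$ in the definition of $\theta_i(\mathbf{t})$. A secondary technicality is the along-subsequence verification of the divergent-sum condition required by the MTP in the lower bound; this is handled by the perturbation $s_i=\log\beta_i/v_i-\varepsilon$ together with the subsequent limit $\varepsilon\to 0$.
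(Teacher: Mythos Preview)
Your upper-bound sketch is correct and, in one respect, cleaner than the paper's. By exploiting the product structure $A_n=\prod_k T_{\beta_k}^{-n}\bigl(B(a_k,\psi_k(n))\bigr)$ and counting occupied $\delta_n$-cubes coordinate by coordinate (with the trivial bound $\delta_n^{-1}=e^{nv_i}$ in directions $k\in\cK_1(i)$, the interval-count $N_{k,n}\lesssim\beta_k^n$ in $\cK_3(i)$, and the length estimate in $\cK_2(i)$), you recover the exponent $v_i\,\theta_i(\mathbf{t}^{(n)})$ without ever invoking the spacing estimate for preimages (Fact~BW). The paper's proof instead covers each rectangle $R_{n,\vj}$ separately and then uses Fact~BW to bound how many rectangles a single ball-collection absorbs; both routes give the same exponent, but yours avoids that ingredient. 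Your passage from the level-$n$ estimate to the supremum over $\UP$ via continuity of $\mathbf{t}\mapsto\min_i\theta_i(\mathbf{t})$ is essentially the paper's finite-$\varepsilon$-cover reduction (Proposition~\ref{upperbound}) in compressed form.

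There is, however, a genuine gap in your lower bound. The full-measure hypothesis~\eqref{fullmeasure} in Theorem~\ref{MTPRG} asks that
\[
\limsup_{k\to\infty}\ \bigcup_{\vj}\ \prod_{i=1}^d B\bigl(x^{(i)}_{n_k,j_i},\,r_i^{\,s_i}\bigr)\;=\;\T^d,\qquad r_i:=\psi_i(n_k)\beta_i^{-n_k},
\]
where the $x^{(i)}_{n_k,j_i}$ are the discrete preimages of $a_i$. You verify this by asserting that the scaled balls ``correspond under $T^{n_k}$ to target rectangles $R(\va,\tilde\Psi(n_k))$'' with $\tilde\psi_i(n_k)\to\infty$, and then invoke Theorem~\ref{metricresult}. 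But once the scaled radius $r_i^{\,s_i}$ exceeds the length of the order-$n_k$ cylinder containing $x^{(i)}_{n_k,j_i}$ --- which is exactly the regime you need --- the map $T_{\beta_i}^{n_k}$ is no longer injective on that ball, and the claimed correspondence fails: the union $\bigcup_{j_i}B(x^{(i)}_{n_k,j_i},r_i^{\,s_i})$ is \emph{not} $T_{\beta_i}^{-n_k}$ of any target interval, so neither Theorem~\ref{metricresult} nor the trivial fact that $T^{-n_k}(\T^d)=\T^d$ tells you it covers. What is actually required is a direct geometric input: consecutive preimage points are at most $\approx(n_k+3)\beta_i^{-n_k}$ apart, so balls of radius $r_i^{\,s_i}\gtrsim\beta_i^{-n_k}$ do cover $\T$. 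This is precisely Fact~BW on the distribution of full cylinders for positive $\beta$-transformations, and the paper's proof of Proposition~\ref{lowerbound} uses it at exactly this step; your appeal to the measure theorem does not substitute for it.
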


\medskip

\begin{remark}
It is easily seen the value of  $\theta_i(\mathbf{t})$ remains  unchanged if replace  $>$ to $\ge   $ in $\mathcal{K}_1(i)$, and/or replace  $\le  $ by $< $ in $\mathcal{K}_2(i)$.
%We remark that for the above formula of $\theta_i(\mathbf{t})$, in the summation conditions, we can change the place of ``$=$" in ``$\le  $" or ``$\ge  $", i.e., we have the following equalities
%\begin{align*}
%\theta_i(\mathbf{t})&:=
%			\sum_{\beta_k>\beta_ie^{t_i}}1+\sum_{\beta_ke^{t_k}\le   \beta_ie^{t_i}}\left(1-\frac{t_k}{\log\beta_i+t_i}\right) +\sum_{\beta_ie^{t_i}\ge  \beta_k>\beta_ie^{t_i-t_k}}\frac{\log\beta_k}{\log\beta_i+t_i}.
%\theta_i(\mathbf{t})\\
%&=
%			\sum_{\beta_k\ge  \beta_ie^{t_i}}1+\sum_{\beta_ke^{t_k}\le   \beta_ie^{t_i}}\left(1-\frac{t_k}{\log\beta_i+t_i}\right) +\sum_{\beta_ie^{t_i}>\beta_k>\beta_ie^{t_i-t_k}}\frac{\log\beta_k}{\log\beta_i+t_i}\\
%			&= \sum_{\beta_k\ge  \beta_ie^{t_i}}1+\sum_{\beta_ke^{t_k}< \beta_ie^{t_i}}\left(1-\frac{t_k}{\log\beta_i+t_i}\right) +\sum_{\beta_ie^{t_i}>\beta_k\ge   \beta_ie^{t_i-t_k}}\frac{\log\beta_k}{\log\beta_i+t_i}.
%\end{align*}
%	\sv{In the formula of $\theta_i(\mathbf{t})$, $t_i$ can be infinity.}
\end{remark}	
\medskip

We now deduce Theorem \ref{dimresult} from Theorem \ref{rectangledimresult}.

	\begin{proof}[Proof of Theorem \ref{dimresult} modulo Theorem \ref{rectangledimresult}]

For the real positive  function $\psi$ in Theorem \ref{dimresult},  we first suppose that its lower order  at infinity is bounded; that is
	$$\lambda=\lambda(\psi):=\liminf_{n\to\infty}\frac{-\log\psi(n)}{n}<+\infty.$$
	
\noindent With this in mind, put $\psi_1=\psi_2=\cdots=\psi_d=\psi$ in the statement of Theorem \ref{rectangledimresult} and note that
any $\mathbf{t}$ in $\UP$ is of the form $\mathbf{t}=(t,t,\dots,t)$  where   $t\in\mathcal{U}(\psi)$ -- the set of accumulation points of the sequence $ \big\{  -\frac{\log\psi(n)}{n} \big\}_{n \ge 1} $. We remark that $\lambda<+\infty$ means that $\UP$ is bounded. Hence, for any $1\le   i \le   d$ we have that
		\begin{equation*}
			\theta_i(t):=\theta_i(\mathbf{t})=
			\sum_{k:\beta_k>\beta_ie^{t}}1+\sum_{k:\beta_k\le   \beta_i}\left(1-\frac{t}{\log\beta_i+t}\right) +\sum_{k:\beta_ie^{t}\ge  \beta_k>\beta_i}\frac{\log\beta_k}{\log\beta_i+t}.
		\end{equation*}
	Now let
		$$k_1:=\max\{1\le   k\le   d: \beta_k\le     \beta_i\}, $$
		and
		$$k_2:=\max\{1\le   k\le   d: \beta_k \le   \beta_ie^t\}.$$

\noindent Then, the above expression for $\theta_i(t)$  becomes
		\begin{equation} \label{po} \theta_i(t)=\sum_{k=k_2+1}^d1+\sum_{k=1}^{k_1}\frac{\log\beta_i}{\log\beta_i+t}
+\sum_{k=k_1+1}^{k_2}\frac{\log\beta_k}{\log\beta_i+t} \end{equation}
and noting that $\beta_k=\beta_i$ for $i+1\le   k\le   k_1$ whenever $k_1 > i$, it follows that
\begin{eqnarray*}\label{thetacal}			\theta_i(t)&=&\frac{\sum_{k=k_2+1}^d(\log\beta_i+t)+i\log\beta_i+\sum_{k=i+1}^{k_2}\log\beta_k}{\log\beta_i+t} \nonumber \\[2ex] &=&\frac{i\log\beta_i-\sum_{k=k_2+1}^d(\log\beta_k-\log\beta_i-t)+\sum_{k=i+1}^{d}\log\beta_k}{\log\beta_i+t}.
\end{eqnarray*}
%\red{[Need to get into the same form as Theorem \ref{dimresult}]}
The upshot of this together with  Theorem \ref{rectangledimresult} is that
		\begin{equation*}
			\dim_{\rm H} W(T,\psi, \va)= \sup_{t \in \mathcal{U}(\psi)}\min_{1\le   i\le   d}\theta_i(t).
		\end{equation*}

\noindent  Now observe that since $\theta_i(t)$ is a decreasing function in $t$, the above right hand side is equal to $ \min_{1\le   i\le   d}\theta_i(\lambda)$ where $\lambda=\lambda(\psi) $ is the lower order at infinity of the function $\psi$.   Thus, under the assumption that $\lambda$ is bounded, we have that
$$
\dim_{\rm H} W(T,\psi, \va)=\min_{1\le   i\le   d}\theta_i(\lambda)
$$
as desired.  To deal with the  case that  $\lambda=\lambda(\psi)=+\infty $, given any real number $M>0$ consider the function    $\psi_M: \mathbb{R}^+\to \mathbb{R}^+   :   x \to e^{-xM} $.  Then, by definition $ \mathcal{U}(\psi_M) = \{ M\}$ and for $M$ sufficiently large
\[
W(T, \psi, \va) \subset W(T, {\psi_M}, \va)  \,
\]
and so it follows that
\begin{equation} \label{poo}
0 \le   \dim_{\rm H} W(T,\psi, \va)\le   \dim_{\rm H} W(T, {\psi_M}, \va)\le   \min_{1\le   i\le   d}\theta_i(M).
\end{equation}
Now with reference to \eqref{po},  we have that $t=M$ and so for $M$ sufficiently large:   $k_2=d$ for $1\le   i \le   d$. Hence,  for  any   $1\le   i \le   d$
$$
\lim_{M\to\infty} \theta_i(M)=\lim_{M\to\infty}\left(\sum_{k=1}^{i}\frac{\log\beta_i}{\log\beta_i+M}
+\sum_{k=i+1}^{d}\frac{\log\beta_k}{\log\beta_i+M}\right) =0.
$$
This together with \eqref{poo} implies that
$$
\dim_{\rm H} W(T,\psi, \va)=0.
$$
	\end{proof}

\subsubsection{Proof of Theorem~\ref{rectangledimresult}  \label{pfrecdim}}

We  start with a brief discussion that sums up various fundamental  notions and  statements that we will require during the course of establishing Theorem~\ref{rectangledimresult}. The statements are concerned with the distribution of the preimages of a fixed ball under a given $\beta$-transformation $T_\beta$.   As usual, let $\beta\in \mathbb{R}$ such that $|\beta|>1$  and let % \slv{[???why not $\cP$ to be consistent with Markov subsystem of  \S\ref{poiu}??]}
	\[
	\mathcal{Q}=\left\{\Big[0, {1\over |\beta|}\Big), \dots, \Big[{k \over |\beta|}, {k+1 \over |\beta|}\Big), \dots, \Big[{\lfloor |\beta| \rfloor \over |\beta|}, 1\Big) \right\}
	\]
	be the natural partition of {$[0,1)$}. The $n$-th refinement of $\mathcal{Q}$ is defined as
	\[
	\mathcal{Q}^{n}:=\left\{ Q_{i_0} \cap T_\beta^{-1}(Q_{i_1}) \cap \cdots \cap T_\beta^{-(n-1)}(Q_{i_{n-1}}): \quad Q_{i_j}\in \mathcal{Q}\ \text{for } 0\le   j \le   n-1\right\}.
	\]
	The elements in $\mathcal{Q}^{n}$ are called \emph{cylinders of order $n$}.
Evidently, the cylinders are disjoint and the restriction of $T_{\beta}^n$ on
each cylinder is continuous linear of slop $\beta^n$. Now given a point $a \in \T$,
 consider the preimage of the  ball $B(a,r)$ under $T_{\beta}^n$. It can be verified
 that this preimage  consists of {disjoint} intervals   whose  lengths are bounded
 above by $2r|\beta|^{-n}$. Indeed, we can write
	\begin{equation}\label{preimages1}
	T_{\beta}^{-n} \big(B(a, r) \big)=\bigcup_{j=1}^{N_{n}}I_{n,j},
\end{equation}
	where each $I_{n,j}$ is an interval lying in some cylinder of order $n$ and  $N_{n}$ is the number of such intervals.

For $\beta>1$, via the work of R\'{e}nyi \cite{R57}, it  follows that the (total) number of cylinders of order $n$ of $T_\beta$ is bounded from above by $\beta^{n+1}/(\beta-1)$. Hence,
\begin{equation}\label{needlabel}
N_{n}\le  \beta^{n+1}/(\beta-1).
\end{equation}
\begin{remark} \label{-need}
It is worth mentioning  that on exploiting the
well-known fact that the topological entropy of $T_\beta$ is $\log  |\beta|$ for any $\beta$ with $|\beta| > 1 $,  we obtain  the weaker bound
\begin{equation}\label{needlabelB}
N_{n} \le  |\beta|^{n(1+\epsilon)}
\end{equation}
 for any $\epsilon > 0$ and $n$ sufficiently large. This suffices for not only proving Lemma \ref{limitexist} below but  more importantly  for establishing the upper bound for the dimension within the context of Theorems~\ref{dimensionalone} and \ref{multiplicative} in which $\beta$ is allowed to be negative.  \end{remark}	

Now, suppose $\beta>1$. Recall that a cylinder $I$ of order $n$  is said to be  \emph{full} for $T_{\beta}$  if  $T_{\beta}^n(I)= \T $.   With this in mind, Bugeaud $\&$ Wang \cite[Theorem 1.2]{BW14} proved that every $(n+1)$ consecutive cylinders of order $n$  contains at least one full cylinder.  This gives rise to the following useful fact that we will make use of on multiple occasions.

\noindent {\bf Fact BW: \ }\label{DF}  \emph{
The distance between any two consecutive full cylinders  is less than $(n+1)\beta^{-n}$.
Furthermore, since any full cylinder intersects $T_{\beta}^{-n} \big(B(a, r) \big)$ it follows that the distance between any two consecutive intervals $I_{n,j}$ and $I_{n, j+1}$ is  less than $(n+3)\beta^{-n}$; i.e.,
$${\rm dist}(I_{n,j},I_{n,j+1})\le   (n+3)\beta^{-n}.$$
}

%\begin{remark}
%The distance between any two consecutive full cylinders  is less than $(n+1)\beta^{-n}$.
%Furthermore, since any full cylinder intersects $T_\beta^{-n}(B(a,r))$ it follows  Hence, the distance between two intervals intersecting two consecutive full cylinders respectively, is less than $(n+3)\beta^{-n}$. This then implies that any two consecutive intervals $I_{n,j}$ and $A_{n, j+1}$ has distance less than $(n+3)\beta^{-n}$, i.e.,
%$$\text{dist}(I_{n,j},A_{n,j+1})\le   (n+3)\beta^{-n}.$$
%
%\end{remark}

We now move onto the task of proving  Theorem~\ref{rectangledimresult}. This will be done  by establishing  the upper and lower bounds  for $ \dim W(T,\Psi,\va) $ separately.

\begin{proposition}\label{upperbound}
		Under the setting of Theorem \ref{rectangledimresult},  we have that
		$$\dim_{\rm H} W(T,\Psi,\va)\le   \sup_{\mathbf{t}\in\UP} \min_{1\le   i\le   d}\{\theta_i(\mathbf{t})\}.$$
	\end{proposition}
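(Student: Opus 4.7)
The plan is to construct, at each level $n$, an efficient cover of $T^{-n}(R(\va,\Psi(n)))$ and, for any $s$ strictly exceeding the claimed bound, to show that the associated $s$-dimensional Hausdorff pre-measures sum to zero. Using that $T$ is diagonal,
\[
T^{-n}(R(\va,\Psi(n))) = \prod_{i=1}^d T_{\beta_i}^{-n}(B(a_i,\psi_i(n))),
\]
and by \eqref{preimages1}--\eqref{needlabel} each factor is a disjoint union of $N_{i,n} \le \beta_i^{n+1}/(\beta_i-1)$ intervals of length at most $\ell_{i,n} := 2\psi_i(n)/\beta_i^n$. Hence $T^{-n}(R(\va,\Psi(n)))$ is contained in a union of at most $\prod_{i=1}^d N_{i,n}$ axis-aligned rectangles of side lengths $(\ell_{i,n})_{i=1}^d$, and $W(T,\Psi,\va) \subset \bigcup_{n \ge N} T^{-n}(R(\va,\Psi(n)))$ for every $N$.

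Write $\mathbf{t}(n):=(-\log\psi_1(n)/n,\dots,-\log\psi_d(n)/n)$ and $S_i(n):=\log\beta_i+t_{i,n}$. First I would verify that $\mathbf{t}\mapsto\theta_i(\mathbf{t})$ is continuous: at the boundary $\log\beta_k=\log\beta_i+t_i$ the contribution $1$ from $\mathcal{K}_1$ and $\log\beta_k/(\log\beta_i+t_i)$ from $\mathcal{K}_3$ agree, and at $\log\beta_k+t_k=\log\beta_i+t_i$ both $\mathcal{K}_2$ and $\mathcal{K}_3$ contribute $\log\beta_k/(\log\beta_i+t_i)$. Since $\UP$ is bounded, $\mathbf{t}(n)$ is eventually bounded and every accumulation point lies in $\UP$; continuity then yields
\[
\limsup_{n\to\infty}\min_{1\le i\le d}\theta_i(\mathbf{t}(n)) \ \le\ \sup_{\mathbf{t}\in\UP}\min_{1\le i\le d}\theta_i(\mathbf{t}).
\]
Fix $s>\sup_{\mathbf{t}\in\UP}\min_i\theta_i(\mathbf{t})$; then for all $n$ sufficiently large there exist $i_n\in\{1,\dots,d\}$ and $\eta>0$ (independent of $n$) with $\theta_{i_n}(\mathbf{t}(n))\le s-\eta$.

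At level $n$, I would cover each rectangle by $d$-dimensional balls of common radius $r_n:=\ell_{i_n,n}\asymp e^{-nS_{i_n}(n)}$. The per-direction cost splits into three regimes dictated by the sets $\mathcal{K}_j(i_n)$ evaluated at $\mathbf{t}(n)$. If $k\in\mathcal{K}_2(i_n)$, then $\ell_{k,n}\ge r_n$, so each of the $N_{k,n}$ intervals in direction $k$ needs $\lceil \ell_{k,n}/r_n\rceil$ balls. If $k\in\mathcal{K}_3(i_n)$, then $\ell_{k,n}<r_n$ and $N_{k,n}\le 1/r_n$, so one ball per interval suffices. If $k\in\mathcal{K}_1(i_n)$, then $N_{k,n}>1/r_n$ and here Fact BW is essential: consecutive intervals of $T_{\beta_k}^{-n}(B(a_k,\psi_k(n)))$ are separated by at most $(n+3)\beta_k^{-n}\ll r_n$, so the entire preimage is contained in at most $\lceil 1/r_n\rceil$ length-$2r_n$ intervals of $\T$. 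Multiplying the per-direction counts, weighting by $r_n^s$, and absorbing a polynomial-in-$n$ factor, the total $s$-cost at level $n$ satisfies
\[
C(n) \ \le\ c_d\, n^{d}\, r_n^{s}\, \prod_{k\in\mathcal{K}_1(i_n)} \tfrac{1}{r_n}\, \prod_{k\in\mathcal{K}_2(i_n)} \tfrac{N_{k,n}\ell_{k,n}}{r_n}\, \prod_{k\in\mathcal{K}_3(i_n)} N_{k,n}.
\]
Inserting $N_{k,n}\asymp e^{n\log\beta_k}$, $\ell_{k,n}\asymp e^{-n(t_{k,n}+\log\beta_k)}$, and $r_n\asymp e^{-nS_{i_n}(n)}$, a direct computation collapses the right-hand side into $n^{d}\exp\!\big({-}nS_{i_n}(n)(s-\theta_{i_n}(\mathbf{t}(n)))\big)\le n^{d}e^{-nS_{i_n}(n)\eta}$.

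This last expression is summable over $n$ provided $S_{i_n}(n)$ stays bounded below by a positive constant, which can be arranged since at least one $S_j(n)\ge\log\beta_j>0$ is available (the degenerate sub-case where every $S_i(n)$ collapses to $0$ is handled by a separate routine argument, splitting off the directions in which $\ell_{i,n}$ does not shrink). Summing $C(n)$ for $n\ge N$, the diameters $r_n$ tend to $0$, so $\mathcal{H}^s_{\rho_N}(W(T,\Psi,\va))\to 0$ as $N\to\infty$, giving $\mathcal{H}^s(W(T,\Psi,\va))=0$ and hence $\dim_{\rm H} W(T,\Psi,\va)\le s$; letting $s$ decrease to $\sup_{\mathbf{t}\in\UP}\min_i\theta_i(\mathbf{t})$ completes the proof. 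The main obstacle is the $\mathcal{K}_1$ regime: a naive ``one ball per preimage interval'' estimate would cost $N_{k,n}\gg 1/r_n$ balls in direction $k$ and destroy the bound; it is precisely the uniform spacing of full cylinders provided by Fact BW that lets one merge the densely packed preimages into a cover of size $O(1/r_n)$.
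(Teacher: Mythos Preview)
Your proposal is correct and rests on the same covering idea as the paper --- splitting the directions into the three regimes $\mathcal{K}_1(i),\mathcal{K}_2(i),\mathcal{K}_3(i)$ and covering by cubes at the scale $\ell_{i,n}$ --- but the organisation differs. The paper first isolates the special case where all limits $\lim_n(-\log\psi_i(n)/n)=t_i$ exist (Lemma~\ref{limitexist}), proving the bound there with a single fixed index $i$; it then handles the general case by covering $\UP$ with finitely many $\varepsilon$-balls, applying the lemma on each ball, and letting $\varepsilon\to 0$ via continuity of $\theta_i$. You instead invoke continuity up front to obtain $\limsup_n\min_i\theta_i(\mathbf{t}(n))\le\sup_{\UP}\min_i\theta_i$, and then run the cover at every level $n$ with a level-dependent index $i_n$. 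This is more direct and sidesteps the intermediate lemma entirely.

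Two remarks. First, Fact~BW is \emph{not} actually essential in your $\mathcal{K}_1$ regime: in a direction-by-direction cover you only need $O(1/r_n)$ intervals of length $2r_n$ in direction $k$, and since the preimage sits inside $\T$ you can simply tile the whole circle. (The paper does use Fact~BW, but only because its bookkeeping is rectangle-by-rectangle rather than direction-by-direction: it covers one rectangle $R$ and invokes Fact~BW to lower-bound how many neighbouring rectangles that same ball-cover absorbs.) Second, ``$\UP$ bounded'' does not by itself force the sequence $\mathbf{t}(n)$ to be eventually bounded, which your $\limsup$ inequality implicitly uses; the paper's reduction step glosses over the same issue, so this is a shared technicality rather than a gap specific to your argument.
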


We first establish the proposition in the special case that $\UP$ consists of a single point.

\begin{lemma}\label{limitexist}
Under the setting of Theorem \ref{rectangledimresult},
 assume in addition that there exists $\mathbf{t}=(t_1, \dots, t_d)\in (\mathbb{R}^+)^d$ such that %$\psi_i(n)$ satisfies
\[
\lim_{n\to \infty}\frac{-\log\psi_i(n)}{n}=t_i   \quad  {\rm for \ all} \ \   1\le    i \le   d.
\]
Then 	$$\dim_{\rm H} W(T,\Psi,\va)\le    \min_{1\le   i\le   d}\{\theta_i(\mathbf{t})\}.$$
\end{lemma}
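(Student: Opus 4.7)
The approach is to build an explicit $\rho$-cover of $W(T,\Psi,\va)$ by cubes, estimate its $s$-dimensional Hausdorff pre-measure, and show that this stays finite for every $s$ strictly exceeding the claimed bound. Since $W(T,\Psi,\va)\subseteq \bigcup_{n\ge N} T^{-n}(R(\va,\Psi(n)))$ for every $N\in\N$, and since $T=\mathrm{diag}(\beta_1,\dots,\beta_d)$ yields the product decomposition
\[
T^{-n}\bigl(R(\va,\Psi(n))\bigr) \;=\; \prod_{k=1}^d T_{\beta_k}^{-n}\bigl(B(a_k,\psi_k(n))\bigr),
\]
the plan is to cover each one-dimensional factor by intervals of a common small length $r_n$ (to be chosen optimally) and then multiply the counts to get a cube cover of the product.

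Fix an index $i\in\{1,\dots,d\}$ and set $r_n := 2\psi_i(n)\beta_i^{-n}$. Under the hypothesis $\lim_{n}-\log\psi_k(n)/n=t_k$, and writing $T_k:=\log\beta_k+t_k$, one has $r_n=e^{-nT_i(1+o(1))}$ and $\ell_{k,n}:=2\psi_k(n)\beta_k^{-n}=e^{-nT_k(1+o(1))}$. By \eqref{preimages1} and the entropy bound $N_{n,k}\le \beta_k^{n(1+\eta)}$ from \eqref{needlabelB} (valid for any $\eta>0$ and $n$ large), each factor $T_{\beta_k}^{-n}(B(a_k,\psi_k(n)))$ is a union of $N_{n,k}$ intervals of length $\ell_{k,n}$, which can be covered by intervals of length $r_n$ in two natural ways: either by $\lceil 1/r_n\rceil$ intervals (the uniform strategy), or by $N_{n,k}\lceil \ell_{k,n}/r_n\rceil$ intervals (one cluster per component). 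I would take the minimum of the two bounds in each coordinate. A direct case analysis along the partition $\{1,\dots,d\}=\mathcal{K}_1(i)\cup\mathcal{K}_2(i)\cup\mathcal{K}_3(i)$ shows that the uniform bound wins on $\mathcal{K}_1(i)$ (where $\log\beta_k>T_i$ makes $N_{n,k}$ exceed $1/r_n$), while the per-component bound wins on $\mathcal{K}_2(i)\cup\mathcal{K}_3(i)$ as soon as $\eta$ is chosen smaller than $\min_{k\in\mathcal{K}_3(i)}(T_i-\log\beta_k)/\log\beta_k$.

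Multiplying the coordinate estimates, the total number of cubes of diameter $r_n$ needed at level $n$ is at most
\[
C\exp\!\Bigl(n\Bigl[|\mathcal{K}_1(i)|T_i+\sum_{k\in\mathcal{K}_2(i)}\!(T_i-t_k+\eta\log\beta_k)+(1+\eta)\!\sum_{k\in\mathcal{K}_3(i)}\!\log\beta_k\Bigr](1+o(1))\Bigr).
\]
Weighting by $r_n^s=e^{-nsT_i(1+o(1))}$ and dividing the bracket by $T_i$, the level-$n$ contribution to the Hausdorff pre-measure decays geometrically whenever $s$ strictly exceeds $|\mathcal{K}_1(i)|+\sum_{k\in\mathcal{K}_2(i)}(1-t_k/T_i)+(1+\eta)\sum_{k\in\mathcal{K}_3(i)}\log\beta_k/T_i$. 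Letting $\eta\to 0^+$ this threshold reduces to $\theta_i(\mathbf{t})$, so $\mathcal{H}^s(W(T,\Psi,\va))<\infty$ for every $s>\theta_i(\mathbf{t})$; consequently $\dim_{\rm H} W(T,\Psi,\va)\le \theta_i(\mathbf{t})$, and minimizing over $i$ yields the lemma.

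The main obstacle will be the careful bookkeeping in the three-way case analysis: one has to verify that the minimum of the two coverings behaves as asserted on each $\mathcal{K}_j(i)$, and that the $\eta$-slack from \eqref{needlabelB} together with the $o(1)$ slack from the limits defining $t_k$ can be absorbed without eroding the final inequality $s>\theta_i(\mathbf{t})$. Boundary cases where $\log\beta_k=T_i$ or $T_k=T_i$ are benign, since by the remark following Theorem~\ref{rectangledimresult} the value of $\theta_i(\mathbf{t})$ is unchanged when the defining inequalities of $\mathcal{K}_1(i)$ and $\mathcal{K}_2(i)$ are made strict.
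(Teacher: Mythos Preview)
Your argument is correct and reaches the same bound as the paper, but the route is genuinely different in one respect. The paper also covers $T^{-n}(R(\va,\Psi(n)))$ by cubes of side $r_n=2\psi_i(n)\beta_i^{-n}$, but it handles the ``crowded'' directions (your $\mathcal{K}_1(i)$) differently: rather than replacing the per-component cover by the trivial cover of the whole unit interval, it keeps the rectangle decomposition and invokes Fact~BW (Bugeaud--Wang on full cylinders) to bound the gap between consecutive preimage intervals by $(n+3)\beta_k^{-n}$, so that the cubes covering one rectangle automatically swallow many neighbouring ones. The resulting count is the same as yours. Your approach is the more elementary of the two---it uses only the entropy bound \eqref{needlabelB} and avoids Fact~BW entirely---while the paper's device, though heavier here, is exactly the tool that later drives the lower bound (Proposition~\ref{lowerbound}), so there is an economy in reusing it. One small wording point: on $\mathcal{K}_2(i)$ the per-component bound need not literally ``win'' when $t_k$ is small relative to $\eta\log\beta_k$, but since you only need an upper bound on the count and the $\eta$-terms vanish in the limit, using it there is harmless; likewise the preimage intervals have length \emph{at most} $\ell_{k,n}$ rather than exactly $\ell_{k,n}$, which only helps.
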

%\begin{remark}
%	\sv{$t_i$ can be infinity.}
%\end{remark}
\begin{proof}
Observe that we can re-write \eqref{yesyes} as
\begin{equation}\label{limsupset}	
	W(T, \Psi, \va)=\limsup_{n\to \infty}   \ T_{\beta_1}^{-n} \big( B(a_1, \psi_1(n)) \big)\times\cdots\times T_{\beta_d}^{-n} \big( B(a_d, \psi_d(n)) \big) ,
\end{equation}
where $T_{\beta_i} $ is the standard  $\beta$-transformation   with $\beta = \beta_i$.   As usual,    we do not distinguish between  $\beta$-transformations acting on the unit interval $[0,1)$  or the torus $\T$.
  The proof of Lemma~\ref{limitexist} relies on finding an ``efficient'' covering by balls  of the $\limsup$ set \eqref{limsupset}.   So with this in mind,
%		Choose $\mathbf{t}\in\mathcal{U}$ and $1\le   i\le   d$ such that $$\theta_i(\mathbf{t})=\max_{\mathbf{t}\in\mathcal{U}} \min_{1\le   i\le   d}\big\{\theta_i(\mathbf{t})\big\}.$$
%		Denote by $\{n_q\}_{q\ge   1}$ be the subsequence of $\mathbb{N}$ with
%		 $$\lim_{q\to\infty}(-\frac{\log\psi_1(n_q)}{n_q},\cdots,
%		-\frac{\log\psi_d(n_q)}{n_q})=(t_1,\cdots,t_d)=\mathbf{t}.$$
for  any  $1\le   i\le   d$,
%For any $n\ge   1$, note that $T_{\beta_i}^n$ is a piecewise linear mapping with slope $\beta_i^n$  and recall that the maximal subintervals of $\T$ on which the restriction of $T_{\beta_i}^n$ is linear are called the cylinders of order $n$ associated with $T_{\beta_i}$.
by (\ref{preimages1}) we have that
%that the preimage of the ball $B(a_i, \psi_i(n))$ under  $T_{\beta_i}^n$ consists of intervals with lengths bounded by $2\psi_i(n)\beta_i^{-n}$ from above. Formally,
\begin{equation}\label{preimages}
	T_{\beta_i}^{-n} \big(B(a_i, \psi_i(n)) \big)=\bigcup_{j=1}^{N_{i,n}}I_{n,j}^{(i)}  \, ,
\end{equation}
where each $I_{n,j}^{(i)}$ is an interval  lying in some cylinder of order $n$ and  $N_{i,n}$ is the number of such intervals.
%		with each interval $I_{n,j}^i$ being the intersection of $T_{\beta_i}^{-n} B(a_i, \psi_i(n))$ and a cylinder of order $n$.
Now for $ n \in \N$, let
$$
J_n := \big\{  \vj =(j_1, \ldots, j_d):   1 \le j_i \le N_{i,n} \ (1\le i\le d) \big\}
$$
and for $\vj \in J_n$, let
$$
R_{n,\vj} :=  I_{n,j_1}^{(1)}\times\cdots\times  I_{n,j_d}^{(d)}     \, .
$$
In turn, let
$$
R_n = \bigcup_{\vj \in J_n} R_{n,\vj}  \qquad {\rm and \ }   \qquad  \cR_n := \big\{ R_{n,\vj} \, : \,  \vj  \in J_n \big\}  \, .
$$
Then in view of \eqref{preimages}, we can re-write \eqref{limsupset}  as
$$
W(T, \Psi, \va)=   \bigcap_{N=1}^\infty \bigcup_{n=N}^\infty  R_n \,
$$
and it follows that  for any $N\ge 1$,
$$W(T,\Psi,\va) \, \subset \,  \bigcup_{n=N}^\infty  R_n \, . $$
%$$\bigcup_{n=N}^\infty\bigcup_{j_1=1}^{N_{1,n}}\cdots\bigcup_{j_d=1}^{N_{d,n}} A_{n,j_1}^{(1)}\times\cdots\times  A_{n,j_d}^{(d)} \, . $$
In other words,  the  collection $\{ \cR_n : n = N, N+1, \ldots \} $ of rectangles $R_{n,\vj}$ form a cover for the set $W(T,\Psi, \va)$.
Now, observe that along the direction of the $i$-th axis $ (1\le i\le d)$, by construction for each $1\le   j < N_{i,n}$
% the distance between any two consecutive full cylinders  is less than $(n+1)\beta_i^{-n}$.
the sides  $I_{n,j}^{(i)}$ and $I_{n,j+1}^{(i)}$ are disjoint  and thus the rectangles  in $\cR_n $ are disjoint.   On the other hand, by Fact BW %the distance between consecutive $I_{n,j}^i$ satisfies
$$\text{dist}(I_{n,j}^{(i)},I_{n,j+1}^{(i)})\le   (n+3)\beta_i^{-n} $$
and so along the direction of the  $i$-th axis,  the distance between consecutive rectangles  in $\cR_n$  is at most $(n+3)\beta_i^{-n}$.

%		For fixed $1\le   i\le   d$, there exists a subsequence $\{n_k\}$ such that the limits $\lim\limits_{k\to\infty}\frac{-\log\psi_i(n_k)}{n_k}$ exist for every $1\le   i\le   d$. For simplicity, let us assume that the limits $\lim\limits_{n\to\infty}\frac{-\log\psi_i(n)}{n}$ exist.  Fix $1\le i\le d$.

We now estimate the number of balls $\Bni$ of diameter $2\psi_i(n)\beta_i^{-n}$   (the sidelength of the rectangles in $\cR_n$  along the direction of the $i$-th axis)  needed to cover the set $R_n$. We start by covering a fixed generic rectangle $ R = R_{n, \vj} \in \cR_n$. It is easily verified that we can find a collection $\cB_{i,n}(R)$  of balls $\Bni$ that covers $R$ with
\begin{equation} \label{mn}
\# \cB_{i,n}(R) 	 \  \le  \  2^d  \prod_{1\le k  \le d \ : \atop \psi_k(n)\beta_k^{-n}\ge   \psi_i(n)\beta_i^{-n}}\frac{\psi_k(n)\beta_k^{-n}}{\psi_i(n)\beta_i^{-n}}  \, .
\end{equation}
Indeed, we can simply take the natural cover in which we split $R$ into closed balls $\Bni$ which are disjoint apart from at the boundary.  Now observe that the  collection $\cB_{i,n}(R)$  will also cover other rectangles in $\cR_n$ along the direction of the $k$-th axis ($1 \le k \le d$)  if the separation in that direction is small compared to the diameter of the balls $\Bni$; that is, in view of  Fact BW if
 $$  \ \ (n+3)\beta_k^{-n}< 2 \psi_i(n)\beta_i^{-n} \, . $$
 In particular,  this leads to the following lower bound for the number $M_{i,n}(R) $  of rectangles covered by $\cB_{i,n}(R)$ :
\begin{eqnarray}  \label{mnb}
M_{i,n}{(R)}  & :=  &  \#  \Big\{  R_{n,\vj} \in \cR_n  \, : \, R_{n, \vj}  \subseteq \bigcup_{\Bni \in \cB_{n,i}(R) } \!\! \Bni    \Big\}  \nonumber \\[2ex]
& \ge &  \!\!\!\! \prod_{1\le   k\le   d \ : \atop (n+3)\beta_k^{-n}< \psi_i(n)\beta_i^{-n}} \!\! \frac{2 \psi_i(n)\beta_i^{-n}}{(n+3)\beta_k^{-n}}  \, .
\end{eqnarray}
\\
The upshot is  that there is a collection $\cB_{n,i} $ of balls of $\Bni$  that cover the set $R_n$   with
$$\# \cB_{n,i}   \  \le    \
 \frac{ \#\cR_n}{M_{i,n}{(R)} }    \  \  \#\cB_{n,i}(R)   \  \ =  \ \   \prod_{j=1}^d  N_{j,n}  \ \cdot \   \frac{ \#\cB_{n,i}(R) }{M_{i,n}{(R)} }  \, .    $$
This together with \eqref{needlabel},  \eqref{mn} and \eqref{mnb}  implies that
\begin{eqnarray*}
\# \cB_{n,i}   \ & \le     &
 {\prod_{j=1}^d  \frac{\beta_j^{n+1}}{\beta_j-1}}  \  \cdot \prod_{1\le   k\le   d \ : \atop (n+3)\beta_k^{-n}< \psi_i(n)\beta_i^{-n}} \!\!\!\! \frac{(n+3)\beta_k^{-n}}{2 \psi_i(n)\beta_i^{-n}}   \  \  \cdot  \  \ 2^d  \!\!\!\!  \prod_{1\le k  \le d \ : \atop \psi_k(n)\beta_k^{-n}\ge   \psi_i(n)\beta_i^{-n}}  \!\!\!\! \frac{\psi_k(n)\beta_k^{-n}}{\psi_i(n)\beta_i^{-n}}  \\[3ex]
&=& 2^d \cdot  {\prod_{j=1}^d  \frac{\beta_j^{n+1}}{\beta_j-1}}   \ \ \prod_{k\in \mathcal{K}_{n,1}(i)}\frac{(n+3)\beta_k^{-n}}{2 \psi_i(n)\beta_i^{-n}}   \ \   \prod_{k\in  \mathcal{K}_{n,2}(i)}\frac{\psi_k(n)\beta_k^{-n}}{\psi_i(n)\beta_i^{-n}},
\end{eqnarray*}
where
\begin{eqnarray*}
\mathcal{K}_{n,1}(i) &:= & \left\{1\le   k\le   d: (n+3)\beta_k^{-n}<\psi_i(n)\beta_i^{-n}\right\} \\[2ex]
& = &  \left\{ 1\le   k\le   d: -{ \log(n+3) \over n}+\log \beta_k>-{\log\psi_i(n) \over n}+\log \beta_i\right\},
\end{eqnarray*}
and
\begin{eqnarray*}
 \mathcal{K}_{n,2}(i)&:=&\left\{1\le   k \le   d: \psi_k(n)\beta_k^{-n}\ge   \psi_i(n)\beta_i^{-n}\right\}  \\[2ex] & = & \left\{ 1\le   k \le   d: -{\log \psi_k(n) \over n}+\log \beta_k\le   -{\log\psi_i(n) \over n}+\log \beta_i\right\}.
\end{eqnarray*}

\medskip

\noindent Thus,  given $\rho>0$ and on choosing $N$  sufficiently large so that $2\psi_i(n)\beta_i^{-n}<\rho$  for any $n \ge N$, it  follows from the definition of $s$-dimensional Hausdorff measure that for any $s>0$
\begin{eqnarray} \label{mnbv}
	\mathcal{H}_\rho^s(W(T,\Psi,\va))  \!\! & \le    & \!\!
	\sum_{n=N}^\infty    \    \#  \cB_{n,i}  \ \left(2\psi_i(n)\beta_i^{-n}\right)^s   \nonumber \\[2ex]
& \le   &
	\sum_{n=N}^\infty  2^d \cdot  {\prod_{j=1}^d  \frac{\beta_j^{n+1}}{\beta_j-1}}  \prod_{k\in \mathcal{K}_{n,1}(i)}\frac{(n+3)\beta_k^{-n}}{2 \psi_i(n)\beta_i^{-n}}     \prod_{k\in  \mathcal{K}_{n,2}(i)}\frac{\psi_k(n)\beta_k^{-n}}{\psi_i(n)\beta_i^{-n}} \cdot \left(2\psi_i(n)\beta_i^{-n}\right)^s   \nonumber \\[2ex]
%	&\le&\sum_{n=N}^\infty 2^s\frac{\beta_1^{n+1}}{\beta_1-1}\cdots \frac{\beta_d^{n+1}}{\beta_d-1}\prod_{k:\psi_k(n)\beta_k^{-n}\ge   \psi_i(n)\beta_i^{-n}}\frac{\psi_k(n)\beta_k^{-n}}{\psi_i(n)\beta_i^{-n}}\left(\psi_i(n)\beta_i^{-n}\right)^sM_{i,n}^{-1}\\[2ex]
	&=&C\sum_{n=N}^\infty \exp\left\{ -n \cdot \ell_n \right\},
\end{eqnarray}
\\
where $C:=2^{s+d} \prod_{j=1}^d  \frac{\beta_j}{\beta_j-1} \ $  is a constant  and
\begin{align*}
\ell_n=\ell_n(i) \ := \   -\sum_{j=1}^{d}\log \beta_j \ \   \!\!\ &- \sum\limits_{k\in  \mathcal{K}_{n,1}(i)} \left({\log(n+3) \over n}-\log \beta_k -{\log\psi_i(n) \over n} + \log \beta_i\right) \\[2ex]
& -\sum\limits_{k\in  \mathcal{K}_{n,2}(i)}\left(\frac{\log\psi_k(n)}{n}-\log\beta_k-\frac{\log\psi_i(n)}{n}+\log\beta_i\right)\\[2ex]
&  + \  \ s\left(-\frac{\log\psi_i(n)}{n}+\log \beta_i\right).
\end{align*}
\\
Now note that $\sum_{n=1}^\infty \exp\left\{ -n \cdot \ell_n \right\}$ converges as long as
$$\limsup_{n\to \infty}\ell_n>0 \, , $$	
and that this is equivalent to the condition
that $s$ is strictly larger than the upper limit of

\begin{eqnarray*}
	h_n \!\! &=& \!\! h_n(i) \ := \ \frac{\sum\limits_{j=1}^{d}\log\beta_j+\sum\limits_{k\in  \mathcal{K}_{n,1}(i)} \left({\log(n+3) \over n}-\log \beta_k -{\log\psi_i(n) \over n} + \log \beta_i\right)}{-\frac{\log\psi_i(n)}{n}+\log \beta_i}\\[4ex]
	& &\ \ \ \ \ \ \ \  \ \ \ \qquad + \ \ \  \frac{\sum\limits_{k\in  \mathcal{K}_{n,2}(i)}\left(\frac{\log\psi_k(n)}{n}-\log\beta_k-\frac{\log\psi_i(n)}{n}+\log\beta_i\right)}{-\frac{\log\psi_i(n)}{n}+\log \beta_i} \ \\[4ex]
	& =& \!\! \sum_{k\in \mathcal{K}_{n,1}(i)} 1+ \sum_{k\in \mathcal{K}_{n,2}(i)} \left(1- { -\frac{\log\psi_k(n)}{n}\over -\frac{\log\psi_i(n)}{n}+\log \beta_i}\right) +\sum_{k\in \mathcal{K}_{n,3}(i)} {\log\beta_k  \over -\frac{\log\psi_i(n)}{n}+\log \beta_i},
\end{eqnarray*}
%Denote by $\mathcal{K}_{n,1}(i), \ \mathcal{K}_{n,2}(i),  \ \mathcal{K}_{n,3}(i) $  the partition of $\{1, \dots, d\}$ given by
where
\begin{eqnarray*}
 \mathcal{K}_{n,3}(i)  \!\!\! &:= &  \!\!\! \{1, \dots, d\} \setminus (\mathcal{K}_{n,1}(i) \cup \mathcal{K}_{n,2}(i))\\[2ex]
 &= & \!\!\! \left\{ 1\le   k \le   d:  -{ \log(n+3) \over n}+\log \beta_k\le   -{\log\psi_i(n) \over n}+\log \beta_i < -{ \log \psi_k(n) \over n}+\log \beta_k\right\} \, .
\end{eqnarray*}
%It is easily seen that   $\mathcal{K}_{n,1}(i)$, $\mathcal{K}_{n,2}(i)$, $\mathcal{K}_{n,3}(i)$ is a  partition of $\{1, \dots, d\}$.
So, by the additional assumption  imposed  in the lemma, it follows that
\[
\limsup_{n\to \infty} h_n= \lim_{n\to\infty} h_n= \sum_{k\in \mathcal{K}_1(i)}1+\sum_{k\in \mathcal{K}_2(i)}\left(1-\frac{t_k}{\log\beta_i+t_i}\right) +\sum_{k\in \mathcal{K}_3(i)}\frac{\log\beta_k}{\log\beta_i+t_i}=\theta_i(\mathbf{t}).
\]
\\
The upshot of the above is that for any $1\le   i \le   d$ and $ s>\theta_i(\mathbf{t})$,   we have that
$$
\sum_{n=1}^\infty \exp\left\{ -n \cdot \ell_n \right\} < \infty
$$
 and hence together with \eqref{mnbv} we obtain that
 $$  0 \le \cH^s(W(T,\Psi,\va)) = \lim\limits_{\rho\to0}\mathcal{H}_\rho^s(W(T,\Psi,\va))  \le \lim\limits_{N\to\infty} C\sum_{n=N}^\infty \exp\left\{ -n \cdot \ell_n \right\}  =0 \, . $$
In turn, it follows from the definition of Hausdorff dimension that $
\dim_{\rm H} W(T,\Psi,\va)\le\theta_i(\mathbf{t})  \, .
$
This upper bound estimate is true for any $1\le   i \le   d$,  and so  it implies that
\[
\dim_{\rm H} W(T,\Psi,\va)\le \min_{1\le   i\le   d}\theta_i(\mathbf{t})
\]	
as desired.
\end{proof}

%\begin{proof}[Proof of Lemma \ref{limitinfinity}]
%Applying Lemma \ref{limitinfinity-m}, we obtain Lemma \ref{limitinfinity}.
%\end{proof}

Armed with Lemma~\ref{limitexist}, it is relatively straightforward to prove the general upper bound statement for  the Hausdorff dimension of  $W(T,\Psi,\va)$.

	\begin{proof}[Proof of Proposition~\ref{upperbound}]
To prove the proposition, we first cover the accumulation set $\UP$. For any $\varepsilon>0$, since $\UP$ is bounded, we can find a family $\mathcal{B}_\epsilon$ of finitely many balls of the  form
		$$B=\prod_{i=1}^d[b^{(i)}_B, b^{(i)}_B+\varepsilon]   \qquad  (b^{(i)}_B\ge   0)$$
		 that  cover $\UP$. For $B \in \mathcal{B}_\epsilon$, let
		 $$\mathcal{N}(B)=\left\{n\in\mathbb{N}: \left(\frac{-\log\psi_1(n)}{n},\cdots,
		 \frac{-\log\psi_d(n)}{n}\right)  \ \in \ \prod_{i=1}^d[b^{(i)}_B, b^{(i)}_B+\varepsilon]\right\}.$$   Without loss of generality, we assume that $\#\mathcal{N}(B)=\infty$
 since, otherwise, there is no accumulation point in  the ball $B$.
 We claim that $W(T,\Psi,\va)$ is a subset of
		 $$\bigcup_{B\in\mathcal{B}}\Big\{\vx\in\mathbb{T}^d:  \|T_{\beta_i}^nx_i-a_i\| \le   e^{-nb^{(i)}_B}  \ \ (1\le   i\le   d)\  \ \text{for infinitely many}\ n\in \mathcal{N}(B)\Big\}.$$
		 Indeed, for any $\vx\in W(T,\Psi,\va)$, there exists a sequence $\{n_j\}_{j \in \N}$ depending on $\vx$ such that for any $1\le   i\le   d$
		 $${\|}T_{\beta_i}^{n_j}x_i-a_i{\|} \le \psi_i(n_j) \quad  \ \forall
  \ \  j\ge   1.$$
		 Since there are only finitely many balls $B\in \mathcal{B}_\epsilon$ which cover $\UP$, there exists some $B\in\mathcal{B}_\epsilon$ that contains infinitely many points of $$\Big\{ \big(\frac{-\log\psi_1(n_j)}{n_j},\cdots,
		 \frac{-\log\psi_d(n_j)}{n_j}\big)  \Big\}_{j \in \N}  \, .  $$ Thus, for these infinitely many $j$'s, we have  that for any $1\le   i\le   d$
		 \[
		 {\|}T_{\beta_i}^{n_j}x_i-a_i{\|} \, \le \,   \psi_i(n_j)   \ \le   \ e^{-n_jb^{(i)}_B}  \, .
		 \]
This establishes the claim and  by the countable stability property of Hausdorff dimension, it follows that  $\dim_{\rm H} W(T,\Psi,\va)$ is less than or equal to
		 \[
		 \max_{B\in\mathcal{B}_\epsilon}  \  \dim_{\rm H} \Big\{\vx\in\mathbb{T}^d:  {\|}T_{\beta_i}^nx_i-a_i{\|} \le   e^{-nb^{(i)}_B} \  (1\le   i \le   d) \ \ \text{for infinitely many}\ n\in\mathbb{N}\Big\}.
		 \]
	\\
Now observe that
\[
\lim_{n\to \infty}\frac{-\log e^{-nb_{B}^{(i)}}}{n} \quad  {\rm for \ all} \ \   1\le    i \le   d \, ,
\]

\noindent
and so on
%$$\left\{\left(-\frac{\log e^{-nb_{B}^{(1)}}}{n},\cdots,
%	-\frac{e^{-nb_{B}^{(d)}}}{n}\right)\right\}$ exists	and equals $(b_{B}^{(1)},\dots,b_{B}^{(d)})$$.
		applying Lemma~\ref{limitexist} we deduce that
\[
\dim_{\rm H} W(T,\Psi,\va)\le \max_{B\in\mathcal{B}_\epsilon} \min_{1\le   i\le   d}\theta_i\big((b^{(1)}_{B}, \dots, b^{(d)}_{B})\big).
\]
Then on letting  $\varepsilon\to 0$, by the continuity of $\theta_i(\mathbf{t})$ with respect to $\mathbf{t}$, we  conclude that
\[
\dim_{\rm H} W(T,\Psi,\va)\le \sup_{\mathbf{t}\in\UP}\min_{1\le   i\le   d}\{\theta_i(\mathbf{t})\}.
\]
This completes the proof of Proposition~\ref{upperbound}.
\end{proof}

We now turn out attention to establishing the  lower bound for the Hausdorff dimension of $ W(T,\Psi, \va) $.
%The key lies in setting up   appropriate  statements in order to apply the `rectangle to rectangle'  Mass Transference Principle  of  Wang $\&$ Wu; namely, Theorem~\ref{MTPRG}.
\begin{proposition}\label{lowerbound}
Under the setting of Theorem \ref{rectangledimresult}, we have that
	%	$$\dim_{\rm H} W(T,\Psi, \va)\ge   \sup_{\mathbf{t}\in\UP} \min_{1\le   i\le   d}\{\theta_i(\mathbf{t})\}.$$
$$\dim_{\rm H} W(T,\Psi, \va)\ge   \sup_{\mathbf{t}\in\UP}\min_{1\le   i\le   d}\{\theta_i(\mathbf{t})\}.$$
%$$\dim_{\rm H} W(T,\Psi, \va)\ge  \sup_{\mathbf{t}\in\UP}\min \big\{\min_{i\in \mathcal{L}_1}\{{\theta}_i(\mathbf{t})\}, \ \min_{i\in \mathcal{L}_2}\{\underline{\theta}_i(\mathbf{t})\}\big\},$$
%where
%\[
%\underline{\theta}_i(\mathbf{t})= \sum_{k\in \mathcal{L}_1}1+\sum_{k\in \mathcal{L}_2}\left(1-\limsup_{n\to\infty}\frac{\log\psi_k(n)}{\log\psi_i(n)}\right).
%\]
\end{proposition}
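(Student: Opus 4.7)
The plan is to apply the rectangular Mass Transference Principle (Theorem~\ref{MTPRG}) to the natural family of product rectangles sitting inside $W(T,\Psi,\va)$. Fix $\mathbf{t}=(t_1,\ldots,t_d)\in\UP$ and extract a subsequence $\{n_k\}$ along which $-\log\psi_i(n_k)/n_k\to t_i$ for every $1\le i\le d$. By the decomposition~(\ref{preimages1}),
$$W(T,\Psi,\va)\ \supseteq \ \limsup_{k\to\infty}\bigcup_{\vj}\prod_{i=1}^{d}I_{n_k,j_i}^{(i)},$$
and each interval $I_{n_k,j}^{(i)}$ coming from a full cylinder has length exactly $2\psi_i(n_k)\beta_i^{-n_k}=e^{-n_k v_i+o(n_k)}$ with $v_i:=t_i+\log\beta_i$. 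Enumerating the pairs $(k,\vj)$ into a single sequence $m\in\N$ and, via Remark~\ref{rem9a}, replacing $I_{n_k,j_i}^{(i)}$ by the ball $B_{i,m}$ of exact radius $r_m^{v_i+\varepsilon}$ centred at its midpoint (for a small $\varepsilon>0$ to be sent to $0$ at the end), places us in the framework of Theorem~\ref{MTPRG} with $p=d$, $X_i=\T$, $\mu_i=m_1$, $\delta_i=1$, $r_m=e^{-n_{k(m)}}$ and exponent vector $\mathbf{v}+\varepsilon\mathbf{1}$.

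The heart of the argument is the verification of the full-measure condition~(\ref{fullmeasure}) for a judicious choice of $(s_1,\dots,s_d)$. For a small $\eta>0$, set $u_i:=\log\beta_i-\eta$ and $s_i:=u_i/(v_i+\varepsilon)\in(0,1)$. The shrunk ball $B_{i,m}^{s_i}$ then has radius $\sim e^{-n_{k(m)}u_i}=\beta_i^{-n_{k(m)}}e^{\eta\, n_{k(m)}}$, exponentially larger than the order-$n_{k(m)}$ cylinder length $\beta_i^{-n_{k(m)}}$. Since Fact~BW tells us that consecutive midpoints of the intervals $\{I_{n_k,j}^{(i)}\}_j$ are at most $(n_k+3)\beta_i^{-n_k}$ apart — a scale dominated by the ball radius for $k$ large — the union $\bigcup_jB_{i,n_k,j}^{s_i}$ covers $\T$ up to a set of measure $O(\beta_i^{-n_k}e^{\eta n_k})=O(e^{-(\log\beta_i-\eta)n_k})\to 0$. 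Consequently $m_d\bigl(\prod_{i}\bigcup_jB_{i,n_k,j}^{s_i}\bigr)\to 1$, and the Fatou inequality for sets yields $m_d\bigl(\limsup_k\prod_i\bigcup_jB_{i,n_k,j}^{s_i}\bigr)=1$, which coincides with $m_d\bigl(\limsup_m\prod_iB_{i,m}^{s_i}\bigr)=1$ after the enumeration, establishing~(\ref{fullmeasure}).

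Theorem~\ref{MTPRG} then delivers $\dim_{\rm H}W(T,\Psi,\va)\ge\min_{1\le i\le d}s(\mathbf{u},\mathbf{v}+\varepsilon\mathbf{1},i)$. Substituting $\delta_k=1$, $u_k=\log\beta_k-\eta$ and $v_k+\varepsilon-u_k=t_k+\eta+\varepsilon$, the three MTP index sets $\mathcal{K}_1(i),\mathcal{K}_2(i),\mathcal{K}_3(i)$ become — as $\eta,\varepsilon\to 0^+$ — exactly those of Theorem~\ref{rectangledimresult} (up to the non-affecting strict/non-strict distinction already noted in the excerpt), and the limit of $s(\mathbf{u},\mathbf{v},i)$ equals $\theta_i(\mathbf{t})$ by inspection. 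Sending $\eta,\varepsilon\to 0^+$ and then taking the supremum over $\mathbf{t}\in\UP$ finishes the proof.

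The main obstacle is the full-measure verification above. The heuristic is transparent — balls whose radius exceeds the cylinder scale, placed at well-distributed centres, must tile the torus — but a clean proof requires both the quantitative spacing estimate of Fact~BW and the perturbation $\eta>0$ to stay strictly on the safe side of the borderline choice $u_i=\log\beta_i$, at which the covering argument degenerates. The auxiliary parameter $\varepsilon>0$ plays the analogous role of absorbing the $o(n_k)$ discrepancy between $\psi_i(n_k)\beta_i^{-n_k}$ and the idealised scale $e^{-n_kv_i}$, precisely as prescribed by Remark~\ref{rem9a}; the bound then emerges by continuity as both parameters tend to $0$.
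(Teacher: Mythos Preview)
Your proposal is correct and follows essentially the same route as the paper's proof: fix $\mathbf{t}\in\UP$, pass to a subsequence, build the $\limsup$ subset of $W(T,\Psi,\va)$ from the preimage intervals sitting in full cylinders, use Fact~BW to verify that the $s_i$-scaled balls cover $\T$ in each coordinate, apply Theorem~\ref{MTPRG}, and then let the perturbation parameters tend to zero. The paper uses a single parameter (setting $s_i=\frac{(1-\varepsilon)\log\beta_i}{(1-\varepsilon)\log\beta_i+t_i}$, $u_i=(1-\varepsilon)\log\beta_i$, $v_i=(1-\varepsilon)\log\beta_i+t_i$) and verifies directly via inequality~(\ref{enlargecondition}) that the scaled balls cover $\T$ \emph{exactly} for large $l$, whereas you split the perturbation into two parameters $\eta,\varepsilon$ and phrase the covering as ``up to a set of measure $O(\beta_i^{-n_k}e^{\eta n_k})\to 0$''; this last phrasing is a bit off (that quantity is the ball radius, not an uncovered-measure bound, and in fact for large $k$ the cover is exact since $e^{\eta n_k}\gg n_k+3$), but the conclusion is unaffected.
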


%\red{?? bounded case ?? }

\begin{proof}	
 The proof of Proposition~\ref{lowerbound} relies on constructing a  suitable  $\limsup$ type subset of $W(T,\Psi, \va)$ which enables us to exploit the  `rectangles to rectangles' Mass Transference Principle (Theorem \ref{MTPRG}). With this in mind, by \eqref{limsupset} and \eqref{preimages}, we know that $W(T,\Psi, \va)$ is a limsup set of rectangles with sides given by the intervals $I_{n,j}^{(i)}$ ($1\le   i \le   d$).   Recall, that the sides correspond to  the intersection of $T_{\beta_i}^{-n} \big( B(a_i, \psi_i(n))\big)$ and a cylinder of order $n$  and  as in the proof of Proposition~\ref{upperbound}, we do not distinguish between  $\beta$-transformations acting on the unit interval or the torus. Thus,  if for each $1\le   i \le   d$, we select only those intervals which are intersections of $T_{\beta_i}^{-n} \big( B(a_i, \psi_i(n))\big)$ and full cylinders of order $n$, we will obtain a $\limsup$ type subset of $W(T,\Psi, \va)$; that is   % the full cylinders which intersect the preimages of balls $B(a_i, \psi(n))$ under $T_{\beta_i}^{n}$.
% For each $1\le   i \le   d$,  let $M_i$ be the number of such full cylinders of order $n$ for $T_{\beta_i}$ and $\{x_{n,j_i}^i, 1\le   j_i\le   M_i\}$ be the preimages of $a_i$ under $T_{\beta_i}^n$ on these full cylinders. Then %we have found a limsup type subset of our aiming set $W(T,\psi, \va)$:
 \begin{equation}\label{limsupsubset}
 W(T,\Psi,\va)\supset\bigcap_{N=1}^{\infty}\bigcup_{n=N}^{\infty}\bigcup_{j_1=1}^{M_{1,n}} \cdots\bigcup_{j_d=1}^{M_{d,n}} B(x_{n,j_1}^{{(1)}},\beta_1^{-n}\psi_1(n))\times\cdots\times B(x_{n,j_d}^{(d)},\beta_d^{-n}\psi_d(n)) \, ,
 \end{equation}
 where $\{x_{n,j_i}^{(i)}, 1\le   j_i\le   M_{i,n}\}$ are the preimages of $a_i$ under $T_{\beta_i}^n$ that fall within full cylinders  of order $n$ for $T_{\beta_i}$ and  $M_{i,n}$ is the number of such full cylinders.
Now with \eqref{limsupsubset} and Fact BW in mind,  it follows that for each $1\le   i \le   d$ the enlarged
  collection of balls or rather intervals
 $\big\{B(x_{n,j_i}^{(i)}, (n+3)\beta_i^{-n}): 1\le   j_i\le   M_{i,n}\big\}$
  covers {$\T$},
%  \del{ the unit interval $[0, 1)$} \sv{[and elsewhere!]}
   that is
 \begin{equation}  \label{meadded}
 \T  \ = \  \bigcup_{j_i=1}^{M_{i,n}} B\big(x_{n, j_i}^{(i)}, (n+3)\beta_i^{-n}\big) \, .
 \end{equation}
 %and we can enlarge the radius $\beta_i^{-n}\psi(n)$ to $(n+1)\beta_i^{-n}$ by the exponent $\tau_i\ge   \frac{\log ((n+1)\beta_i^{-n})}{\log (\beta_i^{-n}\psi(n))}$.

 Now fix a point $\mathbf{t}=(t_1,\dots,t_d)\in\UP$.  Then by definition and the fact that $\UP$ is bounded,
 there exists a subsequence $\{n_l\}_{l\in \N} $ such that
  \[
\lim\limits_{l\to\infty}\frac{-\log\psi_i(n_l)}{n_l}=t_i   \quad  {\rm for \ all} \ \   1\le    i \le   d.
\]
  It is easily verified that for any $0<\varepsilon<1$,   there exists $N=N(\varepsilon)>0$ such that
 \begin{equation}\label{enlargecondition}
 \frac{(1-\varepsilon)\log\beta_i}{(1-\varepsilon)\log\beta_i+t_i} \ \le   \ \frac{-\frac{\log(n_l+3)}{n_l}+\log\beta_i}{\log\beta_i+\frac{-\log\psi_i(n_l)}{n_l}}
 \end{equation}
  for all $l\ge   N$ and $1\le   i\le   d$.
 Let
 $$s_i:=\frac{(1-\varepsilon)\log\beta_i}{(1-\varepsilon)\log\beta_i+t_i}\ \quad  (1\le   i\le   d).$$
 %We remark that $s_i=0$ if $t_i=+\infty$.
 Then,  \eqref{enlargecondition} is equivalent to
 $$\big(\beta_i^{-n_l}\psi_i(n_l)\big)^{s_i}\ge   (n_l+3)\beta_i^{-n_l},$$
 which together with \eqref{meadded}  implies that for any  $l\ge   N$ $$\T =\bigcup_{j_i=1}^{M_{i,n_l}} B\big(x_{n_l, j_i}^{(i)}, \big(\beta_i^{-n_l}\psi_i(n_l)\big)^{s_i}\big).$$
 In turn,  it follows that for any  $l\ge   N$
 $$\T^d  \ = \  \bigcup_{j_1=1}^{M_{1,n_l}} \cdots\bigcup_{j_d=1}^{M_{d,n_l}} B\big(x_{n_l,j_1}^{(1)},(\beta_1^{-n_l}\psi_1(n_l))^{s_1}\big)\times\cdots\times B\big(x_{n_l,j_d}^{(d)},(\beta_d^{-n_l}\psi_d(n_l))^{s_d}\big)$$
 and so
 \begin{equation}\label{fulllimsup}
 \T^d   \ = \ \limsup_{n\to\infty}  \bigcup_{j_1=1}^{M_{1,n}} \cdots\bigcup_{j_d=1}^{M_{d,n}} B\big(x_{n,j_1}^{(1)},(\beta_1^{-n}\psi_1(n))^{s_1}\big)\times\cdots\times B\big(x_{n,j_d}^{(d)},(\beta_d^{-n}\psi_d(n))^{s_d}\big).
 \end{equation}

 The upshot is that given the $\limsup$ set of rectangles appearing on the right hand side of \eqref{limsupsubset}, the corresponding  $\limsup$ set of `$(s_1, \ldots, s_d)$-scaled up' rectangles satisfies
 %\red{??Need working on??} This verifies
  \eqref{fullmeasure} with $p=d$,  $X_i = \T$, $\delta_i =1 $ and $\mu_i=m_1$ (one-dimensional Lebesgue measure)  for each $  1 \le i \le d$.
Thus on applying Theorem \ref{MTPRG} with $u_i=(1-\varepsilon)\log\beta_i$ and $v_i=  (1-\varepsilon)\log\beta_i + t_i$ ($1 \le i \le d$),
%    \sv{[what do put $t_i$  equal to or rather $v_i$ if we make the change in Theorem \ref{MTPRG}]}
%
we obtain the lower bound  %$s_0$ defined as
 $$\dim_{\rm H} W(T,\Psi, \va)\ge   \min_{1\le   i\le   d} s(i,\varepsilon)$$
where
\[
s(i,\varepsilon):=\sum_{k\in\mathcal{K}_1(i,\varepsilon)}1+\sum_{k\in\mathcal{K}_2(i,\varepsilon)}\left(1-\frac{t_k}{(1-\varepsilon)\log\beta_i+t_i}\right)+\sum_{k\in\mathcal{K}_3(i,\varepsilon)}\frac{(1-\varepsilon)\log\beta_k}{(1-\varepsilon)\log\beta_i+t_i}
\]
and where $\mathcal{K}_1(i,\varepsilon)$, $\mathcal{K}_2(i,\varepsilon)$, $\mathcal{K}_3(i,\varepsilon)$ is the partition of $\{1, \dots, d\}$ given by
\begin{eqnarray*}
\mathcal{K}_1(i,\varepsilon)& := & \big\{k: (1-\varepsilon)\log\beta_k \ge   (1-\varepsilon)\log\beta_i+t_i\big\}   \\[2ex] \mathcal{K}_2(i,\varepsilon)& := & \big\{k: (1-\varepsilon)\log\beta_k+t_k \le   (1-\varepsilon)\log\beta_i+t_i\big\}, \\[2ex]  \mathcal{K}_3(i,\varepsilon)& := & \big\{1, \dots, d\big\}\setminus \big(\mathcal{K}_1(i,\varepsilon) \cup \mathcal{K}_2(i,\varepsilon)\big).
 \end{eqnarray*}

\noindent Fix $1\le   i\le   d$. On  letting  $\varepsilon\to 0$, we find that  $\mathcal{K}_1(i,\varepsilon)\to \{k: \log\beta_k > \log\beta_i+t_i\}=\mathcal{K}_1(i),$ $\mathcal{K}_2(i,\varepsilon)\to\mathcal{K}_2(i)$  and $\mathcal{K}_3(i,\varepsilon)\to\mathcal{K}_3(i)$.
Thus $$\lim\limits_{\varepsilon\to 0} s(i,\varepsilon)= \theta_i(\mathbf{t})  \quad   {\rm  and }  \quad  \dim_{\rm H} W(T,\Psi, \va)\ge   \min\limits_{1\le   i\le   d} \theta_i(\mathbf{t}).$$
Moreover, since this is valid for any
 $\mathbf{t} \in \UP$ it follows that 	$$\dim_{\rm H} W(T,\Psi, \va)\ge   \sup_{\mathbf{t}\in\UP} \min_{1\le   i\le   d}\{\theta_i(\mathbf{t})\} $$
and we are done.
\end{proof}

\subsubsection{Proof of Theorem~\ref{dimresultinteger} \label{iwill}}

We show that when $T$ is an integer matrix transformation, the diagonal assumption in Theorem~\ref{dimresult}   can be relaxed to $T$ is diagonalizable over $\Z$.  This thereby proves Theorem~\ref{dimresultinteger}.
So, suppose $T$ is diagonalizable over $\mathbb{Z}$.  Then by definition, there exist a diagonal integer matrix $D$   and  an invertible mapping $\phi$ satisfying   \eqref{nifty}.     It is easily versified that $T^n(\vx)\in B(\va,\psi(n))$ if and only if $D^n(\phi(\vx))\in \phi\big(B(\va,\psi(n))\big)$. Since $\phi$ is a bi-Lipschitz map, we can find two positive constants $ 0 < c_1 \le c_2  <  \infty $ such that
	$$
	B\big(\phi(\va), c_1\psi(n)\big) \subset \phi\big(B(\va,\psi(n)\big) \subset B\big(\phi(\va), c_2\psi(n)\big).
	$$
In turn, Lemma~\ref{lip} implies that the Hausdorff dimension of
	$$W(T,\psi, \va):=\{\vx\in\mathbb{T}^d: T^n(\vx)\in B(\va,\psi(n))\ \ \text{for infinitely many}\ n\in\mathbb{N}\}$$
	is the same as that of
	$$\big\{x\in\mathbb{T}^d: D^n(\vx)\in B(\phi(\va),\psi(n))\ \ \text{for infinitely many}\ n\in\mathbb{N}\big\}.$$
	Thus, without loss of generality, we only need to prove the desired dimension result in the case that $T$ is diagonal.

\medskip

%
%\begin{remark} \label{iwilli}
%Of course the above argument can be easily modified to rectangular target sets.  Thus,  if  $T$ is  an integer matrix transformation, then  we can replace the condition that $T$ is diagonal in Theorem~\ref{rectangledimresult}  by $T$ is diagonalizable over $\Z$.
%\end{remark}

%%%%%%%%%%%%%%%%

%%%%%%%%%%%%%%%%%%%%%

\subsection{Proof of Theorem \ref{dimensionalone}  \label{poiu} }

The proof of Theorem~\ref{dimensionalone} will make use of a general statement (namely, Proposition~\ref{Markov} below)  concerning Markov subsystems which may be of independent interest. In short,  these systems provide a ``nice'' approximation to one-dimensional piecewise linear dynamical systems.   To start with, let us recall the notion of  a Markov system for a one-dimensional expanding dynamical system  $(X,T)$.
%\subsection{lower bound for one-dimensional case}
With this in mind, let $X$ be a compact set in $\mathbb{R}$ and $T: X\to X$ be an expanding map. Furthermore, let $\Lambda $ be a subset of $X$.  A partition  $ \cP_\Lambda$ of $\Lambda$ into finite or countable collection of sets $P(k)$ is called a \emph{Markov partition} if $\Lambda:=\bigcap_{n=0}^\infty T^{-n}\big(\cup P(k)\big)\,$  and
\begin{itemize}
	\item[(i)]  the  interior of $P(j)$ and $P(k)$ are disjoint if $j\neq k$, % the set $X\setminus \bigcup_k P(k)  $ is of Lebesgue measure zero, and
	\item[(ii)] $T$ restricted on each $P(j)$ is one to one,
	\item[ (iii)] if  $T(P(j))$ intersects the interior of $P(k)$  for some $j$ and $k$ then
	$P(k)\subseteq  \overline{T(P(j))}.$
\end{itemize}
In turn, the system $(\Lambda, T|_\Lambda , \mathcal{P}_\Lambda)$ is called a  \emph{Markov subsystem of $(X,T)$}.  In the case $ \Lambda = X$, we simply write $(X, T, \mathcal{P})$  and  referred to it  as  a \emph{Markov system}.

An important property regarding Markov subsystems  that we shall utilise is given by  the following statement.   It is a direct consequence of \cite[Theorems 4.2.9 $\&$ 4.2.11]{MU03}.

\begin{proposition}\label{MarkovAlfhor}
	Let $X$ be a compact set in $\mathbb{R}$ and $T: X\to X$ be an expanding map.  Let  $(\Lambda, T|_\Lambda , \mathcal{P}_\Lambda)$ be a  Markov subsystem of $(X,T)$ with finite partition $\mathcal{P}_\Lambda=\{P(i)\}_{1\le   i\le   N}$ whose incidence matrix is primitive. Suppose that for any $1\le   k\le   N$, $T|_{P(k)}$ is $C^{1+\alpha}$ for some $\alpha>0$.  Then the measure   $\cH^{\delta}|_{\Lambda}$ is  $\delta$-Ahlfors regular where  $\delta:=\dim_{\rm H}\Lambda$.
\end{proposition}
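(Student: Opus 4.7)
The plan is to recast the Markov subsystem $(\Lambda, T|_\Lambda, \mathcal{P}_\Lambda)$ as the limit set of a finite conformal iterated function system (IFS) on the unit interval and then invoke the general Ahlfors regularity results of Mauldin and Urba\'nski~\cite[Theorems 4.2.9 \& 4.2.11]{MU03}. More precisely, for each $1\le k\le N$ the map $T|_{P(k)}$ is a $C^{1+\alpha}$ expanding homeomorphism onto its image $\overline{T(P(k))}$, and by the Markov property (iii) this image is a finite union $\bigcup_{j\in A(k)} P(j)$ for some index set $A(k)\subseteq\{1,\ldots,N\}$. Taking the inverse branches $\varphi_k:=(T|_{P(k)})^{-1}$, we obtain a finite family of $C^{1+\alpha}$ contractions, and the Markov compatibility encodes precisely which compositions $\varphi_{k_1}\circ\varphi_{k_2}\circ\cdots$ are admissible; the associated incidence matrix coincides (up to transposition) with the given one.

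The first step is to verify that this admissible IFS satisfies the hypotheses required by \cite[Section~4.2]{MU03}: that each generator is a $C^{1+\alpha}$ conformal contraction on a compact interval (immediate from the assumption $T|_{P(k)}\in C^{1+\alpha}$ combined with expansion), that the maps satisfy the open set condition (which follows from part (i) of the Markov partition, since the interiors of $P(j)$ and $P(k)$ are disjoint and $\varphi_k$ maps into $P(k)$), and that the incidence matrix is primitive, which is assumed. The second step is to identify the limit set of this IFS with $\Lambda$; this is a direct consequence of the defining equality $\Lambda=\bigcap_{n\ge 0} T^{-n}\bigl(\bigcup_k P(k)\bigr)$, since this intersection coincides with $\bigcap_n \bigcup_{(k_1,\ldots,k_n) \text{ admissible}} \varphi_{k_1}\circ\cdots\circ\varphi_{k_n}\bigl(\bigcup_k P(k)\bigr)$.

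With this identification in hand, Bowen's formula furnishes $\delta=\dim_{\rm H}\Lambda$ as the unique zero of the topological pressure $t\mapsto P(-t\log|T'|)$, and \cite[Theorem~4.2.9]{MU03} provides a $\delta$-conformal measure $\mu$ on $\Lambda$ that is equivalent to $\cH^\delta|_\Lambda$ with bounded Radon--Nikodym derivative. Finally, \cite[Theorem~4.2.11]{MU03} (or the arguments in its proof) shows that this conformal measure satisfies the two-sided Ahlfors regularity bound
\[
c_1 r^\delta \le \mu\bigl(B(x,r)\bigr) \le c_2 r^\delta \qquad \forall\, x\in\Lambda,\ 0<r\le r_0,
\]
with constants depending only on the system; the equivalence $\mu\asymp \cH^\delta|_\Lambda$ then transfers this bound to $\cH^\delta|_\Lambda$.

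The main technical obstacle is verifying the bounded distortion property, i.e. that there is a uniform constant $K$ with $|(\varphi_{k_1}\circ\cdots\circ\varphi_{k_n})'(x)|\asymp_K |(\varphi_{k_1}\circ\cdots\circ\varphi_{k_n})'(y)|$ for admissible words and $x,y$ in a common cylinder. This is where the $C^{1+\alpha}$ regularity is essential (H\"older control of $\log|T'|$ yields the standard Koebe/distortion estimate via a geometric series in $\alpha$), and where the finiteness and primitivity of the alphabet is used to uniformize constants. Once bounded distortion is secured, the Ahlfors regularity reduces to comparing $\cH^\delta$-masses of balls with $\cH^\delta$-masses of cylinders of comparable diameter, which is routine. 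All remaining steps are essentially quotations from \cite{MU03}, so no new calculation is needed beyond the translation of our hypotheses into their framework.
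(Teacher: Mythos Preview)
Your proposal is correct and follows exactly the paper's approach: the paper's proof is simply the one-line assertion that the statement ``is a direct consequence of \cite[Theorems 4.2.9 \& 4.2.11]{MU03}'', and your write-up is a careful unpacking of that citation, verifying that the Markov subsystem translates into a finite conformal graph-directed system satisfying the hypotheses of those theorems.
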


The following statement provides a lower bound for $\dim_{\rm H}\Lambda$  in the case $T$ is  piecewise linear. Throughout,  we suppose that   the absolute value of the slope of such a  map $T$ is constant and will be  denoted by $\slT$.
\medskip

%We use one dimensional Markov systems to approximate the $\beta$-transformation dynamical system $([0, 1], T_\beta)$ with $|\beta|>1$.

\begin{proposition}\label{Markov}
	Let $T$ be a piecewise linear map on $[0, 1]$ and assume that $\slT>8$. Then there exists a Markov subsystem $(\Lambda, T|_{\Lambda}, \cP_{\Lambda} )$ of $([0,1], T)$ with a finite partition $\mathcal{P}_\Lambda=\{P(i)\}_{1\le   i\le   m}$  where each $P(i)$ is an interval and $T|_{P(i)}$ is linear,
such that  $$\dim_{\rm H}\Lambda\ge1-\frac{\log 8}{\log\slT}.$$
\end{proposition}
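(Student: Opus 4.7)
The strategy will be to find a subinterval $I\subseteq [0,1]$ and at least $m \ge \slT/8$ pieces of linearity $J_{k_1},\dots,J_{k_m}$ of $T$ such that, for each $i$, $T(J_{k_i})\supseteq I$ and $T^{-1}(I)\cap J_{k_i}\subseteq I$. Setting $P(i) := J_{k_i}\cap T^{-1}(I)$ will give $m$ disjoint intervals in $I$, each of length $|I|/\slT$, on which $T$ is linear with $T(P(i))=I$. Since every image $T(P(i))$ coincides with $I\supseteq \bigcup_j P(j)$, the Markov property will hold trivially, with the primitive all-ones $m\times m$ incidence matrix. Taking $\Lambda := \bigcap_{n\ge 0} T^{-n}\bigl(\bigcup_i P(i)\bigr)$ then produces a finite Markov subsystem whose invariant set is the attractor of a finite similarity IFS with $m$ branches of ratio $1/\slT$ satisfying the open set condition; hence $\dim_{\rm H}\Lambda = \log m/\log \slT \ge \log(\slT/8)/\log \slT = 1-\log 8/\log \slT$.

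To produce $I$ and the pieces, I will apply averaging. The maximal intervals of linearity $J_1,\dots,J_N$ have lengths $\ell_k\le 1/\slT$ summing to $1$, so $\sum_k|T(J_k)| = \slT\sum_k\ell_k = \slT$. By Fubini, $\int_0^1\#\{k: x\in T(J_k)\}\,\dd x = \slT$, so some $x^*\in (0,1)$ lies in at least $\slT$ of the images $T(J_k)$. Let $\mathcal{F}$ denote the corresponding index set, so $|\mathcal{F}|\ge \slT$. For each $k\in\mathcal{F}$, let $y_k$ be the unique preimage of $x^*$ in $J_k$, and set $m_k:=\dist(x^*,\partial T(J_k))$ and $\delta_k:=|y_k-x^*|$. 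A short computation using the linearity of $T|_{J_k}$ with slope $\pm\slT$ shows that for the candidate interval $I=[x^*-r,x^*+r]$, the two conditions $T(J_k)\supseteq I$ and $T^{-1}(I)\cap J_k\subseteq I$ are equivalent, respectively, to $m_k\ge r$ and $\delta_k\le r(1-1/\slT)$. Thus piece $k$ ``qualifies'' at parameter $r$ precisely when $\delta_k/(1-1/\slT) \le r \le m_k$.

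The main obstacle will be the combinatorial claim that some $r\in(0,1/2]$ yields at least $\slT/8$ qualifying pieces. My plan is a pigeonhole argument on the joint distribution of the margins $m_k$ and displacements $\delta_k$: grouping the pieces in $\mathcal{F}$ by dyadic scales of $m_k$, combining the structural bounds $m_k\le \slT\ell_k/2$ (which gives $\sum_{k\in\mathcal{F}} m_k\le \slT/2$) and $\sum_k \ell_k \le 1$ with the fact that the preimages $\{y_k:k\in\mathcal{F}\}$ form $\ge \slT$ distinct points in $[0,1]$. An averaging across the $\delta_k$-distribution (equivalently, integrating the qualifying count over $r$) should then force that at some well-chosen $r$ the count is $\ge \slT/8$; the hypothesis $\slT>8$ guarantees that the factor $(1-1/\slT) > 7/8$ is favourable and that all constants in the resulting estimate remain positive, which is where the specific constant $8$ in the statement enters. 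Once such $r$ (and hence $I$) is fixed, the construction outlined in the first paragraph yields the desired Markov subsystem, and Proposition~\ref{MarkovAlfhor} will confirm Ahlfors regularity of $\Lambda$ at its Hausdorff dimension, completing the proof.
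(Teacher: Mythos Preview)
Your approach is quite different from the paper's, and the central combinatorial claim is actually false. Consider the following counterexample. Let $\slT>8$ be an integer, partition $[0,1]$ into $2\slT$ equal pieces of length $1/(2\slT)$, and define $T$ so that every piece contained in $[0,\tfrac12]$ maps affinely (with slope $\slT$) onto $[\tfrac12,1]$ while every piece in $[\tfrac12,1]$ maps onto $[0,\tfrac12]$. Every point of $(0,1)\setminus\{\tfrac12\}$ has exactly $\slT$ preimages, so your averaging hypothesis $|\mathcal{F}|\ge\slT$ holds at every $x^*$. But for any $x^*\in(0,\tfrac12)$ all preimages $y_k$ lie in $[\tfrac12,1]$, giving $\delta_k\ge\tfrac12-x^*$, while each $J_k$ with $x^*\in T(J_k)$ has $T(J_k)=[0,\tfrac12]$, so $m_k=\min(x^*,\tfrac12-x^*)\le\tfrac12-x^*$. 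The requirements $\delta_k\le r(1-1/\slT)<r$ and $r\le m_k$ then force $\tfrac12-x^*\le\delta_k<r\le m_k\le\tfrac12-x^*$, a contradiction; hence \emph{no} piece qualifies at \emph{any} $r$, and the same holds for $x^*\ge\tfrac12$ by symmetry. Allowing an asymmetric interval $I=[a,b]$ does not help either: if $T(J_k)\supseteq I$ then $I$ must lie in one half of $[0,1]$, whereas $T^{-1}(I)\cap J_k$ is a nondegenerate subinterval of the other half, so it cannot be contained in $I$. Thus for this $T$ there is simply no interval supporting a full-branch similarity IFS of the kind you seek, and no pigeonhole argument on $(m_k,\delta_k)$ can manufacture one.

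The paper's proof sidesteps this obstruction by building a genuine Markov \emph{subshift} rather than a full-branch IFS. It first refines the linearity partition so that all pieces have comparable length (ratio at most $2$), and then replaces each piece $\tilde P$ by the subinterval $P\subset\tilde P$ whose $T$-image is exactly the union of those partition pieces fully contained in $T(\tilde P)$. Each resulting state has at least $\lfloor\slT/2\rfloor-2$ successors, but crucially not every state need be a successor of every other; the dimension bound then comes from the entropy estimate $h_{\rm top}(T|_\Lambda)\ge\log(\slT/2-3)$ together with the formula $\dim_{\rm H}\Lambda=h_{\rm top}(T|_\Lambda)/\log\slT$. In the counterexample above the paper's construction simply recovers the original partition, which is already Markov with $\Lambda=[0,1]$: the subshift structure permits the dynamics to alternate between the two halves, something a full-branch IFS inside a single interval cannot accommodate.
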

\begin{proof}
	Let $\tilde{\mathcal{P}}=\{\tilde{P}(i)\}_{i=1}^m$  be a partition of $[0,1]$ such that for each  $1\le  i\le  m$ the set $\tilde{P}(i)$ is an interval and $T|_{\tilde{P}(i)}$ is linear. Without loss of generality, we can assume that $$\max\{|\tilde{P}(i)|: \tilde{P}(i)\in  \tilde{\mathcal{P}}\}\le  2\kappa   \quad  {\rm with \ } \quad   \kappa:=\min\{|\tilde{P}(i)|: \tilde{P}(i)\in\tilde{\mathcal{P}}\}  \, . $$ Indeed, if $|\tilde{P}(i)|> 2\kappa$ for some $1\le  i\le  m$, then there exists  $\ell\in\mathbb{N}$ such that
	$$2^\ell\kappa<|\tilde{P}(i)|\le  2^{\ell+1}\kappa.$$
	Hence, we can subdivide $\tilde{P}(i)$ into $2^{\ell}$ equal pieces  and take these subintervals as part of partition rather than $\tilde{P}(i)$. The map $T$ restricted to each piece of the new partition is  still linear and by construction the length of each piece if bounded above by $2 \kappa$.
	
	For any interval $\tilde{P}\in\tilde{\mathcal{P}}$, let
	$$P:=\tilde{P}\cap T^{-1}\Big( \ \overline{\bigcup_{1\le i \le m}\{\tilde{P}(i)\in\tilde{\mathcal{P}}: \tilde{P}(i)\subset T(\tilde{P})\}} \ \Big).$$
	Now since  $T|_{\tilde{P}(i)}$ is linear, the intervals $\tilde{P}(i)$ contained in $T(\tilde{P})$ are adjacent intervals in the partition $\tilde{\mathcal{P}}$. Hence, $P$ is a subinterval of $\tilde{P}$.
	Furthermore, since $T|_{\tilde{P}}$ is linear with slope   $\pm\slT$ we have that  $|T(\tilde{P})|= \slT\cdot |\tilde{P}|\ge\slT\kappa$. So the number of $\tilde{P}(i) \in \tilde{\mathcal{P}}$ that intersect $T(\tilde{P})$ is at least the integer part of $\slT\kappa/2\kappa=[\slT/2]$. Here we use the fact that $|\tilde{P}(i)|\le  2\kappa$ for all intervals in the partition. Thus, on using the fact that $\slT>8$, we have that
\begin{equation}\label{needed}
\# \big\{\tilde{P}(i)\in\tilde{\mathcal{P}}: \tilde{P}(i)\subset T(\tilde{P})\big\}    \ge [\slT/2]-2   \ge 1   \, .
\end{equation}
The upshot of this is that  $$P\neq\emptyset \, .$$
	We now prove that $\mathcal{P}:=\{P(i): \tilde{P}(i)\in\tilde{\mathcal{P}}, \ 1\le  i\le  m\}$ is a Markov partition of $\bigcup_{i=1}^mP(i)$.   The first two conditions are automatically satisfied.   Regarding the third condition,  for any $1\le  j,k\le  m$ with $T(P(j))\cap P(k)\neq\emptyset$,  we first note that
	$T(P(j))\cap \tilde{P}(k)\neq\emptyset$ which in turn implies that $P(j)\cap T^{-1}(\tilde{P}(k))\neq \emptyset$. Then, by the definition of $P(j)$, $\tilde{P}(k)$ is an interval such that $\tilde{P}(k)\subset T(\tilde{P}(j))$ %Thus by the
	%by the definition of $P(j)^*$, we know that $P(k)\subset T(P(j))$
	and $\tilde{P}(j)\cap T^{-1}(\overline{\tilde{P}(k)})\subset P(j)$. So $\tilde{P}(k)\subset T(P(j))$. Therefore, by noting that
	$P(k)\subset \tilde{P}(k)$, we have $P(k)\subset T(P(j))  \subset \overline{T(P(j))} $ and this verifies the third condition.
	
	Next, let
$$f:=T|_{\cup_{i=1}^mP(i)}   \quad {\rm  and } \quad \Lambda:=\bigcap_{n=0}^\infty f^{-n}\big(\cup_{i=1}^mP(i)\big)\, . $$
Then, by construction, the system $(\Lambda, T|_{\Lambda}, \cP_{\Lambda} )$  with $\cP_{\Lambda} :=\{P(i): \tilde{P}(i)\in\mathcal{P}, \ 1\le  i\le  m\}$ is a Markov subsystem of $([0,1], T)$.  It remains to  prove that the Hausdorff dimension of the set $\Lambda$ satisfies the lower bound in the statement of the proposition. For  this,  we work in the symbolic space of the dynamical system under consideration  to  estimate the  topological entropy of $T|_\Lambda$ and then use the fact that the entropy is intimately related to the dimension of $\Lambda$.

The dynamics of $T|_{\Lambda}$ can be coded by the $m\times m$ transition matrix $A=(A_{jk})_{1\le  j,k\le  m}$ with entries
	\begin{eqnarray*}
		A_{jk}=\begin{cases}
			1\ \ \ &\text{if}\ P(k)\subset \overline{T(P(j))},\\[1ex]
			0\ \ \ &\text{otherwise.}
		\end{cases}
	\end{eqnarray*}
	Denote by $\Sigma_A^{\mathbb{N}}\subset\{1,2,\dots,m\}^{\mathbb{N}}$ the corresponding symbolic space induced by $A$ and $\Sigma_A^n$ the set of words of length $n$ in $\Sigma_A^{\mathbb{N}}$. The projection $\pi$ from $\Sigma_A^{\mathbb{N}}$ to $\Lambda$ is given by
	$$  \omega=(\omega_n)_{n\ge0}  \  \ \mapsto \ \
\pi(\omega)=\bigcap_{n=0}^{\infty}f^{-n}(P(\omega_n))  \, .$$

In view of \eqref{needed}, it follows that any given  word of length  $n$ gives rise to at least $\big[\frac{\slT}{2}\big]-2$  words of length  $(n+1)$.  Hence, we have that  $$\#\Sigma_A^n \, \ge \,  m\left(\frac{\slT}{2}-3\right)^{n-1},$$
	which implies that the topological entropy $h_{\rm top}(T|_\Lambda)$ of $T|_\Lambda$ is at least $\log\left(\frac{\slT}{2}-3\right)$.  This together with Bowen's definition of topological entropy  (see  \cite{B1973}, \cite[page 230]{FFW01}) and the fact that the  absolute value of the slope of $T|_\Lambda$ is a constant (namely $\slT > 8$), implies  that
	$$\dim_{\rm H}\Lambda  =    \frac{h_{\rm top}(T|_\Lambda)     } {\log \slT} \ge \frac{\log\left(\frac{\slT}{2}-3\right)}{\log \slT} \ge 1-\frac{\log 8}{\log\slT}  \ .$$
\end{proof}

%\red{Is from below  just for $\beta$-trans or piecewise linear map}
%
%\begin{proposition}\label{Markovlowerboundsv}
%Let $T$ be a piecewise linear map on $[0, 1]$ with constant slope $\beta(T)$.  Assume that $\slT>8$. Then there exists a Markov subsystem $(\Lambda, T|_{\Lambda}, \cP_{\Lambda} )$ such that for any
%  real positive decreasing function $\psi: \mathbb{R}^+\to \mathbb{R}^+$ and $a\in \red{K_{???}}$ $$\dim_{\rm H}W(T_{\Lambda},\psi,a)\ge  \frac{\log\slT - \log 2 }{\log\slT+\lambda}   .$$
%\end{proposition}
%
%\red{OR}

The following result provides a lower bound for the Hausdorff dimension of shrinking target sets associated with  piecewise linear maps.

\begin{proposition}\label{Markovlowerbound}
	%  Let $\beta$ be a real number with $|\beta|>1$ and $T_\beta$ be the corresponding beta-transformation. Let $K(\beta)$ be the support of the associated Parry-Yrrap measure of $T_\beta$. Let $\psi: \mathbb{R}^+\to \mathbb{R}^+$ be a real positive decreasing function and  $a \in K(\beta)$.
	%
	Let $T$ be a piecewise linear map on $[0, 1]$ and assume that $ \slT>8$. Let  $(\Lambda, T|_{\Lambda},\cP_{\Lambda})$ be the associated Markov subsystem arising  from Proposition \ref{Markov}.  Suppose there exists a compact set $K   \supseteq \Lambda $ and an integer $k_0>0$, so that $T^{k_0}(P) \supseteq K$ for any interval $P\in \mathcal{P}_\Lambda$.
	Let $\psi: \mathbb{R}^+\to \mathbb{R}^+$ be a real positive function and $a\in K$. Then  $$\dim_{\rm H}W(T,\psi,a)\ge  \frac{1-{\log 8}/{\log\slT}}{1+\lambda/\log\slT}, $$
where $\lambda=\lambda(\psi) $ is the lower order at infinity of the function $\psi$ and
$$
W(T,\psi, a):=\{x\in [0,1]    : |T^nx-a| \le \psi(n)  \ \ \text{for infinitely many}\ n\in\mathbb{N}\}.
$$
\end{proposition}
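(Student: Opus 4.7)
The plan is to exhibit an explicit $\limsup$ subset of $W(T,\psi,a)$ concentrated on $\Lambda$ and then invoke the balls-to-balls Mass Transference Principle (Theorem~\ref{MTPB}) with respect to the $\delta$-Ahlfors regular measure $\mathcal{H}^\delta|_\Lambda$, where $\delta := \dim_{\rm H}\Lambda$. First I would verify that Proposition~\ref{MarkovAlfhor} applies to $(\Lambda, T|_\Lambda, \mathcal{P}_\Lambda)$: the piecewise linearity of $T|_\Lambda$ gives the required $C^{1+\alpha}$ regularity, while the hypothesis $T^{k_0}(P) \supseteq K \supseteq \Lambda$ for every $P \in \mathcal{P}_\Lambda$ forces the incidence matrix of $\mathcal{P}_\Lambda$ to be primitive. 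Hence $\mathcal{H}^\delta|_\Lambda$ is $\delta$-Ahlfors regular, and by Proposition~\ref{Markov} we have $\delta \geq 1 - \log 8/\log\slT$.

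Next, for every $n \geq k_0$ and every cylinder $P$ of order $n-k_0$ in the Markov system, the image $T^{n-k_0}(P)$ is an element of $\mathcal{P}_\Lambda$, which is mapped by $T^{k_0}$ onto a set containing $K \ni a$. Thus there exists $x_{n,P} \in P$ with $T^n(x_{n,P}) = a$. Since $T^n$ has constant slope $\pm \slT^n$ on $P$, there is an absolute constant $c>0$ such that the ball $B_{n,P} := B(x_{n,P},\, c\psi(n)\slT^{-n})$ is contained in $(T^n)^{-1}B(a,\psi(n))$. Collecting these balls across $n$ yields the inclusion
\[
W(T,\psi,a) \;\supseteq\; \limsup_{n\to\infty} \bigcup_{P} B_{n,P},
\]
where the inner union is over cylinders $P$ of order $n-k_0$.

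For the Mass Transference step, fix $s>0$ and form the scaled balls $B_{n,P}^s = B\bigl(x_{n,P},\, (c\psi(n)\slT^{-n})^{s/\delta}\bigr)$. By the definition of $\lambda$, for each $\varepsilon>0$ there is an infinite subsequence $\{n_\ell\}$ along which $\psi(n_\ell) \geq e^{-(\lambda+\varepsilon)n_\ell}$. Because every cylinder $P$ of order $n-k_0$ has diameter at most a constant multiple of $\slT^{-(n-k_0)}$ and $x_{n,P}\in P$, the scaled ball $B_{n,P}^s$ will contain all of $P\cap\Lambda$ as soon as $(c\psi(n)\slT^{-n})^{s/\delta} \gtrsim \slT^{-(n-k_0)}$. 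Along the chosen subsequence this reduces, for large $\ell$, to the inequality $s/\delta \leq \log\slT/(\lambda+\varepsilon+\log\slT)$. For any such $s$, the union $\bigcup_P B_{n_\ell,P}^s$ covers $\Lambda$ for every large $\ell$, so
\[
\mathcal{H}^\delta\Bigl(\Lambda \cap \limsup_{\ell\to\infty}\bigcup_{P} B_{n_\ell,P}^s\Bigr) = \mathcal{H}^\delta(\Lambda).
\]
Theorem~\ref{MTPB} then gives $\mathcal{H}^s\bigl(\limsup_\ell \bigcup_P B_{n_\ell,P}\bigr) = \mathcal{H}^s(\Lambda) = \infty$ for all such $s < \delta$, hence $\dim_{\rm H} W(T,\psi,a) \geq s$. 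Optimising over $s$, letting $\varepsilon\to 0$, and substituting $\delta \geq 1 - \log 8/\log\slT$ delivers the stated lower bound $(1-\log 8/\log\slT)/(1+\lambda/\log\slT)$.

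The main obstacle is the calibration in the final step: the scaling exponent $s/\delta$ must be large enough that the scaled balls really do cover $\Lambda$ along the chosen subsequence (so that the MTP hypothesis holds), yet small enough that the resulting dimension bound is as strong as possible. The hypothesis $T^{k_0}(P)\supseteq K$ is essential here because it forces a preimage of $a$ to lie in \emph{every} cylinder of order $n-k_0$; without this, the union of scaled balls could only cover a proper sub-Cantor set of $\Lambda$ and the MTP would yield a strictly weaker estimate.
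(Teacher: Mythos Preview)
Your proposal is correct and follows essentially the same route as the paper: locate a preimage of $a$ in every order-$n$ cylinder of the Markov subsystem using the covering hypothesis $T^{k_0}(P)\supseteq K$, build the corresponding $\limsup$ set of balls inside $W(T,\psi,a)$, verify that the $s/\delta$-scaled balls cover $\Lambda$ along an infinite subsequence, and apply the balls-to-balls Mass Transference Principle with the Ahlfors regular measure $\mathcal{H}^\delta|_\Lambda$ supplied by Proposition~\ref{MarkovAlfhor}. Your indexing shift (cylinders of order $n-k_0$ rather than $n$) and your explicit check of primitivity are minor cosmetic differences; the paper simply invokes Proposition~\ref{MarkovAlfhor} without pausing to verify the primitivity hypothesis, so if anything you are more careful there.
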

\begin{proof}
We are given  that $(\Lambda, T|_{\Lambda}, \cP_{\Lambda} )$   is a Markov subsystem of the dynamical system $([0,1], T)$ coming from Proposition~\ref{Markov}.  Indeed, $\cP_{\Lambda} =\{P(i) \ :  \ 1\le  i\le  m\}$ where each $P(i)$ is an interval and $T|_{P(i)}$ is linear.
 As in the proof of Proposition \ref{Markov}, denote by $\Sigma_A^{\mathbb{N}}\subset\{1,2,\dots,m\}^{\mathbb{N}}$ the corresponding symbolic space of the dynamics of $T|_{\Lambda}$ induced by the transition matrix $A$ and $\Sigma_A^n$ the set of words of length $n$ in $\Sigma_A^{\mathbb{N}}$.
With this in mind, given a  word  $(i_0i_1\cdots i_{n-1})\in\Sigma_A^n$,   let
$$
P(i_0i_1\cdots i_{n-1}):=P(i_0)\cap T^{-1}\big(P(i_1) \big) \cap  \cdots \cap T^{-(n-1)}\big(P(i_{n-1}) \big)
$$
and for  each $n\in \N$, let
$$\mathcal{P}_n:=\Big\{P(i_0i_1\cdots i_{n-1}): (i_0i_1\cdots i_{n-1})\in\Sigma_A^n \Big\}$$
denote the collection of cylinder sets of length $n$.
Now by the Markov property of $\mathcal{P}_{\Lambda}$,  for any cylinder $P(i_0i_1\cdots i_{n-1})\in\mathcal{P}_n$
	\begin{equation}\label{Markovcylinder}
		T^{n-1}\big(P(i_0i_1\cdots i_{n-1}) \big)=P(i_{n-1}).
	\end{equation}
	Then with  $K$ and $k_0$ as in the statement of the proposition,  we have that $T^{n-1+k_0} \big(P(i_0i_1\cdots i_{n-1}) \big)\supseteq K$. It therefore follows that for any $a\in K$, there exists a point $x_{i_0i_1\cdots i_{n-1}}\in P(i_0i_1\cdots i_{n-1})$ such that $T^{n-1+k_0}(x_{i_0i_1\cdots i_{n-1}})=a.$ That is, we can find a preimage of the point $a$ under $T^{n+k_0-1}$ on every cylinder of order $n$.
	So for any point   $x\in B\Big(x_{i_0i_1\cdots i_{n-1}},\frac{\psi(n+k_0-1)}{\slT^{n+k_0-1}}\Big)$,   we have that
\begin{eqnarray*} |T^{n+k_0-1}(x)-a| & = & |T^{n+k_0-1}(x)-T^{n+k_0-1}(x_{i_0i_1\cdots i_{n-1}})| \\[1ex] & = & \slT^{n+k_0-1}|x-x_{i_0i_1\cdots i_{n-1}}| \\[1ex] & < & \psi(n+k_0-1).
\end{eqnarray*}
Therefore,
		\begin{equation}\label{limsupsubsetA}
			\limsup_{n\to\infty}\bigcup_{i_0i_1\cdots i_{n-1}\in \Sigma_A^{n}} B\Big(x_{i_0i_1\cdots i_{n-1}},\frac{\psi(n+k_0-1)}{\slT^{n+k_0-1}}\Big)\subset W(T,\psi,a).
		\end{equation}

On the other hand, by \eqref{Markovcylinder} we have  that
	$$\kappa_*\slT^{-(n-1)}\le  |P(i_0i_1\cdots i_{n-1})|\le  \kappa^*\slT^{-(n-1)},$$
	where $\kappa_*=\min_{1\le  i\le  m}|P(i)|$ and
	$\kappa^*=\max_{1\le  i\le  m}|P(i)|$. Hence
	\begin{equation}\label{fullset1}
			\bigcup_{i_0i_1\cdots i_{n-1}\in \Sigma_A^{n}} B\big(x_{i_0i_1\cdots i_{n-1}}, \ \kappa^*\slT^{-(n-1)}\big)\supseteq \Lambda
	\end{equation}
and so
\begin{equation}\label{fullset}
	\limsup_{n \to \infty}		\bigcup_{i_0i_1\cdots i_{n-1}\in \Sigma_A^{n}} B\big(x_{i_0i_1\cdots i_{n-1}}, \ \kappa^*\slT^{-(n-1)}\big)\supset \Lambda  \, .
	\end{equation}

Now let $\delta :=  \dim_{\rm H} \Lambda $  and note that
$$\Big(\frac{\psi(n+k_0-1)}{\slT^{n+k_0-1}}\Big)^{\frac{s}{\delta}}\ge \kappa^*\slT^{-(n-1)}$$  for any
	$$
	0< s < s_0 := \limsup_{n\to \infty}{ \delta ( \log \kappa^* - (n-1) \log \slT) \over \log \psi(n+k_0-1) -(n+k_0-1)\log \slT }=\frac{\delta}{1+\lambda/\log\slT}\, .
	$$
In other words,  for $ s   < s_0$  the radii of the `$s$-scaled up'  balls associated with \eqref{limsupsubsetA} are at least the size of  the corresponding balls appearing in \eqref{fullset}. It then follows via \eqref{limsupsubsetA},  \eqref{fullset} and
Proposition~\ref{MarkovAlfhor}, that   on  applying  the
  Mass Transference Principle  (the original Theorem \ref{MTPB}) with $\mu = \cH^{\delta}|_{\Lambda}$,   we  have that
	% $$\mathcal{H}^{\delta}\Big( \limsup_{n\to\infty}	\bigcup_{i_0i_1\cdots i_{n-1}\in \Sigma_A^{n}} B\big(x_{i_0i_1\cdots i_{n-1}}, \ \kappa^*\beta^{-(n-1)}\big)\Big)=\mathcal{H}^{\delta }(\Lambda)  \, . $$
%	  That is, enlarge the radius of balls to
%Combining \eqref{limsupsubsetA} and Theorem \ref{MTPB} gives
\begin{equation}\label{last}
 \cH^s \big( W(T,\psi,a)  \big)   =   \cH^s(\Lambda) =  \infty \, .
\end{equation}
The right hand most equality  is valid since $ s < \delta $.
Now \eqref{last} is true for any $s < s_0$ and so together with  Proposition \ref{Markov} it follows that
	$$\dim_{\rm H}W(T,\psi,a)\ge s_0 =\frac{\dim_{\rm H}\Lambda}{1+\lambda/\log\slT}\ge  \frac{1-{\log 8}/{\log\slT}}{1+\lambda/\log\slT}.$$

%*************
%
%
%
%	 In the compact space $\Lambda$ and with Proposition~\ref{MarkovAlfhor} in mind,  we can apply the
%  Mass Transference Principle  (the original Theorem \ref{MTPB})
%	 to the $\limsup$ type set of balls $B\big(x_{i_0i_1\cdots i_{n-1}},\frac{\psi(n+k_0-1)}{\beta^{n+k_0-1}}\big)$ (the left hand set of \eqref{limsupsubsetA}). Now with $\delta :=  \dim_{\rm H} \Lambda $,  the inclusion \eqref{fullset} implies  that
%	 $$\mathcal{H}^{\delta}\Big( \limsup_{n\to\infty}	\bigcup_{i_0i_1\cdots i_{n-1}\in \Sigma_A^{n}} B\big(x_{i_0i_1\cdots i_{n-1}}, \ \kappa^*|\beta|^{-(n-1)}\big)\Big)=\mathcal{H}^{\delta }(\Lambda)  \, . $$
%	  That is, enlarge the radius of balls to $\kappa^*|\beta|^{-(n-1)}$, that is, $\big(\frac{\psi(n+k_0-1)}{|\beta|^{n+k_0-1}}\big)^{\frac{s}{\delta}}\ge \kappa^*|\beta|^{-(n-1)}$, with
%	$$
%	s < s_0 := \limsup_{n\to \infty}{ \delta ( \log \kappa^* - (n-1) \log |\beta|) \over \log \psi(n+k_0-1) -(n+k_0-1)\log |\beta| }=\frac{\delta}{1+\lambda/\log\slT}.
%	$$
%Combining \eqref{limsupsubsetA} and Theorem \ref{MTPB} gives
%	$$\dim_{\rm H}W(T,\psi,a)\ge s_0 =\frac{\dim_{\rm H}\Lambda}{1+\lambda/\log\slT}\ge  \frac{1-{\log 8}/{\log\slT}}{1+\lambda/\log\slT}.$$
\end{proof}

As we shall soon see,  Proposition~\ref{Markovlowerbound} will be instrumental in the proof of Theorem~\ref{dimensionalone}.   Before moving onto the latter, we establish a technical lemma.

\begin{lemma} \label{lemma_liminf_thesame}
	Let $\psi: \mathbb{R}^+\to \mathbb{R}^+$ be a real positive decreasing function and let $\lambda=\lambda(\psi) $ be its  lower order at infinity.  Then, for any positive integer $k$  we have that
	\[
	\liminf_{n\to\infty}\frac{-\log\psi(kn)}{kn} = \lambda.
	\]
\end{lemma}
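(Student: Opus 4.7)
The plan is to prove the equality by showing two inequalities separately. The key point is that since $\psi$ is decreasing, the value $\psi(kn)$ is squeezed between $\psi(n')$ for values of $n'$ in an interval of length at most $k$, which becomes negligible on the $1/n$ scale.

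The easy direction is $\liminf_{n\to\infty}\frac{-\log\psi(kn)}{kn} \ge \lambda$. This is immediate from the fact that $\{kn : n \in \N\}$ is a subsequence of $\N$, so the liminf along this subsequence is bounded below by the liminf along $\N$. Note also that since $\psi$ is positive and decreasing, $\psi(n) \le \psi(1)$, hence $-\log\psi(n) \ge -\log\psi(1)$, which is a fixed constant, so $\frac{-\log\psi(n)}{n}$ is eventually bounded below by, say, $-1$ and in fact its $\liminf$ is non-negative; thus $\lambda \in [0,+\infty]$.

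The reverse inequality $\liminf_{n\to\infty}\frac{-\log\psi(kn)}{kn} \le \lambda$ is where the monotonicity of $\psi$ plays a role. If $\lambda = +\infty$ there is nothing to prove, so assume $\lambda < \infty$. Pick a sequence $n_j \to \infty$ realising the liminf, i.e.\ $\frac{-\log\psi(n_j)}{n_j}\to \lambda$. Define $m_j := k\lfloor n_j/k \rfloor$, so $m_j$ is a multiple of $k$ and $n_j - k + 1 \le m_j \le n_j$. Since $\psi$ is decreasing and $m_j \le n_j$, we have $\psi(m_j) \ge \psi(n_j) > 0$, whence $-\log\psi(m_j) \le -\log\psi(n_j)$. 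Using the fact that eventually $-\log\psi(n_j) \ge 0$ (because $\lambda \ge 0$, perhaps after discarding finitely many terms or adjusting by an arbitrarily small $\varepsilon$), this gives
\[
\frac{-\log\psi(m_j)}{m_j} \ \le \ \frac{-\log\psi(n_j)}{m_j} \ = \ \frac{-\log\psi(n_j)}{n_j}\cdot \frac{n_j}{m_j}.
\]
Since $m_j \ge n_j - k + 1$, we have $n_j/m_j \to 1$ as $j \to \infty$, and therefore the right-hand side tends to $\lambda$. Writing $m_j = k p_j$ with $p_j \to \infty$ yields $\liminf_{n\to\infty}\frac{-\log\psi(kn)}{kn} \le \lambda$, completing the proof.

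I do not anticipate any serious obstacle here; the only mild subtlety is the sign of $-\log\psi(n_j)$ when working the inequality through the denominator change, which is handled by first noting $\lambda \ge 0$ from monotonicity, and (if necessary) replacing $\lambda$ by $\lambda + \varepsilon$ and letting $\varepsilon \to 0^+$ at the end.
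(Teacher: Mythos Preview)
Your proof is correct. The core idea---using monotonicity of $\psi$ to compare its value at an arbitrary integer $n_j$ with its value at the nearest multiple of $k$ below---is exactly the same as in the paper. The paper, however, packages this as a proof by contradiction: it sets $\xi:=\liminf_{n\to\infty}\frac{-\log\psi(kn)}{kn}$, assumes $\xi>\lambda$, fixes $\varepsilon=(\xi-\lambda)/4$, and then derives from monotonicity and the defining inequalities for $\lambda$ and $\xi$ the absurd bound $\exp\big(\tfrac{\xi-\lambda}{2}(kn_i+r_i)\big)<\exp\big((\xi-\varepsilon)r_i\big)$ with $0\le r_i\le k-1$. Your direct limit computation is cleaner and avoids the $\varepsilon$--bookkeeping. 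Incidentally, your worry about the sign of $-\log\psi(n_j)$ is unnecessary: the inequality $\frac{-\log\psi(m_j)}{m_j}\le\frac{-\log\psi(n_j)}{n_j}\cdot\frac{n_j}{m_j}$ holds regardless of sign (you only divide by the positive number $m_j$), and the product on the right converges to $\lambda\cdot 1$ by the algebra of limits, so no $\varepsilon$--patch is needed.
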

\begin{proof}
	Recall,
	$
	\lambda:=\liminf\limits_{n\to\infty}\frac{-\log\psi(n)}{n}
	$
	and thus
	there exist infinitely many indices $n\in\N$ such that
	\begin{equation} \label{lemma_liminf_thesame_ie_ub_lambda}
		\psi(n)>\exp\left(-(\lambda+\varepsilon)n\right).
	\end{equation}
	%With elementary transformations, the inequalities~\eqref{lemma_liminf_thesame_ie_preliminary_lb_lambda} and~\eqref{lemma_liminf_thesame_ie_preliminary_ub_lambda} are equivalent respectively to the following:
	%\begin{equation} \label{lemma_liminf_thesame_ie_lb_lambda}
	%\forall n>n_{\varepsilon}, \quad \psi(n)<\exp\left(-(\lambda-\varepsilon)n\right)
	%\end{equation}
	%and
	%\begin{equation} \label{lemma_liminf_thesame_ie_ub_lambda}
	%\exists \ \text{infinitely many  $n\in\N$ s.t.} \quad \psi(n)>\exp\left(-(\lambda+\varepsilon)n\right).
	%\end{equation}
	Now fix a positive integer $k\ge 2 $ and let
	\[
	\xi:=\liminf_{n\to\infty}\frac{-\log\psi(kn)}{kn}.
	\]
	Thus,  for any $\varepsilon>0$ there exists an $N_{\varepsilon}>0$ such that for every $n>N_{\varepsilon}$
	\begin{equation} \label{lemma_liminf_thesame_ie_lb_mu}
		\psi(kn)<\exp\left(-(\xi-\varepsilon)kn\right).
	\end{equation}
	%and there exists infinitely many indices $n\in\N$ such that
	%\begin{equation} \label{lemma_liminf_thesame_ie_ub_mu}
	%\psi(kn)>\exp\left(-(\xi+\varepsilon)kn\right).
	%\end{equation}straightforwardly from the definitions that, in any case, $\xi \ge\lambda$. We claim that, moreover, if $\psi$ is decreasing, then $\xi=\lambda$.
	
	By definition, we trivially have that $\xi \ge   \lambda$. We claim that if $\psi$ is decreasing then we must have equality.  With this in mind,  assume on the contrary that  $\xi  > \lambda$ and set $\varepsilon:=\frac{\xi-\lambda}{4}$. For our fixed $k\ge2$, any arbitrarily positive integer  can be written in the form  $kn+r$ with $n \in\N$ and $r\in\N$ satisfying $0\le  r\le  k-1$.
	By \eqref{lemma_liminf_thesame_ie_ub_lambda}, there is an increasing sequence $(kn_i+r_i)_{i\ge1}$ with $n_i\in \N$ and $0\le  r_i\le  k-1$ such that
	\[
	\psi(kn_i+r_i)>\exp\left(-(\lambda+\varepsilon)(kn_i+r_i)\right).
	\]
	On the other hand, for any $n\in\N$ and $0\le  r\le  k-1$ such that $kn>N_\epsilon$, by the decreasing  property of $\psi$ and \eqref{lemma_liminf_thesame_ie_lb_mu}, we have that
	%we have the following chain of inequalities (where we use~\eqref{lemma_liminf_thesame_ie_lb_mu} at the second step):
	\[
	\psi(kn+r) \le \psi(kn)<\exp\left(-(\xi-\varepsilon)kn\right)=\exp\left(-(\xi-\varepsilon)(kn+r)\right)\exp\left((\xi-\varepsilon)r\right) \, .
	\]
	Thus, for all $i$ large enough  we have that
	\[ \exp\left(-(\lambda+\varepsilon)(kn_i+r_i)\right)<\exp\left(-(\xi-\varepsilon)(kn_i+r_i)\right)\exp\left((\xi-\varepsilon)r_i\right) ,
	\]
	which in turn implies that
\begin{equation} \label{inturn} \exp\Big(\frac{\xi-\lambda}{2}(kn_i+r_i)\Big)<\exp\big((\xi-\varepsilon)r_i\big).
	\end{equation}
	Now note that with  $k$ fixed , the right-hand side of \eqref{inturn}  is bounded since $r_i$ lies in the range from $0$ to $k-1$.  However, since $\frac{\xi-\lambda}{2}>0$, the left-hand side of \eqref{inturn}  tends  to infinity as $i$ tends to infinity and we obtain a  contradiction. The upshot is that we must have $\xi=\lambda$, as claimed.
\end{proof}

\begin{proof}[Proof of Theorem~\ref{dimensionalone}]

We prove Theorem~\ref{dimensionalone} by estimating the upper and lower bounds for the Hausdorff dimension of $W(T_\beta,\psi,a) $ separately.

\medskip

The upper bound for $\dim_{\rm H} W(T,\psi, a)$  essentially follows the same line of argument as within the proof of Lemma~\ref{limitexist}  with $i=1$  and  the estimate \eqref{needlabel} replaced by \eqref{needlabelB}.  In short, for any $n \in \N$ the preimage $T_\beta^{-n} \big(B(a,\psi(n))\big)$ consists of $N_n$ intervals $\{I_{n,j} : 1\le j\le N_n  \}$ with lengths bounded by $2\psi(n)|\beta|^{-n}$ and in view of Remark~\ref{-need},  for any $\epsilon>0$ there exists $N_0\ge   1$ such that for all $n\ge   N_0$
$$N_n\le  |\beta|^{n(1+\epsilon)}.$$
Now for any $ N  \ge 1$, we have that
$$W(T_\beta,\psi,a)\subset \bigcup_{n=N}^\infty\bigcup_{j=1}^{N_n}I_{n,j} \,  . $$
Thus,  given $\rho>0$ and on choosing $N\ge   N_0$  sufficiently large so that $|\beta|^{-N}<\rho$, it follows that for any $s>0$
\begin{eqnarray*}
\mathcal{H}_\rho^s(W(T_\beta,\psi,a))  & \le  &  \sum_{n=N}^\infty\sum_{j=1}^{N_n}|I_{n,j}|^s\le  \sum_{n=N}^\infty |\beta|^{n(1+\epsilon)}(2\psi(n)|\beta|^{-n})^s  \\[2ex]
& \le & \sum_{n=N}^\infty
|\beta|^{n(1+\epsilon)-ns+s\frac{\log\psi(n)}{\log|\beta|}}   \, .
\end{eqnarray*}
	Hence, for any
	$s>\frac{(1+\epsilon)\log|\beta|}{\lambda+\log|\beta|} \, $ we have that $\mathcal{H}^s(W(T_\beta,\psi,a))= 0$ and thus  $$\dim_{\rm H} W(T_\beta,\psi, a)\le \frac{(1+\epsilon)\log|\beta|}{\lambda+\log|\beta|}.$$
	Since $\epsilon>0$ is arbitrary, we obtain the desired  upper bound for the dimension of $ W(T_\beta,\psi, a)$.

%*******************
%
%Note that $T_\beta^n$ is a piecewise linear mapping with constant slope $|\beta|$, and that the preimage $T_\beta^{-n} \big(B(a,\psi(n))\big)$ consists of intervals $\{I_{n,j}\}_{1\le j\le N_n}$ with lengths bounded by $2\psi(n)|\beta|^{-n}$. %Proposition \ref{fullcylinders} tells us that $N_n\le  C|\beta|^n$, where $C$ is a constant independent of $n$.
%It is well-known that the topological entropy of $T_\beta$ is $\log |\beta|$. Thus, for any $\epsilon>0$, there exists $N_0\ge   1$, such that for all $n\ge   N_0$,
%$$N_n\le  |\beta|^{n(1+\epsilon)}.$$	
%	So, for any $N\ge N_0$, noting that
%	$$W(T,\psi,a)\subset \bigcup_{n=N}^\infty\bigcup_{j=1}^{N_n}I_{n,j},$$
%	we have
%	$$\mathcal{H}_\delta^s(W(T,\psi,a))\le  \sum_{n=N}^\infty\sum_{j=1}^{N_n}|I_{n,j}|^s\le  \sum_{n=N}^\infty |\beta|^{n(1+\epsilon)}(2\psi(n)|\beta|^{-n})^s=\sum_{n=N}^\infty|\beta|^{n(1+\epsilon)-ns+s\frac{\log\psi(n)}{\log|\beta|}}$$
%	by choosing $N$ large enough with $|\beta|^{-N}<\delta$ for any given $\delta>0$.  Hence, for any
%	$$s>\frac{(1+\epsilon)\log|\beta|}{\lambda+\log|\beta|},$$ we have $\mathcal{H}^s(W(T,\psi,a))<+\infty$. Therefore, $$\dim_{\rm H} W(T,\psi, a)\le \frac{(1+\epsilon)\log|\beta|}{\lambda+\log|\beta|}.$$
%	Since $\epsilon>0$ is arbitrary, we obtain the upper bound.  \sv{How is this different than proof of Lemma~\ref{upperbound}??  Is Renyi only for $\beta > 1$? }
%	

To prove the complementary lower bound, we make use of  Proposition~\ref{Markovlowerbound}.   With this in mind,  for any real number  $\beta$ with $|\beta|>1$,  the transformation $T_\beta$ can be considered as a piecewise linear mapping of the unit interval  $[0,1]$  with $\beta(T)=|\beta|$.   Strictly, speaking $T_\beta$ is defined on $[0,1)$  but we can naturally include the end point one by defining  $ T_{\beta}(1)=\beta  \ (\text{mod}\ 1)$.    This extension  will not effect the dimension of $W(T_\beta,\psi,a)$ since it  introduces at most a single point.
Now choose $k\in\mathbb{N}$ large enough so that  $|\beta|^k>8$, and note that $$W(T_\beta,\psi,a)\supseteq W(T_\beta^k,\varphi_k,a)   \quad {\rm where } \quad  \varphi_k(n):=\psi(kn)  \, .   $$

	% Denote by $\Lambda_k:=\Lambda(T^k)$ the Markov subsystem for $T^k$ constructed as above.
	Let $(\Lambda_k, T^k_\beta|_{\Lambda_k},\cP_{\Lambda_k})$ be the Markov subsystem  of  $([0,1],T^k_\beta)$ arising from  Proposition \ref{Markov}. The following claim will enable us to establish the hypotheses within  Proposition~\ref{Markovlowerbound}  regarding the existence of a compact set $K \supseteq \Lambda_k $ and an integer $k_0>0$, so that $T^{k_0}_\beta(P) \supseteq K$ for any interval $P\in \cP_{\Lambda_k}$.   As usual,  we let $K(\beta)$ denote  the support of the  Parry-Yrrap measure $\mu_\beta$. Recall, that  $K(\beta)$ is either the unit interval or a finite union of closed intervals -- see Propostion~\ref{Prop:support} in \S\ref{secmetricresult}.

\noindent {\bf Claim.} \emph{For any interval $I \subseteq [0,1]$, there exists an integer $k(I)>0$, such that $T_\beta^{k(I)}(I) \supseteq K(\beta)$.}

\begin{proof}[Proof of Claim]
  We will use the fact that  for any $|\beta|  > 1$, the map  $T_\beta $ is locally eventually onto (or topologically exact); i.e. for every non-degenerate subinterval $I \subseteq K(\beta)$ there exists a non-negative integer $k$ such that $T^k (I)  \supseteq  K(\beta)$.
 For $\beta> 1 $, this is explicitly stated and proved in the  work of  Troubetzkoy $\&$ Varandas \cite[Section 3.3]{TV} and since $K(\beta) =[0,1] $ when $\beta > 1$  it  directly establishes the claim.
    On the other hand, for  $\beta< -1 $ the fact is explicitly stated and proved in the  work of   Liao $\&$ Steiner \cite[Theorem 2.2]{LS12}.    As a consequence,  given any interval $I \subseteq [0,1]$,  if  $I\cap K(\beta)$ contains an interval then we are done. So assume that this is not the situation. Then, $I \cap \big( [0,1] \setminus K(\beta)\big)$ contains an  interval and to continue we consider two situations:
     \begin{itemize}
       \item[(a)] There exists a positive integer $\ell(I)$ such that $
		T_\beta^{\ell(I)}(I) \cap K(\beta) $  contains an interval.  In this case the claim follows  directly  from the locally eventually onto property of $T_\beta $.
       \item[(b)] If (a) does not hold, then for all $n\in \mathbb{N}$, $T^n(I)$ is contained in $[0,1] \setminus K(\beta)$ except for a  finite number of points. Therefore
	\[
	\lim_{n\to\infty}m_1\big(T^{-n}([0,1] \setminus K(\beta))\big)\ge   m_1(I)>0 \, ,
	\]
	where as usual $m_1$ is one-dimensional Lebesgue measure.  However, this contradicts the second assertion of \cite[Theorem 2.2]{LS12}; namely that  $\lim\limits_{n\to\infty}m_1(T^{-n}([0,1] \setminus K(\beta))) = 0 $.
\end{itemize}\end{proof}

	%On the other hand, all intervals $P(i)^*$ have non-empty intersection with the support $K_\beta$. If not, we can delete those intervals $P(i)$ from the partition.
	
On using the above claim, it follows that for any interval $P(i)\in \cP_{\Lambda_k}:=\{P(i): 1\le  i\le  m\}$  there exists an integer $k_0(i)>0$ such that $T_\beta^{k_0(i)}\big(P(i)\big)\supseteq K(\beta)$. The upshot of this is that  the hypotheses within Proposition~\ref{Markovlowerbound}  is satisfied with $K = K(\beta) $ and  $k_0= \max\limits_{1\le i\le m}k_0(i)$. Then on applying Proposition \ref{Markovlowerbound}, we have that
	\begin{eqnarray*}
		\dim_{\rm H}W(T_\beta,\psi,a)\, \ge  \, \dim_{\rm H}W(T_\beta^k,\varphi_k,a)\, \ge    \, \frac{1-{\log 8}/{(k\log\slT)}}{1+\lambda_k/(k\log\slT)}%\\
		%&\ge  & \frac{1}{1+\lambda_k/(k\log\slT)}\left(1-\frac{\log 4}{k\log\slT}\right).
	\end{eqnarray*}
where
 $\lambda_k:=\liminf_{n\to\infty}\frac{-\log\varphi_k(n)}{ n}$ is the lower order at infinity of $\varphi_k$.   Now by Lemma \ref{lemma_liminf_thesame}, since $\psi$ is a real positive decreasing function,  we have that $\lambda_k/k =  \lambda$  and  so on  letting $k\to\infty$ we obtain the desired lower bound for the dimension of $ W(T_\beta,\psi, a)$.
\end{proof}

\section{Final comments}

In this section we discuss various natural problems that arise as a consequence of the results proved in this paper.  The measure results (namely, Theorems~\ref{measure-matrix} - \ref{corInteger}) for matrix transformations are reasonably complete so the problems listed below are essentially concerned with Hausdorff dimension.

%\subsection{Generalization of Theorem~\ref{multiplicative} and Theorem~\ref{rectangledimresult}}
\subsection{Dimension problem for property $ ({\boldsymbol{\rm P}}) $ targets sets  \label{galSV}}

Theorem \ref{rectangledimresult} and Theorem \ref{multiplicative} give the Hausdorff dimension of shrinking target set $W(T,\{E_n\})$  when the targets sets $ \{E_n\}_{n \in \N } $ are a sequence of rectangles or  hyperboloids.     It is easily seen that both these ``shapes'' {when centred at the origin} satisfy the property $ ({\boldsymbol{\rm P}}) $  condition of Gallagher \cite{GallP} adapted for the torus: a subset $E$ of $\mathbb{T}^d$ is said to have property $ ({\boldsymbol{\rm P}}) $ if whenever $\vx=(x_1, \dots, x_d)\in E$ and $ \|  x_i'  \| \le   x_i$ ($1\le   i \le   d$) then $\vx'=(x_1', \dots, x_d')\in E$. Geometrically, the property simply means that the rectangle $B(0,x_1) \times \ldots \times B(0,x_d)$ is contained within $E$.   In short, it would be desirable to extend and thereby unify our dimension results {(with $\va:= (0, \ldots, 0)$ in the first instance)} to target sets satisfying property $ ({\boldsymbol{\rm P}}) $.     We now briefly  describe what we have in mind.

 Let $T$ be a real, non-singular matrix transformation of the torus $\T^d$.  Suppose that $T$ is diagonal and all eigenvalues  $\beta_1,\beta_2,\dots,\beta_d$  are strictly larger than $1$. Assume that $1<\beta_1\le \beta_2\le\cdots\le \beta_d$.  Let $\cP $  be any  collection of subsets $E$ of $\T^d$ satisfying property
$ ({\boldsymbol{\rm P}}) $.  Then,  for  any sequence $ \{E_n \}_{n \in \mathbb{N}} $ in $\mathcal{P}$, Theorem \ref{metricresult} implies that
$
m_d\big(W(T,\{E_n\})\big)   =   0 $  if   $ \sum_{n=1}^\infty m_d\big(E_n\big)<\infty   \, .
$
Thus, whenever the measure sum converges,  it is natural to ask for   the Hausdorff dimension of $W(T,\{E_n\})$.  Given that property
$ ({\boldsymbol{\rm P}}) $  is intimately tied up with rectangles, it is not unreasonable to expect that $\dim_{\rm H} W(T,\{E_n\})$  is in someway related to the Hausdorff dimension of the `rectangular' shrinking targets sets  $W(T,\Psi, \va)$ given by Theorem~\ref{rectangledimresult}. With this in mind, for any sequence $ \{E_n \}_{n \in \mathbb{N}}$ in  $\cP$, we propose the following candidate for the  dimension formula:
\begin{equation}\label{conjectureformula}
\dim_{\rm H}W(T,\{E_n\}) \  = \!\!
\sup_{\Psi \, :  \,  \forall \,  n \in \N \atop R(0, \Psi(n))\subseteq E_n } \!\!\!\!
 \dim_{\rm H}W(T,\Psi, 0).
\end{equation}
%\sv{
%where given  $\Psi := (\psi_1,\dots,\psi_d)$  and   $n \in \N$
%$$
% R_n\big( \Psi \big):= \Big\{ \vx  \in \mathbb{T}^d :  |x_i| \le \psi_i(n)   \ (1\le   i \le   d) \Big\}.
%$$}
%\sv{[OR]}
Here, as in \S\ref{pfthm6},  given  $\Psi := (\psi_1,\dots,\psi_d)$  and some point  $\va:= (a_1, \ldots,  a_d)  \in \T^d$, for  $n \in \N$ we let
$$
 R\big(\va, \Psi(n) \big):= \Big\{ \vx  \in \mathbb{T}^d :  \|x_i-a_i\| \le \psi_i(n)   \ (1\le   i \le   d) \Big\}.
$$
\\
Observe, that since in \eqref{conjectureformula} the supremum is over   $\Psi$ such that the corresponding rectangles $R\big(0, \Psi(n) \big)$ are a subset of the sets $ E_n$ satisfying property $ ({\boldsymbol{\rm P}}) $, we automatically  obtain the desired lower bound statement:
 \begin{equation}\label{conjectureformulalb}\dim_{\rm H}W(T,\{E_n\})  \ \geq  \!\!  \sup_{\Psi \, :  \,  \forall \,  n \in \N \atop R(0, \Psi(n))\subseteq E_n }   \!\!\!\!
 \dim_{\rm H}W(T,\Psi, 0).
 \end{equation}
Thus, establishing \eqref{conjectureformula} boils down to establishing the complimentary  upper bound statement.

It is not difficult to see that the dimension formula \eqref{conjectureformula} holds when the targets sets $ \{E_n\}_{n \in \N } $ are a sequence of rectangles as in Theorem~\ref{rectangledimresult} or  hyperboloids as in Theorem~\ref{multiplicative}.    The former is obvious. Regarding the latter, for $n \in \N$ we let  $\Psi(n):=(1, \cdots, 1, \psi(n))$. Then,
 \begin{equation}\label{choice_rect}
 R\big(0, \Psi(n) \big)=B(0,1)\times\cdots\times B(0,1)\times B(0, \psi(n))
 \end{equation}
%Now we apply Theorem~\ref{rectangledimresult}. We have
 and with reference to Theorem~\ref{rectangledimresult}
 $$\UP=\left\{(0,0,\cdots,0,t_d): \text{$t_d$ is an accumulation point of $\Big\{-\frac{\log\psi(n)}{n}\Big\}_{n\geq 1}$} \right\}.$$
 Furthermore, for $1\leq i\leq d-1$, we have that $\mathcal{K}_1(i)=\{i+1, i+2, \cdots, d\}$, $\mathcal{K}_2(i)=\{1, 2,\cdots, i\}$, and $\mathcal{K}_3(i)=\emptyset$. Hence, $$\theta_1(0,0,\cdots,0,t_d) \, = \, \cdots  \, =  \, \theta_{d-1}(0,0,\cdots,0,t_d)  \, = \, d \, .$$
For $i=d$,  we have that  $\mathcal{K}_1(d)=\mathcal{K}_3(d)=\emptyset$ and $\mathcal{K}_2(d)=\{1, 2,\cdots, d\}$. Thus,
$$\theta_d(0,0,\cdots,0,t_d)  \, = \, d-1+\frac{\log\beta_d}{t_d+\log\beta_d}.$$ Therefore, on applying  Theorem~\ref{rectangledimresult} we obtain that
\begin{eqnarray*}
\dim_{\rm H}W(T,\Psi, 0) & = & \sup_{t_d}\min_{1\leq i\leq d}\theta_i(0,0,\cdots,0,t_d)  \,  =  \,   \sup_{t_d}\left\{d-1+\frac{\log\beta_d}{t_d+\log\beta_d}\right\}  \\ [2ex]
&= & d-1+\frac{\log\beta_d}{\lambda+\log\beta_d}  \, ,
\end{eqnarray*}
where $\lambda$ is the lower order at infinity of $\psi$. Since this dimension formula coincides with the dimension of $W^\times(T,\Psi, 0)$ given by Theorem~\ref{multiplicative}, and we always have the lower bound (\ref{conjectureformulalb}), we conclude that the supremum in (\ref{conjectureformula}) is attained by the choice of rectangles given by  (\ref{choice_rect}).   In other words, the dimension formula \eqref{conjectureformula} holds when the targets sets $ \{E_n\}_{n \in \N } $ are a sequence of  hyperboloids as in Theorem~\ref{multiplicative}.

The following is an extension of  Gallagher's  property $ ({\boldsymbol{\rm P}}) $  condition that naturally incorporates ``shapes''  not necessarily centred at the origin.   Given $ \va \in \T^d$,   a subset $E$ of $\mathbb{T}^d$ is said to have property $ ({\boldsymbol{\rm P_{\! \va}}}) $ if whenever $\vx=(x_1, \dots, x_d)\in E$ and   $  \|x_i' - a_i\| \le x_i$ ($1\le   i \le   d$) then $\vx'=(x_1', \dots, x_d')\in E$.  Now with this in mind, let $\cP_{\va} $  be any  collection of subsets $E$ of $\T^d$ satisfying property $({\boldsymbol{\rm P_{\! \va}}})$.
 Then,  for  any sequence $ \{E_n \}_{n \in \mathbb{N}} \in \cP_{\va}$,  we propose that \eqref{conjectureformula} holds with the origin replaced by $\va$. Clearly, such a statement  would unify in full our dimension results for rectangular and hyperboloid target sets; that is,  not just for when $\va:= (0, \ldots, 0)$.

 % In order to obtain a uniform formula for $\dim_{\rm H}W\big(T,\{E_n\}\big)$ it may be necessary to impose some restriction on  the class of targets sets satisfying property $ ({\boldsymbol{\rm P}}) $.  With this in mind ..
%
%
%In the first instance, it may be beneficial to restrict our class of targets sets satisfying property $ ({\boldsymbol{\rm P}}) $.  in the .

%\subsection{Extending Theorem~\ref{rectangledimresult}}

\subsection{Dimension problem for diagonal matrices with negative entries}  \label{negohya} %{Distribution of full cylinders for negative $\beta$-transformation}

In the one dimensional case, Theorem~\ref{dimensionalone} extends Theorem~\ref{dimresult} by incorporating negative eigenvalues. Naturally,  it would be desirable to obtain the higher dimensional analogue of Theorem~\ref{dimensionalone}.  Indeed, this would clearly follow if we could extend Theorem~\ref{rectangledimresult} (the ``rectangular'' generalization Theorem~\ref{dimresult}) to the situation that all eigenvalues of $T$ are of modulus strictly larger than $1$.  Formally,  we would expect the following statement to hold in which the conditions on the eigenvalues in  Theorem~\ref{rectangledimresult} are replaced by conditions on the modulus of the eigenvalues.

%Recall that the shrinking target set $W(T,\Psi, \va)$ is defined by \eqref{yesyes}.

\noindent{\bf Claim 1.}
	{\it Let $T$ be a real, non-singular matrix transformation of the torus $\T^d$.  Suppose that $T$ is diagonal and all eigenvalues  $\beta_1,\beta_2,\dots,\beta_d$  are of modulus strictly larger than $1$. Assume that $1<|\beta_1|\le |\beta_2|\le\cdots\le |\beta_d|$. For $ 1 \le i \le d$, let $\psi_i: \mathbb{R}^+\to \mathbb{R}^+$ be a real positive decreasing function and $\va \in K=\prod_{i=1}^d K(\beta_i)$. Assume that the set $\UP$ of accumulation points $ \mathbf{t}=(t_1,t_2, \ldots, t_d) $ of the sequence $\big\{(-\frac{\log\psi_1(n)}{n},\cdots,
-\frac{\log\psi_d(n)}{n})\big\}_{n\ge   1}$ is bounded. Then
	$$\dim_{\rm H} W(T,\Psi, \va)=\sup_{\mathbf{t}\in\UP} \min_{1\le   i\le   d}\big\{\theta_i(\mathbf{t})\big\},$$
	where
%	\begin{equation*}
%		\theta_i(\mathbf{t}):=
%		\sum_{|\beta_k|>|\beta_i|e^{t_i}}1+\sum_{|\beta_k|e^{t_k}\le   |\beta_i|e^{t_i}}\left(1-\frac{t_k}{\log|\beta_i|+t_i}\right) +\sum_{|\beta_i|e^{t_i}\ge  |\beta_k|>|\beta_i|e^{t_i-t_k}}\frac{\log|\beta_k|}{\log|\beta_i|+t_i}.
%	\end{equation*}
	\begin{equation*}
			\theta_i(\mathbf{t}):=
			\sum_{k\in \mathcal{K}_1(i)}1+\sum_{k\in \mathcal{K}_2(i)}\left(1-\frac{t_k}{\log |\beta_i|+t_i}\right) +\sum_{k\in \mathcal{K}_3(i)}\frac{\log|\beta_k|}{\log|\beta_i|+t_i}
		\end{equation*}
and, in turn
			\begin{equation*}
			\mathcal{K}_1(i):= \{1\le   k\le   d: \log |\beta_k|>\log |\beta_i|+t_i\}, \ \mathcal{K}_2(i):= \{1\le   k\le   d: \log |\beta_k|+t_k\le   \log |\beta_i|+t_i\}, \
			\end{equation*}
and
\begin{equation*}
  \mathcal{K}_3(i):=\{1, \dots, d\}\setminus (\mathcal{K}_1(i)\cup \mathcal{K}_2(i)).
\end{equation*}

}

\bigskip

% \red{***************   NEED WORKING ON  **************}

The key problem with allowing negative eigenvalues is that we do not have  an analogue of  Fact BW in Section 4.2.1 for negative $\beta$-transformations. This fact played a key role our proofs of the  upper bound (Proposition~\ref{upperbound}) and lower bound (Proposition~\ref{lowerbound})   statements for the Hausdorff dimension of  $\dim_{\rm H} W(T,\Psi, \va)$.   However,  by exploiting the framework of  Markov subsystems  used  in proving  Theorem~\ref{dimensionalone}, it is not too difficult to establish  the lower bound of the above claim; that is to say, we can bypass Fact~BW altogether and prove that $$\dim_{\rm H} W(T,\Psi, \va)\ge   \sup_{\mathbf{t}\in\UP} \min_{1\le   i\le   d}\big\{\theta_i(\mathbf{t})\big\}  \, . $$
\\
Indeed, for  each $1\le   i\le   d$, by Proposition \ref{Markov} there exists a Markov subsystem $(\Lambda^{(i)}, T_{\beta_i}|_{\Lambda^{(i)}}, \mathcal{P}_{\Lambda^{(i)}})$ of $([0, 1), T_{\beta_i})$  under the assumption that $|\beta_i| > 8$.   Also, in view of Proposition \ref{MarkovAlfhor} we know that  the measure $\cH^{\delta_i}|_{\Lambda^{(i)}}$ is $\delta_i$-Ahlfors regular where $\delta_i=\dim_{\rm H}\Lambda^{(i)}$. Now, let  $S_i=T_{\beta_i}|_{\Lambda^{(i)}}$ and consider restricted shrinking target set
	$$W^*(T, \Psi, \va):=\Big\{\vx\in\prod_{i=1}^d\Lambda^{(i)}:|S_i^nx_i-a_i| \le \psi_i(n)  \ (1\le   i \le   d) \ \ \text{for infinitely many}\ n\in\mathbb{N}\Big\}  \,  . $$
Then, by definition,
$$
W^*(T, \Psi, \va)\subset W(T, \Psi, \va).	
$$
The first goal is obtain  a lower bound for $\dim_{\rm H} W^*(T,\Psi, \va)$. For this, we follow the basic strategy used in proving  Proposition~\ref{lowerbound}.  However, the key in executing the strategy lies in the fact that each map $S_i$
$(1\le   i\le   d)$
%(or more precisely the Markov subsystem $(\Lambda^{(i)}, S_i,\mathcal{P}_{\Lambda^{(i)}})$)
 satisfies the hypothesis of  Proposition~\ref{Markovlowerbound} --  this follows on using  the same arguments used at the end of the proof of Theorem~\ref{dimensionalone} to show that the
 Markov subsystem $(\Lambda_m, T^m_\beta|_{\Lambda_m},\cP_{\Lambda_m})$ of  $([0,1],T^m_\beta)$ arising from  Proposition \ref{Markov} satisfies the hypotheses of   Proposition~\ref{Markovlowerbound}. Then, on naturally adapting the arguments leading to (58) within the proof of Proposition~\ref{Markovlowerbound}, it follows that for each $1\leq i\leq d$ there exists an integer $k_0^{(i)}$  such that
%\begin{equation}\label{rectangles-Sec5} %\bigcap_{N=1}^{\infty}\bigcup_{n=N}^{\infty}
% W^*(T,\Psi,\va)\supset\limsup_{n\to\infty}\bigcup_{j_1=1}^{M_{1,n}} \cdots\bigcup_{j_d=1}^{M_{d,n}} B\big(x_{n,j_1}^{{(1)}}, {\psi_1(n-1+k_0^{(1)}) \over |\beta|_1^{n-1+k_0^{(1)}}}\big)\times\cdots\times B\big(x_{n,j_d}^{(d)},{\psi_d(n-1+k_0^{(d)}) \over |\beta|_d^{n-1+k_0^{(d)}}}\big) \, ,
% \end{equation}
 \begin{equation}\label{rectangles-Sec5} %\bigcap_{N=1}^{\infty}\bigcup_{n=N}^{\infty}
 W^*(T,\Psi,\va)\supseteq\limsup_{n\to\infty}\bigcup_{j_1=1}^{M_{1,n}} \cdots\bigcup_{j_d=1}^{M_{d,n}} \prod_{i=1}^d B\Big(x_{n,j_i}^{{(i)}}, {\psi_i(n-1+k_0^{(i)}) \over |\beta_i|^{n-1+k_0^{(i)}}}\Big),
 \end{equation}
where $\{x_{n,j_i}^{(i)}, 1\le   j_i\le   M_{i,n}\}$ are the preimages of $a_i$ under $S_{i}^n$ that fall within  cylinders  of order $n$ for $S_{i}$ and  $M_{i,n}$ is the number of such cylinders.   This is the  analogue of the inclusion \eqref{limsupsubset} in the proof of Proposition~\ref{lowerbound}. Now in view of \eqref{fullset1} within the proof of Proposition \ref{Markovlowerbound}, it follows that for each $1\leq i\leq d$ there exists a contant $\kappa_i^*$ such that
 $$
 \bigcup_{j_i=1}^{M_{i,n}}   B  \big( x_{n,j_i}^{(i)},  \kappa_i^* |\beta_i|^{-n} \big)   \supseteq \Lambda^{(i)}  \, .
 $$
In particular, this leads to the following  analogue of \eqref{fulllimsup}  in the proof of Proposition~\ref{lowerbound}, for any fixed $\mathbf{t}=(t_1, \ldots, t_d) \in\UP$:
\begin{equation*}\label{fulllimsup*}
 \prod_{i=1}^d\Lambda^{(i)}   \ \subseteq\ \limsup_{n\to\infty}  \bigcup_{j_1=1}^{M_{1,n}} \cdots\bigcup_{j_d=1}^{M_{d,n}} \prod_{i=1}^d B\Big(x_{n,j_i}^{{(i)}}, \Big({\psi_i(n-1+k_0^{(i)}) \over |\beta_i|^{n-1+k_0^{(i)}}}\Big)^{s_i}  \ \Big) \,
 \end{equation*}
% \begin{equation*}\label{fulllimsup*}
% \prod_{i=1}^d\Lambda^{(i)}   \ \subseteq\ \limsup_{n\to\infty}  \bigcup_{j_1=1}^{M_{1,n}} \cdots\bigcup_{j_d=1}^{M_{d,n}} B\big(x_{n,j_1}^{(1)},(\kappa_1^*|\beta|_1^{-n}\psi_1(n))^{s_1}\big)\times\cdots\times B\big(x_{n,j_d}^{(d)},(\kappa_d^*|\beta|_d^{-n}\psi_d(n))^{s_d}\big) \,
% \end{equation*}
 where
 $$
 0< s_i < s_0(i)  := \frac{\delta_i}{1+t_i/\log |\beta_i|}\, .
 $$
 The upshot is that given the $\limsup$ set of rectangles appearing on the right hand side of \eqref{rectangles-Sec5}, the corresponding  $\limsup$ set of `$(s_1, \ldots, s_d)$-scaled up' rectangles satisfies
 %\red{??Need working on??} This verifies
  \eqref{fullmeasure} with  $p=d, X_i=\Lambda^{(i)}, \delta_i=\dim_{\rm H}\Lambda^{(i)}$ and $\mu_i=\cH^{\delta_i}|_{\Lambda^{(i)}}$  for each $  1 \le i \le d$.
Thus on applying Theorem \ref{MTPRG} with $u_i=(1-\varepsilon)\log|\beta_i|$ and $ v_i=t_i$ $(1 \le i \le d)$, we find as in the proof of Proposition~\ref{lowerbound}, that
$$\dim_{\rm H} W(T,\Psi, \va) \ \ge \   \dim_{\rm H} W^*(T,\Psi, \va) \ \ge  \  \sup_{\mathbf{t}\in\UP} \min_{1\le   i\le   d}\big\{\widehat{\theta}_i(\mathbf{t})\big\}  \,  ,$$
where
$$\widehat{\theta}_i(\mathbf{t}):=\sum_{k\in \mathcal{K}_1(i)}\delta_k+\sum_{k\in \mathcal{K}_2(i)}\delta_k\left(1-\frac{t_k}{\log |\beta_i|+t_i}\right) +\sum_{k\in \mathcal{K}_3(i)}\frac{\delta_k\log|\beta_k|}{\log|\beta_i|+t_i}
		$$
and $\mathcal{K}_1(i)$, $\mathcal{K}_2(i)$, $\mathcal{K}_3(i)$ are defined as  in  Claim 1.
 %due to the fact that $(\Lambda^{(i)}, S_i,\mathcal{P}_{\Lambda^{(i)}})$ is Markov, the lengths of all cylinders of same order are comparable
%	
%	
%	
%	 we can apply Theorem \ref{MTPRG} to the setup in which
%$$
%X_i=\Lambda^{(i)}.
%$$
%$$W(T,\Psi,\va)=\big\{\vx\in\mathbb{T}^d: |T_{\beta_i}^nx_i-a_i| \le \psi_i(n)  \ (1\le   i \le   d) \ \ \text{for infinitely many}\ n\in\mathbb{N}\big\}  \, ,$$
%$$
%S=T|_{\prod_{i=1}^d\Lambda^{(i)}}.
%$$
%% to the product space $(\prod_{i=1}^d\Lambda^{(i)},\prod_{i=1}^d\mu_i)$,
%to obtain a lower bound of $\dim_{\rm H} W(S,\Psi, \va)$, with

 To obtain the desired lower bound,  for each $1\leq i \leq d$ we need to (i)  overcome  the underlying assumption that $|\beta_i| > 8$ in the argument above  and (ii)  replace $\widehat{\theta}_i(\mathbf{t})$ by $\theta_i(\mathbf{t})$ in the above lower bound estimate; i.e., replace $\delta_i $ by  $1$ in the definition of $\widehat{\theta}_i(\mathbf{t})$.   As in the proof of Theorem~\ref{dimensionalone},  we  deal with  (i) by working with a high enough iterate  $S_i^m:=T_{\beta_i}^m|_{\Lambda_m^{(i)}}$ of the map $S_i$ and replacing  $\psi(n) $  by $ \varphi_m(n):=\psi(mn)$ and then letting $m$ become arbitrarily large.   This also deals with (ii) since  by Proposition~\ref{Markov},  $\dim_{\rm H} \Lambda_m^{(i)} \to 1 $ as $m \to \infty$.

\subsection{ Theorem \ref{rectangledimresult} for unbounded $\UP$}
In the statement of Theorem \ref{rectangledimresult}, we require that the set  $\UP$ of accumulation points is bounded. %Such assumption is needed to apply Wang and Wu's Mass Transference Principle, Theorem .
In short, this   allows us to directly exploit  the ‘rectangles to rectangles’ Mass Transference Principle (Theorem 11).
However, this is  a matter of convenience  and  it should be possible to obtain a general form of Theorem \ref{rectangledimresult}  (and indeed Claim~1 in \S\ref{negohya}) without assuming that $\UP$ is bounded.
 Indeed, by adapting the arguments used in  this paper we can ``directly'' establish various partial statements. These we now briefly describe.

For $\mathbf{t}=(t_1, \ldots,t_d)\in (\mathbb{R}^+\cup\{\infty\})^d$,  let $$\mathcal{L}_1(\mathbf{t}):=\{1\le   i \le   d: t_i<+\infty\}  \ \quad \text{and}  \  \quad \mathcal{L}_2(\mathbf{t}):=\{1\le   i \le   d: t_i=+\infty\}.$$
In turn,  with ${\theta}_i(\mathbf{t})$ as in the statement of Theorem~\ref{rectangledimresult}, define  $\tilde{\theta}_i(\mathbf{t})$ to be the reduced value of ${\theta}_i(\mathbf{t})$ obtained  by removing the ``infinite'' directions associated with  $\mathcal{L}_2(\mathbf{t})$; i.e.,
	\begin{equation*}
			\tilde{\theta}_i(\mathbf{t}):=
			\sum_{k\in \mathcal{K}_1(i)\setminus \mathcal{L}_2(\mathbf{t})}1+\sum_{k\in \mathcal{K}_2(i)\setminus \mathcal{L}_2(\mathbf{t})}\left(1-\frac{t_k}{\log |\beta_i|+t_i}\right) +\sum_{k\in \mathcal{K}_3(i)\setminus \mathcal{L}_2(\mathbf{t})}\frac{\log|\beta_k|}{\log|\beta_i|+t_i}
		\end{equation*}
%\begin{theorem}\label{rectangledimresult}
%  Let $T$ be a diagonal matrix transformation with all the diagonal entries $\beta_1,\beta_2,\dots,\beta_d$ strictly larger than $1$. Assume that $1<\beta_1\le \beta_2\le\cdots\le \beta_d$. For $ 1 \le i \le d$, let $\psi_i: \mathbb{R}^+\to \mathbb{R}^+$ be a real positive %{\color{blue} decreasing}
%		function and $\va \in \mathbb{T}^d$. Then
Then, under the setting of  Theorem \ref{rectangledimresult}  \emph{but without  the assumption that $\UP$ is bounded}, we are able to adapt the proofs of Propositions~\ref{upperbound}~and~\ref{lowerbound} to show that:
\begin{equation}\label{upperlower}
\sup_{\mathbf{t}\in\UP}  \min\big\{ \min_{i\in\mathcal{L}_1(\mathbf{t})}\{\tilde{\theta_i}(\mathbf{t})\}, \ \#\mathcal{L}_1(\mathbf{t})\big\}\le  \dim_{\rm H} W(T,\Psi,\va)\le    \sup_{\mathbf{t}\in\UP}\min \big\{\min_{i\in \mathcal{L}_1(\mathbf{t})}\{{\theta}_i(\mathbf{t})\}, \ \#\mathcal{L}_1(\mathbf{t})\big\}.
\end{equation}
\\
%The difference between the upper and lower bounds is that ${\theta}_i(\mathbf{t})$ is replaced by $\tilde{\theta_i}(\mathbf{t})$. These estimates in general could not give us the precise dimensional number of $W(T, \Psi, \va)$
%since the upper and lower bounds may not coincide.
Clearly, in the case $\UP$ is bounded the  upper and lower estimates in  \eqref{upperlower} coincide.  In the unbounded case,  this is not necessarily true and so the estimates do not in general provide a precise formula for the dimension.  We illustrate this with a concrete example.
 %Nevertheless, even in this case there are circumstances when we are able to combine  \eqref{upperlower} with ``other'' arguments to  establish a precise formula.  For example,
Let
$d=2$ and  $T$ to be the  diagonal matrix with entries $\beta_1=2$ and $\beta_2=3$.   Also, given a real number  $t_1> 0$, let $\psi_1(n)=e^{-nt_1}$ and $\psi_2(n)=e^{-n^2}$. Then,
it is easily verified that  \eqref{upperlower}  implies that for any $t_1 >0$
\begin{equation} \label{notgood} \frac{\log 2}{\log 2+t_1}    \, \leq   \,   \dim_{\rm H}W(T,\Psi,\va) \,  \le  \,   \min \Big\{  1 , \frac{\log 2+\log 3}{\log 2+t_1} \Big\}
.
\end{equation}
To the best of our knowledge the  precise formula for the dimension is unknown and is not a consequence of know results in the theory of Diophantine approximation.   In a forthcoming paper \cite{Dorsa}, by using  the `old school'' approach of constructing optimal Cantor-type subsets of the set under consideration and applying the Mass Distribution Principle \cite[Section 4.1]{F}, it is shown that
$$
\dim_{\rm H}W(T,\Psi,\va)  =  \min \Big\{  1 , \frac{\log 2+\log 3}{\log 2+t_1} \Big\}  \; ;  $$
that is,    the upper bound in \eqref{notgood} is sharp.   In general,  we are therefore  lead to believe that the   upper bound in \eqref{upperlower}  is sharp.   In \cite{Dorsa},  we show that this is indeed the case.

For the sake of completeness, we mention that in \cite{Dorsa} we also address the analogous ``unbounded'' problem in the classical theory of simultaneous Diophantine approximation.  For example, given a real number $\tau >0$,  let $S(\tau)$ denote the set of $(x_1,x_2)\in \mathbb{R}^2 $ for which the inequalities
$$
\|n x_1 \| \, < \, n^{-\tau}   \qquad   {\rm and}  \qquad \|n x_2 \| \, < \, e^{-n}
$$
hold for infinitely many $n \in \N$.   Then it follows from  known ``classical'' statements (see for example \cite{Rynne})  that
$
\dim S(\tau) = 1 $  for $1/2 \le \tau  \le 1 $,   and  that for $\tau > 1 $
%$$
%\dim S(\tau) = 1 \qquad   {\rm for}  \qquad   1/2 \le \tau  \le 1 ,
%$$ and for $\tau > 1 $
$$
\frac{2}{1+\tau}   \, \le \, \dim S(\tau)   \, \le \,   \min \Big\{   1, \frac{3}{1+\tau}  \Big\}  \, .
$$
However, to the best of our knowledge we do not have a precise formula for the dimension when $\tau > 1$.  In \cite{Dorsa},  it is shown that for $\tau \ge 1/2 $
$$
 \dim S(\tau)   \, = \,   \min \Big\{   1, \frac{3}{1+\tau}  \Big\}  \, .
$$

%$\dim S(\tau) = 1 $  for $\tau  \le 2 $   and that  $ \dim S(\tau) = 3/(1+ \tau)  $  for $ \tau > 2 $.

%As a consequence, when $\UP$ is bounded, we have
%		$$\dim_{\rm H} W(T,\Psi, \va)=\sup_{\mathbf{t}\in\UP} \min_{1\le   i\le   d}\big\{\theta_i(\mathbf{t})\big\}.$$
%\end{theorem}

%\subsection{Other related directions of study}
\subsection{Badly approximable sets}

Let $T$ be a real, non-singular matrix transformation of the torus $\mathbb{T}^d$. Suppose that all eigenvalues of $T$ are of modulus  strictly larger than $1$ and let $\mathcal{C}$ be any  collection of subsets $E$ of $\mathbb{T}^d$ satisfying the bounded property $ ({\boldsymbol{\rm B}}) $.  For  any sequence $ \{E_n \}_{n \in \mathbb{N}} $ of subsets  in $\mathcal{C}$,  we can consider the \emph{badly approximable set with respect to the sequence} $ \{E_n \}_{n \in \mathbb{N}} $ as follows:
$$
\Bad\big(T,\{E_n\}\big) :=  \big\{\vx\in  \mathbb{T}^d:   \ \exists \ n_0(\vx)\in\mathbb{N}\   \ \text{such that} \ \ T^n(\vx)\not\in E_n\  \ \forall   \ n\geq n_0(\vx)\big\} \, .
$$
It is  easily seen that the set $\Bad\big(T,\{E_n\}\big) $ is the  complement of shrinking target set $W\big(T,\{E_n\}\big)$ and consists of points $\vx\in  \mathbb{T}^d$ whose orbit under $T$ eventually avoids the given sequence of subsets $E_n$ in $\cC$.  Hence,
 Theorem \ref{measure-matrix} provides us a criterion on the zero-one $d$-dimensional Lebesgue measure of $\Bad\big(T,\{E_n\}\big)$.  Indeed, in the case $T$ is diagonal and all eigenvalues are strictly larger than $1$, it follows  via Theorem~\ref{metricresult} that if $\sum_{n=1}^\infty m_d\big(E_n\big)=\infty$, then
\begin{equation}\label{Bad-measure}
m_d\Big(\Bad\big(T,\{E_n\}\big)  \Big) =     m_d\Big(\T^d \setminus W(T,\{E_n\})\Big)   =   0\, .
% \qquad { \rm if \ } \qquad  \sum_{n=1}^\infty m_d\big(E_n\big)=\infty.   \,
\end{equation}
Thus, whenever the measure sum diverges,  it is natural to ask for   the Hausdorff dimension of $\Bad\big(T,\{E_n\}\big)$.
We suspect that for a large class of subsets in $\cC$, such as those satisfying the stronger property $ ({\boldsymbol{\rm P}}) $, the associated badly approximable sets are of full dimension.   Indeed, it is  plausible that they are winning sets in the sense of Schmidt's framework of  $(\alpha,\beta)$--games  - see \cite[\S 1.7.2]{durham} and references within.  Note that there are obvious cases for which $\Bad\big(T,\{E_n\}\big)$ is empty  (for example if  $E_n =  \mathbb{T}^d$ for all $n \in \N$) and these should naturally be
excluded.

To give a little background and to motivate  a concrete problem,   we consider the special case when the sequence $\{E_n \}_{n \in \mathbb{N}} $ corresponds to balls. More precisely, given  $\va \in \T^d$ and  a decreasing  function $\psi: \mathbb{R}^+\to \mathbb{R}^+$, let
\begin{eqnarray*}
\Bad(T,\psi,\va)  & :=  &   \T^d \setminus W(T,\psi,\va) \\[1ex]
& = & \big\{\vx\in  \mathbb{T}^d:   \liminf_{n\to\infty} \ \psi(n)^{-1} \ \|T^n\vx-\mathbf{a}\|>1 \big\}  \, .
\end{eqnarray*}
Also, let
$$
\Bad(T,\va)   :=   \big\{\vx\in  \mathbb{T}^d:   \liminf_{n\to\infty}  \|T^n\vx-\mathbf{a}\|>0 \big\}  \,  .
$$
Then, it is easily seen that if  $\psi(n)\to 0$ as $n\to\infty$, then
$${\bf Bad}(T,\mathbf{a})\subset \Bad\big(T, \mathbf{a}, \psi\big) $$
and so if the badly approximable set  $ {\bf Bad}(T,\mathbf{a})$ has full dimension then so does $\Bad(T,\psi,\va)$.    With this is mind,   Dani \cite{Dani1988} showed that if $T$ is a non-singular, semi-simple integer matrix
and $\mathbf{a} \in  \mathbb{Q}^d/\mathbb{Z}^d$, then $\dim_{\rm H}  \Bad(T,\va) = d$. In fact, he showed that  ${\bf Bad}(T, \mathbf{a})$ is winning.   Dani's winning result was later extended by
 Broderick, Fishman $\&$ Kleinbock \cite{BFK2011} to any non-singular, integer matrix transformation
 and $\mathbf{a}  \in \mathbb{T}^d$. Regarding non-integer matrix transformations,  we  have a complete dimension result in dimension one.   Indeed, for any $\beta \in (1,2]$ and  $a\in \mathbb{T}$,  F\"{a}rm, Persson $\&$ Schmeling \cite{FPS2010} have shown that ${\Bad}(T_\beta, a)$ is  ``strong''  winning and hence has full Hausdorff dimension.  Subsequently, Yang $\&$ Wang \cite{YW2019} extended the full Hausdorff dimension result to  any $\beta>1$.
   To the best of our knowledge, the problem of determining $ \dim_{\rm H} {\Bad}(T, \va)$ when $T$ is a real, non-singular matrix transformation of $\mathbb{T}^d$ with $d \ge 2 $  is open.  In fact, it seems that the dimension result is currently unknown even in the case that $T$ is a diagonal matrix with all eigenvalues strictly larger than $1$.

 Now let us consider the  special case when the sequence $\{E_n \}_{n \in \mathbb{N}} $ corresponds to hyperboloids.  For the sake of simplicity, suppose that $T = {\rm diag} \, (t_1, \ldots, t_d  ) $ is an integer, diagonal  matrix with $t_i   \ge 2$.   Then in line with the discussion above for balls, given  $\va =(a_1, \ldots, a_d  ) \in \T^d$ and  a  decreasing  function $\psi: \mathbb{R}^+\to \mathbb{R}^+$, we consider the sets \begin{eqnarray*}
\Bad^\times(T,\psi,\va)  & :=  &   \T^d \setminus W^\times(T,\psi,\va) \\[1ex]
& = &
\Big\{\vx\in \mathbb{T}^d: \liminf_{n\to\infty} \ \psi(n)^{-1} \ \prod_{1\le i \le d} \|t_i^nx_i - a_i\| > 1 \Big\} \,
\end{eqnarray*}
and
$$
\Bad^\times(T,\va)   :=   \Big\{\vx\in  \mathbb{T}^d:   \liminf_{n\to\infty}  \prod_{1\le i \le d} \|t_i^nx_i - a_i\|>0 \Big\}  \,  .
$$
If  $\psi(n)\to 0$ as $n\to\infty$, then
${\bf Bad}^\times (T,\mathbf{a})\subset \Bad^\times \big(T, \mathbf{a}, \psi\big) $
and so the aim is to show that the multiplicative badly approximable set  $ \Bad^\times(T,\mathbf{a})$ has full dimension.  Given the one dimension result for balls (namely that $\dim {\Bad}(T_\beta, a)=1$), this is relatively straightforward to establish.   Indeed, we start with  the  observation  that
$$
\liminf_{n\to\infty} \prod_{1\le i \le d} \|t_i^nx_i - a_i\|  \ge  \prod_{1\le i \le d}  \liminf_{n\to\infty}\|t_i^nx_i - a_i\|.
$$
Thus, it follows that
$$
 \prod_{1\le i \le d} \Bad(t_i, a_i) \subseteq \Bad^{\times}(T,\va),
$$
where for each  $1\leq i\leq d$
$$
\Bad(t_i, a_i):=\Big\{x_i\in \T: \liminf_{n\to\infty} \|t_i^nx_i - a_i\| >0 \Big\}.
$$
In turn, since each $\Bad(t_i, a_i)$  has Hausdorff dimension $1$, we obtain that
$$
\dim_{\rm H}\Bad^{\times}(T,\va)\geq \dim_{\rm H}\prod_{1\le i \le d} \Bad(t_i, a_i) \geq  \sum_{1\le i \le d}  \dim_{\rm H} \Bad(t_i, a_i)=d  \, .
$$
The complementary upper bound statement is trivial. Thus, $ \dim_{\rm H}\Bad^{\times}(T,\va) = d $ as desired.

\medskip

We now describe a class of sequences $ \{E_n \}_{n \in \mathbb{N}} $ that naturally unify the above badly approximable sets for balls and hyperboloids.  At the same time it allows us to state a concrete problem.
Suppose that $E$ is a subset of  $\mathbb{T}^d$ satisfying the bounded property $ ({\boldsymbol{\rm B}}) $ and furthermore suppose that $E$ contains the origin.  Next,  given $ \va \in \T^d$ and  a decreasing function $\psi: \mathbb{R}^+\to \mathbb{R}^+$,   for each $ n \in \N$ let  $$E_n(\va,\psi) =\va + \psi(n)E := \big\{ \va + \psi(n) \vx  : \vx \in E \big\}   \, .  $$
Note that if $E$ satisfies  Gallagher's property $ ({\boldsymbol{\rm P}}) $  condition then for each $n \in \N$, the set $E_n(\va,\psi)$ satisfies the general property $ ({\boldsymbol{\rm P_{\! \va}}}) $ introduced in \S\ref{galSV}. Now let
$$
\Bad(T,\psi,\va, E):= \big\{\vx\in  \mathbb{T}^d:   \ \exists \ n_0(\vx)\in\mathbb{N}\   \ \text{such that} \ \ T^n(\vx)\not\in  E_n(\va,\psi) \  \ \forall   \ n\geq n_0(\vx)\big\}   \,
$$
denote the badly approximable set with respect to the sequence $ \{E_n(\va,\psi) \}_{n \in \mathbb{N}} $.  Furthermore, let
$$
\Bad(T,\va, E):= \big\{\vx\in  \mathbb{T}^d:   \ \exists \ c(\vx)>0 \   \ \text{such that} \ \ T^n(\vx)\not\in   \   \va + c(\vx) E \   \ \forall   \ n \in \N \big\}  \, .
$$
It is easily verified, that if we take $E $ to be the ball $ B(\mathbf{0}, 1)$  (resp. the hyperbola $H(\mathbf{0}, 1) $)   then  $ \Bad(T,\psi,\va, E)$ coincides with   $\Bad(T,\psi,\va)$  (resp.  $\Bad^\times (T,\psi,\va)$) and $\Bad(T,\va, E)$ coincides with $\Bad(T,\va)$  (resp. $\Bad^\times (T,\va)$).  We suspect that the badly approximable set $\Bad(T,\va, E)$ is of full dimension and thus by default $\Bad(T,\psi,\va, E) $ is also of full dimension.  More precisely,  we would expect the following statement to hold.

\medskip

\noindent{\bf Claim 2.}
	{\it Let $T$ be a real, non-singular matrix transformation of the torus $\T^d$.  Suppose that $T$ is integer and all eigenvalues  $\beta_1,\beta_2,\dots,\beta_d$  are of modulus strictly larger than $1$. Then }
$$
\dim_{\rm H} \Bad(T,\va, E) = d \, .
$$

\noindent It is plausible that the claim is true without the assumption that $T$ is integer. However, as mentioned above,  without the integer assumption the problem is currently open even for balls (i.e., when  $E =B(\mathbf{0}, 1)$).   A potentially interesting  starting point towards establishing the claim would be to consider the situation  in which $E$ satisfies Gallagher's property $ ({\boldsymbol{\rm P}}) $ condition and $T$ is diagonal with all eigenvalues strictly larger than $1$.

\bigskip

We now briefly consider another aspect of the badly approximable theory.  Let $ c \in (0,1) $ and  with $\Bad(T,\va, E)$ in mind,  consider the set
$$
\Bad_c(T,\va, E):= \big\{\vx\in  \mathbb{T}^d:    \ T^n(\vx)\not\in   \   \va + c E \   \ \forall   \ n \in \N \big\}  \, .
$$
In short, we  fix the so called badly approximable constant $c(\vx)$ appearing in $\Bad(T,\va, E)$. Then, by definition
$$
\Bad(T,\va, E)  = \bigcup_{0<c<1} \Bad_c(T,\va, E)  \, .
$$
When $E$ is an open set, the corresponding set $\Bad_c(T,\va, E)$ is
often referred to as a survivor set
in the study of (open) dynamical systems. The associated open set $\va+cE$ is referred to as  a hole and we are interested in points whose orbit under $T$ avoid the hole.
 In general, it is difficult to give an exact formula for $\dim_{\rm H}\Bad_c\big(T,\va,E\big)$ and we are interested in determining  how $\dim_{\rm H}\Bad_c\big(T,\va,E\big)$  varies with respect to the positioning of the hole which is governed by  $\va $ and its size which is governed by  $0<c<1$. So with this in mind, Urba\'{n}ski \cite{U1986} proved that  if $T$ is an expanding map of $\T$ and $E=[0, 1]$ then the dimension function $c\mapsto \dim_{\rm H}\Bad_c\big(T,\mathbf{0},E\big)$     is a devil's staircase. The same statement was shown to hold by Nilson \cite{N09} in the case  $T$ is the doubling map, and by Kalla, Kong, Langeveld $\&$ Li \cite{KKLL2020} in the case  $T$ is a $\beta$-transformation with $\beta \in (1,2]$.  The problem of extending the latter to all $\beta >1 $ and indeed to higher dimensions is clearly a natural path to pursue. In the first instance, establishing a statement of the following type would in our opinion represent serious progress.

 \medskip

\noindent{\bf Claim 3.}
	{\it Let $T$ be a real, non-singular matrix transformation of the torus $\T^d$.  Suppose that $T$ is diagonal and all eigenvalues  $\beta_1,\beta_2,\dots,\beta_d$ are strictly larger than $1$. Furthermore,  let $E$ be a subset of $\T^d$ satisfying  Gallaghers's property $ ({\boldsymbol{\rm P}}) $ condition.  Then the dimension function
$$
c\mapsto \dim_{\rm H}\Bad_c\big(T,\mathbf{0},E\big)
$$
is a devil's staircase.
}
\\

\noindent
Indeed, establishing the claim in the  case $T$ is  integer  and $E=B(\mathbf{0}, 1)$  would be most  desirable.

\subsection{Shrinking targets restricted to  manifolds}

For the sake of simplicity, through out this section $T$ will be an integer, non-singular matrix transformation of the torus $\mathbb{T}^d$. Also, we suppose that $T$ is  diagonal with  eigenvalues $1< \beta_1 \le \beta_2 \le \dots  \le \beta_d$.   Finally, given $\psi: \mathbb{R}^+\to \mathbb{R}^+$ we consider the ``basic'' shrinking target set
$$W(T,\psi)= W(T,\psi,0):=\{\vx\in\mathbb{T}^d: T^n(\vx)\in B(0,\psi(n))\ \ \text{for infinitely many}\ n\in\mathbb{N}\} \, . $$

\noindent In view of  Theorems~\ref{corInteger} and \ref{dimresult},  we have a complete description of the ``size'' of $W(T,\psi)$ in terms of both Lebsegue measure and Hausdorff dimension.  Indeed, the former implies    that
 \begin{eqnarray}  \label{May1}
		m_d\big(W(T,\psi)\big)=
		\begin{cases}
			0 &\text{if}\ \sum_{n=1}^\infty \psi(n)^d<\infty\\[2ex]
			1 &\text{if}\ \sum_{n=1}^\infty \psi(n)^d=\infty,
		\end{cases}
	\end{eqnarray}
 while the latter implies that
 \begin{equation}\label{May2}
 \dim_{\rm H} W(T,\psi)=\min_{1\le   i\le   d}\theta_i(\lambda)  \, .
 \end{equation}
 \\
We now add a little twist which is very much  in line with the classical theory of Diophantine approximation on manifolds -- see \cite[Section 6]{durham} for background and further references.
Suppose that the coordinates of the  point $\vx$ in  $\T^d$  are confined by functional relations or equivalently are restricted to a sub-manifold $\cM$ of $\T^d$. We then consider the following two natural problems.

\medskip

~\hspace*{4ex} {\em Problem 1.} To develop a Lebesgue theory for
$ \cM \cap W(T,\psi) $.

\medskip

~\hspace*{4ex} {\em Problem 2.} To develop a Hausdorff theory for
$ \cM \cap W(T,\psi) $.

\medskip

\noindent In short, the aim is to establish analogues of \eqref{May1} and \eqref{May2} for the set
$
\cM \cap W(T,\psi)
$.
The fact that the points $\vx \in \T^d$ of
interest are of dependent variables, which reflects the fact that
$\vx \in {\cal M}$, introduces various difficulties  even in the specific case that ${\cal M}$ is a planar
curve $\cC$.   However, in this case we have recently obtained a reasonably complete theory.  Briefly,
assume that $d=2$ and that the planar curve
$$
\cC=\cC_f:= \{ (x,f(x))  : x \in [0,1] \}
$$
is  the graph of a bi-Lipschitz function $f: [0,1] \to \R$. Let $m$ denote the normalised, induced one dimensional Lebesgue measure on $\cC$. Then, the main measure result in  our forthcoming paper \cite{LLVZ} implies that
\begin{eqnarray*}
		m\big(\cC \cap W(T,\psi)\big)=
		\begin{cases}
			0 &\text{if}\ \sum_{n=1}^\infty \psi(n)^d<\infty\\[2ex]
			1 &\text{if}\ \sum_{n=1}^\infty \psi(n)^d=\infty.
		\end{cases}
	\end{eqnarray*}
As usual, let $\lambda $ be the  lower order at infinity of $\psi$ and recall that $1< \beta_1 \le \beta_2$.   Then, the main dimension result in  \cite{LLVZ} implies the following statement. \emph{Assume that $0 \le \lambda  \le  \log \beta_2$. Then
\begin{equation}\label{dimdim}
\dim \big(\cC \cap W(T,\psi) \big) \, \leq  \,  \frac{1-\frac{\lambda}{\log \beta_2}}{1+ \frac{\lambda}{\log \beta_2}}\, ,
\end{equation}
and we have equality in \eqref{dimdim} for $0 \le \lambda  \le  \log \beta_2 - \log \beta_1$.
Moreover, if $\cC $ is a line with rational slope then we also have equality in \eqref{dimdim} for $ \log \beta_2 - \log \beta_1 < \lambda \le  \log \beta_2  $, conditional on the  validity of the $abc$-conjecture.}

\medskip

\begin{remark}

Let $\cC$  be the diagonal line $L:=\{ (x,x): x \in [0,1]\} $
and   $T$ to be \slv{be} the  diagonal matrix with entries $\beta_1=2$ and $\beta_2=3$.  Then,  a simple consequence of the above dimension result is the following  number theoretic statement which may be of independent  interest:  for $ 0 \leq \tau \leq 1 $ the set
 $$
\left\{ x\in [0,1] \ : \ \max\left\{\|2^nx\|,\|3^n x \|\right\}< 3^{-n \tau} \rm{ \ for \  infinitely \  many \ } n\in\N  \right\}
$$
has Hausdorff dimension $(1-\tau)/(1+ \tau)$. For $\tau > 1 - (\log2/\log3) $,  our proof is conditional on the validity of the $abc$-conjecture.

\end{remark}
\medskip

 Two things are worth mentioning.  Firstly, for  $ \log \beta_2 - \log \beta_1 < \lambda \le  \log \beta_2$ we suspect that we also  have equality in \eqref{dimdim} for all ``bi-Lipschitz'' planar curves  (not just rational lines) and almost certainly the use of the $abc$-conjecture is an overkill.  Secondly, to the best of our knowledge,  beyond the planar case very little seems to be  known and  in our opinion Problems~1~$\&$~2 represent interesting and potentially fruitful avenues of research.

%A further problem is to study the approximation on manifold. One could naturally ask the size of the intersection of $W\big(T,\{E_n\}\big) $ with a manifold in $\mathbb{T}^d$. Let $\psi: \mathbb{R}^+\to \mathbb{R}^+$ be a real positive decreasing function and $\va \in \mathbb{T}^d$. In a forthcoming paper \cite{LLVZ}, we will give the Lebesgue measure and the Hausdorff dimension of the intersection of
%$$W(T,\psi,\va):=\{\vx\in\mathbb{T}^d: T^n(\vx)\in B(\va,\psi(n))\ \ \text{for infinitely many}\ n\in\mathbb{N}\}$$
%with a curve in $\mathbb{T}^d$. The study of the intersection of $W\big(T,\{E_n\}\big) $ with more general manifolds is largely open.

\vspace*{6ex}

\noindent{\it Acknowledgments}. BL was supported partially by NSFC 12271176. We would like to thank J\'er\^ome Buzzi, Gerhard Keller, Beno\^it Saussol, and Baowei Wang for the many useful discussions.  Also we would like thank Yubin He for reading the paper carefully and pointing out various inconsistencies. Almost certainly there are more errors of one form or another and of course we take  full responsibility.
SV  would like to take this opportunity to thank the wonderful Bridget and the dynamic duo Ayesha and  Iona for absolutely everything  over the difficult and weird covid years.   The duo recently turned twenty-one and their curiosity and optimism  remains a marvel -- long may it last champions and   remember those magical words of  Glinda from the land of Oz   ``You've always had the power my dear, you just had to learn it yourself''

%\subsection{lower bound for higher-dimensional case}

\newpage

\begin{appendix}

\begin{center}  { \Large
Appendix:  \  Proof of Theorem~\ref{multiplicative}
 \\ ~ \hspace*{10ex} }
 \\ ~ \vspace*{-5ex} \\
{\large  by \\ ~ \hspace*{10ex}  }
\\ ~ \vspace*{-4ex} \\
{\large   Baowei Wang}
\end{center}

\bigskip

%\subsubsection{Proof of Theorem~\ref{multiplicative} \label{Th-multiplicative}}

We  start with stating two lemmas that we will make use of  during the course of establishing Theorem~\ref{multiplicative}.  As in the main body of the paper, balls  are always with respect to the maximum  norm and thus correspond to a hypercubes.  Indeed, the diameter $d(B)$ of a ball $B$ can equivalently be interpreted as  the side length of a hypercube.

\begin{lemma}{\rm{(\cite[Lemma 1]{BD1978})} \label{BD}}
  Let $d \in \N$ and $\delta$ be a sufficiently small positive number. Then, for any $\va= (a_1, \ldots, a_d)  \in \T^d$ and $s\in (d-1, d)$ the set
  $$H_d(\va,\delta)=\{\vx=(x_1,\dots,x_d)\in \T^d: \|x_1 -a_1\|\cdots \|x_d -a_d\|<\delta \}$$
  has a covering $\mathcal{B}$ by $d$-dimensional balls $B$ such that
  $$\sum_{B\in\mathcal{B}} d(B)^s\ll \delta^{s-d+1},$$
  where $d(B)$ is the length of a side of $U$ and $\ll$ implies an inequality with a factor independent of $\delta$.
\end{lemma}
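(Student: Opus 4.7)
\noindent\textbf{Proof proposal for Lemma \ref{BD}.}

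The plan is to perform a dyadic decomposition of a fundamental domain of $\T^d$ and to cover each dyadic box lying in (or intersecting) the hyperboloid region $H_d(\va,\delta)$ by cubes of the appropriate size. First, by translation invariance of the distance to the nearest integer, one may assume $\va=\mathbf{0}$, so that $H_d(\mathbf{0},\delta)=\{\vx\in\T^d:\|x_1\|\cdots\|x_d\|<\delta\}$. The map $x_i\mapsto\min(x_i,1-x_i)$ identifies $H_d(\mathbf{0},\delta)$ with $2^d$ reflected copies of the positive-octant set $V_d(\delta):=\{\vx\in(0,1/2]^d:x_1\cdots x_d<\delta\}$, so it suffices to cover $V_d(\delta)$ with a loss of a constant factor $2^d$. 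For each multi-index $\mathbf{k}=(k_1,\dots,k_d)\in\Z_{\ge 0}^d$ set $B_{\mathbf{k}}:=\prod_{i=1}^d(2^{-k_i-1},2^{-k_i}]$, so that $\{B_{\mathbf{k}}\}$ partitions $(0,1/2]^d$. Writing $L:=\log_2(1/\delta)$, $T:=k_1+\cdots+k_d$ and $M:=\max_i k_i$, one verifies that $B_{\mathbf{k}}\cap V_d(\delta)\neq\emptyset$ forces $T>L-d$, since $\inf_{\vx\in B_{\mathbf{k}}}x_1\cdots x_d=2^{-T-d}$.

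For every such $B_{\mathbf{k}}$ we cover it by cubes $B$ of side length equal to its smallest side $2^{-M-1}$; the required number of cubes is $\prod_i 2^{M-k_i}=2^{dM-T}$, and each cube contributes $d(B)^s=2^{-s(M+1)}$. Hence, with $\alpha:=d-s\in(0,1)$, the full covering sum is bounded by
\[
\sum_{\mathbf{k}:\,T>L-d} 2^{dM-T}\cdot 2^{-s(M+1)} \;\ll\; \sum_{\mathbf{k}:\,T>L-d} 2^{\alpha M-T}.
\]
It remains to show this last expression is $\ll 2^{-(1-\alpha)L}=\delta^{s-d+1}$.

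The core of the argument is to organise this sum by the value of $M$. For $M\ge L$ the constraint $T>L-d$ is automatic, and summing the geometric series $\sum_{\mathbf{k}:\max=M}2^{-T}\le d\cdot 2^{d-1}\cdot 2^{-M}$ yields
\[
\sum_{M\ge L}2^{\alpha M}\cdot d\cdot 2^{d-1}\cdot 2^{-M}\;\ll\;2^{(\alpha-1)L}=\delta^{s-d+1},
\]
using $\alpha-1<0$. For $M<L$ one needs the tail estimate
\[
G(T_0):=\sum_{\mathbf{j}\in\Z_{\ge 0}^{d-1},\;|\mathbf{j}|\ge T_0}2^{-|\mathbf{j}|}\ll T_0^{d-2}\,2^{-T_0}\qquad(T_0\ge 1),
\]
which is a standard geometric-weighted tail over the discrete simplex. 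Applying it with $T_0:=(L-d+1-M)^+$ gives, after the change of variable $j=L-d+1-M$, the bound $2^{\alpha(L-d+1)}\sum_{j\ge 1}j^{d-2}2^{-\alpha j}\ll 2^{-(1-\alpha)L}$, since the $j$-series converges by $\alpha>0$. Combining the two regimes completes the proof.

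The main obstacle is exactly this second regime $M<L$: a crude estimate of the number of $\mathbf{k}$ with $\max=M$ and $T>L-d$ produces a spurious factor $(\log(1/\delta))^{d-1}$, which would match the volume $m_d(H_d(\delta))\asymp\delta(\log(1/\delta))^{d-1}$ at $s=d$ but is too weak for $s<d$. The trick is not to separate ``count of tuples'' from ``size of each term'': one must keep the geometric weight $2^{-T}$ attached to each tuple and exploit the fact that forcing $T\ge L-d+1-M$ contributes a factor $2^{-T_0}\asymp \delta\cdot 2^M$ that precisely balances against the geometric gain $2^{(\alpha-1)M}$, so that the polynomial factor $T_0^{d-2}$ is absorbed by the convergent series $\sum j^{d-2}2^{-\alpha j}$ rather than by a sum of length $L$.
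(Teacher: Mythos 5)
The paper does not actually prove Lemma~\ref{BD}: it cites \cite[Lemma~1]{BD1978} and only remarks that, after observing the shift $\va$ is immaterial, the problem reduces to covering $\{\vx\in[0,1/2]^d:x_1\cdots x_d<\delta\}$, to which the Bovey--Dodson result applies directly. You have therefore done more than the paper does --- you have supplied a self-contained proof via dyadic decomposition.

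Your argument is correct in substance and is the natural route. A few small points. First, the boxes $B_{\mathbf{k}}=\prod_i(2^{-k_i-1},2^{-k_i}]$ with $k_i\ge 0$ partition $(0,1]^d$, not $(0,1/2]^d$; since $V_d(\delta)\subset(0,1/2]^d$, only indices with every $k_i\ge 1$ can meet $V_d(\delta)$, so the extra terms in your sums are harmless, but the phrasing is slightly off. Second, in the regime $M<L$ the displayed expression
\[
2^{\alpha(L-d+1)}\sum_{j\ge 1}j^{d-2}2^{-\alpha j}
\]
has the wrong prefactor: after the substitution $M=L-d+1-j$ in $\sum_M 2^{(\alpha-1)M}G(T_0)$ one obtains
\[
2^{(\alpha-1)(L-d+1)}\sum_{j\ge 1}j^{d-2}2^{-\alpha j},
\]
and it is the factor $(\alpha-1)<0$ in the exponent (not $\alpha>0$) that yields the conclusion $\ll 2^{-(1-\alpha)L}=\delta^{s-d+1}$; as written, $2^{\alpha(L-d+1)}=\delta^{-\alpha}2^{-\alpha(d-1)}$ blows up. This is evidently a slip in the exposition rather than in the underlying computation, since the stated final bound is the correct one. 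Third, the tail estimate $G(T_0)\ll T_0^{d-2}2^{-T_0}$ needs $T_0\ge 1$; you note this, and the residual range $L-d+1\le M<L$ with $T_0=0$ contributes at most a bounded geometric sum $\ll 2^{(\alpha-1)L}$, so it is fine, though worth one sentence. With those adjustments the proof is complete, and the concluding remark --- that one must keep the weight $2^{-T}$ coupled to the simplex count rather than bounding the count crudely by a power of $\log(1/\delta)$ --- correctly identifies why the argument works for all $s\in(d-1,d)$ rather than just recovering the volume estimate at $s=d$.
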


The above lemma does not precisely correspond to the  Bovey-Dodson statement \cite[Lemma~1]{BD1978}.   However, it is readily verified that  in establishing Lemma~\ref{BD} we can, without loss of generality, ignore  the `shift'   $\va \in \T^d$.  Then, the problem reduces to finding an appropriate cover by balls of the set
$
\{(x_1,\dots,x_d)\in [0,1/2]^d: x_1 \cdots x_d <\delta \}  \, .   $
In short, for this task  the Bovey-Dodson statement  is directly applicable.

\begin{lemma} {\rm(\cite[Corollary 7.12]{F}) \label{cor7.12}}
	Let $F$ be any subset of $\mathbb{R}^d$, and let $E$ be a subset of the $x_d$-axis. Assume that
$$\dim_{\rm H}F\cap L_x\ge   t$$
for all $x\in E$, where $L_x$  is  the plane parallel to all other axis through the point $(0,\dots,0,x)$. Then
$$\dim_{\rm H}F\ge   t+\dim_{\rm H}E.$$
\end{lemma}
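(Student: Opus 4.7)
\textbf{Proof plan for Lemma \ref{cor7.12}.}

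The plan is to establish $\dim_{\rm H} F \ge s + u$ for every $s < t$ and every $u < \dim_{\rm H} E$, since then taking suprema gives $\dim_{\rm H} F \ge t + \dim_{\rm H} E$. The strategy couples a Frostman-type mass distribution on the low-dimensional ``base'' $E$ with the slice hypothesis $\dim_{\rm H}(F \cap L_x) \ge t$, gluing them via a covering/Fubini argument. First I would apply Frostman's lemma, using $\dim_{\rm H} E > u$, to obtain a non-zero finite Borel measure $\mu$ supported on $E$ with $\mu(B(y,r)) \le C_1 r^u$ for every interval $B(y,r)$ on the $x_d$-axis. Second, since $\dim_{\rm H}(F \cap L_x) \ge t > s$ for each $x \in E$, we have $\mathcal{H}^s(F \cap L_x) = \infty$ for each such $x$.

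The main estimate uses the following level-set device to make the last statement quantitative. Set
\[
E_n := \bigl\{ x \in E : \mathcal{H}^s_{1/n}(F \cap L_x) \ge 1 \bigr\},
\]
which forms an increasing sequence with $\bigcup_n E_n = E$, so that $\mu(E_n) \ge \tfrac12 \mu(E)$ for all $n$ sufficiently large. Fix such an $n$, denote by $\pi \colon \mathbb{R}^d \to \mathbb{R}$ the projection onto the $x_d$-axis, and take any cover $\{U_i\}$ of $F$ by sets of diameter at most $\delta \le 1/n$. For each $x \in E_n$, the collection $\{U_i \cap L_x : U_i \cap L_x \neq \emptyset\}$ is a $\delta$-cover of $F \cap L_x$, so
\[
\sum_{i\,:\,U_i \cap L_x \ne \emptyset} d(U_i)^s \;\ge\; \mathcal{H}^s_\delta(F \cap L_x) \;\ge\; 1.
\]
Integrating with respect to $\mu$ over $E_n$ and interchanging sum and integral yields
\[
\tfrac12 \mu(E) \;\le\; \mu(E_n) \;\le\; \sum_i d(U_i)^s \, \mu\bigl(\{x \in E_n : U_i \cap L_x \ne \emptyset\}\bigr).
\]
Since $\{x : U_i \cap L_x \ne \emptyset\} \subseteq \pi(U_i)$ sits in an interval of length at most $d(U_i)$, Frostman's bound gives $\mu\bigl(\{x : U_i \cap L_x \ne \emptyset\}\bigr) \le C_1 d(U_i)^u$, so
\[
\sum_i d(U_i)^{s+u} \;\ge\; \frac{\mu(E)}{2 C_1}.
\]
Taking the infimum over all such covers shows $\mathcal{H}^{s+u}_\delta(F) \ge \mu(E)/(2 C_1)$ for every $\delta \le 1/n$, hence $\mathcal{H}^{s+u}(F) > 0$ and so $\dim_{\rm H} F \ge s+u$.

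The main obstacle is the non-uniformity of the slice information: the statement $\mathcal{H}^s(F \cap L_x)=\infty$ translates into $\mathcal{H}^s_\delta(F \cap L_x) \ge 1$ only once $\delta$ is small enough, with ``small enough'' depending on $x$. Introducing the level sets $E_n$ and exploiting the fact that $\mu(E_n) \to \mu(E)$ is precisely what converts the pointwise slice bound into a uniform one on a set of positive $\mu$-measure, which is what the Fubini step requires. Once this is in place, the remaining steps are essentially accounting.
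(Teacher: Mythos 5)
The paper does not prove this lemma; it is cited verbatim from Falconer's book \cite[Corollary~7.12]{F}, so there is no paper proof to compare against. Your argument is correct and is essentially the standard textbook proof of this slicing inequality: Frostman on the base $E$, the level-set decomposition $E_n$ to upgrade the pointwise $\cH^s(F\cap L_x)=\infty$ into a uniform $\cH^s_\delta$-lower bound on a set of positive $\mu$-measure, and then a Tonelli swap combined with the Frostman bound applied to $\pi(U_i)$. Each step holds up; in particular the monotonicity $\cH^s_\delta \ge \cH^s_{1/n}$ for $\delta\le 1/n$ and the inclusion $\{x: U_i\cap L_x\neq\emptyset\}\subseteq\pi(U_i)$ are used correctly, and the infimum over covers yields $\cH^{s+u}_\delta(F)\ge \mu(E)/(2C_1)$ uniformly in small $\delta$, hence $\cH^{s+u}(F)>0$.

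One small caveat worth recording: Frostman's lemma in the form you invoke (existence of a positive finite Borel measure $\mu$ on $E$ with $\mu(B(y,r))\le C_1 r^u$ whenever $u<\dim_{\rm H}E$) requires $E$ to be Borel or Souslin, whereas the lemma as stated allows $E$ to be an arbitrary subset of the $x_d$-axis. For an arbitrary $E$ one should either observe that the sets actually fed into the lemma in this paper are Borel ($\limsup$ sets), or work with Hausdorff content and a Frostman-type construction adapted to outer measures. This does not affect the structure of the argument, but the phrase ``apply Frostman's lemma'' should be qualified.
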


\medskip

We now move onto the task of proving  Theorem~\ref{multiplicative}. This will be done  by establishing  the upper and lower bounds  for $ \dim_{\rm H} W^\times (T,\psi, \va)$ separately. Recall, that
$$W^{\times}(T,\psi,\va):=\{\vx\in\mathbb{T}^d: T^n(\vx)\in H(\va,\psi(n))  \ \ \text{for infinitely many}\ n\in\mathbb{N}\}$$
where $ H(\va,\psi(n))  $ is the hyperboloid region  given by \eqref{iona}.

\begin{proposition}\label{upperboundmult}
		Under the setting of  Theorem~\ref{multiplicative},  we have that
		$$\dim_{\rm H} W^\times (T,\Psi,\va) \ \le   \ d-1+\frac{\log|\beta_d|}{\lambda+\log|\beta_d|}.$$
	\end{proposition}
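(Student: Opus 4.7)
The plan is to construct, for each sufficiently large $n$, an efficient covering of the preimage $T^{-n}(H(\va,\psi(n)))$ whose $s$-Hausdorff sum is summable in $n$ whenever $s$ is strictly above $d-1+\frac{\log|\beta_d|}{\lambda+\log|\beta_d|}$. Since $W^\times(T,\psi,\va)\subset\bigcup_{n\geq N}T^{-n}(H(\va,\psi(n)))$ for every $N\in\N$, summability forces $\mathcal{H}^s(W^\times(T,\psi,\va))=0$ for every such $s$, which gives the stated upper bound.

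First, decompose $\T^d$ into the product cylinders $C=\prod_{i=1}^d C_i$, where each $C_i$ is an order-$n$ cylinder of $T_{\beta_i}$; by Remark~\ref{-need} there are at most $\prod_i|\beta_i|^{n(1+\epsilon)}$ such product cylinders. On each $C$ the map $T^n$ is affine with diagonal ``slopes'' $\pm\beta_i^n$, so $C\cap T^{-n}(H(\va,\psi(n)))$ sits inside the inverse image of $H(\va,\psi(n))$ under this affine chart, which stretches any cube of side $r$ into a rectangle of dimensions $r/|\beta_1|^n\times\cdots\times r/|\beta_d|^n$.

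Second, fix $s\in(d-1,d)$ strictly above the target threshold and apply Lemma~\ref{BD} to $H(\va,\psi(n))$ to produce a cover $\mathcal{B}_n$ by cubes with $\sum_{B\in\mathcal{B}_n}d(B)^s\ll\psi(n)^{s-d+1}$. Pulling each $B\in\mathcal{B}_n$ back through the affine chart on $C$ and then covering the resulting rectangle by cubes of its shortest side $d(B)/|\beta_d|^n$ uses $|\beta_d|^{dn}/\prod_i|\beta_i|^n$ cubes per rectangle. Summing the resulting $s$-masses over $B$ and then over all at most $\prod_i|\beta_i|^{n(1+\epsilon)}$ product cylinders gives a total $s$-Hausdorff mass at level $n$ bounded by
$$\prod_i|\beta_i|^{n(1+\epsilon)}\cdot\frac{|\beta_d|^{(d-s)n}}{\prod_i|\beta_i|^n}\,\psi(n)^{s-d+1}\;=\;\prod_i|\beta_i|^{n\epsilon}\,|\beta_d|^{(d-s)n}\,\psi(n)^{s-d+1}.$$

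Third, sum over $n$. Since $\liminf_{n\to\infty}(-\log\psi(n)/n)=\lambda$, for every $\eta>0$ one has $\psi(n)\leq e^{-(\lambda-\eta)n}$ for all sufficiently large $n$, so the tail $\sum_{n\geq N}$ of the above quantity is a geometric series that tends to $0$ as $N\to\infty$ whenever $(d-s)\log|\beta_d|+\epsilon\sum_i\log|\beta_i|-(\lambda-\eta)(s-d+1)<0$. Letting $\epsilon,\eta\to 0$ yields convergence for every $s>d-1+\frac{\log|\beta_d|}{\lambda+\log|\beta_d|}$, hence $\mathcal{H}^s(W^\times(T,\psi,\va))=0$ for all such $s$ and the proposition follows. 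The step that I expect to require the most care is the first one: decomposing via product cylinders first (rather than pulling back a single Bovey--Dodson cover of the whole hyperboloid) is precisely what avoids the ``large $d(B)$'' blow-up a naive subdivision would introduce, and one must also check that partial, non-full product cylinders do not spoil the counting bound $\prod_i|\beta_i|^{n(1+\epsilon)}$, which is why we work with the slightly loose entropy estimate from Remark~\ref{-need} rather than a tight combinatorial count.
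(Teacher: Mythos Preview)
Your argument is correct and follows essentially the same route as the paper's proof: decompose $\T^d$ into product cylinders of order $n$, apply the Bovey--Dodson covering lemma (Lemma~\ref{BD}) to $H(\va,\psi(n))$, pull back each cube through the diagonal affine chart on each product cylinder, cover the resulting rectangles by cubes of the shortest side $d(B)/|\beta_d|^n$, and sum using the entropy cylinder count from Remark~\ref{-need}. The only cosmetic differences are that the paper carries the harmless constant $2^d$ in the rectangle-by-cube count and bounds $\prod_i|\beta_i|^{n\epsilon}\le |\beta_d|^{nd\epsilon}$ before summing, whereas you keep $\epsilon\sum_i\log|\beta_i|$ in the exponent; both vanish as $\epsilon\to 0$ and give the same threshold $s>d-1+\frac{\log|\beta_d|}{\lambda+\log|\beta_d|}$.
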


\begin{proof}
Observe that we can re-write $W^\times (T,\Psi,\va)$ as
\begin{equation} \label{limin}
	W^\times (T,\Psi,\va) =\limsup_{n\to \infty}   \ E_n^\times(T,\psi,\va)
\end{equation}
where
$$
E_n^\times(T,\psi,\va):=\{\vx\in\mathbb{T}^d: T^n(\vx)\in H(\va, \psi(n)) \}=\Big\{\vx\in\mathbb{T}^d: \prod_{i=1}^d  \|T_{\beta_i}^nx_i-a_i\| <\psi(n)\Big\}   \,   .    $$
As in the main body of the paper,  $T_{\beta_i} $ is the standard  $\beta$-transformation   with $\beta = \beta_i$ and we do not distinguish between  $\beta$-transformations acting on the unit interval $[0,1)$  or the torus $\T$.
  The proof of the proposition  relies on finding an ``efficient'' covering by balls  of the $\limsup$ set \eqref{limin}.   So with this in mind,   for $n \in \N$, we first obtain an efficient cover of the set $ E_n^\times(T,\psi,\va)$.

 For any $1 \le   i \le   d$, let $\{C_{n,j}^{(i)}  : 1\le j\le N_{i,n}\}$ be the cylinders of order $n$ associated with the transformation $T_{\beta_i} $ .  By definition, these $N_{i,n}$ intervals are disjoint  and cover $\T$. Hence
	%Proposition \ref{fullcylinders} (ii)\& (iii),  $|\beta_i|^n\ll N_i\ll |\beta_i|^n(1+\epsilon)$ for any $\epsilon>0$.
	%By Proposition \ref{fullcylinders} (ii)\& (iii), we know that $T_{\beta_i}^n$ is linear with slope $|\beta_i^n|$ on each $C_{n,j}^i$ and $|\beta_i|^n\ll N_i\ll |\beta_i|^n$.
$$\mathbb{T}^d=\bigcup_{j_1=1}^{N_{1,n}}\cdots \bigcup_{j_d=1}^{N_{d,n}}C_{n,j_1}^{(1)}\times\cdots\times C_{n,j_d}^{(d)}  \, ,  $$ where the $d$-dimensional ``rectangles''  $ C_{n,j_1}^{(1)}\times\cdots\times C_{n,j_d}^{(d)}$  are disjoint.    For $n \in \N$,  let
$$
J_n := \big\{\vj=(j_1, \ldots, j_d):   1 \le j_i \le N_{i,n} \ (1\le i\le d) \big\}  \,
$$
and for $\vj \in J_n $, let
$$E^\times_{n,\mathbf{j}} (T,\psi,\va)   :=\Big\{\vx\in C_{n,j_1}^{(1)}\times\cdots\times C_{n,j_d}^{(d)}: \prod_{i=1}^d \|T_{\beta_i}^nx_i-a_i  \|   <\psi(n)\Big\}  \, .  $$
It follows that
$$
E^\times_{n} (T,\psi,\va)  = \bigcup_{\vj \in J_n }   E^\times_{n,\mathbf{j}} (T,\psi,\va)   \, .
$$

	By Lemma \ref{BD}, with  $\delta=\psi(n)$ and $n$ sufficiently large,  for any $ s \in (d-1,d)$ there exists a covering $\mathcal{B}_n$ of the hyperboloid $H\big(\va,\psi(n)\big)$ by balls $B$ such that
	\begin{equation}\label{cov-estimate}
		\sum_{B\in\mathcal{B}_n}d(B)^s\ll \psi(n)^{s-d+1}.
	\end{equation}
By definition $$E^\times_{n,\mathbf{j}}(T,\psi,\va)=\Big(T^n|_{C_{n,j_1}^{(1)}\times\cdots\times C_{n,j_d}^{(d)}}\Big)^{-1} \left(H\big(\va,\psi(n)\big)\right),$$
and so it follows that
$$E^\times_{n,\mathbf{j}}(T,\psi,\va)\subset \Big(T^n|_{C_{n,j_1}^{(1)}\times\cdots\times C_{n,j_d}^{(d)}}\Big)^{-1}\big(\bigcup_{B\in\mathcal{B}_n}B\big)
=\bigcup_{B\in\mathcal{B}_n}\Big(T^n|_{C_{n,j_1}^{(1)}\times\cdots\times C_{n,j_d}^{(d)}}\Big)^{-1}\left(B\right).$$
 On making use of the fact that for each  $1\le   i \le   d$, the $n$-th iteration of $T_{\beta_i}$ on $C_{n,j_i}^{(i)}$ is an affine function, it can be verified that for any $ B\in\mathcal{B}_n $:
$$R_{n,\mathbf{j}} (B) := \Big(T^n|_{C_{n,j_1}^{(1)}\times\cdots\times C_{n,j_d}^{(d)}}\Big)^{-1}\left(B\right)   $$
corresponds  to  either the empty set  or to a rectangle with  side length  $ |\beta_i|^{-n}  d (B)$    along the $x_i$-th axis.  The upshot is that
	$$E_n^\times(T,\psi,\va)\subset \bigcup_{\vj \in J_n }     \bigcup_{B\in\mathcal{B}_n}R_{n,\mathbf{j}}(B) \, ,$$
	and so for $N$ large enough
	$$W^\times (T,\psi, \va)  \  \subset \  \bigcup_{n=N}^\infty E_n^\times(T,\psi,\va)   \ \subset  \ \bigcup_{n=N}^\infty\bigcup_{j_1=1}^{N_{1,n}}\cdots \bigcup_{j_d=1}^{N_{d,n}} \bigcup_{B\in\mathcal{B}_n}R_{n,\mathbf{j}}(B) \, .$$

For any $ 1 \le i \le d $,   whenever $R_{n,\mathbf{j}} (B)$ is non-empty,  as already  mentioned above  the side length of the rectangle   along the $x_i$-th axis is $ |\beta_i|^{-n}  d (B)$  and by assumption $ |\beta_i|^{-n}  d (B)   \ge  |\beta_d|^{-n}  d (B)$.  We now cover the rectangle  by  balls with  diameter equal to the shortest side length of the rectangle.    A straightforward geometric argument shows that we can find a collection $\cC_n$ of balls with diameter $|\beta_d|^{-n}  d (B)$ that cover
$R_{n,\mathbf{j}}(B)$  with
$$ \# \cC_n  \ \le  \ \prod_{i=1}^d\left(\frac{|\beta_i|^{-n}d(B)}{|\beta_d|^{-n}d(B)}+1\right)
\ = \ \prod_{i=1}^d\left(\frac{|\beta_d|^{n}}{|\beta_i|^{n}}+1\right) \ \le \  2^d\prod_{i=1}^d\frac{|\beta_d|^{n}}{|\beta_i|^{n}}.$$
%Secondly, since $W^\times (T,\psi, \va)=\cap_{N=1}^\infty\cup_{n=N}^\infty E_n^\times(T,\psi,\va)$, for large enough $N$,
%$$W^\times (T,\psi, \va)\subset \bigcup_{n=N}^\infty\bigsqcup_{j_1=1}^{N_1}\cdots \bigsqcup_{j_d=1}^{N_d}\bigcup_{C\in\mathcal{C}}C^*_{n,\mathbf{j}}.$$
 Thus, given $\rho>0$ and on choosing $N$  sufficiently large so that $|\beta_d|^{-n}d(B)<\rho$  for all $B\in \mathcal{B}_n$ and for any $n \ge N$,  it follows from the definition of $s$-dimensional Hausdorff measure  that for any $ s > 0$
\begin{eqnarray} \label{rasq}
 \mathcal{H}^s_\rho(W^\times (T,\psi, \va))
 &\le& \sum_{n=N}^\infty  \  \sum_{\vj \in J_n } \ \sum_{B\in\mathcal{B}_n}  \# \cC_n \
 \left(|\beta_d|^{-n}d(B)\right)^s  \nonumber \\[2ex]
&\le &\sum_{n=N}^\infty\  \sum_{\vj \in J_n } \ \left(2^d\prod_{i=1}^d\frac{|\beta_d|^{n}}{|\beta_i|^{n}}\right)|\beta_d|^{-ns}
\ \sum_{B\in\mathcal{B}_n} d(B)^s.
\end{eqnarray}
Now for any given $\ep > 0$, it follows via \eqref{needlabelB} that for $n$ sufficiently large
$$
\#  J_n \ \le   \ \prod_{i=1}^d   |\beta_i|^{n (1 + \ep)}   \, .
$$
This together with   \eqref{cov-estimate}, \eqref{rasq}   and the assumption that  $ |\beta_d|  \ge   |\beta_i| > 1 $ for  any $ 1 \le i \le d$,
 implies that for any $ s \in (d-1,d)$ and $N$ sufficiently large
\begin{eqnarray*}
\mathcal{H}^s_\rho(W^\times (T,\psi, \va))
&\le&\sum_{n=N}^\infty \ \prod_{i=1}^d|\beta_i|^{n(1+\epsilon)}
\left(2^d\prod_{i=1}^d\frac{|\beta_d|^{n}}{|\beta_i|^{n}}\right)
|\beta_d|^{-ns}\left(\psi(n)\right)^{s-d+1}\\[2ex]
&\le  &2^d\sum_{n=N}^\infty |\beta_d|^{n(d(1+\epsilon)-s)}\left(\psi(n)\right)^{s-d+1}\\[2ex]
&=&2^d\sum_{n=N}^\infty\exp\Big(n\Big(d(1+\epsilon)\log|\beta_d|-
(d-1)\frac{\log\psi(n)}{n}  \\[-1ex] & & \hspace*{30ex}
  -s\Big(\log|\beta_d|
-\frac{\log\psi(n)}{n}\Big) \, \Big) \Big)  \, . \\
\end{eqnarray*}
Now for any $$ s>d-1+\frac{(d\epsilon+1)\log|\beta_d|}{\lambda+\log|\beta_d|}$$ the above exponential sum converges   and so $\mathcal{H}^s(W^\times (T,\psi, \va))  = 0$.  Therefore, it follows from the definition of Hausdorff dimension that   $$\dim_{\rm H} W^\times (T,\psi, \va)\le   d-1+\frac{(d\epsilon+1)\log|\beta_d|}{\lambda+\log|\beta_d|}.$$
Since $ \ep > 0$ is arbitrary, on letting $\epsilon \to 0$ we obtain the desired upper bound for $ \dim_{\rm H} W^\times (T,\psi, \va)$.
	\end{proof}

We now establish the complementary lower bound statement for the Hausdorff dimension of the set $ W^\times (T,\psi, \va) $.
\begin{proposition}\label{lowerboundmult}
		Under the setting of  Theorem~\ref{multiplicative},  we have that
		$$\dim_{\rm H} W^\times (T,\Psi,\va) \ \ge \ d-1+\frac{\log|\beta_d|}{\lambda+\log|\beta_d|}  \, .$$
	\end{proposition}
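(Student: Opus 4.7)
The plan is to reduce the lower bound in dimension $d$ to the one--dimensional dimension formula supplied by Theorem~\ref{dimensionalone}, via an elementary fibering (slicing) argument using Lemma~\ref{cor7.12}. The key observation is that each factor $\|T_{\beta_i}^n x_i - a_i\|$ in the multiplicative product is automatically $\le 1/2$, so if a single coordinate in the $d$--th direction decays like $\psi(n)$, then the full product already does.

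More precisely, first I would apply Theorem~\ref{dimensionalone} to the real number $\beta=\beta_d$ (with $|\beta_d|>1$ and the condition $a_d\in K(\beta_d)$ imposed in the hypotheses of Theorem~\ref{multiplicative}) to obtain the one--dimensional shrinking target dimension
\[
\dim_{\rm H} W(T_{\beta_d},\psi,a_d) \;=\; \frac{\log|\beta_d|}{\lambda+\log|\beta_d|}.
\]
Next I would produce a ``fiber'' inclusion of the form
\begin{equation} \label{slice_inclusion_proposal}
\T^{d-1}\times W(T_{\beta_d},\psi,a_d) \;\subseteq\; W^\times(T,\psi,\va).
\end{equation}
To establish \eqref{slice_inclusion_proposal}, fix any $x_d\in W(T_{\beta_d},\psi,a_d)$: by definition there are infinitely many $n\in\N$ with $\|T_{\beta_d}^n x_d - a_d\|<\psi(n)$. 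For any $(x_1,\dots,x_{d-1})\in\T^{d-1}$ and such an $n$, since $\|T_{\beta_i}^n x_i-a_i\|\le 1/2$ for all $1\le i\le d-1$, one has
\[
\prod_{i=1}^d\|T_{\beta_i}^n x_i - a_i\| \;\le\; \|T_{\beta_d}^n x_d - a_d\|\;<\;\psi(n),
\]
so $(x_1,\dots,x_{d-1},x_d)\in W^\times(T,\psi,\va)$, which gives \eqref{slice_inclusion_proposal}.

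Finally I would invoke Lemma~\ref{cor7.12} with $F:=W^\times(T,\psi,\va)\subset [0,1)^d$ and $E:=W(T_{\beta_d},\psi,a_d)$ viewed as a subset of the $x_d$--axis. The inclusion \eqref{slice_inclusion_proposal} says that for every $x_d\in E$, the fiber $F\cap L_{x_d}$ contains $\T^{d-1}\times\{x_d\}$ and in particular satisfies $\dim_{\rm H}(F\cap L_{x_d})\ge d-1$. Lemma~\ref{cor7.12} then yields
\[
\dim_{\rm H} W^\times(T,\psi,\va) \;\ge\; (d-1)+\dim_{\rm H} W(T_{\beta_d},\psi,a_d) \;=\; d-1+\frac{\log|\beta_d|}{\lambda+\log|\beta_d|},
\]
as desired. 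There is no serious obstacle here: Theorem~\ref{dimensionalone} does the genuine work (it is precisely the ingredient unavailable if one restricts to positive eigenvalues, as emphasised in the paper), while the slicing step is essentially a free bonus from the fact that the multiplicative constraint is dominated by any single coordinate. The only points that need a one--line verification are that $a_d\in K(\beta_d)$ makes Theorem~\ref{dimensionalone} directly applicable and that $\psi$ is decreasing (hence its lower order $\lambda$ behaves well), both of which are explicit hypotheses of Theorem~\ref{multiplicative}.
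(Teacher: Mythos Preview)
Your proposal is correct and follows essentially the same route as the paper's proof: invoke Theorem~\ref{dimensionalone} for the $d$-th coordinate, use the trivial inclusion $\T^{d-1}\times W(T_{\beta_d},\psi,a_d)\subseteq W^\times(T,\psi,\va)$ coming from $\|\cdot\|\le 1/2$, and then apply the slicing Lemma~\ref{cor7.12}. The only cosmetic difference is that the paper applies Lemma~\ref{cor7.12} to the product set $[0,1)^{d-1}\times W(T_{\beta_d},\psi,a_d)$ first and then uses the inclusion, whereas you apply it directly to $W^\times(T,\psi,\va)$; the content is identical.
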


\begin{proof}

 By Theorem \ref{dimensionalone}, for any $a_d\in K(\beta_d)$ we have that
		$$\dim_{\rm H}W^\times (T_{\beta_d},\psi, a_d)=\dim_{\rm H} W(T_{\beta_d},\psi, a_d)=\frac{\log|\beta_d|}{\lambda+\log|\beta_d|}.$$
Now it is easily verified that for any $x_d\in W^\times (T_{\beta_d},\psi, a_d)$
  $$\big([0, 1)^{d-1}\times W^\times (T_{\beta_d},\psi, a_d)\big)\cap L_{x_d}=[0,1)^{d-1}   \, . $$ Hence, it follows that  $$\dim_{\rm H} \Big(   \big( \, [0, 1)^{d-1}\times W^\times (T_{\beta_d},\psi, a_d) \big)\cap L_{x_d}  \Big) \ge   d-1.$$ Applying Lemma \ref{cor7.12}, we obtain
$$\dim_{\rm H}   \Big( [0, 1)^{d-1}\times W^\times (T_{\beta_d},\psi, a_d)\Big) \ge   d-1+\frac{\log|\beta_d|}{\lambda+\log|\beta_d|}.$$
This together with the fact that
	$$[0, 1)^{d-1}\times W^\times (T_{\beta_d},\psi, a_d) \ \subset  \  W^\times (T,\psi, \va),$$
	implies that  $$\dim_{\rm H} W^\times (T,\psi, \va)\ge   d-1+\frac{\log|\beta_d|}{\lambda+\log|\beta_d|}  \, .$$
\end{proof}

\vspace*{6ex}

\noindent Baowei Wang:  School of Mathematics, Huazhong University of Science and Technology,

\vspace{-2mm}

\noindent\phantom{Baowei Wang: } Wuhan
430074, China.

%\vspace{0mm}

\noindent\phantom{Baowei Wang: } e-mail: bwei\_wang@hust.edu.cn

%%%%%%%%%%%%%%%%%%%%%%%%%%%%%%%%%%%%%%%%%

\end{appendix}

\newpage

%
%\noindent {\bf Data Availability:}
%Data sharing is not applicable to this article as no datasets were generated or analyzed during the current study.
%
%\vspace*{5ex}
%
%\noindent {\bf Conflict of interest:}  On behalf of all authors, the corresponding author states that there is no conflict of interest.
%
%\vspace*{5ex}

 { }

%%%%%%%%%%%%%%%%%%%%%%%%%%%%%%%%%%%%%%%%%%%%%

\vspace*{10ex}

\noindent Bing Li: Department of Mathematics,
South China University of Technology,

\vspace{-2mm}

\noindent\phantom{Bing Li: }Wushan Road 381, Tianhe District, Guangzhou, China

%\vspace{0mm}

\noindent\phantom{Bing Li: }e-mail: scbingli@scut.edu.cn

%%%%%%%%%%%%%%%%%%%%%%%%%%%%%%%%%%%%%%%%%

\vspace{5mm}

\noindent Lingmin Liao: School of Mathematics and Statistics,
Wuhan University, %Universit\'e Paris-Est Créteil CNRS, LAMA, F-94010 Créteil, France
\vspace{-2mm}

\noindent\phantom{Lingmin Liao: }Bayi Road 299, Wuchang District, Wuhan, China
% \&
%Universit\'e Gustave Eiffel, LAMA, F-77447 Marne-la-Vallée, France
%\vspace{0mm}

\noindent\phantom{Lingmin Liao: }e-mail: lmliao@whu.edu.cn %lingmin.liao@u-pec.fr

%%%%%%%%%%%%%%%%%%%%%%%%%%%%%%%%%%%%%%%%%

\vspace{5mm}

\noindent Sanju Velani: Department of Mathematics,
University of York,

\vspace{-2mm}

\noindent\phantom{Sanju Velani: }Heslington, York, YO10
5DD, England.

%\vspace{0mm}

\noindent\phantom{Sanju Velani: }e-mail: sanju.velani@york.ac.uk

%%%%%%%%%*******************

\vspace{5mm}

\noindent Evgeniy Zorin: Department of Mathematics,
University of York,

\vspace{-2mm}

\noindent\phantom{Evgeniy Zorin: }Heslington, York, YO10
5DD, England.

%\vspace{0mm}

\noindent\phantom{Evgeniy Zorin: }e-mail: evgeniy.zorin@york.ac.uk

\end{document}